\documentclass[a4paper,11pt]{article}
\usepackage{amsmath,amssymb,amsfonts,amsthm}
\usepackage{mathrsfs}
\usepackage{mathtools}
\usepackage{stmaryrd}
\usepackage{bbold}
\usepackage{breqn}
\usepackage{txfonts}
\usepackage{geometry}
\usepackage[colorlinks]{hyperref}

\usepackage[mathscr]{euscript}
\usepackage{booktabs}
\usepackage{multirow} 
\usepackage{graphicx}
\usepackage{comment}
\makeatletter
\newcommand*{\defeq}{\mathrel{\rlap{%
					 \raisebox{0.3ex}{$\m@th\cdot$}}%
					 \raisebox{-0.3ex}{$\m@th\cdot$}}%
					 =}
\makeatother
\makeatletter
\newcommand*{\eqdef}{=\mathrel{\rlap{%
					 \raisebox{0.3ex}{$\m@th\cdot$}}%
					 \raisebox{-0.3ex}{$\m@th\cdot$}}%
					 }
\makeatother

\makeatletter
\newcommand*\bcdot{\mathpalette\bcdot@{.5}}
\newcommand*\bcdot@[2]{\mathbin{\vcenter{\hbox{\scalebox{#2}{$\m@th#1\bullet$}}}}}
\makeatother

\theoremstyle{definition}
\newtheorem{definition}{Definition}[section]

\theoremstyle{plain}
\newtheorem{lemma}[definition]{Lemma}
\newtheorem{corollary}[definition]{Corollary}
\newtheorem{theorem}[definition]{Theorem}
\newtheorem{proposition}[definition]{Proposition}

\theoremstyle{remark}
\newtheorem{remark}[definition]{Remark}

\theoremstyle{definition}
\newtheorem{example}[definition]{Example}

\usepackage{authblk}
\usepackage{cleveref}

\DeclarePairedDelimiter\abs{\lvert}{\rvert}%
\DeclarePairedDelimiter\norm{\lVert}{\rVert}%
\DeclarePairedDelimiter\croc{\langle}{\rangle}%

\makeatletter
\let\oldabs\abs
\def\abs{\@ifstar{\oldabs}{\oldabs*}}
\let\oldnorm\norm
\def\norm{\@ifstar{\oldnorm}{\oldnorm*}}
\let\oldcroc\croc
\def\croc{\@ifstar{\oldcroc}{\oldcroc*}}
\makeatother

\makeatletter
\newcommand{\proofstep}[1]{%
  \par
  \addvspace{\medskipamount}%
  \noindent\textit{#1\@addpunct{.}}\enspace\ignorespaces
}
\makeatother

\newcommand{\RE}{\mathop{\mathrm{Re}}}
\newcommand{\IM}{\mathop{\mathrm{Im}}}

\newcommand{\Log}{\operatorname{Log}}
\newcommand{\Arg}{\operatorname{Arg}}

\newcommand{\ee}{\operatorname{e}}

\begin{document}

\title{A Resurgent Analytic Framework for Indicial Umbral Calculus via Mellin--Barnes and Borel--Laplace Theories}
\author{Roberto Ricci\footnote{E-mail: roberto.ricci@enea.it (Roberto Ricci)}}
\affil[1]{ENEA, Nuclear Department NUC-DTT, Frascati Research Center, Via E. Fermi 45, 00044 Frascati (Rome), Italy}
\date{}
\setcounter{Maxaffil}{0}
\renewcommand\Affilfont{\itshape\small}
\maketitle

\begin{abstract}
	Indicial umbral calculus offers an effective operational framework for manipulating transcendental functions, yet its analytic foundations have long remained only partially understood. In this work, we provide a rigorous analytic realisation of the theory grounded in Mellin--Barnes integrals, Borel--Laplace summation, and resurgent analysis. 

	By elevating umbral operators from formal algebraic symbols to continuous linear functionals, we establish a topological duality akin to Gelfand--Shilov theory. Formal substitutions are thereby replaced by well-defined, continuous pairings between geometric kernels and suitably topologised admissible ground states. Within this framework, divergent umbral evaluations acquire a precise meaning: they emerge as sectorial asymptotic expansions of analytic functions reconstructed exactly by Mellin--Barnes integrals. 

	The associated Stokes phenomena are natively encoded by jump functions in the spectral variable. This leads to a central \emph{spectral transmutation law} relating entire and rational kernels directly through Gamma regularisation, proving that classical algebraic umbral identities are merely local expansions of a global analytic correspondence. 

	The construction is systematically extended to general umbral functionals via the P{\'o}lya representation of entire functions of exponential type. Explicit examples---including Hankel contours, genuinely Barnes-type integrals, and Lerch transcendents---demonstrate the exact geometric resolution of formal algebraic obstructions. Ultimately, this approach embeds indicial umbral calculus within a unified functional-analytic and resurgent framework, where formal series, analytic continuation, and topological spectral data are intrinsically linked.
\end{abstract}

\section{Introduction}

Umbral methods, in their various forms \cite{aDB00,sL22}, have a long tradition dating back to the 19th century \cite{JordanBell1965}. However, a mathematically sound formulation of these methods only emerged in the second half of the 20th century, when Roman and Rota provided a rigorous foundation for umbral calculus as the action of an algebra of linear functionals on the space of polynomials \cite{RomanRota1973}.

In more recent years, a different variant has developed under the name of \emph{indicial umbral calculus}. Initially inspired by Steffensen's poweroids \cite{Steffensen1941} and the quasi-monomial formalism of Dattoli and Torre \cite{Dattoli2000}, this approach has evolved into an operational calculus capable of handling a wide range of transcendental functions \cite{Babusci2019}, with notable applications to special functions (see, e.g., \cite{Dattoli2000,gD24}).

Indicial umbral calculus is rooted in the theory of special functions and is characterised by an analytic, rather than algebraic, perspective. Its basic mechanism consists in establishing formal identities between transcendental functions and simpler operatorial expressions---the \emph{umbrae}---which act on suitable \emph{ground states} by evaluation.

These formal identities allow one to perform nontrivial manipulations involving transcendental functions by operating on their umbrae as if they were ordinary functions, postponing the operatorial interpretation to the final step.

Despite its effectiveness, indicial umbral calculus has long lacked a rigorous analytic justification. A first mathematically sound formulation was recently proposed by the author \cite{Ricci2026}, based on formal series and Borel--Laplace summation, and building on ideas developed by Dattoli and collaborators on the role of the integral Borel transform in umbral methods \cite{gD15,gDsL20,gD20}.

However, despite this progress, the analytic structure underlying indicial umbral calculus remains only partially understood. In particular, the interpretation of umbral evaluations as actual analytic objects, and the precise role of divergence and summability, require a framework going significantly beyond formal series manipulations.

The present work addresses this issue by providing a rigorous analytic realisation of indicial umbral calculus. We extend the formal framework developed in previous work into a setting grounded in Mellin--Barnes integrals \cite{ParisKaminski2001} and Borel--Laplace theory \cite{Balser2000}. The guiding principle is to elevate umbral operators from formal algebraic symbols to continuous linear functionals acting on a suitably topologised space of admissible test functions. By doing so, we establish a topological duality akin to the Gelfand--Shilov theory of generalised functions \cite{GelfandShilov1968}, where formal substitutions are replaced by well-defined, continuous pairings between geometric kernels and admissible ground states.

Within this approach, divergent umbral evaluations acquire a precise meaning: they arise as asymptotic expansions of analytic functions reconstructed by Mellin--Barnes integrals. The associated Stokes phenomena are naturally encoded by jump functions in the spectral variable. This embeds the resulting analytic structure seamlessly into the broader framework of resurgent analysis \cite{Costin2009,dS16,mLR16}. A central outcome of this construction is a spectral transmutation law relating entire and rational kernels, which explains classical algebraic umbral identities as local expansions of a global analytic correspondence.

More precisely, the analytic realisation developed in this work achieves the following goals:

\begin{itemize}
	\item it provides a precise interpretation of the umbral action as a topological pairing between a kernel and a ground state, replacing purely formal substitutions with rigorous contour integral representations;

	\item it clarifies the origin of the divergent series that arise from formal umbral evaluations, showing that they correspond to specific sectorial asymptotic expansions in the complex plane;

	\item it identifies the underlying analytic objects as functions reconstructed by Mellin--Barnes integrals, thereby giving a concrete representation of these objects beyond their formal expansions and relating them to classical integral transforms;

	\item it establishes a direct connection with Borel--Laplace summation, whereby the umbral kernels are interpreted as bilateral Laplace transforms of Borel-plane data, and the analytic pairing natively realises the corresponding summation procedure;

	\item it embeds the formalism into the framework of resurgent analysis, in which the jump functions encode the Stokes phenomena of the associated analytic functions, and the umbral identities reflect structural relations between different resurgent representations.
\end{itemize}

Ultimately, this framework aims to elevate indicial umbral calculus from a formal heuristic to a rigorous analytic theory. By replacing discrete algebraic manipulations with continuous contour geometries, it provides exact, well-defined evaluations precisely in those regimes where the classical formal methods diverge or face analytic obstructions. We hope this perspective not only clarifies the foundations of umbral methods but also highlights their natural connection to modern asymptotic analysis.

\vskip0.3cm

The article is organised as follows. In \cref{sec:preliminaries} we introduce the analytic setting and the class of admissible functions underlying the construction. \Cref{sec:umbral_borel_functionals} develops the notion of umbral Borel functionals and the associated class of umbral Laplace series, and introduces, via the Mellin transform \cite{Titchmarsh1948,Zemanian1987}, the jump function as the spectral image of an umbral Borel functional. In \cref{sec:spectral_duality} we prove and analyse the consequences of the spectral duality symmetry in the space of jump functions. In \cref{sec:umbral_pairing} we define the analytic umbral pairing and establish its fundamental properties, including the Mellin--Barnes representation and the spectral transmutation law. 

\Cref{sec:examples} is devoted to explicit constructions and examples, illustrating the analytic realisation of umbral evaluations in elementary, Gaussian, Beta-type, and genuinely Barnes cases. Finally, in \cref{sec:conclusion} we summarise the main results and discuss further developments, the topological duality of the pairing, and broader connections with resurgent analysis.

\subsection*{Notations and conventions}

We denote by $\mathbb Z$ the set of integers, by $\mathbb Z^*$ the set of nonzero integers, by $\mathbb Z_{\ge 0} \equiv \mathbb N$ the set of nonnegative integers, by $\mathbb Z_{> 0} \equiv \mathbb N^*$ the set of strictly positive integers and by $\mathbb Z_{< 0}$ the set of strictly negative integers. Similar conventions apply to the set of real numbers $\mathbb R$. By $\mathbb C^*$ we denote the set of nonzero complex numbers and by $\tilde{\mathbb C}$ the Riemann surface of the logarithm. We also use the notation $\mathbb D_z(R)$ for the open disc of radius $R$ centred at $z \in \mathbb C$ and $\mathbb D^*_z(R)$ for the corresponding punctured disc. We write $\mathbb D(R)$ and $\mathbb D^*(R)$ when $z=0$.

Throughout this text, we will operate within two complementary analytic regimes: the additive Borel--Laplace framework and the multiplicative Mellin--Barnes framework. 

In the additive regime, we use the pair of complex variables $t \in \mathbb C_t$ and $u \in \mathbb C_u$ as duals of each other and refer to $\mathbb C_t$ as the Laplace plane and $\mathbb C_u$ as the Borel plane. The Laplace transform will always be applied to a function of $u$ (Borel functional) and the formal Borel transform will always be applied to a (possibly convergent) formal series without constant term in $t^{-1}$. 
In particular, we adopt the following naming conventions: the identifier of a formal power series without constant term in $t^{-1}$ will be marked with a tilde ($\tilde{\Delta}$), and the convergent series in $u$ (germ at the origin of the Borel plane) obtained as its formal Borel transform will be identified by the same identifier marked with a hat ($\hat{\Delta}$). The same hatted identifier will be used both for a Borel germ at the origin and for its analytic continuation in an unbounded domain of the Borel plane. The nude identifier ($\Delta$) will be reserved for the integral Laplace transform of the     analytically continued function.

In the multiplicative regime governing the analytic pairing, we introduce the multiplicative Borel variable $z \in \mathbb C_z$ (parametrised as $z = \ee^u$). Crucially, the variable $t \in \mathbb C_t$ serves a dual topological purpose: it acts simultaneously as the additive Laplace variable and as the \emph{Mellin spectral variable} dual to $z$. This enacts the core identity $z^{-t} = \ee^{-ut}$, which seamlessly bridges the Mellin and Laplace representations. When the pairing involves the spectral scaling parameter $\zeta$, it acts via the scaled variable $w = \zeta z = \zeta \ee^u$, naturally extending this bridge to $w^{-t} = \zeta^{-t}\ee^{-ut}$.

To structure the Mellin--Barnes topological duality, we adopt the following nomenclature for the pairing operators: the admissible test functions evaluated in the spectral plane are denoted by $\varphi(t)$ (the \emph{ground states}), while their inverse Mellin transforms acting as complex measures in the multiplicative plane are denoted by $W_\varphi(z)$ (the \emph{Mellin weights}). The umbral operators, which act in the multiplicative plane via the scaled kernels $\hat K(z) = \hat \Delta(\zeta z)$, encode the resurgent Stokes data in the spectral plane through their corresponding \emph{jump functions} $J_{\hat{\Delta}}(t;\zeta)$.

The conventions adopted in the text in relation to the Borel--Laplace summation procedure and the Mellin--Barnes analytic pairing are schematically reported in \cref{fig:naming_conventions} and \cref{fig:pairing_conventions}.

\begin{figure}[h!]
\centering
{\setlength{\tabcolsep}{3pt} 
\begin{tabular}{ccccccc}
\framebox{\parbox{2.8cm}{\centering\footnotesize
    \vspace{1mm}
    \textbf{Formal series}\\[2mm]
    $\tilde{\Delta}\in t^{-1}\mathbb C[[ t^{-1} ]]_{s}$
    \vspace{1mm}
}}
& \begin{tabular}{c} $\xrightarrow{\mathcal{B}_{1/s}}$ \end{tabular} &
\framebox{\parbox{2.8cm}{\centering\footnotesize
    \vspace{1mm}
    \textbf{Borel germ}\\[2mm]
    $\hat{\Delta}=\mathcal{B}_{1/s}\tilde{\Delta} \in \mathbb C\{u\}$
    \vspace{1mm}
}}
& \begin{tabular}{c} $\xrightarrow{\mathcal E}$ \end{tabular} &
\framebox{\parbox{2.8cm}{\centering\footnotesize
    \vspace{1mm}
    \textbf{A. c. function}\\[2mm]
    $\hat{\Delta}=\mathcal E \,\hat{\Delta}$
    \vspace{1mm}
}}
& \begin{tabular}{c} $\xrightarrow{\mathcal{L}_{1/s}^{\Theta}}$ \end{tabular} &
\framebox{\parbox{2.8cm}{\centering\footnotesize
    \vspace{1mm}
    \textbf{Laplace transform}\\[2mm]
    $\Delta=\mathcal{L}_{1/s}^{\Theta}\,\hat{\Delta}$
    \vspace{1mm}
}}
\end{tabular}}
\caption{Notations related to the Borel--Laplace summation procedure.}
\label{fig:naming_conventions}

\vspace{10mm}

\begin{tabular}{ccc}
\framebox{\parbox{5.5cm}{\centering
    \vspace{2mm}
    \textbf{Spectral Plane $\mathbb{C}_t$}\\[3mm]
    Mellin--Barnes Integral\\[3mm]
    $\displaystyle \frac{1}{2\pi i}\int_{\mathcal C_t} J_{\hat{\Delta}}(t;\zeta)\,\varphi(t)\,\mathrm d t$
    \vspace{2mm}
}}
&
\begin{tabular}{c}
    {\scriptsize Analytic Pairing $\langle \hat{\Delta}, \varphi \rangle$} \\
    $\longleftrightarrow$ \\
    {\scriptsize Topological Duality}
\end{tabular}
&
\framebox{\parbox{5.5cm}{\centering
    \vspace{2mm}
    \textbf{Multiplicative Plane $\mathbb{C}_z$}\\[3mm]
    Mellin--Parseval / Hankel\\[3mm]
    $\displaystyle \int_{\gamma} \hat{K}(z)\,W_\varphi(z)\,\mathrm d z$
    \vspace{2mm}
}}
\end{tabular}
\caption{Dual analytic representations of the umbral pairing linking the Mellin spectral space and the multiplicative Borel space.}
\label{fig:pairing_conventions}
\end{figure}

\section{Preliminaries}\label{sec:preliminaries}

In this section we summarise the definitions and main results concerning the theory of Borel--Laplace summation of divergent series of given Gevrey order that will be used in the rest of the article. Most of the theorems are reported without proofs, for which we remand the reader to the bibliography.

\subsection{The multiplicative algebra $\mathbb C[[t^{-1}]]$}\label{sec:multiplicative_algebra}

Consider the vector space of formal nonpositive power series with complex coefficients in the Laplace indeterminate $t$, denoted $\mathbb{C}[[ t^{-1} ]]$ \cite{dS16}.
This space forms an algebra under multiplication, defined coefficientwise by the usual Cauchy product \cite{cC15}, and becomes a differential algebra with the additional structure provided by the natural derivation with respect to the indeterminate, $\partial \defeq \frac{\operatorname{d}}{\operatorname{d}t}$, satisfying the usual Leibniz rule.

We define on $\mathbb{C}[[ t^{-1} ]]$ the valuation function, returning the index of the first non-zero coefficient of its argument.
\begin{definition}
	Given $\phi = \sum_{n=0}^\infty c_n t^{-n} \in \mathbb{C}[[ t^{-1} ]]$, the valuation function $\operatorname{val} : \mathbb{C}[[ t^{-1} ]] \to \mathbb N \cup \{\infty\}$ is defined by:
	\begin{dmath*}
	\operatorname{val}(\phi) \defeq {
		\min\left\{ n \in \mathbb N \,|\, c_n \neq 0\right\},
		\qquad 
		\operatorname{val}(0) \defeq \infty
	}
\end{dmath*}.
\end{definition}

\begin{remark}
Note that $\operatorname{val}(\phi) = 0$ if and only if $\phi$ has nonzero constant term  and $\operatorname{val}(\partial \phi) \geq \operatorname{val}(\phi) + 1$, i.e. $\partial$ is a valuation-increasing operator.
\end{remark}

The valuation function enables to introduce a metric structure on $\mathbb{C}[[ t^{-1} ]]$.
\begin{proposition}
	The function $\operatorname{d} : \mathbb{C}[[ t^{-1} ]] \times \mathbb{C}[[ t^{-1} ]] \to \mathbb R_{\geq 0}$ defined by
	\begin{dmath*}
		\operatorname{d}(\varphi, \chi) \defeq 2^{-\operatorname{val}(\varphi - \chi)}
		\condition{for any $\varphi,\,\chi \in \mathbb{C}[[ t^{-1} ]]$}
	\end{dmath*},
	satisfies the axioms characterising a distance function on $\mathbb{C}[[ t^{-1} ]]$ \cite{dB01}.
\end{proposition}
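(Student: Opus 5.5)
The plan is to check the metric axioms one at a time, reducing each to an elementary property of the valuation. Two conventions fix the edge cases: $2^{-\infty}\defeq 0$, and $\min$ and comparisons are extended to $\mathbb N\cup\{\infty\}$ in the obvious way. With these, $\operatorname{d}$ takes values in $\{0\}\cup\{2^{-n}:n\in\mathbb N\}\subset\mathbb R_{\ge 0}$, so it is well defined.

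\emph{Identity of indiscernibles.} We have $\operatorname{d}(\varphi,\chi)=0$ if and only if $\operatorname{val}(\varphi-\chi)=\infty$. This holds exactly when every coefficient of $\varphi-\chi$ vanishes, since otherwise the minimum defining the valuation is attained at a finite $n$. That is, $\operatorname{d}(\varphi,\chi)=0$ if and only if $\varphi=\chi$.

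\emph{Symmetry.} The series $\chi-\varphi=-(\varphi-\chi)$ has the same set of nonzero coefficient indices as $\varphi-\chi$. Hence $\operatorname{val}(\chi-\varphi)=\operatorname{val}(\varphi-\chi)$, which gives symmetry.

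\emph{Triangle inequality.} This is the only step with real content. I would prove the stronger ultrametric inequality
\[
\operatorname{d}(\varphi,\psi)\le\max\{\operatorname{d}(\varphi,\chi),\operatorname{d}(\chi,\psi)\},
\]
which implies the ordinary triangle inequality because the maximum of two nonnegative numbers is at most their sum. The key lemma is $\operatorname{val}(a+b)\ge\min\{\operatorname{val}(a),\operatorname{val}(b)\}$ for $a,b\in\mathbb C[[t^{-1}]]$. To prove it, let $m$ be this minimum; if $m=\infty$ there is nothing to prove. For $n<m$ the $n$-th coefficients of $a$ and $b$ both vanish, because addition in $\mathbb C[[t^{-1}]]$ is coefficientwise. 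So the $n$-th coefficient of $a+b$ vanishes as well, and therefore $\operatorname{val}(a+b)\ge m$.

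Now apply the lemma with $a=\varphi-\chi$ and $b=\chi-\psi$, so that $a+b=\varphi-\psi$. Since $x\mapsto 2^{-x}$ is decreasing on $\mathbb N\cup\{\infty\}$, the lower bound on $\operatorname{val}(\varphi-\psi)$ becomes the ultrametric upper bound on $\operatorname{d}(\varphi,\psi)$. The only care needed is with the infinite valuation in the conventions above; apart from that no step is a genuine obstacle.
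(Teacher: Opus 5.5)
Your proof is correct and complete. The paper gives no argument of its own and only cites the reference, and yours is the standard verification: the convention $2^{-\infty}=0$ handles $\operatorname{val}(0)=\infty$, and the triangle inequality follows from the ultrametric bound $\operatorname{val}(a+b)\ge\min\{\operatorname{val}(a),\operatorname{val}(b)\}$.
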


The topology induced by $\operatorname{d}$, known as Krull topology \cite{dS16}, transforms the algebra $\mathbb{C}[[ t^{-1} ]]$ into a complete metric space. Informally, any map $f : \mathbb{C}[[ t^{-1} ]] \to \mathbb{C}[[ t^{-1} ]]$ is continuous in the Krull topology if, for any $\phi \in \mathbb{C}[[ t^{-1} ]]$, each coefficient of $f(\phi)$ depends on a finite number of coefficients of $\phi$.

The Krull topology enables to define the notion of formal convergence.
\begin{proposition}[Cauchy sequences]\label{prop:Cauchy_sequences}
	The sequence of formal series $\left\{\phi_p\right\}_{p=0}^\infty\,$, with $\phi_p = \sum_{n\geq 0} c_n^{(p)} t^{-n-1} \in \mathbb C[[ t^{-1} ]]$, is Cauchy and converges formally to $\phi = \sum_{n\geq 0} c_n t^{-n-1} \in \mathbb C[[ t^{-1} ]]$	if and only if, for each $n \in \mathbb{N}$, there is an integer $\mu(n)$ such that the tail subsequence of $n$-th coefficients $\left\{c^{(p)}_n\right\}_{p=\mu(n)}^\infty$ is constant, i.e. $c^{(p)}_n = c_n$ for $p\geq \mu(n)$.
\end{proposition}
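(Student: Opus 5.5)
The plan is to translate the Cauchy condition for $\operatorname{d}$ into a statement about coefficients, using the ultrametric structure of the distance. The basic observation is that $\operatorname{val}(\phi_p-\phi_q)>N$ holds exactly when all coefficients of $\phi_p-\phi_q$ of index $\le N$ vanish. With the indexing of the statement, $\phi_p=\sum_{n\ge0} c_n^{(p)} t^{-n-1}$, this means $c_n^{(p)}=c_n^{(q)}$ for all $n\le N-1$. Since $2^{-\operatorname{val}(\cdot)}<\varepsilon$ is equivalent to $\operatorname{val}(\cdot)>\log_2(1/\varepsilon)$, every metric statement becomes a statement about agreement of finitely many initial coefficients.

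\emph{Necessity.} Assume $\{\phi_p\}$ is Cauchy. For each $n$, take $\varepsilon=2^{-(n+1)}$. The Cauchy property gives an index $\mu(n)$ such that for $p,q\ge\mu(n)$ we have $\operatorname{val}(\phi_p-\phi_q)>n+1$. Hence $c_m^{(p)}=c_m^{(q)}$ for all $m\le n$, and in particular the $n$-th coefficients are eventually constant. Call this common value $c_n$ and define $\phi=\sum_{n\ge0} c_n t^{-n-1}$. To see that $\phi_p\to\phi$, fix $N$ and set $M=\max_{n\le N}\mu(n)$. For $p\ge M$, the coefficients of $\phi_p-\phi$ of index $\le N$ all vanish. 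Therefore $\operatorname{d}(\phi_p,\phi)\le 2^{-(N+1)}$, which gives convergence. The same argument shows that the Krull metric space is complete.

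\emph{Sufficiency.} Assume each coefficient sequence is eventually constant, equal to $c_n$ for $p\ge\mu(n)$. Given $\varepsilon>0$, pick $N$ with $2^{-N}<\varepsilon$ and let $M=\max_{n\le N}\mu(n)$. This is a finite maximum, and that finiteness is the only point that needs care. For $p,q\ge M$, the coefficients of $\phi_p-\phi_q$ and of $\phi_p-\phi$ vanish up to index $N$. So both $\operatorname{d}(\phi_p,\phi_q)$ and $\operatorname{d}(\phi_p,\phi)$ are at most $2^{-N}<\varepsilon$. This proves that the sequence is Cauchy and converges to $\phi$.

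I do not expect a genuine obstacle. The one step needing attention is the pointwise-to-uniform passage in both directions: a separate $\mu(n)$ for each coefficient must become a single index controlling the first $N$ coefficients at once. This works because the distance only looks at finitely many initial coefficients at each scale, so a maximum over finitely many indices suffices. Finally, the limit is unique because $\operatorname{d}$ is a metric, by the preceding proposition.
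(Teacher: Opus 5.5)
Your argument is correct. The paper does not actually prove this proposition: it sits in the preliminaries, which the author states without proof and defers to the cited literature. What you wrote is the standard coefficientwise argument. Its whole content is the equivalence $\operatorname{d}(\phi_p,\phi_q)<2^{-(N+1)}$ if and only if $c_n^{(p)}=c_n^{(q)}$ for all $n\le N$. The finite maximum $\max_{n\le N}\mu(n)$ is exactly what turns per-coefficient stabilisation into metric control.

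One small streamlining is available in the forward direction. There the hypothesis already includes convergence to the given $\phi$, so $c_n^{(p)}=c_n$ for large $p$ can be read off directly from $\operatorname{d}(\phi_p,\phi)<2^{-(n+1)}$. Your detour through the Cauchy property plus uniqueness of limits is also valid, and it has the side benefit of proving completeness of the Krull metric.
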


Prop.~\ref{prop:Cauchy_sequences} can be used to establish a criterion of convergence for a series of formal series, identified with the sequence of its partial sums.

\begin{proposition}[Formal convergence of a series of formal series]
	Given the series $\sum_{p\geq 0} \phi_p\,$, with $\phi_p = \sum_{n\geq 0} c_n^{(p)} t^{-n-1} \in \mathbb C[[ t^{-1} ]]$,  if there exists a sequence of integers $\left\{\nu_p\right\}_{p=0}^\infty\,$ satisfying the condition $\lim_{p \to \infty} \nu_p = \infty$ such that $\phi_p \in t^{-\nu_p}\mathbb C[[ t^{-1} ]]$, i.e. $c_n^{(p)} = 0$ for $0 \leq n < \nu_p$, then the series converges formally to $\phi = \sum_{n\geq 0} c_n t^{-n-1} \in \mathbb C[[ t^{-1} ]]$ with coefficients given by
	\begin{dmath*}
		c_n =
		\sum_{p \in M_n} c^{(p)}_n \condition*{M_n = \{p \,|\, \nu_p \leq n\}}
	\end{dmath*}.
\end{proposition}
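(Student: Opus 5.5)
The plan is to reduce the statement to the Cauchy criterion of Prop.~\ref{prop:Cauchy_sequences}, applied to the sequence of partial sums $S_P \defeq \sum_{p=0}^{P} \phi_p$. Writing $S_P = \sum_{n\ge 0} s_n^{(P)} t^{-n-1}$, the coefficients are finite sums, $s_n^{(P)} = \sum_{p=0}^{P} c_n^{(p)}$. By the hypothesis $c_n^{(p)} = 0$ whenever $n < \nu_p$, we may discard the vanishing terms and write $s_n^{(P)} = \sum_{p\le P,\ \nu_p\le n} c_n^{(p)}$.

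The key step is to show that $M_n = \{p \mid \nu_p \le n\}$ is finite for every $n$. This is exactly where $\lim_{p\to\infty}\nu_p = \infty$ enters: taking the threshold $n+1$, there exists $\mu(n)$ such that $\nu_p > n$ for all $p > \mu(n)$. Hence $M_n \subseteq \{0,\dots,\mu(n)\}$.

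It follows that, for every $P \ge \mu(n)$, the $n$-th coefficient $s_n^{(P)}$ equals $\sum_{p\in M_n} c_n^{(p)} = c_n$. The tail subsequence $\{s_n^{(P)}\}_{P\ge\mu(n)}$ is therefore constant. By Prop.~\ref{prop:Cauchy_sequences}, $\{S_P\}$ is Cauchy in the Krull topology and converges formally to $\phi = \sum_n c_n t^{-n-1}$ with the stated coefficients. As a cross-check, one can verify this directly with the distance $\operatorname{d}$: for $P > \mu(n)$ we have $S_P - \phi \in t^{-n-1}\mathbb C[[t^{-1}]]$, so $\operatorname{d}(S_P,\phi) \le 2^{-n}$.

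There is no real obstacle here. The only point requiring care is the finiteness of $M_n$, which makes the formula for $c_n$ a finite sum and therefore well defined. Divergence of $\nu_p$ guarantees this, while mere unboundedness of $\nu_p$ would not. A minor bookkeeping issue is the indexing convention: the coefficients $c_n^{(p)}$ multiply $t^{-n-1}$, whereas the condition is stated as $\phi_p \in t^{-\nu_p}\mathbb C[[t^{-1}]]$. I will adopt the explicit form $c_n^{(p)} = 0$ for $n < \nu_p$ given in the statement, since any index shift does not affect the argument.
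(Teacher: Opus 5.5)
Your proposal is correct and follows the route the paper indicates: the paper gives no separate proof, saying only that Prop.~\ref{prop:Cauchy_sequences} applied to the sequence of partial sums yields the criterion. Your argument supplies exactly that deduction, namely that $\nu_p\to\infty$ makes each $M_n$ finite, so every coefficient of $S_P$ is eventually constant, and you also correctly handle the off-by-one between $\phi_p\in t^{-\nu_p}\mathbb C[[t^{-1}]]$ and the explicit condition $c_n^{(p)}=0$ for $n<\nu_p$.
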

	
\begin{remark}
	It is important to stress that formal convergence and analytic convergence are totally unrelated concepts. Formal convergence has to do with rules ensuring that the result of certain manipulations of formal series is itself a well-formed formal series.
\end{remark}

In the following, we will be mainly concerned with the subalgebra of formal series without constant term, which is a maximal ideal of $\mathbb{C}[[ t^{-1} ]]$. In view of following developments, we reserve to this subalgebra a special denomination and distinguish its elements with a superimposed tilde.

\begin{definition}[Laplace space]
	We call Laplace space the subalgebra of $\mathbb{C}[[ t^{-1} ]]$ formed by formal series without constant term, namely:
	\begin{dmath*}
		t^{-1}\mathbb{C}[[ t^{-1} ]] \defeq {
		\left\{\tilde{\phi}(t) = t^{-1}\sum_{n = 0}^\infty c_n t^{-n} \,\middle|\, c_n \in \mathbb{C}\right\}
		}
	\end{dmath*}.
	Accordingly, we refer to $t$ as the Laplace indeterminate.
\end{definition}

\begin{remark}
	The Laplace space $t^{-1}\mathbb{C}[[t^{-1}]]$ is a non-unital multiplicative algebra under the Cauchy product.  
\end{remark}

\subsubsection{Gevrey classification of elements of $t^{-1}\mathbb{C}[[ t^{-1} ]]$}

The Gevrey classification \cite{aD15, bP16} is of fundamental importance for the considerations that will be made in the following. Although the notion of Gevrey order applies to any formal series, we specialise the definitions to the Laplace space $t^{-1}\mathbb{C}[[t^{-1}]]$.
 
 \begin{definition}[Gevrey order]
 	A formal series $\tilde{\phi} = \sum_{n\geq 0} c_n t^{-n-1} \in t^{-1}\mathbb C[[ t^{-1} ]]$ is Gevrey of order $s\geq 0$ ($s$-Gevrey for short) if some constants $C, A >0$ exist such that
	\begin{dmath}\label{eq:s-Gevrey_condition}
		| c_n | \leq C A^n (n!)^s\condition{for any $n\in \mathbb{N}$}
	\end{dmath}.
\end{definition}

\begin{remark}
	It follows directly from the definition that $\sum_{n\geq 0}c_n t^{-n-1}$ is $s$-Gevrey if and only if the series $\sum_{n\geq 0} c_n t^{-n-1}/(n!)^s$ is convergent in the analytic sense.
\end{remark}

\begin{proposition}[Nesting property]
	For each $s\geq 0$, $s$-Gevrey formal series in $t^{-1}\mathbb C[[ t^{-1} ]]$ form a subalgebra, denoted $t^{-1}\mathbb C[[ t^{-1} ]]_s$. 
	
	\noindent For any $s, s'$ satisfying $0 < s < s' < \infty$ the following chain of algebra inclusions holds:
	\begin{dmath}\label{eq:gevrey_nested}
		t^{-1}\mathbb{C}\{t^{-1}\} \equiv {
		t^{-1}\mathbb{C}[[ t^{-1} ]]_0 \subset 
		t^{-1}\mathbb{C}[[ t^{-1} ]]_s \subset 
		t^{-1}\mathbb{C}[[ t^{-1} ]]_{s'} \subset
		t^{-1}\mathbb{C}[[ t^{-1} ]]_\infty \equiv
		t^{-1}\mathbb{C}[[ t^{-1} ]]
		}
	\end{dmath}.
\end{proposition}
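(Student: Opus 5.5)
We prove the two assertions of the Nesting property in turn. First we show that each $t^{-1}\mathbb C[[t^{-1}]]_s$ is a subalgebra. Then we show the chain of inclusions \eqref{eq:gevrey_nested}.

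\textbf{Linear subspace.} Let $\tilde\phi=\sum c_n t^{-n-1}$ and $\tilde\chi=\sum d_n t^{-n-1}$ satisfy \eqref{eq:s-Gevrey_condition} with constants $(C_1,A_1)$ and $(C_2,A_2)$. For scalars $\alpha,\beta$, the series $\alpha\tilde\phi+\beta\tilde\chi$ satisfies \eqref{eq:s-Gevrey_condition} with $C=|\alpha|C_1+|\beta|C_2$ and $A=\max(A_1,A_2)$.

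\textbf{Closure under the Cauchy product.} This is the main step. Since both factors start at $t^{-1}$, the product is $\tilde\phi\tilde\chi=\sum_{n\ge0} e_n t^{-n-2}$ with $e_n=\sum_{k=0}^n c_k d_{n-k}$. Rewritten in the form $\sum_m e'_m t^{-m-1}$, we have $e'_0=0$ and $e'_{m}=e_{m-1}$ for $m\ge1$. Setting $A=\max(A_1,A_2)$, we get
\begin{equation*}
|e_n|\le C_1C_2A^n\sum_{k=0}^n (k!\,(n-k)!)^s .
\end{equation*}
The key inequality is $k!\,(n-k)!\le n!$, which holds because $\binom nk\ge1$. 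With it, $|e_n|\le C_1C_2(n+1)A^n(n!)^s$. The factor $n+1$ is absorbed into the geometric term, since $n+1\le 2^n$. Finally, $e'_{m}=e_{m-1}$ satisfies $|e'_m|\le C_1C_2(2A)^{m-1}((m-1)!)^s\le C_1C_2(2A)^m(m!)^s$. Hence the product is $s$-Gevrey, and the set $t^{-1}\mathbb C[[t^{-1}]]_s$ is a (non-unital) subalgebra.

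\textbf{Inclusions.} Take $0\le s<s'$. Since $n!\ge1$, we have $(n!)^s\le(n!)^{s'}$, so every bound of the form \eqref{eq:s-Gevrey_condition} at order $s$ is also a bound at order $s'$. This gives $t^{-1}\mathbb C[[t^{-1}]]_s\subset t^{-1}\mathbb C[[t^{-1}]]_{s'}$. The inclusion is strict: the series with $c_n=(n!)^{s'}$ lies in the larger class, but $(n!)^{s'-s}$ grows faster than any $CA^n$, so it does not lie in the smaller one. The two endpoints are identifications.
\begin{itemize}
\item For $s=0$, the condition $|c_n|\le CA^n$ is the Cauchy--Hadamard criterion for a positive radius of convergence in $t^{-1}$. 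Hence the $0$-Gevrey series are exactly $t^{-1}\mathbb C\{t^{-1}\}$.
\item For $s=\infty$, the space is read as the union over all $s$ (or as no constraint at all), which is the whole of $t^{-1}\mathbb C[[t^{-1}]]$.
\end{itemize}
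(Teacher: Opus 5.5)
The paper states this proposition without proof, deferring it to the cited literature, so there is nothing to compare your argument against. Your proof is the standard one and is essentially correct, but two points need repair.

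\textbf{The product step.} The last inequality, $(2A)^{m-1}((m-1)!)^s\le (2A)^m (m!)^s$, fails at $m=1$ whenever $2A<1$. This is harmless and easily fixed in either of two ways:
\begin{itemize}
\item Since enlarging $A$ preserves a Gevrey bound, you may assume $A\ge 1$ from the outset.
\item Alternatively, absorb the index shift into the constant, using $|e'_m|=|e_{m-1}|\le \frac{C_1C_2}{2A}\,(2A)^m\,(m!)^s$, which holds because $((m-1)!)^s\le (m!)^s$ for $s\ge 0$.
\end{itemize}

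\textbf{The endpoint $s=\infty$.} You treat two readings as interchangeable, but they are not equivalent. The union $\bigcup_{s<\infty} t^{-1}\mathbb{C}[[t^{-1}]]_s$ is a proper subalgebra of $t^{-1}\mathbb{C}[[t^{-1}]]$. For example, $\sum_{n} (n!)^n\, t^{-n-1}$ and $\sum_n 2^{2^n} t^{-n-1}$ are not $s$-Gevrey for any finite $s$. Under the union reading, the identification $t^{-1}\mathbb{C}[[t^{-1}]]_\infty\equiv t^{-1}\mathbb{C}[[t^{-1}]]$ in the statement would therefore be false. It has to be understood as a notational convention: order $\infty$ means no growth constraint at all. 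Under that convention the last inclusion is trivial, so keep only that reading and drop the union.

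The remaining parts are fine: closure under linear combinations, the bound $k!(n-k)!\le n!$, the monotonicity $(n!)^s\le (n!)^{s'}$, and the Cauchy--Hadamard identification at $s=0$. Your strictness example for finite $s<s'$ is also correct; the statement does not require it.
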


\begin{remark}
This chain defines a natural filtration of $t^{-1}\mathbb C[[ t^{-1} ]]$ by increasing divergence order: the larger the Gevrey order, the faster the factorial growth of the coefficients.
\end{remark}

\begin{remark}
	It follows from the nesting property that any $s$-Gevrey series is also $s'$-Gevrey for any $s'>s$. A formal series is said to be exactly $s$-Gevrey if $s$ is the lowest value for which \cref{eq:s-Gevrey_condition} is satisfied.	
\end{remark}

\begin{remark}
The Gevrey order $s=0$ identifies the subalgebra of analytically convergent series, denoted with the special symbol $t^{-1}\mathbb{C}\{t^{-1}\}$. It is sometimes convenient to extend the definition of Gevrey order to include negative values of $s$.  For $s < 0$, the condition  \cref{eq:s-Gevrey_condition} is understood as a decay estimate $|c_n| \leq C A^n/(n! )^{|s|}$.
\end{remark}

\subsection{Formal Borel transform}\label{sec:formal_Borel_transform}

\begin{definition}[Formal Borel transform]\label{def:Borel_transform}
	The formal Borel transform of $\tilde{\phi} = \sum_{n\geq 0}c_n\,t^{-n-1}\in t^{-1}\mathbb{C}[[ t^{-1} ]]$ is the formal series $\hat{\phi}$ in the indeterminate $u$ defined by:
	\begin{dmath}\label{eq:Borel_transform}
		\hat{\phi}(u) = {
			(\mathcal{B}\,\tilde{\phi})(u) \defeq \sum_{n=0}^\infty \frac{c_n}{n!} \,u^n
		}
	\end{dmath}.
\end{definition}

\Cref{eq:Borel_transform} can be interpreted as the action on $\tilde{\phi}$ of the  Borel linear operator $\mathcal B : t^{-1}\mathbb{C}[[ t^{-1} ]] \to \mathbb{C}[[ u ]]$.

\begin{proposition}
	The linear operator $\mathcal B : t^{-1}\mathbb{C}[[ t^{-1} ]] \to \mathbb{C}[[ u ]]$ realises a vector space isomorphism. Its inverse is defined by:
	\begin{dmath}\label{eq:Borel_inverse_transform}
		\tilde{\phi}(t) = {
			(\mathcal{B}^{-1} \hat{\phi})(t) \defeq \sum_{n=0}^\infty c_n\,n! \,t^{-n-1},
			\qquad \text{for any }\,
			\hat{\phi} = \sum_{n=0}^\infty c_n u^n \in \mathbb{C}[[ u ]]
		}
	\end{dmath}.
\end{proposition}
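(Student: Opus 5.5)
The plan is to show directly that $\mathcal B$ is linear and that the proposed $\mathcal B^{-1}$ is a two-sided inverse of it. Both maps act coefficientwise, so no convergence issue arises: $\mathbb C[[u]]$ is the space of all formal power series, with no growth condition imposed.

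\textbf{Linearity.} Linearity of $\mathcal B$ follows from the coefficientwise definition in \cref{eq:Borel_transform}. If $\tilde\phi=\sum_n c_n t^{-n-1}$ and $\tilde\chi=\sum_n d_n t^{-n-1}$, then the $n$-th coefficient of $\mathcal B(\alpha\tilde\phi+\beta\tilde\chi)$ is $(\alpha c_n+\beta d_n)/n!$, which equals the $n$-th coefficient of $\alpha\mathcal B\tilde\phi+\beta\mathcal B\tilde\chi$. The same argument shows that the map defined by \cref{eq:Borel_inverse_transform} is linear from $\mathbb C[[u]]$ to $t^{-1}\mathbb C[[t^{-1}]]$. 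Its image does lie in the Laplace space, because every term carries at least one factor $t^{-1}$.

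\textbf{Two-sided inverse.} I then compose the two maps in both orders. Starting from $\tilde\phi=\sum c_n t^{-n-1}$, applying $\mathcal B$ gives coefficients $c_n/n!$ in front of $u^n$. Applying \cref{eq:Borel_inverse_transform} then multiplies by $n!$ and reattaches $t^{-n-1}$, which returns $c_n t^{-n-1}$. Hence $\mathcal B^{-1}\mathcal B=\mathrm{id}$ on $t^{-1}\mathbb C[[t^{-1}]]$. Conversely, starting from $\hat\phi=\sum c_n u^n$, applying \cref{eq:Borel_inverse_transform} gives $\sum c_n n!\,t^{-n-1}$. Applying $\mathcal B$ divides the $n$-th coefficient by $n!$, which yields $\sum c_n u^n$ again. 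So $\mathcal B\mathcal B^{-1}=\mathrm{id}$ on $\mathbb C[[u]]$.

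A linear map with a two-sided linear inverse is a vector space isomorphism, which proves the statement. Equivalently, both spaces are identified with $\mathbb C^{\mathbb N}$ through their coefficient sequences, and under this identification $\mathcal B$ is the diagonal map with nonzero entries $1/n!$.

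\textbf{Main obstacle.} No step here is a genuine obstacle. The only point needing care is the index bookkeeping: the monomial $t^{-n-1}$ corresponds to $u^n$, and the absence of a constant term in the Laplace space is exactly what makes this shift bijective onto all of $\mathbb C[[u]]$. As a side remark, $\mathcal B$ is also continuous for the Krull topologies on both sides, since each output coefficient depends on a single input coefficient. This is not needed for the statement.
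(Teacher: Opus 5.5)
Your proof is correct: checking coefficientwise that $\mathcal B$ and the proposed inverse are mutually inverse linear maps (a diagonal map with nonzero entries $1/n!$ on coefficient sequences, together with the index shift $t^{-n-1}\leftrightarrow u^n$) is all the statement requires. The paper gives no proof of this proposition and lists it among recalled preliminaries. The explicit inverse formula it records is exactly the one you verify, so your argument is the natural one.
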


\subsection{The convolutive algebra $\mathbb C[[u]]$}\label{sec:convolutive_algebra}

The isomorphism realised by the formal Borel transform enables to establish a natural duality between the Laplace space $t^{-1}\mathbb{C}[[ t^{-1} ]]$ and $\mathbb C[[u]]$, the space of formal series in the Borel indeterminate $u$. In the following we better clarify the nature of this duality relationship.

\begin{definition}[Borel space]
	We call Borel space the image under the formal Borel transform of the Laplace space $t^{-1}\mathbb{C}[[ t^{-1} ]]$, i.e. the vector space of formal nonnegative power series with complex coefficients in the Borel indeterminate $u$, denoted $\mathbb C[[u]]$:
	\begin{dmath*}
		\mathbb{C}[[ u ]] \defeq {
		\left\{\hat{\phi}(u) = \sum_{n = 0}^\infty \frac{c_n}{n!} u^{n} \,\middle|\, c_n \in \mathbb{C}\right\}
		}
	\end{dmath*}.
\end{definition}

 We assume for this space the same metric and topological structures already defined coefficientwise for $\mathbb{C}[[ t^{-1} ]]$. On the contrary, it is convenient to introduce in $\mathbb{C}[[ u ]]$ a different algebraic product, defined as the pushforward by $\mathcal B$ of the Cauchy product.

\begin{definition}[Convolution product]
	Given $\hat{\phi}, \, \hat{\chi} \in \mathbb C[[u]]$	, their convolution product is defined by
	\begin{dmath*}
		\hat{\phi} * \hat{\chi} \defeq \mathcal B \left((\mathcal B^{-1} \, \hat{\phi}) \, (\mathcal B^{-1} \, \hat{\chi})\right)
	\end{dmath*}.
	Coefficientwise, if $\hat{\phi} = \sum_{n\geq 0} a_n u^n /n!$ and $\hat{\chi} = \sum_{n\geq 0} b_n u^n /n!$, then:
	\begin{dmath*}
		\hat{\phi} * \hat{\chi} = {
			\sum_{n\geq 0} \frac{c_n}{(n+1)!}\,u^{n+1},
			\qquad \text{where }\;
			c_n = \sum_{m=0}^n a_m b_{n -m}
		}
	\end{dmath*}.
\end{definition}

\begin{remark}
	The Borel space $\mathbb C[[u]]$ is a non-unital convolutive algebra under the the convolution product.
\end{remark}

\begin{remark}
	The convolution product in $C[[u]]$ is bilinear, commutative and associative. These properties are derived from the corresponding properties of the Cauchy product restricted to $t^{-1}\mathbb{C}[[ t^{-1} ]]$.	Since the latter has no unit element, the convolution product has no unit element either, hence $C[[u]]$ is non-unital.
\end{remark}

\vskip 0.3cm

In the present work we will be mainly concerned with the 0-Gevrey and 1-Gevrey subalgebras of the Laplace space $t^{-1}\mathbb C[[ t^{-1} ]]$ and with their images under Borel transform. The following theorems clarify that, in this case, the Borel space reduces to $\mathbb C\{ u \}$.

\begin{theorem}\label{prop:0-Gevrey_to_ent_exp_type}
	Let $\tilde{\phi} \in t^{-1}\mathbb C[[ t^{-1} ]]$. Then $\tilde{\phi} \in t^{-1}\mathbb C\{ t^{-1} \}$, i.e. it is 0-Gevrey hence convergent, if and only if its formal Borel transform $\hat{\phi} = \mathcal B\, \tilde{\phi} \in \mathbb C\{u\}$ and converges to an entire function of exponential type $\tau$, i.e. there exists constants $A>0$ and $\tau \in \mathbb R$ such that $\abs{\hat{\phi}(u)} \leq A \ee^{\tau\abs{u}}$ for $\abs{u} \to \infty$. 
\end{theorem}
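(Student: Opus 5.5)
The plan is to prove both directions using only the coefficient characterisation of growth: Cauchy estimates in one direction and termwise majorisation in the other. Write $\tilde{\phi} = \sum_{n\geq 0} c_n t^{-n-1}$, so that $\hat{\phi}(u) = \sum_{n\geq 0} c_n u^n/n!$ by \cref{def:Borel_transform}.

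\emph{Forward direction.} Assume $\tilde{\phi}$ is $0$-Gevrey. By \cref{eq:s-Gevrey_condition} with $s=0$ there are constants $C,A>0$ with $\abs{c_n}\le C A^n$ for all $n$. Then the Borel series is dominated termwise:
\begin{equation*}
\abs{\hat{\phi}(u)} \le \sum_{n\ge0} \frac{C A^n \abs{u}^n}{n!} = C\,\ee^{A\abs{u}} .
\end{equation*}
Hence the series converges on all of $\mathbb C$, so $\hat{\phi}$ is entire. The same display gives the exponential-type bound with constant $C$ and type $\tau=A$.

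\emph{Converse direction.} Assume $\hat{\phi}$ is entire and $\abs{\hat{\phi}(u)}\le A\ee^{\tau\abs{u}}$ for $\abs{u}\ge R_0$. Since $\hat{\phi}$ is continuous on the compact disc $\overline{\mathbb D}(R_0)$, after enlarging $A$ we may assume the bound holds for all $u$. We may also replace $\tau$ by $\max(\tau,1)>0$. Cauchy's estimate on the circle $\abs{u}=r$ then gives
\begin{equation*}
\frac{\abs{c_n}}{n!} \le \frac{A\ee^{\tau r}}{r^n}\qquad\text{for every } r>0 .
\end{equation*}
We optimise by choosing $r=n/\tau$ (using any fixed $r$ when $n=0$). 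This yields $\abs{c_n}\le A\,n!\,(\ee\tau/n)^n$. Stirling's bound $n!\le \ee\sqrt{n}\,(n/\ee)^n$ then gives $\abs{c_n}\le A\ee\sqrt{n}\,\tau^n \le C (\ee\tau)^n$, say. This is exactly the $0$-Gevrey condition \cref{eq:s-Gevrey_condition}, so $\tilde{\phi}\in t^{-1}\mathbb C\{t^{-1}\}$ by the nesting statement \cref{eq:gevrey_nested}.

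\emph{Expected obstacle.} The only delicate point is the converse step. One must extend the asymptotic bound to a global one, handle a possibly nonpositive $\tau$ by enlarging it, and carry out the Cauchy-estimate optimisation together with Stirling. Everything else is routine termwise majorisation. As a by-product, the radius of convergence of $\tilde{\phi}$ in $t^{-1}$ is controlled by the exponential type, roughly $1/\tau$ up to the Stirling constant.
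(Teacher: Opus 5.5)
Your proof is correct and complete in both directions. The paper does not prove this theorem: it is one of the preliminary results quoted from the resurgence literature without proof. Your argument is the standard proof found there:
\begin{itemize}
\item for the forward implication, termwise majorisation $\abs{\hat{\phi}(u)}\le C\ee^{A\abs{u}}$;
\item for the converse, extending the bound to a global one on a compact disc, then Cauchy estimates on the circle $\abs{u}=n/\tau$, then the inequality $n!\le \ee\sqrt{n}\,(n/\ee)^n$.
\end{itemize}

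One small sharpening of your closing remark. The factor $\sqrt{n}$ is subexponential, so your estimate $\abs{c_n}\le A\ee\sqrt{n}\,\tau^n$ already gives $\limsup_{n}\abs{c_n}^{1/n}\le\tau$. Hence $\tilde{\phi}$ converges for $\abs{t}>\tau$, with no loss of a Stirling constant. Enlarging $\tau$ is needed only to make the radius $r=n/\tau$ meaningful, and $\max(\tau,\varepsilon)$ for any $\varepsilon>0$ works as well as $\max(\tau,1)$.
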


\begin{theorem}\label{thr:1-Gevrey_to_convergent}
	Let $\tilde{\phi} \in t^{-1}\mathbb C[[ t^{-1} ]]$. Then $\tilde{\phi} \in t^{-1}\mathbb C[[ t^{-1} ]]_1$, i.e. it is 1-Gevrey, if and only if its formal Borel transform is convergent, i.e. $\hat{\phi} = \mathcal B\, \tilde{\phi} \in \mathbb C\{u\}$. 
\end{theorem}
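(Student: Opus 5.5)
The plan is to reduce the equivalence to the Cauchy--Hadamard formula, comparing the coefficients of $\tilde{\phi}$ with those of $\hat{\phi}$ through the factor $n!$. Write $\tilde{\phi} = \sum_{n\geq 0} c_n t^{-n-1}$. By \cref{def:Borel_transform}, $\hat{\phi}(u) = \sum_{n\geq 0} (c_n/n!)\,u^n$. Saying that $\hat{\phi} \in \mathbb C\{u\}$ means this series has a positive radius of convergence $R$, where
\begin{dmath*}
R^{-1} = \limsup_{n\to\infty} \abs{c_n/n!}^{1/n}.
\end{dmath*}
So both directions amount to translating the Gevrey condition \cref{eq:s-Gevrey_condition} with $s=1$ into a geometric bound on $c_n/n!$, and back.

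\textbf{(Only if.)} Assume $\tilde{\phi}$ is $1$-Gevrey, so $\abs{c_n} \leq C A^n n!$ for all $n$. Dividing by $n!$ gives $\abs{c_n/n!} \leq C A^n$. The Borel series is therefore dominated termwise by the geometric series $C\sum_n (A\abs{u})^n$, which converges for $\abs{u} < 1/A$. Hence $\hat{\phi}$ converges on $\mathbb D(1/A)$, and in particular $R \geq 1/A > 0$.

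\textbf{(If.)} Assume $\hat{\phi}$ has radius of convergence $R > 0$, and fix any $0 < r < R$. The terms $(c_n/n!)\,r^n$ of the series at the point $u = r$ tend to zero, so they are bounded: $\abs{c_n/n!}\,r^n \leq C$ for some $C>0$ and all $n$. Equivalently, $\abs{c_n} \leq C\, r^{-n}\, n!$. This is exactly \cref{eq:s-Gevrey_condition} with $s=1$ and $A = 1/r$, so $\tilde{\phi} \in t^{-1}\mathbb C[[t^{-1}]]_1$.

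There is no genuine obstacle; the only point requiring care is the bookkeeping of the index shift. The coefficient $c_n$ multiplies $t^{-n-1}$ but $u^n$, and the Borel transform divides by $n!$ rather than $(n+1)!$. This shift affects only the constants $C$ and $A$, which the Gevrey definition leaves free, so the argument is unaffected. One may also note the quantitative form of the correspondence: the optimal Gevrey constant satisfies $\inf A = 1/R$.
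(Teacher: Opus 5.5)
Your proof is correct and complete. The $1$-Gevrey bound gives termwise domination of $\hat{\phi}$ by a geometric series, and conversely, boundedness of the terms $(c_n/n!)\,r^n$ for any $0<r<R$ gives the bound with $A=1/r$.

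The paper gives no proof of this theorem; it is one of the preliminary results deferred to the literature. It is, however, exactly the case $s=1$ of the paper's remark that $\sum c_n t^{-n-1}$ is $s$-Gevrey if and only if $\sum c_n t^{-n-1}/(n!)^s$ converges, and your argument is the standard unpacking of that remark.

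Your index-shift caveat is unnecessary. With the paper's conventions, $c_n$ is the coefficient of $t^{-n-1}$, and it is divided by exactly $n!$ both in the Gevrey bound and in the Borel transform, so the indices already match.
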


\begin{remark}
	It is important to stress that theor.~\ref{thr:1-Gevrey_to_convergent} does not exclude the possibility that the Borel transform of a divergent formal series $\tilde{\phi} \in t^{-1}\mathbb C[[ t^{-1} ]]_1$ converges to an entire function in the Borel plane. Such function, though, cannot be of exponential type in all directions in the Borel plane, since in this case $\tilde{\phi}$ would be convergent as a consequence of theor.~\ref{prop:0-Gevrey_to_ent_exp_type}, hence exactly 0-Gevrey, thus contradicting the initial assuption that $\tilde{\phi} \in t^{-1}\mathbb C[[ t^{-1} ]]_1$.
\end{remark}
	
\begin{remark}
	Theor.~\ref{thr:1-Gevrey_to_convergent} admits an immediate extension to formal series in the Laplace space of any Gevrey order. In particular, for any $s>0$, $\tilde{\phi} \in t^{-1}\mathbb C[[ t^{-1} ]]_s$ if and only if $\hat{\phi} = \mathcal B_{1/s}\, \tilde{\phi} \in \mathbb C\{u\}$.  	
\end{remark}

The convolution product of convergent formal series in the Borel indeterminate admits an analytic realisation. Note that, as specified in the introduction, we use the same symbol for a convergent series and the function it converges to.

\begin{lemma}
	Let $\hat{\phi},\,\hat{\chi} \in \mathbb C\{u\}$ and let $R>0$ be smaller then their respective radii of convergence. Then the function:
	\begin{dmath*}
		(\hat{\phi} * \hat{\chi})(u) = \int_0^u \mathrm d v \,\hat{\phi}(v) \,\hat{\chi}(u - v)
	\end{dmath*}
	is holomorphic in $\mathbb D(R)=\left\{u \in \mathbb C_u \,|\, \abs{u} < R\right\}$ and coincides there with the sum of the series $\hat{\phi} * \hat{\chi} = \sum_{n\geq 0} \frac{c_n}{(n+1)!}\,u^{n+1}$.
\end{lemma}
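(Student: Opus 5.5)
The plan is to expand both factors in their power series and integrate term by term along the straight segment $[0,u]$. Write $\hat{\phi}(v)=\sum_{m\ge0} a_m v^m/m!$ and $\hat{\chi}(w)=\sum_{k\ge0} b_k w^k/k!$. Both series converge absolutely and uniformly on the closed disc $\overline{\mathbb D(R')}$ for any $R<R'$ smaller than the two radii of convergence. Fix $u\in\mathbb D(R)$ and parametrise $v=\theta u$ with $\theta\in[0,1]$. Then $v$ and $u-v=(1-\theta)u$ both lie in $\overline{\mathbb D(\abs{u})}\subset\mathbb D(R)$. Consequently the double series
\[
\hat{\phi}(v)\,\hat{\chi}(u-v)=\sum_{m,k\ge0}\frac{a_m b_k}{m!\,k!}\,v^m(u-v)^k
\]
converges absolutely and uniformly in $\theta\in[0,1]$. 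It is dominated by $\bigl(\sum\abs{a_m}\abs{u}^m/m!\bigr)\bigl(\sum\abs{b_k}\abs{u}^k/k!\bigr)<\infty$, so summation and integration can be interchanged.

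For each term I will use the Beta integral
\[
\int_0^u v^m(u-v)^k\,\mathrm dv = u^{m+k+1}\int_0^1\theta^m(1-\theta)^k\,\mathrm d\theta=\frac{m!\,k!}{(m+k+1)!}\,u^{m+k+1}.
\]
This gives
\[
\int_0^u \hat{\phi}(v)\hat{\chi}(u-v)\,\mathrm dv=\sum_{m,k}\frac{a_m b_k}{(m+k+1)!}\,u^{m+k+1}.
\]
Regrouping by $n=m+k$, which is allowed by absolute convergence, yields $\sum_{n\ge0} c_n u^{n+1}/(n+1)!$ with $c_n=\sum_{m=0}^n a_m b_{n-m}$. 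This is exactly the series defining $\hat{\phi}*\hat{\chi}$. The same majorant shows the resulting power series converges on all of $\mathbb D(R)$, because its coefficients are bounded in absolute value by those of $u\,\hat{\phi}_{+}(u)\hat{\chi}_{+}(u)$, where $\hat{\phi}_{+}$ and $\hat{\chi}_{+}$ denote the series with coefficients $\abs{a_m}/m!$ and $\abs{b_k}/k!$.

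Holomorphy in $\mathbb D(R)$ then follows because the integral equals a power series convergent there. Alternatively, one can differentiate under the integral sign, since the integrand on $\theta\in[0,1]$ is jointly continuous and holomorphic in $u$. The only delicate point is justifying the interchange of the double sum with the integral, and this is settled by the uniform absolute domination above. I will also note that the integral does not depend on the path from $0$ to $u$ inside $\mathbb D(R)$, by Cauchy's theorem, so the segment can be used without loss of generality.
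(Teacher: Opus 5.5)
Your proof is correct. The paper states this lemma in its preliminaries without proof, deferring to the literature, so there is no in-paper argument to compare against. Your argument is the standard one: normal convergence of the double series on the segment $[0,u]$, the Beta integral $\int_0^u v^m(u-v)^k\,\mathrm dv=\frac{m!\,k!}{(m+k+1)!}\,u^{m+k+1}$, and regrouping by $n=m+k$.

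One small correction concerns your closing remark on path independence. The integrand $v\mapsto\hat{\phi}(v)\,\hat{\chi}(u-v)$ is only known to be holomorphic where both $\abs{v}$ and $\abs{u-v}$ lie below the respective radii of convergence. That region is a convex lens containing $[0,u]$, and path independence holds for paths inside it. It does not hold for arbitrary paths in $\mathbb D(R)$: such a path can carry $u-v$ outside the disc of convergence of $\hat{\chi}$. This remark is not used in your main argument, which works directly on the segment, so it does not affect the proof.
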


\subsection{Laplace transform}\label{sec:Laplace_transform}

The Laplace transform is usually defined as an improper integral along the positive real semiaxis for functions of a real variable with at most exponential growth along that semiaxis. This definition is not sufficiently comprehensive for our purposes, so we give in the following a more general definition, applicable to suitable functions of the complex variable $u$, which we interpret as analytic continuations of holomorphic germs at the origin of the Borel plane.

The material presented in this section is mostly taken from \cite{dS16} and \cite{mLR16}.

\begin{definition}[Half-strip in direction $\theta$]
	Given $\theta \in \mathbb R$, a half-strip in the direction $\theta$ is an open subset of the Borel plane of the form
	\begin{dmath*}
		S_\delta(\theta) = {
			\left\{ u \in \mathbb C_u \,\middle|\, \operatorname{dist}\bigl(u, \ee^{i\theta}\mathbb R_{>0}\bigr) < \delta \right\}
		}
		\condition*{\delta > 0}
	\end{dmath*}.
\end{definition}

\begin{definition}[Exponential type along a direction]\label{def:exp_type_along_direction}
	Let $\theta \in \mathbb R$. We denote by $\mathcal H_\tau(\ee^{i\theta}\mathbb R_{>0})$ the set of holomorphic germs $\hat{\phi} \in \mathbb C\{u\}$ which admit analytic continuation to a half-strip $S_\delta(\theta)$ for some $\delta > 0$, and for which there exist constants $A>0$, $R>0$ and $\tau' \in \mathbb R$ such that
	\begin{dmath}\label{eq:exp_type_along_theta}
		\abs{\hat{\phi}(u)} \le A \ee^{\tau' \abs{u}}
		\condition{$u \in S_\delta(\theta),\ \abs{u} \ge R$}
	\end{dmath}.
	
	The quantity $\tau$ is defined as the infimum of the set of $\tau'$ for which \cref{eq:exp_type_along_theta} holds, and is called the exponential type of $\hat{\phi}$ along the direction $\theta$. We also set:
	\begin{dmath*}
		\mathcal H(\ee^{i\theta}\mathbb R_{>0}) \defeq \bigcup_{\tau \in \mathbb R} \mathcal H_\tau(\ee^{i\theta}\mathbb R_{>0})	
	\end{dmath*}.
\end{definition}

\begin{remark}
	The growth condition in \cref{eq:exp_type_along_theta} is only required for large $\abs{u}$; near the origin it is automatically satisfied since $\hat{\phi}$ is holomorphic.
\end{remark}

\begin{remark}\label{rmk:summable_tilde_phi}
	We denote by $\mathcal B^{-1}(\mathcal H_\tau(\ee^{i\theta}\mathbb R_{>0}))$ the set of formal series $\tilde{\phi}$ such that their formal Borel transform $\hat{\phi} = \mathcal B \, \tilde{\phi}$ belongs to $\mathcal H_\tau(\ee^{i\theta}\mathbb R_{>0})$. The set $\mathcal B^{-1}(\mathcal H(\ee^{i\theta}\mathbb R_{>0}))$ is defined analogously.
\end{remark}

\begin{definition}[Directional Laplace transform]\label{def:directional_Laplace_transform}
	Given a function $\hat{\phi} \in \mathcal H_\tau(\ee^{i\theta}\mathbb R_{>0})$, its Laplace transform in the direction $\ee^{i\theta}\mathbb R_{>0}$ is the function
	\begin{dmath}\label{eq:directional_Laplace_transform}
			\phi^\theta(t) = {
				(\mathcal L^\theta\,\hat{\phi})(t) \defeq 
				\int_0^{\infty \ee^{i\theta}} \mathrm d u \, \ee^{-t u} \hat{\phi}(u)
			}
	\end{dmath},
	holomorphic in the half-plane of $\mathbb C_t$
	\begin{dmath*}
			\Pi_\tau^\theta \defeq {
				\left\{t \in \mathbb C_t \,|\, \RE(t\ee^{i\theta}) > \tau\right\}
		}
	\end{dmath*}.
\end{definition}

\begin{proposition}[Inverse Laplace a.k.a. integral Borel transform]
	The inverse of \cref{eq:directional_Laplace_transform} is given by
	\begin{dmath}\label{eq:integral_Borel_transform}
		\hat{\phi}(u) = \frac{1}{2\pi i} \int_{B_c^\theta} \mathrm d t \,\ee^{ut}\,\phi^\theta(t),
	\end{dmath}
	where $B_c^\theta$ is the oriented line
	\begin{dmath*}
		B_c^\theta = {
			\left\{ t = c \ee^{-i\theta} + i s \ee^{-i\theta} \,\middle|\, s \in \mathbb R \right\}
		}
	\end{dmath*},
	with $c > \tau$, oriented in the direction of increasing $s$.
	\Cref{eq:integral_Borel_transform} is also known as integral Borel transform.
\end{proposition}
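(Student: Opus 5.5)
The plan is to reduce the statement to the classical Fourier--Laplace inversion formula by a rotation of variables, and then to recover the value of $\hat{\phi}$ itself through analytic continuation. First, set $v = \ee^{-i\theta}u$ and $\hat\psi(v) \defeq \hat{\phi}(\ee^{i\theta}v)$. Then $\hat\psi \in \mathcal H_\tau(\mathbb R_{>0})$. Substituting into \cref{eq:directional_Laplace_transform} gives
\begin{dmath*}
\phi^\theta(t) = \ee^{i\theta}\int_0^{\infty}\mathrm d v\,\ee^{-t\ee^{i\theta}v}\,\hat\psi(v) = \ee^{i\theta}\,\Psi(t\ee^{i\theta})
\end{dmath*},
where $\Psi$ is the ordinary Laplace transform of $\hat\psi$. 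It is holomorphic for $\RE(T)>\tau$, with $T=t\ee^{i\theta}$. On the line $B_c^\theta$ we have $T = c+is$ and $\mathrm d t = \ee^{-i\theta}\,i\,\mathrm d s$. The prefactors $\ee^{\pm i\theta}$ cancel, and since $ut = vT$, the right-hand side of \cref{eq:integral_Borel_transform} becomes
\begin{dmath*}
\frac{1}{2\pi}\int_{-\infty}^{\infty}\mathrm d s\,\ee^{v(c+is)}\,\Psi(c+is)
\end{dmath*}.
So it suffices to treat $\theta=0$.

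Next I will prove the case $\theta=0$ for real $v>0$. Write $g(x)\defeq \ee^{-cx}\hat\psi(x)$ for $x>0$ and $g(x)\defeq 0$ for $x<0$. Because $c>\tau$, we can pick $\tau'$ with $\tau<\tau'<c$ in the bound \cref{eq:exp_type_along_theta}, so $g$ decays exponentially. Hence $g\in L^1\cap L^2$, and $g$ is smooth away from $0$. Moreover $\Psi(c+is)=\int_{\mathbb R} g(x)\ee^{-isx}\,\mathrm d x = \hat g(s)$ is its Fourier transform. The Fourier inversion theorem, taken as a symmetric principal value limit $\lim_{S\to\infty}\int_{-S}^{S}$, applies at every point where $g$ is locally Lipschitz (Dini's criterion). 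This gives $g(v)=\frac{1}{2\pi}\int \hat g(s)\ee^{isv}\,\mathrm d s$ for $v>0$. Multiplying by $\ee^{cv}$ yields the formula. The same argument returns $0$ for $v<0$ and the mean value $\hat\psi(0)/2$ at $v=0$, and independence from $c$ follows from Cauchy's theorem in the half-plane.

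Finally, the statement is for complex $u$, understood as the germ and its continuation into the strip. I will extend from the ray by analytic continuation. For complex $v$ near the positive axis, the integrand $\ee^{v(c+is)}\Psi(c+is)$ carries the factor $\ee^{-s\IM v}$. This factor is harmless only if $\Psi$ decays fast enough along the vertical line.

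This decay is the main obstacle. Holomorphy of $\hat\psi$ in the half-strip $S_\delta(0)$ lets me rotate the contour of the Laplace integral by small angles $\pm\alpha$, and the bound \cref{eq:exp_type_along_theta} persists on those rotated rays after a slight adjustment of constants. The resulting contour deformation should yield exponential decay $|\Psi(c+is)|\lesssim \ee^{-\alpha' |s|}$, at least after subtracting the $1/T$ behaviour coming from $\hat\psi(0)$. If it does, the integral converges locally uniformly for $|\IM v|<\alpha'$ and defines a holomorphic function there. That function agrees with $\hat\psi$ on $\mathbb R_{>0}$, so the identity theorem finishes the proof. If this uniform decay cannot be secured, I fall back to stating the inversion as a principal-value identity for $u\in\ee^{i\theta}\mathbb R_{>0}$ and extending it by continuation of both sides.
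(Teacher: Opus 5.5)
Your first two steps are correct and are the standard argument for this statement. The paper itself gives no proof and defers to the literature. After the rotation, the line integral is the Fourier inversion integral of $g(x)=\ee^{-cx}\hat\psi(x)$, extended by $0$ for $x<0$, and Dini's test returns $\hat\phi(u)$ for every $u\in\ee^{i\theta}\mathbb R_{>0}$. This works only with the integral read as an improper (e.g.\ symmetric) limit: along $B_c^\theta$ one has $\phi^\theta(t)\sim\hat\phi(0)/t$, which is not absolutely integrable. The paper's statement says nothing about this, nor about the range of $u$.

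The third step, however, cannot be carried out, and the conclusion it aims at is false.

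\textbf{The proposed mechanism is unavailable.} A half-strip $S_\delta(0)$ contains no ray $\ee^{\pm i\alpha}\mathbb R_{>0}$ with $\alpha\neq0$: such a ray leaves the strip near $\abs{v}=\delta/\sin\alpha$. So the Laplace contour can only be translated to $\pm i\delta'+\mathbb R_{>0}$, not rotated.

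\textbf{No deformation gives exponential decay.} This holds even after removing $\hat\psi(0)/T$. Integrate by parts $N$ times and bound $\hat\psi^{(N)}$ by Cauchy estimates on discs of radius $\delta/2$ centred on the ray. This gives
\[
\Psi(c+is)=\sum_{n=0}^{N-1}\hat\psi^{(n)}(0)\,(c+is)^{-n-1}+O\bigl(\abs{s}^{-N}\bigr),
\]
so for $\hat\psi\not\equiv0$ the decay is exactly polynomial. For example, $\hat\psi(v)=\ee^{v}$ gives $\Psi(T)=1/(T-1)$. Since $\abs{\ee^{v(c+is)}}=\ee^{c\RE v-s\IM v}$, the integral in \cref{eq:integral_Borel_transform} diverges for every $u\notin\ee^{i\theta}\mathbb R$. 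There is no strip of convergence and nothing to continue.

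\textbf{Consequences for your fallback.} The fallback is forced, and it is the actual content of the proposition. Drop the phrase ``continuation of both sides'', though: the right-hand side does not exist off the ray. The correct closing is that the values on the ray determine the germ and its continuation to $S_\delta(\theta)$ by the identity theorem. Hence $\mathcal L^\theta$ is injective and the line integral is a genuine left inverse.

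To reach a point with $\arg u=\theta'\neq\theta$ by an integral formula, you would need the rotated line $B_c^{\theta'}$. That requires continuing $\phi^\theta$ to $\Pi_c^{\theta'}$, which is available only under sectorial, not half-strip, hypotheses on $\hat\phi$.
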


\vskip 0.3cm

It is of fundamental  interest for the following considerations the case where the Borel germ at the origin $\hat{\phi}$ can be continued to a function holomorphic not just in a half-strip, but rather in an open infinite sector of the Borel plane.

\begin{definition}[Open sector of aperture $\abs{\Theta}$ and radius $\rho$]
	Let $\Theta \defeq (\theta_1, \theta_2) \subset \mathbb R$ and $\rho > 0$. Define $\abs{\Theta} \defeq \theta_2 - \theta_1$. 
	We denote by
	\begin{dmath*}
		S(\Theta,\rho) = {
			\left\{ u \in \tilde{\mathbb C}_u \,\middle|\,
			\theta_1 < \arg u < \theta_2,\ |u| < \rho \right\}
			\subset \tilde{\mathbb C}_u
		}
	\end{dmath*}
	the corresponding open sector of aperture $\abs{\Theta}$ and radius $\rho$. When $\rho = \infty$, we simply write $S(\Theta)$.
\end{definition}

\begin{definition}
	Let $\Theta \subset \mathbb R$ be an open interval and $f_\tau : \Theta \to \mathbb R$ a locally bounded function, a.k.a. \emph{indicator function}. 
	For any locally bounded function $A : \Theta \to \mathbb R_{>0}$, we denote by $\mathcal H(\Theta, f_\tau, A)$ the set of holomorphic germs $\hat{\phi} \in \mathbb C\{u\}$ which admit analytic continuation to the sector $S(\Theta)$ and satisfy the growth condition
	\begin{dmath*}
		\abs{\hat{\phi}(u)} \leq A(\theta) \ee^{f_\tau(\theta)\abs{u}}
		\condition*{u \in S(\Theta),\ \abs{u} \ge R}
	\end{dmath*}
	for some $R>0$.
	
	We denote by $\mathcal H(\Theta, f_\tau)$ the set of $\hat{\phi}$ for which a locally  bounded function $A$ exists such that $\hat{\phi} \in \mathcal H(\Theta, f_\tau, A)$, and by $\mathcal H(\Theta)$ the set of $\hat{\phi}$ for which a locally bounded indicator function $f_\tau$ exists such that $\hat{\phi} \in \mathcal H(\Theta, f_\tau)$.
\end{definition}

\begin{remark}
	We recall that a function is locally bounded in an open interval $\Theta$ if it is bounded in any compact subinterval of $\Theta$.
\end{remark}

\begin{lemma}\label{lem:sector-to-half-strip}
	For every $\theta \in \Theta$ there exists a real number $c=c(\theta)$ such that
	\begin{dmath*}
		\mathcal H(\Theta,f_\tau) \subset \mathcal H_c(\ee^{i\theta}\mathbb R_{>0}).
	\end{dmath*}
	A possible choice is $c(\theta) = \sup_{\vartheta \in I} f_\tau(\vartheta)$ for any open interval $I \subset \Theta$ containing $\theta$ and with compact closure in $\Theta$.
\end{lemma}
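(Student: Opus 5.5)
Fix $\theta \in \Theta$ and an open interval $I \subset \Theta$ containing $\theta$ with compact closure $\bar I \subset \Theta$. Set $c(\theta) = \sup_{\vartheta \in I} f_\tau(\vartheta)$, which is finite because $f_\tau$ is locally bounded. Let $\hat{\phi} \in \mathcal H(\Theta, f_\tau)$, so that $\hat{\phi} \in \mathcal H(\Theta, f_\tau, A)$ for some locally bounded $A$. We must check the two requirements of \cref{def:exp_type_along_direction}: analytic continuation to a half-strip $S_\delta(\theta)$, and the bound $\abs{\hat{\phi}(u)} \le A' \ee^{c\abs{u}}$ there for $\abs{u}$ large. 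Since the exponential type along $\theta$ is an infimum, obtaining the bound with exponent $c$ gives $\hat{\phi} \in \mathcal H_{c'}(\ee^{i\theta}\mathbb R_{>0})$ with $c' \le c$. This is the intended inclusion; strictly speaking, one reads $\mathcal H_c$ as ``type at most $c$''.

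\textbf{Geometry.} Choose $\epsilon > 0$ with $[\theta - \epsilon, \theta + \epsilon] \subset I$. A half-strip $S_\delta(\theta)$ is not contained in the sector $S(I)$ near the origin, but its tail is. Concretely, for $u \in S_\delta(\theta)$ with $\abs{u} \ge R_1$, where $\delta / R_1 < \sin\epsilon$, we have $\abs{\arg u - \theta} < \epsilon$. Hence $S_\delta(\theta) \cap \{\abs{u} \ge R_1\} \subset S(I)$.

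The bounded part $S_\delta(\theta) \cap \{\abs{u} < R_1\}$ has to be handled separately. Choosing $\delta$ smaller than the radius of convergence $r$ of the germ would cover the segment from $0$ to $r\ee^{i\theta}$. The remaining piece near the ray, for $r/2 \le \abs{u} \le R_1$, lies in $S(I)$ as soon as $\delta < (r/2)\sin\epsilon$. So we take $\delta < \min\{r/2, (r/2)\sin\epsilon\}$ and $R_1 = r/2$. Then every point of $S_\delta(\theta)$ lies either in $\mathbb D(r)$ or in $S(I)$. The germ and its continuation agree on the overlap, a connected region near the ray meeting $\mathbb D(r)$, so they glue to a single holomorphic function on $S_\delta(\theta)$.

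\textbf{Growth bound.} For $u \in S_\delta(\theta)$ with $\abs{u} \ge R$, where $R$ is the radius in the growth condition, we have $\vartheta = \arg u \in I$. Therefore
$\abs{\hat{\phi}(u)} \le A(\vartheta)\ee^{f_\tau(\vartheta)\abs{u}} \le \bigl(\sup_I A\bigr)\ee^{c\abs{u}}$.
This uses $\sup_I A < \infty$, which holds by local boundedness since $\bar I$ is compact. It also uses $f_\tau(\vartheta) \le c$ together with $\abs{u} \ge 0$.

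The main obstacle is the geometric gluing near the origin. We must make sure the half-strip stays inside the union of the disc of convergence and the sector, with a connected overlap so that the continuation is unambiguous. This is exactly what the choice of a small $\delta$ relative to $r$ and $\epsilon$ achieves.
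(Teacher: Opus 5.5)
Your proof is correct and follows the paper's argument: bound $A$ and $f_\tau$ on a compact neighbourhood of $\theta$, then shrink $\delta$ so that the tail of $S_\delta(\theta)$ lies in the sector and $\arg u$ stays in that neighbourhood. Two parts of your write-up go beyond the paper: the explicit gluing of the germ on $\mathbb D(r)$ with the sectorial continuation near the origin, which the paper leaves implicit, and the use of $\sup_I f_\tau$ exactly as the statement specifies. One small fix: the large-$\abs{u}$ threshold should be $\max(R, r/2)$ rather than $R$.
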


\begin{proof}
	Fix $\theta \in \Theta$ and let $\hat{\phi} \in \mathcal H(\Theta,f_\tau)$. By definition, there exists a locally bounded function
	 $A : \Theta \to \mathbb R_{>0}$
	such that $\hat{\phi} \in \mathcal H(\Theta,f_\tau,A)$. Thus $\hat{\phi}$ admits analytic continuation to the sector $S(\Theta)$ and there exists $R>0$ such that
	\begin{dmath*}
		\abs{\hat{\phi}(u)} \leq A(\arg u)\,\ee^{f_\tau(\arg u)\abs{u}}
		\condition*{u \in S(\Theta),\ \abs{u} \geq R}
	\end{dmath*}.
	Since $\Theta$ is open and $\theta \in \Theta$, one can choose an open interval $I = (\theta-\varepsilon,\theta+\varepsilon)$ 	with $\varepsilon>0$ and compact closure $\overline I \subset \Theta$.
	Because $A$ and $f_\tau$ are locally bounded, they are bounded on $\overline I$. Hence there exist constants $M>0$ and $c \in \mathbb R$ such that
	\begin{dmath*}
		A(\vartheta) \leq {
			M,
			\qquad
			f_\tau(\vartheta) \leq c
		}
		\condition*{\vartheta \in \overline I}
	\end{dmath*}.
	
	Now choose $\delta>0$ small enough so that, for all sufficiently large $u \in S_\delta(\theta)$, one has $\arg u \in \overline I$.
	Then, for such $u$,
	\begin{dmath*}
		\abs{\hat{\phi}(u)} \leq {
			A(\arg u)\,\ee^{f_\tau(\arg u)\abs{u}}
			\leq M \ee^{c\abs{u}}
		}
	\end{dmath*}.
	
	Thus $\hat{\phi}$ admits analytic continuation to the half-strip $S_\delta(\theta)$ and satisfies there an exponential bound of type $c$ for $\abs{u}$ sufficiently large. Therefore
	\begin{dmath*}
		\hat{\phi} \in \mathcal H_c(\ee^{i\theta}\mathbb R_{>0}),
	\end{dmath*}
	which proves the claim.
\end{proof}

\begin{remark}\label{rem:laplace-sector-vs-half-strip}
	The Lemma shows that every $\hat{\phi} \in \mathcal H(\Theta,f_\tau)$ admits analytic continuation to a half-strip around any direction $\theta \in \Theta$, and therefore the directional Laplace transform $\mathcal L^\theta \,\hat{\phi}$ is well defined.
	It is important, however, to distinguish two different roles played by the growth bounds. The constant $c(\theta)$ in lemma~\ref{lem:sector-to-half-strip} is obtained from a uniform bound for $f_\tau$ in a neighbourhood of $\theta$, and is used to embed the sectorial class $\mathcal H(\Theta,f_\tau)$ into a half-strip class. By contrast, the convergence of the Laplace integral along the ray $\ee^{i\theta}\mathbb R_{>0}$ only depends on the growth of $\hat{\phi}$ on that ray and is therefore governed by the pointwise value $f_\tau(\theta)$. Consequently, for any $\theta \in \Theta$, $\mathcal L^\theta \,\hat{\phi}$ is holomorphic in the half-plane $\Pi_{f_\tau(\theta)}^\theta$.
\end{remark}
	
\begin{lemma}\label{lem:overlapping_half_planes}
	Let $\hat{\phi} \in \mathcal H(\Theta,f_\tau)$ and suppose that $\theta_1,\theta_2 \in \Theta$ satisfy $0 < \theta_2 - \theta_1 < \pi$.
	Then the restrictions of $\mathcal L^{\theta_1}\hat{\phi}$ and $\mathcal L^{\theta_2}\hat{\phi}$ to the open subset $\Pi_{f_\tau(\theta_1)}^{\theta_1} \cap \Pi_{f_\tau(\theta_2)}^{\theta_2}$ coincide.
\end{lemma}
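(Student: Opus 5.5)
The plan is the classical contour-rotation argument based on Cauchy's theorem. The first step is to reduce to a nonempty overlap. Since $0<\theta_2-\theta_1<\pi$, the half-planes $\Pi_{f_\tau(\theta_1)}^{\theta_1}$ and $\Pi_{f_\tau(\theta_2)}^{\theta_2}$ have non-parallel boundary lines, so their intersection is a nonempty convex open set. In particular it is connected. By the identity theorem it therefore suffices to prove $\mathcal L^{\theta_1}\hat\phi=\mathcal L^{\theta_2}\hat\phi$ on a nonempty open subset of this intersection. Both functions are holomorphic there by Remark~\ref{rem:laplace-sector-vs-half-strip}.

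For the main step, consider the closed sector $\overline{S}$ of directions $[\theta_1,\theta_2]\subset\Theta$. Since $[\theta_1,\theta_2]$ is compact in $\Theta$ and $A,f_\tau$ are locally bounded, there are constants $M$ and $c$ with $\abs{\hat\phi(u)}\le M\ee^{c\abs{u}}$ for $\arg u\in[\theta_1,\theta_2]$ and $\abs{u}\ge R$. This is the same device as in Lemma~\ref{lem:sector-to-half-strip}. I choose $t$ with $\RE(t\ee^{i\vartheta})>c+\eta$ for all $\vartheta\in[\theta_1,\theta_2]$, for some $\eta>0$. Such $t$ exist: take $t=r\ee^{-i\bar\theta}$ with $\bar\theta=(\theta_1+\theta_2)/2$ and $r$ large. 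Then $\RE(t\ee^{i\vartheta})=r\cos(\vartheta-\bar\theta)\ge r\cos((\theta_2-\theta_1)/2)>0$, and this is where $\theta_2-\theta_1<\pi$ is used. These $t$ form a nonempty open set. After enlarging $r$ if necessary, it lies inside both $\Pi^{\theta_j}_{f_\tau(\theta_j)}$, because $f_\tau(\theta_j)\le c$.

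Next apply Cauchy's theorem to $u\mapsto \ee^{-tu}\hat\phi(u)$ on the truncated sector bounded by $[0,\rho\ee^{i\theta_1}]$, the arc $\gamma_\rho=\{\rho\ee^{i\vartheta}:\vartheta\in[\theta_1,\theta_2]\}$, and $[\rho\ee^{i\theta_2},0]$. This function is holomorphic in $S(\Theta)$ and at the origin, since the germ is holomorphic near $0$ and continues analytically into the sector. One should note that the closed truncated sector lies in $S(\Theta)\cup\mathbb D(r_0)$, and that the continuation agrees with the germ near $0$. Then
\[
\int_0^{\rho\ee^{i\theta_1}}\ee^{-tu}\hat\phi(u)\,\mathrm du-\int_0^{\rho\ee^{i\theta_2}}\ee^{-tu}\hat\phi(u)\,\mathrm du+\int_{\gamma_\rho}\ee^{-tu}\hat\phi(u)\,\mathrm du=0 .
\]
The arc integral is bounded by $(\theta_2-\theta_1)\rho M\ee^{c\rho}\ee^{-(c+\eta)\rho}\to0$. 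Letting $\rho\to\infty$ gives equality of the two directional Laplace transforms on the chosen open set. The identity theorem then extends the equality to the whole intersection.

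The main obstacle is the uniform estimate on the arc. The pointwise bounds $f_\tau(\theta_j)$ at the endpoints do not control the growth at interior directions, so the uniform constant $c$ obtained from local boundedness on the compact interval is genuinely needed. The resulting equality holds only on a smaller region, namely $t$ with $r$ large, and the extension to the full intersection of the two half-planes relies entirely on the connectedness argument from the first step.
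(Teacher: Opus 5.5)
Your proof is correct. Its core is the same Cauchy contour rotation as the paper's: two rays and an outer arc controlled by the uniform constant $c=\sup_{[\theta_1,\theta_2]}f_\tau$. Closing at the origin through the holomorphic germ instead of with an inner arc of radius $\varepsilon$ is immaterial. The global step, however, differs, and the difference matters.

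The paper rotates directly at an arbitrary $t\in\Pi_{f_\tau(\theta_1)}^{\theta_1}\cap\Pi_{f_\tau(\theta_2)}^{\theta_2}$ and asserts $\RE(t\ee^{i\vartheta})>c$ for all $\vartheta\in[\theta_1,\theta_2]$. This does not follow from membership in the two half-planes, which gives only $\RE(t\ee^{i\theta_j})>f_\tau(\theta_j)$. That bound need not exceed $c$ when $f_\tau$ is larger inside the interval. Even for constant $f_\tau\equiv-1$ the inequality can fail at interior directions: with $\theta_{1,2}=\pi\mp0.1$ and $t=1.001$ one has $\RE(t\ee^{i\theta_j})\approx-0.996>-1$, but $\RE(t\ee^{i\pi})=-1.001<-1$.

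So the paper's arc estimate is justified only on a subregion of the intersection. Your two-step scheme is exactly what closes this gap: you prove equality on the open set $\{t:\min_{\vartheta\in[\theta_1,\theta_2]}\RE(t\ee^{i\vartheta})>c+\eta\}$, then extend it by the identity theorem to the convex, hence connected, intersection, where both transforms are holomorphic.

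The pointwise rotation would be legitimate if $f_\tau$ were trigonometrically convex, as a genuine indicator function is. In that case a sinusoid exceeding $f_\tau$ at the endpoints of an interval of length less than $\pi$ exceeds it throughout. For a merely locally bounded $f_\tau$ this fails, and the analytic-continuation step you single out is what the argument needs.
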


\begin{proof}
	Let $t \in \Pi_{f_\tau(\theta_1)}^{\theta_1} \cap \Pi_{f_\tau(\theta_2)}^{\theta_2}$. Then one has $\RE(t\ee^{i\theta_1}) > \tau(\theta_1)$, $\RE(t\ee^{i\theta_2}) > \tau(\theta_2)$.
	
	Since $\hat{\phi} \in \mathcal H(\Theta,f_\tau)$, it admits analytic continuation to the sector $S(\Theta)$. By Cauchy's theorem, the integral along the contour formed by these two rays and the circular arcs of radius $\varepsilon$ and $R$ (with appropriate orientations) is zero.
	This implies that
	\begin{dmath*}
		\int_{\varepsilon}^{R} \mathrm d r \, \ee^{-t r \ee^{i\theta_1}} \hat{\phi}(r\ee^{i\theta_1}) \ee^{i\theta_1}
		-
		\int_{\varepsilon}^{R} \mathrm d r \, \ee^{-t r \ee^{i\theta_2}} \hat{\phi}(r\ee^{i\theta_2}) \ee^{i\theta_2}
	\end{dmath*}
	is equal to the sum of the integrals over the two circular arcs.
	
	\noindent The integral over the inner arc tends to $0$ as $\varepsilon \to 0^+$ because $\hat{\phi}$ is holomorphic near the origin.
	
	\noindent For the outer arc, let
	\begin{dmath*}
		c \defeq \sup_{\vartheta \in [\theta_1,\theta_2]} f_\tau(\vartheta)
	\end{dmath*}.
	Since $t \in \Pi_{f_\tau(\theta_1)}^{\theta_1} \cap \Pi_{f_\tau(\theta_2)}^{\theta_2}$ and $\theta_2-\theta_1<\pi$, one has
	\begin{dmath*}
		\RE(t\ee^{i\vartheta}) > c
		\condition*{\vartheta \in [\theta_1,\theta_2]}
	\end{dmath*}.
	By continuity, there exists $\delta>0$ such that
	\begin{dmath*}
		\RE(t\ee^{i\vartheta}) \ge c+\delta
		\condition*{\vartheta \in [\theta_1,\theta_2]}
	\end{dmath*}.
	Since $\hat{\phi}$ has exponential type bounded by $c$ in that sector, there exist constants $A>0$ and $R_0>0$ such that
	\begin{dmath*}
		\abs{\hat{\phi}(u)} \le A \ee^{c\abs{u}}
		\condition*{u \in S(\Theta),\ \abs{u}\ge R_0}
	\end{dmath*}.
	Along the outer arc $u=R\ee^{i\vartheta}$, one then has
	\begin{dmath*}
		\abs{\ee^{-t u}\hat{\phi}(u)} {
			\le A \exp\left(-R\RE(t\ee^{i\vartheta})+cR\right)
			\le A \ee^{-\delta R}
		}
	\end{dmath*}
	so the integral over the outer arc tends to $0$ as $R\to+\infty$.
	Passing to the limit $\varepsilon \to 0^+$ and $R \to +\infty$, one obtains
	\begin{dmath*}
		(\mathcal L^{\theta_1}\hat{\phi})(t) = (\mathcal L^{\theta_2}\hat{\phi})(t)
	\end{dmath*}.
\end{proof}

\begin{remark}
	For geometrical reasons, the intersection $\Pi_{f_\tau(\theta_1)}^{\theta_1} \cap \Pi_{f_\tau(\theta_2)}^{\theta_2}$ is non-empty whenever $\theta_1,\theta_2 \in \Theta$ satisfy $\abs{\theta_2-\theta_1}<\pi$.
\end{remark}

\begin{remark}
	\Cref{lem:overlapping_half_planes} allows one to extend the directional Laplace transform from a single direction to an interval of directions, at least when the aperture of the interval is not larger than $\pi$.
\end{remark}

\begin{definition}[Sectorial Laplace transform]\label{def:sectorial-Laplace_transform}
	Let $\Theta \subset \mathbb R$ be an open interval with $\abs{\Theta} \leq \pi$, and let $f_\tau : \Theta \to \mathbb R$ be a locally bounded indicator function. Define
	\begin{dmath*}
		\mathcal D(\Theta,f_\tau)
		=
		\bigcup_{\theta \in \Theta}\Pi_{f_\tau(\theta)}^\theta
	\end{dmath*}.
	The sectorial Laplace transform of $\hat{\phi} \in \mathcal H(\Theta,f_\tau)$ is the function $\mathcal L^\Theta \hat{\phi}$, holomorphic in $\mathcal D(\Theta,f_\tau)$, defined by
	\begin{dmath*}
		(\mathcal L^\Theta \hat{\phi})(t)
		=
		(\mathcal L^\theta \hat{\phi})(t)
		\condition*{\theta \in \Theta,\ t \in \Pi_{f_\tau(\theta)}^\theta}
	\end{dmath*}.
\end{definition}

\begin{remark}
	The previous lemma ensures that the definition of $\mathcal L^\Theta \hat{\phi}$ is independent of the choice of $\theta$ such that $t \in \Pi_{f_\tau(\theta)}^\theta$.
\end{remark}

\begin{remark}
	The domain $\mathcal D(\Theta,f_\tau)$ is an open sectorial neighbourhood of infinity of aperture $\pi + \abs{\Theta}$ (see \cref{fig:borel_laplace_mapping}). 
\end{remark}

\begin{figure}[ht]
    \centering
    \includegraphics[width=0.95\linewidth]{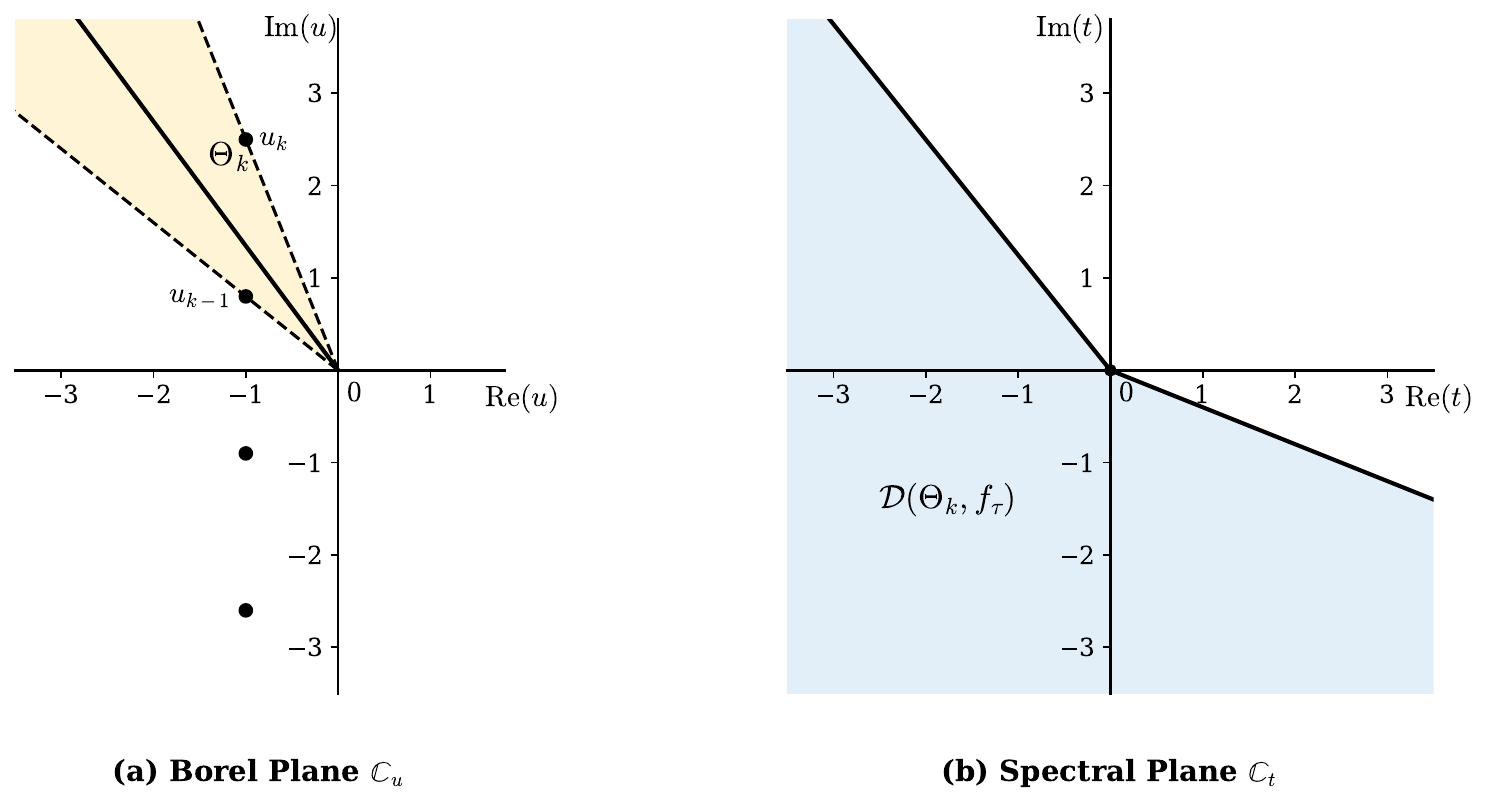}
    \caption{Geometric structure of the admissible domains. \textbf{(a)} The Borel plane $\mathbb{C}_u$ showing a discrete lattice of singularities in the left half-plane. An admissible sector $\Theta_k$ is bounded by the rays passing through the poles $u_{k-1}$ and $u_k$. \textbf{(b)} The corresponding domain of validity $\mathcal{D}(\Theta_k, f_\tau)$ in the spectral plane $\mathbb{C}_t$, obtained as the union of convergence half-planes. By depicting an exact concave sector originating at $0$, the figure visually represents the purely sectorial case where the exponential growth bound vanishes ($f_\tau(\theta) = 0$).}
    \label{fig:borel_laplace_mapping}
\end{figure}

\begin{remark}[Sectors of aperture larger than $\pi$]
	The previous construction extends without modification to the case $\pi < \abs{\Theta} < 2\pi$, provided one works on $\tilde{\mathbb C}_t$, the Riemann surface of the logarithm. In this case, the half-planes $\Pi_{f_\tau(\theta)}^\theta$ and the domain $\mathcal D(\Theta,f_\tau)$ are naturally lifted to $\tilde{\mathbb C}_t$, and the sectorial Laplace transform is defined by analytic continuation along admissible paths in the lifted domain. We denote the lifted counterparts of $\Pi_{f_\tau(\theta)}^\theta$ and $\mathcal D(\Theta,f_\tau)$ with a superimposed tilde symbol.
\end{remark}

\begin{remark}[The case $\abs{\Theta} \ge 2\pi$]
	If $\abs{\Theta} \ge 2\pi$, the analytic continuation becomes global on the Borel plane. One is then naturally led to the class of entire functions of exponential type in the usual sense, and the dependence on the direction $\theta$ disappears.
\end{remark}

\subsubsection{Stokes phenomenon and lateral Laplace transforms}\label{sec:Stokes_phenomenon}

The directional Laplace transform $\mathcal L^\theta \hat{\phi}$ ceases to be directly meaningful whenever the analytic continuation of $\hat{\phi}$ has singularities along the ray $\ee^{i\theta}\mathbb R_{>0}$.

\begin{definition}[Singular direction]\label{def:singular_direction}
	A direction $\theta \in \mathbb R$ is called singular for $\hat{\phi}$ if the analytic continuation of $\hat{\phi}$ has a singularity along the ray $\ee^{i\theta}\mathbb R_{>0}$.
\end{definition}

\begin{definition}[Lateral Laplace transforms]
	Let $\theta$ be a singular direction. The lateral Laplace transforms of $\hat{\phi}$ at $\theta$ are defined by
	\begin{dmath*}
		\mathcal L^{\theta^\pm}\,\hat{\phi}
		\defeq
		\lim_{\varepsilon \to 0^+}
		\mathcal L^{\theta \pm \varepsilon}\,\hat{\phi},
	\end{dmath*}
	whenever the limits exist.
\end{definition}

\begin{definition}[Stokes discontinuity]
	The difference
	\begin{dmath*}
		\mathfrak S_\theta \,\hat{\phi}
		\defeq
		\mathcal L^{\theta^+}\,\hat{\phi}
		-
		\mathcal L^{\theta^-}\,\hat{\phi}
	\end{dmath*}
	is called the Stokes discontinuity of $\hat{\phi}$ in the singular direction $\theta$.
\end{definition}

\begin{remark}
	If $\hat{\phi}$ is analytic in a sector containing the ray $\ee^{i\theta}\mathbb R_{>0}$, then $\mathcal L^{\theta^+}\hat{\phi} = \mathcal L^{\theta^-}\hat{\phi}$ and therefore $\mathfrak S_\theta \,\hat{\phi} = 0$.
\end{remark}

\begin{definition}[Borel-plane discontinuity across a singular ray]\label{def:Borel_plane_discontinuity}
	Suppose that $\hat{\phi}$ admits analytic continuation to a sector
	\begin{dmath*}
		S\bigl((\theta-\eta,\theta+\eta)\bigr)\setminus \ee^{i\theta}\mathbb R_{>0}
	\end{dmath*}
	for some $\eta>0$, and that for each $r>0$ the lateral limits
	\begin{dmath*}
		\hat{\phi}_\pm\left(r\ee^{i\theta}\right)
		\defeq
		\lim_{\varepsilon \to 0^+}
		\hat{\phi}\left(r\ee^{i(\theta \pm \varepsilon)}\right)
	\end{dmath*}
	exist.
	
	\noindent The function
	\begin{dmath*}
		(\operatorname{disc}_\theta\,\hat{\phi})\left(u\right)
		\defeq
		\hat{\phi}_+\left(u\right)
		-
		\hat{\phi}_-\left(u\right)
		\condition*{u \in \ee^{i\theta}\mathbb R_{>0}}
	\end{dmath*}
	is called the Borel-plane discontinuity, or lateral discontinuity, of $\hat{\phi}$ across the ray $\ee^{i\theta}\mathbb R_{>0}$.
	
	\noindent The definition extends in the sense of distributions when the limits do not exist pointwise.
\end{definition}

\begin{remark}[Borel-plane discontinuity versus Mellin--Barnes jump kernel]\label{rem:Borel_disc_vs_MB_kernel}
	The object $\operatorname{disc}_\theta\hat{\phi}$ is a discontinuity in the Borel variable $u$. It should not be confused with the Mellin--Barnes jump kernel used later in the analytic umbral pairing. The former lives on a singular ray in the Borel plane, whereas the latter lives in the dual Mellin--Laplace variable and is obtained by applying the appropriate Mellin--Barnes transform to the Borel data.
\end{remark}

\begin{proposition}[Borel-plane representation of the Stokes discontinuity]\label{prop:Borel_disc_representation}
	Under the assumptions of \cref{def:Borel_plane_discontinuity}, one has formally
	\begin{dmath*}
		(\mathfrak S_\theta \,\hat{\phi})(t)
		=
		\int_0^{\infty \ee^{i\theta}} \mathrm d u \,
		\ee^{-t u}\,
		(\operatorname{disc}_\theta\,\hat{\phi})(u),
	\end{dmath*}
	whenever the integral is well defined.
\end{proposition}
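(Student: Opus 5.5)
We compute the two lateral transforms $\mathcal L^{\theta^\pm}\hat{\phi}$ directly and express their difference as a single integral along the singular ray, obtained by collapsing the contour onto that ray. Fix $t$ in the common half-plane of convergence of the nearby directional transforms $\mathcal L^{\theta\pm\varepsilon}\hat{\phi}$. This half-plane is nonempty for small $\varepsilon$ once we assume, as part of ``whenever the integral is well defined'', a uniform exponential bound $\abs{\hat{\phi}(u)}\le A\ee^{c\abs{u}}$ on the slit sector $S((\theta-\eta,\theta+\eta))\setminus\ee^{i\theta}\mathbb R_{>0}$, and also $\RE(t\ee^{i\vartheta})>c+\delta$ for all $\vartheta$ near $\theta$.

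First, we deform each ray. For $0<\varepsilon'<\varepsilon<\eta$, the function $\hat{\phi}$ is holomorphic in the open sector between the rays $\arg u=\theta+\varepsilon'$ and $\arg u=\theta+\varepsilon$, since this sector does not meet the singular ray. Exactly as in the proof of \cref{lem:overlapping_half_planes}, Cauchy's theorem applies: the inner arc vanishes by holomorphy at the origin, and the outer arc is bounded by $A\ee^{-\delta R}$. Hence $\mathcal L^{\theta+\varepsilon}\hat{\phi}=\mathcal L^{\theta+\varepsilon'}\hat{\phi}$, and likewise on the minus side. The lateral limits therefore exist and equal the integrals along any ray $\arg u=\theta\pm\varepsilon$ with $\varepsilon$ small.

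Second, we let the two rays converge onto $\ee^{i\theta}\mathbb R_{>0}$. Parametrising $u=r\ee^{i(\theta\pm\varepsilon)}$, we write
\begin{equation*}
\mathcal L^{\theta\pm\varepsilon}\hat{\phi}(t)=\int_0^\infty \mathrm d r\,\ee^{i(\theta\pm\varepsilon)}\ee^{-tr\ee^{i(\theta\pm\varepsilon)}}\hat{\phi}\bigl(r\ee^{i(\theta\pm\varepsilon)}\bigr).
\end{equation*}
The integrand converges pointwise to $\ee^{i\theta}\ee^{-tr\ee^{i\theta}}\hat{\phi}_\pm(r\ee^{i\theta})$ by the definition of the lateral boundary values. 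The exponential bounds give a dominating function $A\ee^{-\delta r}$ for $r\ge R_0$. Subtracting the two limits then yields $\int_0^{\infty\ee^{i\theta}}\ee^{-tu}(\hat{\phi}_+-\hat{\phi}_-)(u)\,\mathrm du$, which is the claimed formula with $\operatorname{disc}_\theta\hat{\phi}$.

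The main obstacle is the dominated-convergence step near the singularities on the ray itself. Boundary values may blow up at isolated singular points $u_k$, for instance poles or logarithmic branch points, where pointwise limits fail to exist. For integrable singularities we would take an integrable majorant on compact pieces of the ray, which requires a local bound of the form $\abs{\hat{\phi}(u)}\le C\abs{u-u_k}^{-\alpha}$ with $\alpha<1$ uniformly in the slit neighbourhood. For poles we would instead reinterpret $\operatorname{disc}_\theta$ distributionally, as the definition allows. Deforming the $\pm$ rays into contours that hug the ray, with small semicircles around each $u_k$, then produces the residue contributions $2\pi i\,\mathrm{Res}$, which are exactly the $\delta$-components of the distributional discontinuity. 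This is why the statement is qualified as ``formal'' and ``whenever the integral is well defined'', and the proof would make those hypotheses explicit at precisely this step.
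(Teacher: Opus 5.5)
The paper gives no proof of this proposition. It is stated only ``formally'', and the identity is checked only in the simple-pole case, through Proposition~\ref{prop:stokes_simple_poles} and Remark~\ref{rem:Borel_disc_simple_poles}. That argument never passes to boundary values: it applies the residue theorem on the truncated sector between the rays $\arg u=\theta\pm\eta$ to obtain $-2\pi i\sum_j\operatorname{Res}_{u=\omega_j}(\ee^{-tu}\hat{\phi})$ directly. Only afterwards does it read $\operatorname{disc}_\theta\hat{\phi}$ as the measure $-2\pi i\sum_j\operatorname{Res}_{u=\omega_j}(\hat{\phi})\,\delta(u-\omega_j)$.

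Your route is different and more general, and it is correct under the hypotheses you make explicit. Cauchy invariance makes $\mathcal L^{\theta\pm\varepsilon}\hat{\phi}$ constant in $\varepsilon$. Dominated convergence then collapses both rays onto $\ee^{i\theta}\mathbb R_{>0}$, so the integral of $\hat{\phi}_+-\hat{\phi}_-$ appears as a genuine limit. This also covers integrable branch-type singularities, where $\operatorname{disc}_\theta\hat{\phi}$ is an honest function and the paper's residue computation says nothing. Your majorant works there because $\abs{r\ee^{i(\theta\pm\varepsilon)}-u_k}\ge\abs{r-\abs{u_k}}$ for $u_k$ on the ray. Conversely, the paper's computation yields the explicit pole formula with no boundary-value analysis, and your indentation step reduces to exactly that computation.

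Treating the majorant as a hypothesis is the right reading of ``formally''. For a simple pole the pointwise boundary values agree everywhere except at one point. The pointwise integral therefore vanishes, while $\mathfrak S_\theta\hat{\phi}\neq 0$.

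Three small corrections:
\begin{itemize}
\item Impose the exponential bound only for $\abs{u}\ge R_0$, as in the classes $\mathcal H(\Theta,f_\tau)$. Imposed on the whole slit sector, it would exclude the poles you treat at the end.
\item The indentation circles are traversed clockwise, so each contributes $-2\pi i\operatorname{Res}$, not $+2\pi i\operatorname{Res}$. This matches the sign in Proposition~\ref{prop:stokes_simple_poles}.
\item The ray may carry infinitely many poles, as for $1/(1-\zeta\ee^u)$ with $\abs{\zeta}=1$ and $\theta=\pi/2$. Then the tail majorant $A\ee^{-\delta r}$ is unavailable near the ray, so the indentation must run along the whole ray. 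You then need normal convergence of the resulting residue series, which is the extra hypothesis of Corollary~\ref{cor:lattice_poles}.
\end{itemize}
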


\begin{proposition}[Stokes discontinuity generated by simple poles]\label{prop:stokes_simple_poles}
	Suppose that $\hat{\phi}$ admits analytic continuation to a punctured neighbourhood of the ray $\ee^{i\theta}\mathbb R_{>0}$ and has there no singularities except a finite number of simple poles
	\begin{dmath*}
		\omega_1,\dots,\omega_m \in \ee^{i\theta}\mathbb R_{>0}.
	\end{dmath*}
	Assume moreover that the lateral Laplace transforms $\mathcal L^{\theta^\pm}\hat{\phi}$ exist.
	Then
	\begin{dmath}\label{eq:stokes_simple_poles}
		(\mathfrak S_\theta \,\hat{\phi})(t) = {
			-2\pi i
			\sum_{j=1}^m
			\operatorname{Res}_{u=\omega_j}\left(\hat{\phi}(u)\right)\,
			\ee^{-t\omega_j}
		}
	\end{dmath}.
\end{proposition}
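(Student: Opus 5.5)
The plan is a contour deformation argument in the Borel plane. Fix $t$ in the region where both lateral transforms $\mathcal L^{\theta^\pm}\hat{\phi}$ exist, i.e.\ $\RE(t\ee^{i(\theta\pm\varepsilon)})$ exceeds the relevant exponential type for small $\varepsilon>0$. I will write
\begin{dmath*}
(\mathfrak S_\theta\hat{\phi})(t)=\lim_{\varepsilon\to0^+}\left(\int_0^{\infty\ee^{i(\theta+\varepsilon)}}-\int_0^{\infty\ee^{i(\theta-\varepsilon)}}\right)\mathrm du\,\ee^{-tu}\hat{\phi}(u)
\end{dmath*}
and regard the difference, for fixed small $\varepsilon$, as a single integral over the closed path that runs out along $\ee^{i(\theta+\varepsilon)}\mathbb R_{>0}$ and back along $\ee^{i(\theta-\varepsilon)}\mathbb R_{>0}$, closed by an arc of radius $R$. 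This path encircles the thin sector $S((\theta-\varepsilon,\theta+\varepsilon),R)$.

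First I truncate at radius $R>\max_j|\omega_j|$ and apply the residue theorem in this thin sector. Its only singularities are the simple poles $\omega_j$, since $\hat{\phi}$ is holomorphic near $0$ and elsewhere in the punctured neighbourhood of the ray. The path runs out along the ray at $\theta+\varepsilon$ and back along the ray at $\theta-\varepsilon$, so it traverses the boundary of the sector clockwise. The residue theorem therefore gives
\begin{dmath*}
\int_{0}^{R\ee^{i(\theta+\varepsilon)}}-\int_0^{R\ee^{i(\theta-\varepsilon)}}+\int_{\mathrm{arc}} = -2\pi i\sum_{j}\operatorname{Res}_{u=\omega_j}\left(\ee^{-tu}\hat{\phi}(u)\right),
\end{dmath*}
where the arc runs from $R\ee^{i(\theta-\varepsilon)}$ to $R\ee^{i(\theta+\varepsilon)}$, taken with the orientation that closes the path. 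Because each pole is simple and $\ee^{-tu}$ is entire, $\operatorname{Res}_{u=\omega_j}(\ee^{-tu}\hat{\phi}(u))=\ee^{-t\omega_j}\operatorname{Res}_{u=\omega_j}\hat{\phi}(u)$, which is exactly the summand in \cref{eq:stokes_simple_poles}.

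Second, I let $R\to\infty$. The arc has length $2\varepsilon R$, and I need $\ee^{-tu}\hat{\phi}(u)$ to decay exponentially on it. This is the same estimate as in the proof of \cref{lem:overlapping_half_planes}. The lateral transforms exist, which I interpret (as the paper implicitly does) as an exponential-type bound $\abs{\hat{\phi}(u)}\le A\ee^{c\abs{u}}$ on a sector $(\theta-\eta,\theta+\eta)$ away from the poles. Then for $t$ with $\RE(t\ee^{i\vartheta})\ge c+\delta$ on that sector, the arc contribution is $O(\varepsilon R\,\ee^{-\delta R})\to0$. The two truncated ray integrals converge to $\mathcal L^{\theta\pm\varepsilon}\hat{\phi}(t)$.

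Third, the resulting identity holds for every small $\varepsilon>0$, and its right-hand side does not depend on $\varepsilon$. Passing to $\varepsilon\to0^+$ therefore gives the claim.

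The main obstacle is this growth hypothesis. The bare statement only assumes that the lateral limits exist, so I will make explicit the uniform exponential bound near the ray needed to kill the large arc, and restrict $t$ to the intersection of the corresponding half-planes. Once that is fixed, everything else is the residue theorem. As a cross-check, the result is consistent with \cref{prop:Borel_disc_representation}: the distributional discontinuity of a simple pole $r/(u-\omega)$ across the ray is $-2\pi i\,r\,\delta(u-\omega)$.
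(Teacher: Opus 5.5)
Your proposal is correct and follows essentially the same route as the paper: a clockwise contour around the thin sector between the rays at angles $\theta\pm\varepsilon$, the residue theorem with $\operatorname{Res}_{u=\omega_j}(\ee^{-tu}\hat{\phi}(u))=\ee^{-t\omega_j}\operatorname{Res}_{u=\omega_j}\hat{\phi}(u)$, vanishing of the large arc, and the limit of vanishing aperture. The paper also cuts out a small arc at the origin, which is unnecessary since $\hat{\phi}$ is holomorphic there; your explicit uniform exponential bound on a sector around the ray is exactly what kills the large arc, a step the paper only asserts as following from the existence of the lateral transforms.
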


\begin{proof}
	Let $\varepsilon>0$ and $R>0$ be such that all poles $\omega_j$ lie in the truncated annular sector
	\begin{dmath*}
		\left\{u \in \tilde{\mathbb C}_u \,\middle|\, \varepsilon < |u| < R,\ \theta-\eta < \arg u < \theta+\eta \right\}
	\end{dmath*}
	for some sufficiently small $\eta>0$, and no other singularities of $\hat{\phi}$ are present there.
	
	Consider the clockwise oriented closed contour consisting of:
	\begin{enumerate}
		\item the segment of the ray $\ee^{i(\theta+\eta)}\mathbb R_{>0}$ from $\varepsilon$ to $R$,
		\item the circular arc of radius $R$ from angle $\theta+\eta$ to angle $\theta-\eta$,
		\item the segment of the ray $\ee^{i(\theta-\eta)}\mathbb R_{>0}$ from $R$ back to $\varepsilon$,
		\item the circular arc of radius $\varepsilon$ from angle $\theta-\eta$ to angle $\theta+\eta$.
	\end{enumerate}
	Applying the residue theorem one obtains that
	\begin{dmath*}
		\int_{\varepsilon\ee^{i(\theta+\eta)}}^{R\ee^{i(\theta+\eta)}} \ee^{-t u}\hat{\phi}(u)\,\mathrm d u
		-
		\int_{\varepsilon\ee^{i(\theta-\eta)}}^{R\ee^{i(\theta-\eta)}} \ee^{-t u}\hat{\phi}(u)\,\mathrm d u
	\end{dmath*}
	is equal to
	\begin{dmath*}
		-2\pi i
		\sum_{j=1}^m
		\operatorname{Res}_{u=\omega_j}\left(\ee^{-t u}\hat{\phi}(u)\right)
	\end{dmath*},
	up to the contributions of the two circular arcs.
	
	Under the assumed existence of the lateral Laplace transforms, the large and small arcs tend to zero respectively as $R \to +\infty$ and $\varepsilon \to 0^+$. Passing to the limit $\eta \to 0^+$ then yields
	\begin{dmath*}
		(\mathcal L^{\theta^+}\hat{\phi})(t)
		-
		(\mathcal L^{\theta^-}\hat{\phi})(t)
		=
		-2\pi i
		\sum_{j=1}^m
		\operatorname{Res}_{u=\omega_j}\left(\ee^{-t u}\hat{\phi}(u)\right).
	\end{dmath*}
	Since $\ee^{-t u}$ is holomorphic at each pole,
	\begin{dmath*}
		\operatorname{Res}_{u=\omega_j}\left(\ee^{-t u}\hat{\phi}(u)\right)
		=
		\ee^{-t\omega_j}
		\operatorname{Res}_{u=\omega_j}\left(\hat{\phi}(u)\right),
	\end{dmath*}
	which gives \cref{eq:stokes_simple_poles}.
\end{proof}

\begin{remark}[Borel-plane discontinuity for simple poles]\label{rem:Borel_disc_simple_poles}
	Under the assumptions of \cref{prop:stokes_simple_poles}, the Borel-plane discontinuity of $\hat{\phi}$ across the ray $\ee^{i\theta}\mathbb R_{>0}$ is a discrete measure supported at the poles $\omega_j$. More precisely,
	\begin{dmath*}
		(\operatorname{disc}_\theta\,\hat{\phi})(u)
		=
		-\,2\pi i
		\sum_{j=1}^m
		\operatorname{Res}_{u=\omega_j}\left(\hat{\phi}(u)\right)\,
		\delta(u-\omega_j),
	\end{dmath*}
	in the sense of distributions along the ray.
	
	\noindent The Stokes discontinuity is therefore obtained by Laplace transform of this Borel-plane discontinuity, which yields \cref{eq:stokes_simple_poles}.
\end{remark}

\begin{remark}
	The same argument applies to a discrete infinite family of poles along $\ee^{i\theta}\mathbb R_{>0}$, provided the resulting series of exponentially small contributions converges normally in the relevant domain of the Laplace variable $t$.
\end{remark}

\begin{corollary}[Stokes discontinuity for a lattice of poles]\label{cor:lattice_poles}
	Suppose that $\hat{\phi}$ has a sequence of simple poles along the ray $\ee^{i\theta}\mathbb R_{>0}$ of the form
	\begin{dmath*}
		\omega_j = \omega_0 + j\omega
		\condition*{j \in \mathbb N}
	\end{dmath*},
	with $\omega_0, \, \omega \in \ee^{i\theta}\mathbb R_{>0}$, and that the corresponding residues satisfy a growth condition ensuring convergence.
	
	Then the Stokes discontinuity is given by
	\begin{dmath}
		(\mathfrak S_\theta \,\hat{\phi})(t)
		=
		-\,2\pi i
		\sum_{j=0}^\infty
		\operatorname{Res}_{u=\omega_j}\left(\hat{\phi}(u)\right)\,
		\ee^{-t\omega_j}
	\end{dmath},
	where the series converges normally in any domain where $\RE(t\omega_0)>0$.
\end{corollary}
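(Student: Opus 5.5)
The plan is to obtain the corollary as the limit case $m\to\infty$ of \cref{prop:stokes_simple_poles}, truncating the lattice at finite radius. Fix $t$ with $\RE(t\omega_0)>0$; note that $\omega_0$ and $\omega$ lie on the same ray $\ee^{i\theta}\mathbb R_{>0}$, so this is the same as $\RE(t\ee^{i\theta})>0$. Then $\abs{\ee^{-t\omega_j}} = \ee^{-\RE(t\omega_0)}\,\ee^{-j\RE(t\omega)}$ decays geometrically in $j$.

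The precise form of the ``growth condition ensuring convergence'' on the residues $r_j\defeq\operatorname{Res}_{u=\omega_j}\hat{\phi}$ is left open in the statement. I would make it concrete as subexponential growth, for instance $\abs{r_j}\le C\ee^{\epsilon j}$ for every $\epsilon>0$; with this choice the normality claim holds exactly where $\RE(t\omega_0)>0$. Under this assumption, on any compact $K\subset\{\RE(t\ee^{i\theta})>0\}$ we have $\RE(t\omega)\ge\kappa>0$ uniformly. Choosing $\epsilon<\kappa$ gives the summable majorant $\abs{r_j\ee^{-t\omega_j}}\le C'\ee^{-(\kappa-\epsilon)j}$, so the Weierstrass M-test yields normal convergence on $K$.

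Next comes the identification with $\mathfrak S_\theta\hat{\phi}$. For $R_N\defeq\abs{\omega_0}+(N+\tfrac12)\abs{\omega}$, I repeat the contour argument from the proof of \cref{prop:stokes_simple_poles} in the truncated annular sector $\varepsilon<\abs{u}<R_N$, $\abs{\arg u-\theta}<\eta$. This sector contains exactly the poles $\omega_0,\dots,\omega_N$. The residue theorem gives
\begin{dmath*}
\int_{\varepsilon\ee^{i(\theta+\eta)}}^{R_N\ee^{i(\theta+\eta)}}\ee^{-tu}\hat{\phi}\,\mathrm du-\int_{\varepsilon\ee^{i(\theta-\eta)}}^{R_N\ee^{i(\theta-\eta)}}\ee^{-tu}\hat{\phi}\,\mathrm du=-2\pi i\sum_{j=0}^N r_j\ee^{-t\omega_j}+\text{(arcs)}
\end{dmath*}.
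The inner arc vanishes as $\varepsilon\to0^+$ by holomorphy at the origin. Letting $\eta\to0^+$ and then $N\to\infty$, the left-hand side tends to $\mathcal L^{\theta^+}\hat{\phi}-\mathcal L^{\theta^-}\hat{\phi}$, using the assumed existence of the lateral transforms. The finite sums converge to the series by the previous step.

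The main obstacle is the outer arc of radius $R_N$, which now passes between two consecutive poles, so the crude exponential-type bound is not available near the ray. Here I would use that the circle $\abs{u}=R_N$ stays at distance $\abs{\omega}/2$ from the lattice. I would then assume, in the same spirit as the residue hypothesis, that $\hat{\phi}$ obeys a bound $\abs{\hat{\phi}(u)}\le A\ee^{c\abs{u}}$ on these separating arcs with $c<\RE(t\ee^{i\theta})$. The arc then has length $2\eta R_N$ and contributes $O(\eta R_N\ee^{-(\RE(t\ee^{i\theta})-c)R_N})\to0$.

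If such a uniform estimate is not available, the fallback is to decompose $\hat{\phi}=\sum_j r_j/(u-\omega_j)+\hat g$, with $\hat g$ regular across the ray. This Mittag-Leffler decomposition converges under the residue growth condition. One then applies \cref{prop:stokes_simple_poles} termwise to the pole part, where each term has Stokes jump $-2\pi i r_j\ee^{-t\omega_j}$, and observes that $\mathfrak S_\theta\hat g=0$. Interchanging the sum with $\mathcal L^{\theta^\pm}$ is justified by dominated convergence, using the normal convergence established above.
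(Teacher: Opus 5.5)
Your strategy is the one the paper intends. The paper gives no separate proof, only the remark that the argument of \cref{prop:stokes_simple_poles} carries over once the series of exponentially small terms converges normally. Your sharpening of the vague hypotheses is sound: subexponential residues, separating radii $R_N$, and an explicit exponential bound on the arcs $\abs{u}=R_N$. Your M-test step is correct, and a bound of the last kind is in fact indispensable.

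\textbf{The gap: order of limits.} If you let $\eta\to0^+$ at fixed $N$, the outer arc vanishes for a trivial reason: its length $2\eta R_N$ shrinks while $\hat{\phi}$ stays bounded on it. So your growth hypothesis is never used. Everything then rests on the claim
\[
\lim_{N\to\infty}\lim_{\eta\to0^+}\int_0^{R_N\ee^{i(\theta\pm\eta)}}\ee^{-tu}\hat{\phi}(u)\,\mathrm du=(\mathcal L^{\theta^\pm}\hat{\phi})(t).
\]
This is an interchange of limits. The mere existence of the lateral transforms, which are defined as $\lim_{\eta}\lim_{R}$, does not justify it, and it can fail. Take $f(u)=\int_0^\infty\ee^{us}s^{-s}\,\mathrm ds$.
\begin{itemize}
\item $f$ is entire and bounded on every ray $\arg u=\pm\varepsilon$ with $0<\varepsilon<\pi/2$: rotate the $s$-ray to $\arg s=\pm\beta$ with $\pi/2-\varepsilon<\beta<\pi/2$.
\item The same rotation gives $(\mathcal L^{\pm\varepsilon}f)(t)=\int_0^{\infty\ee^{\pm i\beta}}\frac{s^{-s}}{t-s}\,\mathrm ds$.
\item The residue at $s=t$ then gives $(\mathcal L^{0^+}f)(t)-(\mathcal L^{0^-}f)(t)=2\pi i\,t^{-t}$ for $t>0$.
\item The difference of your iterated limits is $0$.
\end{itemize}
Your stated justification would therefore apply verbatim to $\hat{\phi}+f$ (with the lattice on $\mathbb R_{>0}$), for which the formula is false.

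The repair is to reverse the order, as the paper does in the finite case. Fix $\eta$ small enough that $\RE(t\ee^{i\vartheta})>c$ for $\abs{\vartheta-\theta}\le\eta$, and let $N\to\infty$. Your arc bound now kills the outer arc. The left side tends to $\mathcal L^{\theta+\eta}\hat{\phi}-\mathcal L^{\theta-\eta}\hat{\phi}$, and the right side tends to the full series, which does not depend on $\eta$. Letting $\eta\to0^+$ is then just the definition of $\mathfrak S_\theta\hat{\phi}$.

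\textbf{The fallback does not work as stated and should be dropped.}
\begin{itemize}
\item The series $\sum_j r_j/(u-\omega_j)$ diverges whenever $\sum_j\abs{r_j}/j=\infty$. An example is the constant residues of $1/(\ee^u-1)-1/u$ along $i\mathbb R_{>0}$. Convergence-producing terms are needed.
\item More seriously, $\mathfrak S_\theta\hat g=0$ is not automatic for $\hat g$ holomorphic across the ray; the function $f$ above is a counterexample. It requires exactly the growth control near the ray that the fallback was meant to avoid.
\item The normal convergence in $t$ of $\sum_j r_j\ee^{-t\omega_j}$ does not dominate the Laplace integrals of the pole terms along rays that approach the poles as $\eta\to0^+$. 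The dominated-convergence step therefore has no basis.
\end{itemize}
State the arc bound as a hypothesis and keep only the main argument with the corrected order of limits.
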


\subsubsection{Elements of resurgence theory}\label{sec:resurgence_elements}

To fix the terminology used in the analysis of rational umbral Borel functionals, we provide a brief synthesis of the core concepts of resurgence theory, following the framework outlined in \cite{Ecalle1981, dS16}.

\begin{definition}[Endless continuability]\label{def:endless}
    A holomorphic germ at the origin in the Borel plane $\hat{\phi}$ is said to be \emph{endlessly continuable} if for any $L > 0$, there exists a finite set of points $\Omega_L \subset \mathbb{C}$ such that $\hat{\phi}$ can be analytically continued along any path of length $L$ starting from the origin and avoiding $\Omega_L$.
\end{definition}

\begin{definition}[Resurgent Borel germ]\label{def:resurgent_germ}
    A \emph{resurgent Borel germ} is an endlessly continuable germ whose growth at infinity along any non-singular ray is at most exponential. We denote the space of such germs by $\hat{\mathcal{R}}$.
\end{definition}


\begin{remark}
    A resurgent Borel germ $\hat{\phi}$ is characterised by two properties:
    \begin{enumerate}
        \item it admits analytic continuation to $\mathbb{C}_u \setminus \Omega$, where $\Omega$ is a discrete set;
        \item along every non-singular direction, it has at most exponential growth.
    \end{enumerate}
\end{remark}

\begin{definition}[Resurgent series and transform]\label{def:resurgent_series}
    A formal power series $\tilde{\phi}(t) = t^{-1}\sum_{n\geq 0} c_n t^{-n}$ is a \emph{resurgent series} if its formal Borel transform $\hat{\phi}(u) = \sum_{n\geq 0} \frac{c_n}{n!} u^n$ is a resurgent Borel germ. Its \emph{resurgent Laplace transforms} are the sectorial transforms $\mathcal{L}^\Theta \hat{\phi}$ defined in \cref{sec:Stokes_phenomenon}.
\end{definition}

\begin{remark}[Alien derivatives in the simple-pole case]\label{rem:alien_derivative}
    In the simple-pole situation relevant to the rational umbral class introduced later, the alien derivative at a singular point $\omega$ may be identified with the singular part of the Borel germ at $\omega$. More precisely, if $\hat{\phi}$ has a simple pole at $u=\omega$, then
    \begin{dmath*}
        \Delta_\omega \, \hat{\phi}
        =
        2\pi i \operatorname{Res}_{\,u=\omega}(\hat{\phi})\,\delta,
    \end{dmath*}
    where $\delta$ denotes the Dirac mass at the origin, i.e. the Borel transform of the constant $1$.
    
    In this case, the corresponding contribution to the Stokes discontinuity in the direction $\theta=\arg\omega$ is
    \begin{dmath*}
        \ee^{-\omega t}
        ((\mathcal{L}^\theta \circ \Delta_\omega) \, \hat{\phi})(t)
        =
        2\pi i \operatorname{Res}_{\,u=\omega}(\hat{\phi})\,\ee^{-\omega t},
    \end{dmath*}
    which is precisely the elementary jump term appearing in proposition~\ref{prop:stokes_simple_poles}.
\end{remark}

\subsection{Gevrey asymptotics and Borel--Laplace summability}

We recall the notion of Gevrey asymptotic expansion and its relation with the generalised Borel--Laplace summation procedure.

\subsubsection{Gevrey asymptotics}

\begin{definition}[Sectorial neighbourhood of infinity]
	Let $\Theta = (\theta_1,\theta_2) \subset \mathbb R$ and $R>0$. We denote by
	\begin{dmath*}
		S_\infty(\Theta,R) = {
			\left\{ t \in \tilde{\mathbb C}_t \,\middle|\, \theta_1 < \arg t < \theta_2,\ |t| > R \right\}
		}
	\end{dmath*}
	the corresponding sectorial neighbourhood of infinity. When $R=0$, we simply write $S_\infty(\Theta)$.
\end{definition}

\begin{definition}[Gevrey asymptotic expansion]
	Let $s>0$ and let $S_\infty(\Theta,R)$ be a sectorial neighbourhood of infinity in $\tilde{\mathbb C}_t$. Let $\tilde{\phi} = \sum_{n\ge 0} c_n t^{-n-1} \in t^{-1}\mathbb C[[t^{-1}]]$.	
	
	\noindent A function $\phi$ holomorphic in $S_\infty(\Theta,R)$ is said to admit $\tilde{\phi}$ as Gevrey asymptotic expansion of order $s$ in $S_\infty(\Theta,R)$ if, for every compact subsector $W \subset S_\infty(\Theta,R)$, there exist constants $A,C>0$ such that
	\begin{dmath*}
		\left|
		\phi(t) - \sum_{n=0}^{N-1} c_n t^{-n-1}
		\right|
		\le
		C A^N (N!)^{s} |t|^{-N-1}
	\end{dmath*}
	for all $t \in W$ and all $N \in \mathbb N$.
	
	In this case, we write
	\begin{dmath*}
		\phi \sim_s \tilde{\phi}
		\quad \text{in } S_\infty(\Theta,R).
	\end{dmath*}
\end{definition}

\begin{remark}
	If $\phi$ admits a Gevrey asymptotic expansion of order $s$, then $\tilde{\phi}$ is $s$-Gevrey.
\end{remark}

\begin{proposition}[Non-uniqueness for small sectors]
	If $\abs{\Theta} < s\pi$, then every $s$-Gevrey series admits at least one asymptotic realisation in $S_\infty(\Theta,\rho)$, but this realisation is not unique.
	
	\noindent The ambiguity is given by exponentially small functions of the form $\ee^{-a t^{s}}$ with $a>0$.
\end{proposition}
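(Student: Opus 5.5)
The statement has two parts. First, when $\abs{\Theta} < s\pi$, every $s$-Gevrey series $\tilde{\phi}$ has an asymptotic realisation in $S_\infty(\Theta,\rho)$. Second, that realisation is not unique, and the ambiguity consists of exponentially small terms. I will prove these separately and first reduce to $s=1$ by the ramification $t \mapsto t^{s}$. Under this change of variable an $s$-Gevrey series in $t^{-1}$ becomes, up to the usual index bookkeeping, a $1$-Gevrey series in $T^{-1}$ with $T = t^{1/s}$. A sector of aperture $\abs{\Theta} < s\pi$ in $t$ becomes a sector of aperture $\abs{\Theta}/s<\pi$ in $T$. It therefore suffices to treat $s=1$ and $\abs{\Theta}<\pi$, with the ambiguity $\ee^{-aT}$ pulled back to $\ee^{-at^{1/s}}$. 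The exponent in the displayed ambiguity should be checked against this convention: with $\abs{\Theta}<s\pi$ and the paper's Gevrey normalisation $(n!)^s$, the natural flat functions are $\ee^{-at^{1/s}}$.

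For existence (the Borel--Ritt step), take $\tilde{\phi}=\sum c_n t^{-n-1}$ with $\abs{c_n}\le CA^n n!$. By \cref{thr:1-Gevrey_to_convergent}, $\hat{\phi}=\mathcal B\tilde{\phi}$ converges in a disc $\mathbb D(1/A)$. Choose a direction $\theta$ with $\Theta\subset(-\theta-\pi/2,-\theta+\pi/2)$; this is possible because $\abs{\Theta}<\pi$. Fix $0<\rho<1/A$ and set
\[
\phi(t)=\int_0^{\rho\ee^{i\theta}}\ee^{-tu}\hat{\phi}(u)\,\mathrm du,
\]
the truncated Laplace transform. This is entire in $t$. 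Expanding $\hat{\phi}$ termwise and using $\int_0^{\rho\ee^{i\theta}}\ee^{-tu}u^n\,\mathrm du = n!\,t^{-n-1} - (\text{incomplete }\Gamma\text{ tail})$, I will show that the remainder after $N$ terms is bounded by $C'A'^N N!\,\abs{t}^{-N-1}$ uniformly on closed subsectors $W$. The estimate is the standard one. The Taylor remainder of $\hat{\phi}$ is bounded by $C''A'^N\abs{u}^N$ on the segment, and on $W$ one has $\RE(t\ee^{i\theta})\ge c\abs{t}$, so $\int_0^\infty \ee^{-c\abs{t}r}r^N\,\mathrm dr = N!/(c\abs{t})^{N+1}$. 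The tail pieces are $O(\ee^{-c\rho\abs{t}})$, and these are absorbed into the Gevrey bound because $\ee^{-x}\le N!\,x^{-N}$.

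For non-uniqueness, I exhibit a nonzero function that is $1$-Gevrey asymptotic to $0$ on $\Theta$. The candidate is $\ee^{-at\ee^{i\theta}}$ with $a>0$ and $\theta$ as above. On a closed subsector one has $\abs{\ee^{-at\ee^{i\theta}}}\le \ee^{-ac\abs{t}}\le N!\,(ac)^{-N}\abs{t}^{-N}$, so it is asymptotic to $0$, and adding it to $\phi$ gives a different realisation. Equivalently, moving $\rho$ to another value changes $\phi$ by such a flat term.

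For the converse characterisation, that every ambiguity is exponentially small, the difference $g$ of two realisations satisfies $\abs{g(t)}\le CA^N N!\,\abs{t}^{-N-1}$ for all $N$. Optimising in $N$ with $N\approx\abs{t}/A$ gives $\abs{g(t)}\le C\ee^{-\abs{t}/A}$ on subsectors, which is exactly the stated form. The main obstacle is to keep the constants uniform on compact subsectors near the edges of $\Theta$, since $c$ degenerates there. I will handle this by choosing $\theta$ so that the closure of $W$ stays strictly inside the admissible half-plane.
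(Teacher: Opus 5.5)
The paper gives no proof of this proposition. It is one of the background results in section~\ref{sec:preliminaries} quoted from the literature. Your argument is the standard one: a Gevrey Borel--Ritt theorem via a truncated Laplace integral, an explicit exponentially flat function for non-uniqueness, and optimal truncation for the converse. It is correct in substance.

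You are also right about the exponent. With the paper's normalisation $(N!)^s$ and the aperture condition $\abs{\Theta}<s\pi$, optimal truncation at $N\approx(\abs{t}/A)^{1/s}$ bounds every flat function by $C\ee^{-b\abs{t}^{1/s}}$. Moreover, $\ee^{-at^{1/s}}$ is precisely the function that decays on a sector of aperture $s\pi$. The displayed $\ee^{-at^{s}}$ agrees with this only for $s=1$. Your reading also matches the paper's summation level $1/s$ in $\mathcal B_{1/s}$ and $\mathcal L_{1/s}^\theta$. The last sentence of the statement can only mean ``bounded by $C\ee^{-b\abs{t}^{1/s}}$ on closed subsectors'', and that is what you prove.

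The one step to tighten is the ramification. For $s\notin\mathbb N^*$, the substitution $T=t^{1/s}$ turns $\tilde\phi$ into $\sum_n c_nT^{-s(n+1)}$, which is not a power series in $T^{-1}$. So theorem~\ref{thr:1-Gevrey_to_convergent} and your termwise formula for $\int_0^{\rho\ee^{i\theta}}\ee^{-Tu}u^n\,\mathrm du$ do not apply as written.

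The repair is routine:
\begin{enumerate}
\item Run the same truncated Laplace integral with $\hat\psi(u)=u^{s-1}\sum_n c_n(u^s)^n/\Gamma(s(n+1))$, on a fixed branch along the ray. This converges for small $\abs{u}$ because $(n!)^s/\Gamma(s(n+1))$ grows at most geometrically. The singularity $u^{s-1}$ at $0$ is integrable.
\item Use $\int_0^{\infty\ee^{i\theta}}\ee^{-Tu}u^{\lambda-1}\,\mathrm du=\Gamma(\lambda)T^{-\lambda}$. The remainder bound becomes $CB^N\Gamma(s(N+1))\abs{T}^{-s(N+1)}$.
\item This pulls back to the $s$-Gevrey bound, since $\Gamma(s(N+1))\le C'B'^N(N!)^s$.
\end{enumerate}
Equivalently, truncate the $1/s$-Laplace transform of $\mathcal B_{1/s}\tilde\phi$ directly, with no ramification at all.

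Finally, the edge difficulty you anticipate does not arise. Since $\abs{\Theta}/s<\pi$ strictly, take $\theta$ to be minus the bisector of the ramified sector. Then $\RE(T\ee^{i\theta})\ge\cos\bigl(\abs{\Theta}/(2s)\bigr)\abs{T}$ on the whole sector. Your constants are therefore uniform on all of $S_\infty(\Theta,\rho)$, not just on compact subsectors.
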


\begin{proposition}[Uniqueness for large sectors]
	If $\abs{\Theta} > s\pi$, then a function admitting a given Gevrey asymptotic expansion is unique, whenever it exists.
\end{proposition}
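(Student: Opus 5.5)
This is a Watson-type uniqueness statement. The plan is to reduce it to a flat-function lemma and then close with a Phragmén--Lindelöf argument.

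\emph{Reduction.} Suppose $\phi_1,\phi_2$ are holomorphic in $S_\infty(\Theta,R)$ with $\abs{\Theta}>s\pi$, and both satisfy $\phi_j\sim_s\tilde{\phi}$. Put $g=\phi_1-\phi_2$. Subtracting the two Gevrey estimates with the same truncation index $N$ gives, on every compact subsector $W$,
\begin{dmath*}
\abs{g(t)}\le 2CA^N (N!)^s \abs{t}^{-N-1}
\condition*{t\in W,\ N\in\mathbb N}.
\end{dmath*}
So $g$ is $s$-Gevrey flat. It suffices to show that such a $g$ vanishes identically.

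\emph{Exponential decay.} For fixed $t$, I will optimise the bound over $N$. Using $N!\le N^N$, the right-hand side is at most $2C\abs{t}^{-1}\bigl(AN^s/\abs{t}\bigr)^N$. Choosing $N\approx(\abs{t}/(\ee A))^{1/s}$ makes the bracket about $\ee^{-s}$. This yields
\begin{dmath*}
\abs{g(t)}\le C'\exp\bigl(-b\abs{t}^{1/s}\bigr)
\condition*{t\in W,\ \abs{t}\ \text{large}}
\end{dmath*}
for some $b>0$, uniformly on $W$.

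Here the exponent $1/s$ comes with the $t$-normalisation used in this paper (coefficients $(n!)^s$ in $t^{-1}$). The proposition on non-uniqueness writes the flat functions as $\ee^{-at^s}$, so the exponent should be reconciled with the paper's convention. What matters for the argument is that the critical aperture is the one for which the flat exponential is bounded on the sector. That threshold is exactly the hypothesis $\abs{\Theta}>s\pi$ stated in the proposition, so I will adopt the exponent that matches it.

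\emph{Phragmén--Lindelöf step (the main obstacle).} Choose a closed subsector $W$ with aperture $\alpha$ still strictly greater than the critical value, and rotate so that it is bisected by the positive real axis. Substitute $\xi=t^{1/s}$, or the matching power, so that $W$ becomes a sector of aperture strictly greater than $\pi$ in which $\abs{h(\xi)}\le C'\ee^{-b\abs{\xi}}$, where $h(\xi)=g(\xi^s)$. Then, for every $\lambda>0$, the function $h(\xi)\ee^{\lambda\xi}$ is bounded on the rays of a subsector of aperture exactly $\pi$ around the positive axis. It is of exponential order $1$, which is less than the critical order $1$ plus slack obtained from the strict aperture margin: work on a subsector of aperture $\pi$, where order $1$ of finite type is admissible only if bounded on the boundary rays and of type compatible. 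The cleaner route is to apply Phragmén--Lindelöf to $h(\xi)\ee^{\lambda\xi}$ on the half-plane sector $\abs{\arg\xi}\le\pi/2$, taking the boundary rays inside the larger sector where $\abs{h}$ decays.

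On those rays, $\abs{\ee^{\lambda\xi}}=1$, so the function is bounded by $C'$ there. Inside, it grows at most like $\ee^{\lambda\abs{\xi}}$, which is of order $1<2$, the critical order for aperture $\pi$. Phragmén--Lindelöf therefore gives $\abs{h(\xi)}\le C'\ee^{-\lambda\RE\xi}$ for all $\lambda>0$. Letting $\lambda\to\infty$ on the positive real axis forces $h=0$ there. By the identity theorem, $g\equiv 0$ on the connected sector, so $\phi_1=\phi_2$.

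The delicate points are both in this step. First, the strict inequality $\abs{\Theta}>s\pi$ must be used to place the boundary rays of the aperture-$\pi$ sector strictly inside a compact subsector, where the uniform bound holds. Second, the exponent normalisation must be consistent with the paper's convention.
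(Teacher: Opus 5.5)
The paper quotes this proposition without proof, so the benchmark is the standard Watson-lemma argument, which is the route you set up. Your reduction to an $s$-Gevrey flat difference $g$ is correct, and so is the optimal-truncation bound $\lvert g(t)\rvert\le C'\exp(-b\lvert t\rvert^{1/s})$. (With $N\approx(\lvert t\rvert/(\ee A))^{1/s}$ the bracket is at most $\ee^{-1}$ rather than $\ee^{-s}$, which is immaterial.) The exponent $1/s$ needs no reconciling: $\xi=t^{1/s}$ turns a subsector of aperture $\alpha>s\pi$ into one of aperture $\alpha/s>\pi$, which is exactly the hypothesis. The $\ee^{-at^{s}}$ written in the preceding non-uniqueness proposition is a slip for $\ee^{-at^{1/s}}$ in this normalisation.

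The genuine gap is the Phragm\'en--Lindel\"of step. For a sector of aperture $\pi$ the critical order is $1$, not $2$. The function $h(\xi)\ee^{\lambda\xi}$ grows like $\ee^{(\lambda-b)\lvert\xi\rvert}$, i.e.\ exactly at the critical order with positive type, so boundedness on $\arg\xi=\pm\pi/2$ does not propagate inside. The standard counterexample is $\ee^{\xi}$, bounded on the imaginary axis and unbounded on the real one. The sharp half-plane form of the principle for functions of exponential type only returns $\lvert h(\xi)\ee^{\lambda\xi}\rvert\le C'\ee^{(\lambda-b)\RE\xi}$, which is the decay you started with, so letting $\lambda\to\infty$ yields nothing.

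The missing idea is that the strict inequality $\lvert\Theta\rvert>s\pi$ must be spent on the growth rate, not merely on placing the boundary rays inside the domain. Here is the repair:
\begin{itemize}
\item Let $\alpha>s\pi$ be the aperture of $W$, put $\kappa=\alpha/(s\pi)>1$, and after rotating set $\eta=t^{\pi/\alpha}$. This maps $W$ onto $D=\{\lvert\arg\eta\rvert\le\pi/2,\ \lvert\eta\rvert\ge\rho\}$, where $\lvert h(\eta)\rvert\le C'\exp(-b\lvert\eta\rvert^{\kappa})$.
\item Then $F_\lambda=h\,\ee^{\lambda\eta}$ is bounded on $D$ because $\kappa>1$. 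It is bounded by $C'$ on the two rays and by $C'\ee^{\lambda\rho}$ on the arc $\lvert\eta\rvert=\rho$; your claimed bound also ignores this arc.
\item The bounded-function form of Phragm\'en--Lindel\"of is legitimate at aperture $\pi$ and gives $\lvert h(\eta)\rvert\le C'\ee^{\lambda(\rho-\RE\eta)}$.
\item Letting $\lambda\to\infty$ gives $h=0$ on $\RE\eta>\rho$, and the identity theorem concludes.
\end{itemize}
Equivalently, you can keep $\xi=t^{1/s}$ and use the multiplier $\exp(\lambda\xi^{1/\kappa})$, which has modulus $1$ on $\arg\xi=\pm\kappa\pi/2$.

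Your closed-half-plane picture can also be rescued, but only with a different tool. A bounded holomorphic function on $\RE\xi\ge R$ whose boundary values satisfy $\int\log\lvert h(R+iy)\rvert\,\mathrm dy/(1+y^2)=-\infty$ vanishes identically. That is not what the $\ee^{\lambda\xi}$ argument proves.
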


\subsubsection{Borel--Laplace summability}

\begin{definition}[Borel--Laplace summability in a direction]
	Let $s>0$. A formal series $\tilde{\phi} \in t^{-1}\mathbb C[[t^{-1}]]_s$ is said to be $1/s$-Borel--Laplace summable ($1/s$-summable for short) in the direction $\theta$ if its formal Borel transform $\hat{\phi} = {\mathcal B_{1/s}\,\tilde{\phi}}$ admits analytic continuation to a half-strip $S_\delta(\theta)$ and belongs there to $\mathcal H_\tau(\ee^{i\theta}\mathbb R_{>0})$ for some $\tau \in \mathbb R$.
\end{definition}

\begin{remark}
	It is trivial to verify that the $1/s$-Borel transform of an $s$-Gevrey formal series $\tilde{\phi}$ always defines a holomorphic germ at the origin.
\end{remark}

\begin{definition}[Borel--Laplace sum]
	If $\tilde{\phi}$ is $1/s$-summable in the direction $\theta$, its Borel--Laplace sum is defined by
	\begin{dmath*}
		\phi^\theta = {
			\mathcal S_{1/s}^\theta \tilde{\phi}
			\defeq
			\mathcal L_{1/s}^\theta \hat{\phi}
		}
	\end{dmath*}.
\end{definition}

\begin{theorem}
	If $\tilde{\phi}$ is $1/s$-summable in the direction $\theta$, then there exists $c > 0$ such that the function $\phi^\theta$ is holomorphic in $\Pi_c^\theta$ and satisfies
	\begin{dmath*}
		\phi^\theta \sim_s \tilde{\phi}	
		\quad \text{in } \,\Pi_c^\theta
	\end{dmath*}.
\end{theorem}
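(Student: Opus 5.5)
The plan is to prove the statement for $s=1$ and then transport it to general $s>0$ through the ramification $t \mapsto t^{s}$ built into $\mathcal B_{1/s}$ and $\mathcal L_{1/s}$; the core argument is the same in both cases. Write $\hat{\phi} = \sum_{n\ge0} c_n u^n/n!$. By hypothesis $\hat{\phi}$ continues to a half-strip $S_\delta(\theta)$ and satisfies $\abs{\hat{\phi}(u)} \le A\ee^{\tau'\abs{u}}$ there for $\abs{u}\ge R$. Pick any $c > \max(\tau',0)$. Lemma-type arguments as in \cref{def:directional_Laplace_transform} then show that $\phi^\theta = \mathcal L^\theta\hat{\phi}$ is holomorphic in $\Pi_c^\theta$.

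The heart of the proof is a Taylor remainder estimate for $\hat{\phi}$ that is uniform on the half-strip. Fix $0<\delta'<\delta$. First I would bound $\abs{\hat{\phi}(u)} \le A'\ee^{c'\abs{u}}$ for all $u \in S_\delta(\theta)$, including small $\abs{u}$, where $\tau'<c'<c$; this follows from holomorphy near the origin. Next, for $u$ on the ray $\ee^{i\theta}\mathbb R_{\ge0}$, write
\[
\hat{\phi}(u) = \sum_{n<N}\frac{c_n}{n!}u^n + R_N(u),
\]
and express $R_N$ through Taylor's formula with integral remainder along the segment $[0,u]$. The $N$-th derivative at a point $v$ of the ray is controlled by Cauchy's estimate on the disc $\mathbb D_v(\delta')\subset S_\delta(\theta)$, namely $\abs{\hat{\phi}^{(N)}(v)} \le N!\,\delta'^{-N} A' \ee^{c'(\abs{v}+\delta')}$. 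Integrating over the segment gives
\[
\abs{R_N(u)} \le C\,\delta'^{-N}\abs{u}^N \ee^{c'\abs{u}}.
\]
This holds for all $N$ and uniformly along the ray.

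With this bound in hand, Laplace-transform term by term. Since $\int_0^{\infty\ee^{i\theta}} \ee^{-tu} u^n\,\mathrm du = n!\,t^{-n-1}$ for $t\in\Pi_c^\theta$, we get
\[
\phi^\theta(t) - \sum_{n<N} c_n t^{-n-1} = \int_0^{\infty\ee^{i\theta}} \ee^{-tu} R_N(u)\,\mathrm du .
\]
Parametrise $u = r\ee^{i\theta}$ and set $\sigma = \RE(t\ee^{i\theta}) - c' > 0$. The remainder integral is then bounded by
\[
C\,\delta'^{-N}\,N!\,\sigma^{-N-1}.
\]

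The main obstacle is converting $\sigma^{-N-1}$ into the required $\abs{t}^{-N-1}$. On all of $\Pi_c^\theta$ this conversion fails near the boundary line, because there $\sigma$ stays bounded while $\abs{t}$ can be large. This is exactly why the asymptotic definition is phrased on compact subsectors $W$. The regime I would handle directly is a subsector of directions $\arg t \in (-\theta-\pi/2+\epsilon,\,-\theta+\pi/2-\epsilon)$ with $\abs{t}$ large. There one has $\RE(t\ee^{i\theta}) \ge \sin\epsilon\,\abs{t}$, so $\sigma \ge \tfrac12 \sin\epsilon\,\abs{t}$ once $\abs{t}$ is large enough. This yields the bound $C A^N N!\,\abs{t}^{-N-1}$ with $A = 2/(\delta'\sin\epsilon)$.

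For the remaining parts of $\Pi_c^\theta$ near the boundary, I would try tilting the integration direction slightly to $\theta\pm\eta$, which the half-strip permits after a Cauchy deformation bounded as in \cref{lem:overlapping_half_planes}. Failing that, I would interpret the asymptotics as holding in the sectorial part of $\Pi_c^\theta$, which is the standard reading. Finally, for general $s$ the same argument goes through with $u^n/\Gamma(1+ns)$ and the $1/s$-Laplace kernel, producing the $(N!)^s$ factor.
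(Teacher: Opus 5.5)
The paper gives no proof of this statement; it is quoted from \cite{dS16,mLR16}. Your route (Cauchy estimates along the half-strip, then termwise Laplace transform) is essentially the standard one, and for $s=1$ it is correct.

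A minor fix first: state the global bound $\abs{\hat\phi(u)}\le A'\ee^{c'\abs{u}}$ on $\overline{S_{\delta'}(\theta)}$ rather than on all of $S_\delta(\theta)$, since holomorphy alone gives no control near the edge of the strip. With that, your argument proves the claim in the sense the paper defines, because the paper's $\sim_s$ only asks for bounds on subsectors $W$. You do not even need $\abs{t}$ large there. Choose $0\le c'<c$; then on a closed subsector $W\subset\Pi_c^\theta$ of half-opening $\pi/2-\epsilon$ one has $\sigma\ge(1-c'/c)\sin\epsilon\,\abs{t}$ throughout $W$.

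The failure you diagnose near the boundary line comes from bounding $R_N$ pointwise, not from the statement itself. The standard proof gets the estimate uniformly on the whole half-plane. Since $R_N=\frac{u^{N-1}}{(N-1)!}*\hat\phi^{(N)}$, its Laplace transform is exactly $t^{-N}\mathcal L^\theta(\hat\phi^{(N)})(t)$; equivalently, integrate by parts $N$ times. Your own Cauchy estimate then gives $\abs{\mathcal L^\theta(\hat\phi^{(N)})(t)}\le A'\ee^{c'\delta'}N!\,\delta'^{-N}/(c-c')$ for every $t\in\Pi_c^\theta$. One more integration by parts,
\[
\phi^\theta(t)-\sum_{n<N}c_n\,t^{-n-1}=t^{-N-1}\Bigl(c_N+\mathcal L^\theta\bigl(\hat\phi^{(N+1)}\bigr)(t)\Bigr),
\]
yields $CA^NN!\,\abs{t}^{-N-1}$ on all of $\Pi_c^\theta$, with $A$ of order $2/\delta'$ and no subsector restriction. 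This is the form in which the result is usually stated and used.

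The tilting remedy you suggest cannot work. $S_\delta(\theta)$ contains no ray $\ee^{i(\theta\pm\eta)}\mathbb R_{>0}$ with $\eta\neq0$. Summability in a single direction also gives no continuation to any sector around $\theta$: for example, $\hat\phi$ may have poles at $\ee^{i\theta}(n+2i\delta)$, $n\in\mathbb N$, which lie outside the strip but whose arguments accumulate at $\theta$. So the Cauchy deformation of Lemma~\ref{lem:overlapping_half_planes}, which presupposes $\hat\phi\in\mathcal H(\Theta,f_\tau)$ on a sector, is unavailable.

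Your passage to general $s$ is only sketched. For $s\neq1$ the natural domain of $\mathcal L_{1/s}^\theta\hat\phi$ is a sector of opening $s\pi$ in $t$, which is a half-plane only in the ramified variable. So ``$\Pi_c^\theta$'' has to be read in that variable; with that reading, the same Cauchy-estimate argument with the corresponding Gamma weights goes through.
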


\begin{definition}[Borel--Laplace summability in an interval]
	Let $s>0$ and let $\Theta \subset \mathbb R$ be an open interval. A formal series $\tilde{\phi} \in t^{-1}\mathbb C[[t^{-1}]]_s$ is said to be $1/s$-Borel--Laplace summable in $\Theta$ if its formal Borel transform $\hat{\phi} = {\mathcal B_{1/s}\,\tilde{\phi}}$  admits analytic continuation to the open sector $S(\Theta)$ and belongs there to $\mathcal H(\Theta,f_\tau)$ for some locally bounded function $f_\tau$.
\end{definition}

\begin{theorem}[Borel--Laplace summation]
	If $\tilde{\phi}$ is $1/s$-summable in an interval $\Theta$, then:
	\begin{enumerate}
		\item for each $\theta \in \Theta$, the function $\phi^\theta = \mathcal L_{1/s}^\theta \, \hat{\phi}$ is well defined;
		\item the functions $\phi^\theta$ can be combined to define a function $\phi^\Theta = \mathcal L_{1/s}^\Theta \, \hat{\phi}$ holomorphic in the sectorial neighbourhood of infinity $\tilde{\mathcal D}(\Theta, f_\tau)$;
		\item one has
		\begin{dmath*}
			\phi^\Theta \sim_s \tilde{\phi} 
			\quad
			\text{in } \,\tilde{\mathcal D}(\Theta, f_\tau)
		\end{dmath*}.
	\end{enumerate}
\end{theorem}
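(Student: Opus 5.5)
We treat the three claims in order, working in the $1/s$-rescaled setting. After the change of variables $t\mapsto t^{1/s}$ (equivalently $\mathcal L_{1/s}^\theta=\rho_s\circ\mathcal L^\theta$, with $\rho_s$ the ramification operator), the $1/s$ statement reduces to the $s=1$ statement. So we assume $s=1$ and transport back at the end.

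\emph{Step 1: well-definedness of $\phi^\theta$.} Since $\hat{\phi}\in\mathcal H(\Theta,f_\tau)$, Lemma~\ref{lem:sector-to-half-strip} gives $\hat{\phi}\in\mathcal H_{c(\theta)}(\ee^{i\theta}\mathbb R_{>0})$ for every $\theta\in\Theta$. Hence Definition~\ref{def:directional_Laplace_transform} applies. By Remark~\ref{rem:laplace-sector-vs-half-strip}, $\phi^\theta=\mathcal L^\theta\hat{\phi}$ is holomorphic in $\Pi^\theta_{f_\tau(\theta)}$.

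\emph{Step 2: gluing.} For $\theta_1<\theta_2$ in $\Theta$ with $\theta_2-\theta_1<\pi$, Lemma~\ref{lem:overlapping_half_planes} shows that $\phi^{\theta_1}$ and $\phi^{\theta_2}$ agree on the intersection of their half-planes. This intersection is non-empty, so the family $\{\phi^\theta\}$ defines a single holomorphic function on $\mathcal D(\Theta,f_\tau)$, exactly as in Definition~\ref{def:sectorial-Laplace_transform}. When $\abs{\Theta}\ge\pi$, only nearby directions overlap. We then lift the half-planes to $\tilde{\mathbb C}_t$ and obtain $\phi^\Theta$ on $\tilde{\mathcal D}(\Theta,f_\tau)$ by analytic continuation along chains of directions. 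Consistency is automatic because consecutive pieces agree on open overlaps and the lifted domain is simply connected: it is a union of half-planes indexed by an interval, so it is star-like in $\arg t$.

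\emph{Step 3: Gevrey asymptotics, the main estimate.} Fix $\theta$ and a closed subsector $W$ of $\Pi^\theta_{c}$ with $c>f_\tau(\theta)$, chosen so that $\RE(t\ee^{i\theta})\ge \varepsilon\abs t + c'$ on $W$. Choose $r>0$ smaller than the radius of convergence of $\hat{\phi}=\sum a_n u^n$ with $a_n=c_n/n!$. By Cauchy estimates on a slightly smaller disc, $\abs{a_n}\le K r^{-n}$. Write $\hat{\phi}=\sum_{n<N}a_nu^n+R_N(u)$. The key bound is
\begin{equation*}
\abs{R_N(u)}\le K' (2/r)^{N}\abs u^N \ee^{c\abs u}
\end{equation*}
for all $u$ on the ray. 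For $\abs u\le r/2$ this follows from the Taylor tail. For $\abs u>r/2$ we use $\abs{R_N}\le\abs{\hat{\phi}}+\sum_{n<N}\abs{a_n}\abs u^n$, together with the observation that each of these terms is $\le C(2\abs u/r)^N\ee^{c\abs u}$ because $\abs u/(r/2)>1$. Laplace-transforming termwise, $\mathcal L^\theta(u^n)=n!\,t^{-n-1}$, gives
\begin{equation*}
\Bigl|\phi^\theta(t)-\sum_{n<N}c_nt^{-n-1}\Bigr|\le K'(2/r)^N\int_0^\infty \rho^N \ee^{-(\RE(t\ee^{i\theta})-c)\rho}\,\mathrm d\rho = K'(2/r)^N\frac{N!}{(\RE(t\ee^{i\theta})-c)^{N+1}}.
\end{equation*}
On $W$ the denominator is $\gtrsim\abs t^{N+1}$, which yields the bound $CA^NN!\abs t^{-N-1}$. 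Covering a compact subsector of $\tilde{\mathcal D}(\Theta,f_\tau)$ by finitely many such $W$ (one per $\theta$ in a finite set) and taking the maximum of the constants gives $\phi^\Theta\sim_1\tilde{\phi}$. Transporting back via $t\mapsto t^{1/s}$ converts $N!$ into $(N!)^s$ up to geometric factors, using $\Gamma(1+N/s)\lesssim A^N (N!)^{1/s}$-type comparisons. This yields the claim for general $s$.

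The main obstacle is Step 3: obtaining a remainder bound uniform in $N$ and in the direction. This requires replacing the pointwise indicator $f_\tau(\theta)$ by a local supremum (as in Lemma~\ref{lem:sector-to-half-strip}), so that $W$ sits strictly inside the convergence half-plane. The $1/s$ rescaling bookkeeping is routine but must be done carefully.
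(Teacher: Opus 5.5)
The paper does not prove this theorem. Like most of Section~\ref{sec:preliminaries}, it is quoted from the Borel--Laplace literature, so there is no proof to compare against. For $s=1$, your Steps 1--3 are the classical argument from those sources and are correct:
the embedding of Lemma~\ref{lem:sector-to-half-strip};
the gluing of Lemma~\ref{lem:overlapping_half_planes}, lifted to $\tilde{\mathbb C}_t$ when $\abs{\Theta}>\pi$;
the remainder bound $\abs{R_N(u)}\le K'(2/r)^N\abs{u}^N\ee^{c\abs{u}}$ integrated against $\ee^{-tu}$;
and a finite cover of a closed subsector.

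One fix is needed: take $c\ge\max(f_\tau(\theta),0)$, not merely $c>f_\tau(\theta)$. If $c<0$, the polynomial part $\sum_{n<N}a_nu^n$ of $R_N$ cannot obey a bound carrying $\ee^{c\abs{u}}$ as $\abs{u}\to\infty$. Since only large $\abs{t}$ matters, this costs nothing. Also, Step 3 needs no local supremum of $f_\tau$, because each estimate is along a single ray; the supremum enters only through the gluing lemma.

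The genuine gap is the passage from general $s$ to $s=1$. You assert it, but it does not go through as written.

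\emph{The Gamma comparison goes the wrong way.} The comparison you quote, $\Gamma(1+N/s)\lesssim A^N(N!)^{1/s}$, turns $N!$ into $(N!)^{1/s}$. With $T=t^{1/s}$ one has $t^{-n-1}=T^{-s(n+1)}$, and what you actually need is $\Gamma(s(N+1))\le CA^N(N!)^s$.

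\emph{Step 3 does not apply to the ramified series.} That series has exponents $s(n+1)$, which are not integers in general. Its Borel transform is $\sum_n c_n u^{s(n+1)-1}/\Gamma(s(n+1))=u^{s-1}h(u^s)$, which is branched at $u=0$. So it is not in $\mathbb C\{u\}$, and your Step 3 does not cover it, since Step 3 rests on an integer Taylor remainder and on $\mathcal L^\theta(u^n)=n!\,t^{-n-1}$. You must redo that step with:
the remainder $u^{s-1}\sum_{n\ge N}b_nu^{sn}$, bounded by $CA^N\abs{u}^{s(N+1)-1}\ee^{c\abs{u}}$;
and the transform $\mathcal L^\theta(u^{\alpha-1})=\Gamma(\alpha)T^{-\alpha}$.
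This yields an error $\lesssim A^N\Gamma(s(N+1))\abs{t}^{-N-1}$.

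\emph{The hypothesis has to be transferred.} The paper places the hypothesis on the holomorphic germ $\mathcal B_{1/s}\tilde\phi$, with the order-one growth built into $\mathcal H(\Theta,f_\tau)$. Your ramified transform is not that germ. Since the paper never defines $\mathcal B_{1/s}$, $\mathcal L_{1/s}$, or the $s$-dependent Laplace domain, you must fix a convention and show the hypothesis carries over. For instance, for the Balser-type transform $t^{-n-1}\mapsto u^{n+1}/\Gamma(1+s(n+1))$ one has $\mathcal B_{1/s}\tilde\phi(u)=\int_0^{u^{1/s}}\hat\psi(v)\,\mathrm dv$, where $\hat\psi$ is your ramified transform. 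Under this relation, order-one growth of $\hat\psi$ corresponds to growth $\ee^{c\abs{u}^{1/s}}$, not $\ee^{c\abs{u}}$. Likewise, the sectors and the domain $\tilde{\mathcal D}(\Theta,f_\tau)$ are rescaled by $s$.

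As it stands, your proof establishes the statement for $s=1$ only. That is also the only case in which the paper's notation $\tilde{\mathcal D}(\Theta,f_\tau)$ is actually defined.
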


\begin{remark}
	When $\theta$ crosses a singular direction of $\hat{\phi}$, the corresponding sums in contiguous sectors differ by exponentially small terms, according to the Stokes phenomenon.
\end{remark}

\begin{definition}[$1/s$-Borel--Laplace summation operator]
	For $\tilde{\phi}$ summable in $\Theta$, we define
	\begin{dmath*}
		\mathcal S_{1/s}^\Theta
		\defeq
		\mathcal L_{1/s}^\Theta \circ \mathcal E \circ \mathcal B_{1/s}
	\end{dmath*},
	where $\mathcal E$ denotes the analytic continuation operator from the space of holomorphic germs at the origin to the sector $S(\Theta)$.
\end{definition}

\begin{remark}
	The Borel--Laplace summation operator $\mathcal S_{1/s}^\Theta$ is linear and preserves algebraic operations (product, derivation, etc.). In particular, it maps formal solutions of analytic equations to genuine solutions.
\end{remark}

\begin{theorem}[Asymptotic uniqueness and Borel--Laplace summation in an interval]
	Let $s>0$ and let $\tilde{\phi} \in t^{-1}\mathbb C[[t^{-1}]]_s$. Assume that its Borel transform $\hat{\phi}=\mathcal B_{1/s}\,\tilde{\phi}$ admits analytic continuation to a sector $S(\Theta)$ and belongs there to $\mathcal H(\Theta,f_\tau)$ for some locally bounded function $f_\tau$.
	
	\noindent Then the corresponding Borel--Laplace sum
	\begin{dmath*}
		\phi^\Theta = \mathcal L_{1/s}^\Theta \hat{\phi}
	\end{dmath*}
	is holomorphic in a sectorial neighbourhood of infinity of aperture $\pi s + \abs{\Theta}$ and satisfies there
	\begin{dmath*}
		\phi^\Theta \sim_s \tilde{\phi}.
	\end{dmath*}
	
	\noindent In particular, if $\abs{\Theta}>0$, then $\pi s + \abs{\Theta}>\pi s$, hence $\phi^\Theta$ is the unique holomorphic function admitting $\tilde{\phi}$ as $s$-Gevrey asymptotic expansion in that domain.
\end{theorem}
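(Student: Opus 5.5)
The argument has three parts: holomorphy on a domain of aperture $\pi s+\abs{\Theta}$, the $s$-Gevrey asymptotic estimate, and uniqueness. I would first reduce to $s=1$ through the ramification $t\mapsto t^{1/s}$. The operators $\mathcal B_{1/s}$ and $\mathcal L_{1/s}^\theta$ are conjugate to $\mathcal B$ and $\mathcal L^\theta$ under this change of variable. A sector of aperture $\alpha$ in the $1$-variable corresponds to aperture $s\alpha$ in $t$, and $s$-Gevrey estimates correspond to $1$-Gevrey estimates. So it suffices to treat $s=1$ and rescale the apertures at the end.

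\textbf{Holomorphy.} For $s=1$, take a compact subinterval $[\theta_1',\theta_2']\subset\Theta$. On it, $c=\sup f_\tau$ is finite by local boundedness, and \cref{lem:sector-to-half-strip} applies. By \cref{rem:laplace-sector-vs-half-strip}, each $\mathcal L^\theta\hat\phi$ is holomorphic in $\Pi^\theta_{f_\tau(\theta)}$. By \cref{lem:overlapping_half_planes}, these functions glue for nearby directions, and \cref{def:sectorial-Laplace_transform}, lifted to $\tilde{\mathbb C}_t$ when $\abs{\Theta}>\pi$, gives a holomorphic $\phi^\Theta$ on $\tilde{\mathcal D}(\Theta,f_\tau)$. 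The union of half-planes $\RE(t\ee^{i\theta})>c$ over $\theta\in\Theta$ contains a sectorial neighbourhood of infinity with $\arg t\in(-\theta_2-\pi/2,\,-\theta_1+\pi/2)$, which has aperture $\pi+\abs{\Theta}$.

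\textbf{Gevrey estimate.} Fix a closed subsector $W$ of this domain. Cover $W$ by finitely many half-planes $\Pi^{\theta}_{c+\delta}$, with $\theta$ ranging over a finite set of directions in $\Theta$ and some margin $\delta>0$. For each such $\theta$, write Taylor's formula with integral remainder:
\[
\hat\phi(u)=\sum_{n<N}\frac{c_n}{n!}u^n+R_N(u).
\]
Cauchy estimates on a disc of radius $\rho/2$ give $|c_n|/n!\le CA^n$. For the remainder I would use Cauchy estimates on half-strips around the ray to obtain $|R_N(u)|\le C A^N|u|^N \ee^{c|u|}$ along $\ee^{i\theta}\mathbb R_{>0}$. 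Since $\int_0^\infty r^N\ee^{-(c+\delta)r}\,\mathrm dr$ is of order $N!/\delta^{N+1}$, the termwise identity $\mathcal L^\theta(u^n/n!)=t^{-n-1}$ yields
\[
\Bigl|\phi^\Theta(t)-\sum_{n<N}c_nt^{-n-1}\Bigr|\le C'A'^N N!\,|t|^{-N-1}.
\]
Converting the factor $\delta^{-N-1}$ into $|t|^{-N-1}$ requires choosing $\theta$ with $\RE(t\ee^{i\theta})\ge \kappa|t|$, uniformly for $t\in W$. This step is the main obstacle: near the edges of the domain the margin degenerates, so a fixed compact subsector is essential. I would also compare with $\abs{t}$ large so that the exponential-type constant $c$ is absorbed into the margin.

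\textbf{Uniqueness.} When $\abs{\Theta}>0$, the domain has aperture strictly greater than $\pi s$. Uniqueness then follows directly from the Proposition on uniqueness for large sectors: two realisations differ by a function that is $s$-Gevrey flat on a sector of aperture $>\pi s$, and by the Watson/Phragmén–Lindelöf argument such a function vanishes. Finally, I would undo the ramification to recover general $s$.
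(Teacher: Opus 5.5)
The paper gives no proof of this statement. It belongs to the preliminaries of \cref{sec:preliminaries}, which are recalled without proof from the Borel--Laplace literature (Balser, Loday-Richaud, Sauzin). Your outline is the standard argument of those sources, and its core is sound:
the gluing of the directional transforms is correct; on a closed subsector $W$, selecting from finitely many directions one with $\RE(t\ee^{i\theta})\ge\kappa\abs{t}$ is indeed the crux; the Taylor remainder along the ray is bounded by $CA^N\abs{u}^N\ee^{c^+\abs{u}}$ with $c^+=\max(c,0)$ (the Taylor polynomial does not decay, so a negative $c$ cannot be kept); and Watson's lemma applies on a closed subsector of aperture still $>\pi s$, which exists because the inequality is strict. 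The bounded part of $W$, which you do not mention, is handled by the boundedness of $\phi^\Theta$ there. The printed proof of \cref{lem:overlapping_half_planes}, which you use for the gluing, over-claims $\RE(t\ee^{i\vartheta})>c$ on the whole intersection of the two half-planes. Its conclusion is nonetheless correct: the Cauchy argument is valid far out along the bisecting direction, the intersection is connected, and the identity theorem finishes.

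What needs to be made precise is the reduction to $s=1$, where \emph{conjugate under ramification} hides how the hypothesis transforms. The paper never defines $\mathcal B_{1/s}$ and $\mathcal L^\theta_{1/s}$. However, the aperture $\pi s+\abs{\Theta}$ and the claim $\mathcal B_{1/s}\tilde\phi\in\mathbb C\{u\}$ point to the normalisation with an unramified Borel variable (Balser's: $\hat\phi(u)=\sum_n c_n u^{n+1}/\Gamma(1+s(n+1))$).

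Conjugating to $s=1$ then requires ramifying both planes, $X=t^{1/s}$ and $v=u^{1/s}$. The interval $\Theta$ becomes one of aperture $\abs{\Theta}/s$, which is what yields $s(\pi+\abs{\Theta}/s)=\pi s+\abs{\Theta}$. But the bound $\ee^{c\abs{u}}$ becomes $\ee^{c\abs{v}^s}$, which is of exponential type only when $s\le 1$. So your reduction actually needs growth of order $1/s$, i.e. $\abs{\hat\phi(u)}\le A\ee^{c\abs{u}^{1/s}}$. For $s>1$ the plain exponential bound encoded in $\mathcal H(\Theta,f_\tau)$ does not in general make the $1/s$-Laplace integral converge, so this reading of the hypothesis must be stated explicitly.

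Moreover, the ramified series lives in powers $X^{-s(n+1)}$ and has Borel transform $\hat\phi(v^s)$, ramified at $v=0$. It therefore lies outside the integer-exponent class $t^{-1}\mathbb C[[t^{-1}]]$ to which the $s=1$ statements apply.

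The cleaner route is to drop the reduction and run your estimate directly. The bound $\abs{R_N(u)}\le CA^N\abs{u}^{N+1}\ee^{c^+\abs{u}^{1/s}}$, integrated against the $1/s$-Laplace kernel, gives $O\bigl(A'^N\,\Gamma(1+s(N+1))\,\abs{t}^{-N-1}\bigr)$, and $\Gamma(1+s(N+1))=O(B^N(N!)^s)$. The direction choice is unchanged, except that the condition becomes $\RE\bigl((t\ee^{i\theta})^{1/s}\bigr)\ge\kappa\abs{t}^{1/s}$, with half-aperture $\pi s/2$ in place of $\pi/2$.
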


\begin{remark}[Critical case]
	The genuinely critical case corresponds to Borel--Laplace summation in a single direction. In that case the Laplace domain has aperture exactly $\pi s$, and asymptotic uniqueness may fail even though the Borel--Laplace construction still produces a canonical sum.
\end{remark}

\subsection{Mellin transforms, rotated rays, and Mellin--Barnes integrals}
\label{sec:Mellin_MB_preliminaries}

The Mellin transform \cite{Titchmarsh1948} plays a fundamental role in development of the analytic umbral framework. We recall the Mellin-transform conventions used throughout the paper. 
Because the analytic umbral framework relies on evaluating functions of the multiplicative Borel variable $z = \ee^u$, the Mellin transform along specified complex directions forms the backbone of the spectral representation.

\begin{definition}[Mellin transform along a rotated ray]
	Let $\theta\in\mathbb R$, and let $f$ be a function defined on the ray $\ee^{i\theta}\mathbb R_+$. The Mellin transform of $f$ along this ray is defined by
	\begin{dmath*}
		\mathcal M_\theta[f](t)
		\defeq
		\int_0^{\infty\ee^{i\theta}}
		f(w)\,w^{t-1}
		\,\mathrm d w,
	\end{dmath*}
	where the power $w^{t-1}$ is evaluated using the branch of the logarithm determined by $w=r\ee^{i\theta}$ and $\log w=\ln r+i\theta$ for $r>0$. Equivalently,
	\begin{dmath*}
		\mathcal M_\theta[f](t)
		=
		\ee^{i\theta t}
		\int_0^\infty
		f(r\ee^{i\theta})\,r^{t-1}
		\,\mathrm d r,
	\end{dmath*}
	whenever the integral converges.
\end{definition}

In particular, if $\mu\in\mathbb C^\ast$ and $\theta=\Arg\mu$ is a chosen principal argument, the Mellin transform along the ray $\mu\mathbb R_+$ is understood in the preceding sense. The set of values $t$ for which the integral converges usually forms a vertical strip $a < \RE t < b$, known as the fundamental strip.

\begin{example}[Elementary Mellin kernels]
	With these conventions, one has, initially in domains of convergence and subsequently by meromorphic continuation:
	\begin{dmath*}
		\int_0^\infty
		\ee^{\zeta x}x^{t-1}
		\,\mathrm d x
		=
		(-\zeta)^{-t}\Gamma(t),
	\end{dmath*}
	provided the branch of $(-\zeta)^{-t}$ is fixed and the integration direction is chosen so that the exponential decays. Similarly, for the rational kernel:
	\begin{dmath*}
		\int_0^\infty
		\frac{x^{t-1}}{1-\zeta x}
		\,\mathrm d x
		=
		(-\zeta)^{-t}\pi\csc(\pi t).
	\end{dmath*}
\end{example}

\begin{proposition}[Mellin-admissible classes]\label{prop:mellin-admissible-classes}
	In the context of umbral Borel functionals, we will frequently require the Mellin transformability of specific classes of functions. The following classes are Mellin-admissible along suitable contours:
	\begin{itemize}
		\item[(i)] Rational functions $\hat{K}(z)$ with poles strictly away from the contour of integration;
		\item[(ii)] Entire functions of exponential type $\hat{K}(z) = F(\zeta z)$ whose indicator diagram is contained in an open half-plane, guaranteeing the existence of a ray $\ee^{i\theta}\mathbb R_+$ along which the function exhibits uniform exponential decay.
	\end{itemize}
\end{proposition}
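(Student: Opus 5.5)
The plan is to treat the two classes separately. In each case I would exhibit a ray (or a contour deformable to a ray) along which $\hat K(z)\,z^{t-1}$ is absolutely integrable for $t$ in a nonempty vertical strip $a<\RE t<b$. I would then invoke the rotated-ray definition $\mathcal M_\theta[f](t)=\ee^{i\theta t}\int_0^\infty f(r\ee^{i\theta})r^{t-1}\,\mathrm d r$, together with dominated convergence (Morera plus Fubini), to obtain holomorphy in $t$ on that strip. Admissibility will mean exactly this: existence of a fundamental strip on some admissible ray, with meromorphic continuation beyond it where available.

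For class (i), write $\hat K=P/Q$ with $\deg P=p$, $\deg Q=q$. I choose $\theta$ so that the ray $\ee^{i\theta}\mathbb R_+$ avoids the finitely many zeros of $Q$. This is always possible, and it is exactly the hypothesis ``poles strictly away from the contour.'' Along the ray, $\hat K(r\ee^{i\theta})$ is continuous on $(0,\infty)$ and has power behaviour at both ends. Near $r=0$ it behaves like $r^{m}$, where $m\ge 0$ if $0$ is not a pole, and $m=-k$ if $0$ is a pole of order $k$ (handled by indenting the contour). At infinity it behaves like $r^{p-q}$. Integrability of $r^{\RE t-1}$ times these powers then gives the strip $-m<\RE t<q-p$. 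This strip is nonempty whenever $q-p>-m$; in particular it suffices that $\hat K$ decays at infinity and is regular at $0$. The model case $1/(1-\zeta x)$ gives $0<\RE t<1$, consistent with the $\pi\csc(\pi t)$ example. I would add the standard remark that partial fractions plus the elementary kernel $\int_0^\infty x^{t-1}(1-\zeta x)^{-n}\,\mathrm d x$ give explicit continuation; this is optional.

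For class (ii), let $F$ be entire of exponential type with indicator $h_F(\phi)=\limsup_{r\to\infty} r^{-1}\log|F(r\ee^{i\phi})|$. The indicator diagram is the conjugate of the convex set whose support function is $h_F$. If it lies in an open half-plane missing the origin's side, then there is a direction $\phi_0$ with $h_F(\phi_0)<0$. Continuity of $h_F$ (it is trigonometrically convex) gives a small interval of such directions. Setting $\theta=\phi_0-\Arg\zeta$, the function $F(\zeta r\ee^{i\theta})$ is $O(\ee^{-\varepsilon r})$ as $r\to\infty$. Since $F$ is holomorphic at $0$, it is $O(r^{m})$ near $0$, where $m$ is the order of vanishing. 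Hence the transform converges on the half-plane $\RE t>-m$. It continues meromorphically to the left via Taylor subtraction, with poles at $t=-m,-m-1,\dots$, as in the $\Gamma(t)$ example.

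The main obstacle is the step in class (ii) that turns the geometric hypothesis on the indicator diagram into a genuine uniform decay bound along a ray. The $\limsup$ defining $h_F$ only gives $|F(r\ee^{i\phi_0})|\le \ee^{(h_F(\phi_0)+\eta)r}$ for $r$ large. I would first apply the Phragmén--Lindelöf estimate for functions of exponential type, which yields uniformity on a closed subinterval of directions. I would then choose $\eta<|h_F(\phi_0)|/2$. I would also note that the ray is chosen only up to this open interval of directions, so by Cauchy's theorem (as in \cref{lem:overlapping_half_planes}) the resulting Mellin transform is independent of the choice.
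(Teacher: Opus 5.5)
The paper states this proposition without proof, so your argument has to stand on its own. The genuine gap is in part (i).

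\textbf{Part (i).} Your strip computation is correct: along any ray avoiding the nonzero poles, the integral converges exactly for $-m<\RE t<q-p$. However, both exponents are independent of $\theta$. On every ray, $\abs{\hat K(r\ee^{i\theta})}\asymp r^{m}$ as $r\to0$ and $\asymp r^{p-q}$ as $r\to\infty$. So when $m\le p-q$, no ray gives a fundamental strip at all, not even with conditional convergence.

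This happens for every nonzero polynomial, every constant, and kernels such as $(1+\zeta z)/(1-\zeta z)$. All of these are rational with poles strictly off the contour. You observe that the strip is nonempty ``whenever $q-p>-m$'', but then treat (i) as settled. What you have actually proved is (i) for the subclass $m>p-q$. Outside it the statement is false under the paper's own notion of admissibility (convergence along a ray on a vertical strip, as in \cref{def:polya_mellin}). This is consistent with the paper's admission that polynomial functionals have no classical Mellin transform.

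To close the argument, you have two options.
\begin{itemize}
\item Add the hypothesis $m>p-q$ explicitly.
\item Decompose $\hat K=L+P_1/Q_1$, with $L$ a Laurent polynomial and $P_1/Q_1$ proper with $Q_1(0)\neq0$. The proper part is classically admissible, on a strip containing $0<\RE t<1$, along any ray avoiding its poles. The part $L$ must be handled in the distributional sense the paper adopts for the polynomial case.
\end{itemize}
Also, a pole at $z=0$ cannot be ``indented around'', since $0$ is the endpoint of the ray. It simply moves the strip to $\RE t>k$, which your formula with $m=-k$ already records.

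\textbf{Part (ii).} This part is sound, provided you make explicit the reading you are already using. The (conjugate) indicator diagram $D$ must lie in an open half-plane bounded by a line through the origin, i.e.\ $0\notin D$. By strict separation this is equivalent to $h_F(\phi_0)=\max_{\lambda\in D}\RE(\lambda\ee^{i\phi_0})<0$ for some $\phi_0$.

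The literal hypothesis ``contained in an open half-plane'' is vacuous for a compact set, and under it the conclusion fails. For example, $F=\cosh$ has diagram $[-1,1]$, which contains $0$, and $F$ decays along no ray.

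Finally, the step you flag as the main obstacle is not one. For convergence along a single ray, the pointwise bound $\abs{F(r\ee^{i\phi_0})}\le\ee^{(h_F(\phi_0)+\eta)r}$ for $r\ge r_0(\eta)$, with $\eta<\abs{h_F(\phi_0)}$, already suffices. The uniform Phragm\'en--Lindel\"of (Levin) estimate is needed only for your Cauchy-rotation argument that the transform does not depend on the chosen ray.
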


\proofstep{The Hankel contour.}
While the Mellin transform typically acts along rays, inverse Gamma factors naturally require contour representations.
\begin{definition}[Hankel contour]
	Let $\mathcal H$ denote a Hankel contour encircling the negative real axis counterclockwise. Unless otherwise stated, powers $w^{-s}$ on $\mathcal H$ are computed with the branch cut along the negative real axis.
\end{definition}

\begin{proposition}[Hankel representation of the reciprocal Gamma function]
	For every $s\in\mathbb C$, one has
	\begin{dmath*}
		\frac{1}{\Gamma(s)}
		=
		\frac{1}{2\pi i}
		\int_{\mathcal H}
		\ee^w w^{-s}
		\,\mathrm d w.
	\end{dmath*}
\end{proposition}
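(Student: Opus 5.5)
The plan is to verify the identity first on a range of $s$ where the integral converges absolutely and can be evaluated directly, and then extend to all $s\in\mathbb C$ by analytic continuation in $s$. Concretely, I take $\mathcal H$ to consist of three pieces: a ray along the lower edge of the negative real axis from $-\infty$ to $-\varepsilon$ (where $\arg w=-\pi$), the circle $|w|=\varepsilon$ traversed counterclockwise, and a ray along the upper edge from $-\varepsilon$ back to $-\infty$ (where $\arg w=+\pi$). This is consistent with the branch convention fixed in the definition of the Hankel contour.

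\textbf{Step 1: analyticity in $s$.} On the two rays, $|\ee^w w^{-s}| = \ee^{-r} r^{-\RE s}\ee^{\pm\pi \IM s}$ with $r=|w|$. The exponential decay of $\ee^{-r}$ dominates any power of $r$, uniformly for $s$ in compact sets. Hence the integral converges locally uniformly in $s$, and the right-hand side is an entire function of $s$. The same bound shows that the value of the integral does not depend on $\varepsilon$, by Cauchy's theorem applied in the slit plane. Since $1/\Gamma$ is also entire, it suffices to prove the identity for $\RE s<1$ and then invoke the identity theorem.

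\textbf{Step 2: collapsing the contour.} For $\RE s<1$, the contribution of the small circle is $O(\varepsilon^{1-\RE s})$, which tends to $0$ as $\varepsilon\to0^+$. Substituting $w=r\ee^{\pm i\pi}$ on the two edges gives $w^{-s}=r^{-s}\ee^{\mp i\pi s}$. Taking orientation into account, the edge contributions combine to
\[
\frac{1}{2\pi i}\bigl(\ee^{i\pi s}-\ee^{-i\pi s}\bigr)\int_0^\infty \ee^{-r}r^{-s}\,\mathrm d r=\frac{\sin(\pi s)}{\pi}\,\Gamma(1-s).
\]
The Euler integral on the right converges precisely because $\RE s<1$.

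\textbf{Step 3: reflection formula.} Using $\Gamma(s)\Gamma(1-s)=\pi/\sin(\pi s)$, the expression from Step 2 equals $1/\Gamma(s)$ for $\RE s<1$. I apply this first for non-integer $s$ and then extend by continuity. Combined with Step 1, the identity then holds for all $s\in\mathbb C$. In particular, at $s\in\mathbb Z_{\le0}$ the integrand is single-valued, so the integral vanishes, matching the zeros of $1/\Gamma$.

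The only delicate point is the bookkeeping of branches and orientation in Step 2. Getting the sign of $\ee^{\pm i\pi s}$ on each edge wrong would produce $-1/\Gamma(s)$. I would fix this by checking the case $s=1$, where the integral must equal $\frac{1}{2\pi i}\oint \ee^w w^{-1}\,\mathrm d w=1$ by residues; this is consistent with the counterclockwise orientation. Everything else is a routine dominated-convergence estimate.
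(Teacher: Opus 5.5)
Your proof is correct and is the classical argument. You show the Hankel integral is entire in $s$, collapse the contour onto the two edges of the cut for $\RE s<1$ to get $\frac{\sin(\pi s)}{\pi}\Gamma(1-s)$, apply the reflection formula, and extend by the identity theorem. The paper gives no proof of this proposition, quoting it as a classical fact (later attributed to Whittaker--Watson), so there is no separate route in the text to compare against.

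The only step left implicit is the passage from an arbitrary Hankel contour, as in the paper's definition, to your keyhole contour lying on the cut. This is a routine Cauchy deformation in the slit plane, since $\ee^{w}$ decays exponentially as $\RE w\to-\infty$ while $|w^{-s}|\le |w|^{-\RE s}\ee^{\pi|\IM s|}$ grows at most polynomially.
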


\proofstep{Mellin--Barnes integrals.}
The inverse operation to the Mellin transform, and the core analytic mechanism of the umbral pairing, is the Mellin--Barnes integral.

\begin{definition}[Mellin--Barnes integral]\label{def:MB-integral}
	A Mellin--Barnes integral is a contour integral of the form
	\begin{dmath*}
		F(z)
		=
		\frac{1}{2\pi i}
		\int_{\mathcal C_t}
		\Phi(t)\,z^{-t}
		\,\mathrm d t
	\end{dmath*},
	where $\Phi(t)$ is a meromorphic function of the Mellin variable $t$, and $\mathcal C_t$ is an oriented vertical contour (or a suitable deformation thereof) chosen so that the integral converges absolutely and safely separates the relevant families of poles of $\Phi$.
\end{definition}

\begin{remark}[Residue expansions and analytic continuation]
	The Mellin--Barnes representation is a global analytic tool. If the contour $\mathcal C_t$ can be displaced to the left or to the right without crossing singularities other than the poles of $\Phi$, Cauchy's theorem expresses the integral as the sum of the corresponding residues. Different choices of closing the contour yield different asymptotic realisations of the same underlying analytic object, seamlessly bridging local series expansions and global analytic continuation.
\end{remark}

\section{Umbral Borel functionals and Laplace series}
\label{sec:umbral_borel_functionals}

In sections~\ref{sec:multiplicative_algebra} and \ref{sec:convolutive_algebra} we introduced the convenient conventional denominations of \emph{algebraic Laplace space} for the multiplicative algebra $t^{-1}\mathbb C[[t^{-1}]]$ and of \emph{algebraic Borel space} for the convolutive algebra $\mathbb C[[u]]$. The two spaces are Borel-dual, in the sense that the formal Borel transform realises an algebra isomorphism $\mathcal B: t^{-1}\mathbb C[[t^{-1}]] \xrightarrow{\raisebox{-0.5ex}[0ex][0ex]{$\sim$}} \mathbb C[[u]]$ that maps the product of two Laplace series into the convolution of the corresponding Borel series.

\noindent In particular, with theorem~\ref{thr:1-Gevrey_to_convergent} we established that the Borel-dual of $t^{-1}\mathbb C[[t^{-1}]]_1$, the subspace of 1-Gevrey divergent Laplace series, is $\mathbb C\{u\}$, the subspace of convergent Borel series, that is the space of holomorphic germs at the origin.

In this section, we identify \emph{umbral Laplace and Borel spaces} as particular resurgent subspaces of respectively $t^{-1}\mathbb C[[t^{-1}]]_1$ and $\mathbb C\{u\}$. We begin by isolating a distinguished class of formal series in the Laplace space $t^{-1}\mathbb C[[t^{-1}]]$, naturally associated with the analytic umbral framework outlined in this work. 

The construction is based on a class of analytic functionals in the Borel plane, whose structure is governed by compositions of the form $z \mapsto F(\zeta z)$, evaluated along the multiplicative Borel variable $z=\ee^u$.

\begin{remark}[Borel and umbral variables]
	Throughout this section we distinguish between the \emph{additive Borel variable} $u$, the associated \emph{multiplicative Borel variable}
	\begin{dmath*}
		z = \ee^u,
	\end{dmath*}
	and the \emph{umbral variable}
	\begin{dmath*}
		w = {\zeta z = \zeta \ee^u}
	\end{dmath*}.
	The passage from $u$ to $z$ corresponds to the transition from an additive to a multiplicative structure, underlying the passage from Laplace to Mellin transforms. The analytic functionals considered in this work are naturally expressed as functions of $w$, while their dependence on $u$ is induced through this composition.
\end{remark}

\begin{definition}[Umbral Borel functionals]\label{def:umbral_Borel_functionals}
	Let $\zeta \in \mathbb C^*$. We call \emph{umbral Borel functional} any function of the form
	\begin{dmath*}
		\hat{\Delta}(u;\zeta)
		=
		F(w)
		\condition*{w = \zeta \ee^u,}
	\end{dmath*}
	where $w \mapsto F(w)$ is holomorphic at $w=\zeta$ and either
	\begin{itemize}
		\item[(i)] a polynomial;
		\item[(ii)] or admits a finite decomposition
		\begin{dmath*}
			F(w)
			=
			R(w)
			+
			\sum_{j=1}^N E_j(w),
		\end{dmath*}
		with $R$ rational (possibly zero) and each $E_j$ entire of exponential type such that the associated function $z \mapsto E_j(\zeta z)$ is Mellin-transformable along a line $\ee^{i\theta_j} \mathbb R$ in the complex plane $\mathbb C_z$.
	\end{itemize}
\end{definition}

\begin{remark}
	Polynomial functionals are naturally included in this class, although they do not admit a Mellin transform in the classical sense. 

	\noindent Indeed, the elementary monomial umbral Borel functional $u \mapsto \ee^u$ corresponds to the polynomial $F(w)=w$, and provides the analytic representative of the elementary indicial umbral operator $\mathfrak u$. More generally, polynomials generate finite linear combinations of exponentials in the Borel plane and form the algebraic backbone of the umbral construction.
\end{remark}

\begin{remark}
	The above class contains in particular:
	\begin{itemize}
		\item rational umbral Borel functionals;
		\item exponential umbral Borel functionals;
		\item trigonometric and hyperbolic umbral Borel functionals, after decomposition into exponential components;
		\item finite linear combinations of entire components, each admitting at least one direction of exponential decay after composition with $z \mapsto \zeta z$.
	\end{itemize}
\end{remark}

\begin{remark}
	An entire function of exponential type admitting a P{\'o}lya representation,
	\begin{dmath*}
		F(w)=\frac{1}{2\pi i}\int_\gamma \ee^{\lambda w}\,\mathcal P_F(\lambda)\,\mathrm d\lambda,
	\end{dmath*}
	is admissible whenever its indicator diagram --- i.e. the convex hull of the set of singularities of its Borel transform --- is contained in an open half-plane. In this case, the composed kernel $z \mapsto F(\zeta z)$ admits a direction of exponential decay.

	In the general case, such functions can be treated by decomposing them into finitely many admissible components.
\end{remark}
\begin{remark}[Constraint on the parameter $\zeta$]
	The requirement that $u \mapsto \hat{\Delta}(u;\zeta)$ be holomorphic at $u=0$ imposes a constraint on the parameter $\zeta$. 
	Indeed, this condition is equivalent to requiring that $F$ be holomorphic at the point $w=\zeta$. In particular, if $F$ has singularities at $\{a_j\}$, then one must have
	\begin{dmath*}
		\zeta \notin \{a_j\}.
	\end{dmath*}
\end{remark}

\begin{definition}[Umbral Laplace series]
	We call \emph{umbral Laplace series} any formal series $\tilde{\Delta} \in t^{-1}\mathbb C[[t^{-1}]]$ such that
	\begin{dmath*}
		\tilde{\Delta} \in t^{-1}\mathbb C[[t^{-1}]]_1
	\end{dmath*}
	and its formal Borel transform $\hat{\Delta} = \mathcal B\,\tilde{\Delta}$ admits analytic continuation to an umbral Borel functional.
\end{definition}

\begin{remark}
	We recall that, due to the nesting property \cref{eq:gevrey_nested}, the subalgebra $t^{-1}\mathbb C[[t^{-1}]]_1$ includes analytically convergent series.
\end{remark}

\subsection{Structure and classification of umbral Laplace series}

We now analyze the analytic structure of umbral Borel functionals and the corresponding umbral Laplace series. The behavior of the latter is entirely determined by the properties of the composed functional
\begin{dmath*}
	\hat{\Delta}(u;\zeta) = F(w)
	\condition*{w = \zeta \ee^u.}
\end{dmath*}

\begin{lemma}\label{lem:exp_type_polynomial}
	Let $F:\mathbb C \to \mathbb C$ be entire. Then the following statements are equivalent:
	\begin{enumerate}
		\item[\textnormal{(i)}] the function $u \mapsto F(\zeta \ee^u)$ is entire of exponential type in the whole Borel plane;
		\item[\textnormal{(ii)}] $F$ is a polynomial.
	\end{enumerate}
\end{lemma}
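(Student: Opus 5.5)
The plan is to treat the two implications separately, with $\zeta \in \mathbb C^*$ fixed throughout.

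\emph{(ii) $\Rightarrow$ (i).} This direction is immediate. If $F(w)=\sum_{k=0}^d a_k w^k$, then
\begin{equation*}
F(\zeta\ee^u)=\sum_{k=0}^d a_k\zeta^k\ee^{ku}
\end{equation*}
is a finite linear combination of exponentials, so it is entire in $u$. Using $\abs{\ee^{ku}}\le \ee^{k\abs{u}}$, we get
\begin{equation*}
\abs{F(\zeta\ee^u)}\le \Bigl(\sum_k\abs{a_k}\abs{\zeta}^k\Bigr)\ee^{d\abs{u}},
\end{equation*}
which shows exponential type at most $d$.

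\emph{(i) $\Rightarrow$ (ii).} The idea is to convert the exponential-type bound in $u$ into polynomial growth of $F$ in $w$, and then apply the generalised Liouville theorem.

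First, assume $\abs{F(\zeta\ee^u)}\le A\ee^{\tau\abs{u}}$ for all $u$, with $\tau\ge 0$. For large $\abs{w}$, choose the preimage $u=\log(w/\zeta)$ given by the principal branch. Then $\RE u=\ln\abs{w}-\ln\abs{\zeta}$ and $\abs{\IM u}\le\pi$, so
\begin{equation*}
\abs{u}\le \ln\abs{w}+C \quad\text{with } C=\abs{\ln\abs{\zeta}}+\pi.
\end{equation*}
Every $w\neq0$ has such a preimage, and $F(w)=F(\zeta\ee^u)$ by definition. Hence
\begin{equation*}
\abs{F(w)}\le A\ee^{\tau C}\abs{w}^{\tau} \quad\text{for } \abs{w}\ge 1.
\end{equation*}

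Second, feed this bound into Cauchy's estimates for the Taylor coefficients of $F$ on circles of radius $r\to\infty$:
\begin{equation*}
\abs{a_k}\le A\ee^{\tau C} r^{\tau-k}.
\end{equation*}
Letting $r\to\infty$ gives $a_k=0$ for every $k>\tau$, so $F$ is a polynomial of degree at most $\lfloor\tau\rfloor$.

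The only point requiring care is that the exponential-type hypothesis in (i) must be global, i.e.\ it must hold in all directions of the Borel plane. A directional bound would not suffice: for $F=\exp$, the function $\exp(\zeta\ee^u)$ has exponential type zero along the directions where $\RE(\zeta\ee^u)$ stays bounded, yet $F$ is not a polynomial. With the global bound, the preimage choice above needs only $\abs{\IM u}\le\pi$, and the argument goes through. I do not expect any genuine obstacle beyond this. A further consequence worth noting is that the exponential type equals $\deg F$, attained along $\RE u\to+\infty$.
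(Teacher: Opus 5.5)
Your proof is correct and is essentially the paper's argument. The paper extracts $a_m\zeta^m\ee^{mx}$ by integrating $F(\zeta\ee^{x+iy})$ over $0\le y\le 2\pi$ and using $\abs{u}\le x+2\pi$, which is exactly a Cauchy estimate on the circle $\abs{w}=\abs{\zeta}\ee^{x}$; you instead first convert the bound into $\abs{F(w)}\le A\ee^{\tau C}\abs{w}^{\tau}$ via the principal logarithm and then apply Cauchy's estimates. The only step you state without justification, passing from the definition's bound for $\abs{u}\ge R$ to a bound for all $u$, is harmless: enlarge $A$, since $F(\zeta\ee^u)$ is bounded on the disc $\abs{u}\le R$.
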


\begin{proof}
	If $F$ is a polynomial, then $F(\zeta \ee^u)$ is a finite linear combination of exponentials, hence entire of exponential type.

	Conversely, assume $F(\zeta \ee^u)$ is of exponential type $\tau \in \mathbb R$. Then there exist constants $A, R>0$ such that
	\begin{dmath*}
		|F(\zeta \ee^u)| \le A \ee^{\tau |u|}
		\condition*{|u| \ge R.}
	\end{dmath*}
	Writing $F(w) = \sum_{m\ge 0} a_m w^m$, we have
	\begin{dmath*}
		F(\zeta \ee^u) = \sum_{m=0}^\infty a_m \zeta^m \ee^{m u}.
	\end{dmath*}
	Let $u = x + i y$ with $x>R+2\pi$. Then
	\begin{dmath*}
		|x+iy| \ge x-|y| > R
		\condition*{0 \le y \le 2\pi},
	\end{dmath*}
	so the exponential-type estimate applies. Integrating over $y$ gives
	\begin{dmath*}
		|a_m| |\zeta|^m \ee^{mx}
		\le
		A \ee^{\tau(x+2\pi)}.
	\end{dmath*}
	Rearranging to isolate the coefficients yields
	\begin{dmath*}
		|a_m|
		\le
		A |\zeta|^{-m} \ee^{2\pi\tau} \ee^{-(m-\tau)x}.
	\end{dmath*}
	If $m>\tau$, letting $x \to +\infty$ yields $a_m=0$. Thus $F$ is a polynomial.
\end{proof}

\begin{remark}[Dependence on $u$ and $w$]
	Although umbral Borel functionals are defined as compositions
	\begin{dmath*}
		\hat{\Delta}(u;\zeta) = F(w)
		\condition*{w = \zeta \ee^u,}
	\end{dmath*}
	their analytic classification is exclusively governed by the dependence on the Borel variable $u$. 

	The passage from $w$ to $u$ transforms algebraic properties of $w \mapsto F(w)$ into analytic properties of $u \mapsto \hat{\Delta}(u;\zeta)$. In particular, polynomial, entire, and polar structures in the $w$-plane correspond respectively to exponential type, anisotropic growth, and lattice singularities in the $u$-plane.
\end{remark}

\begin{theorem}[Classification of umbral Laplace series]\label{thr:classification_Laplace_umbral}
	Let $\tilde{\Delta}$ be an umbral Laplace series with Borel transform $u \mapsto \hat{\Delta}(u; \zeta)=F(\zeta \ee^u)$. Then exactly one of the following mutually exclusive cases occurs:
	
	\begin{enumerate}
		\item[\textnormal{(i)}] \emph{Polynomial case.} 
		$F$ is a polynomial. Then $\hat{\Delta}$ is a finite linear combination of exponentials and
		\begin{dmath*}
			\tilde{\Delta} \in t^{-1}\mathbb C\{t^{-1}\}.
		\end{dmath*}
		
		\item[\textnormal{(ii)}] \emph{Entire non-polynomial case.} 
		$F$ is entire and non-polynomial. Then $\hat{\Delta}$ is entire in the Borel plane with anisotropic growth, and
		\begin{dmath*}
			\tilde{\Delta} \in t^{-1}\mathbb C[[t^{-1}]]_1 \setminus t^{-1}\mathbb C\{t^{-1}\},
		\end{dmath*}
		that is, $\tilde{\Delta}$ is strictly 1-Gevrey divergent.
		
		\item[\textnormal{(iii)}] \emph{Polar case.} 
		$F$ has a finite number of poles $\{a_j\}_{j\in J} \subset \mathbb C^*$. Then $\hat{\Delta}$ is meromorphic in the Borel plane with a lattice of singularities at
		\begin{dmath*}
			u_{j,k} = \Log\left(\frac{a_j}{\zeta}\right) + 2\pi i k \condition*{j \in J,\ k \in \mathbb Z,}
		\end{dmath*}
		and
		\begin{dmath*}
			\tilde{\Delta} \in t^{-1}\mathbb C[[t^{-1}]]_1 \setminus t^{-1}\mathbb C\{t^{-1}\},
		\end{dmath*}
		that is, $\tilde{\Delta}$ is strictly 1-Gevrey divergent.
	\end{enumerate}
\end{theorem}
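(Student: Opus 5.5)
The plan is to split according to the nature of $F$ allowed by \cref{def:umbral_Borel_functionals}, and to reduce each membership claim to \cref{prop:0-Gevrey_to_ent_exp_type} and \cref{thr:1-Gevrey_to_convergent} via \cref{lem:exp_type_polynomial}. First I will check that the three cases are mutually exclusive and exhaustive. By \cref{def:umbral_Borel_functionals}, $F$ is either a polynomial or $R+\sum_j E_j$ with $R$ rational and the $E_j$ entire. If $R$ has no poles (after cancellation), $F$ is entire, and it is then either a polynomial or not. Otherwise $F$ has finitely many poles $a_j$, and $a_j\neq 0$ is assumed in the statement. Exclusivity is immediate: a function with poles is not entire, and an entire function either is a polynomial or is not.

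\emph{Case (i).} Write $F(w)=\sum_{m=0}^M b_m w^m$. Then $\hat\Delta(u;\zeta)=\sum_m b_m\zeta^m \ee^{mu}$ is entire of exponential type $M$. \Cref{prop:0-Gevrey_to_ent_exp_type} then gives $\tilde\Delta\in t^{-1}\mathbb C\{t^{-1}\}$; in fact $\tilde\Delta=\sum_m b_m\zeta^m/(t-m)$ explicitly.

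\emph{Case (ii).} Since $\tilde\Delta$ is an umbral Laplace series, it is $1$-Gevrey by definition; alternatively, \cref{thr:1-Gevrey_to_convergent} applies because $\hat\Delta$ is holomorphic at $0$. Suppose $\tilde\Delta$ were convergent. Then \cref{prop:0-Gevrey_to_ent_exp_type} would make $\hat\Delta$ entire of exponential type, and \cref{lem:exp_type_polynomial} would force $F$ to be a polynomial, a contradiction. The claim of anisotropic growth I would illustrate rather than prove in general: along $\RE u\to-\infty$ the function stays bounded by $\sup_{|w|\le|\zeta|}|F|$, while along $\RE u\to+\infty$ it grows faster than any exponential. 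The second fact follows from the same Cauchy-estimate argument as in \cref{lem:exp_type_polynomial}, applied on the half-strip $x>R$, $0\le y\le 2\pi$.

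\emph{Case (iii).} Here $\hat\Delta(u)=F(\zeta\ee^u)$ is meromorphic, since $\ee^u$ is entire and $F$ is meromorphic. Its poles are exactly the solutions of $\zeta\ee^u=a_j$, namely $u=\Log(a_j/\zeta)+2\pi i k$. Because $\ee^u$ is locally biholomorphic, each such point is a genuine pole of the same order as $a_j$. Holomorphy at $u=0$ is guaranteed by $\zeta\notin\{a_j\}$. For $1$-Gevreyness I again invoke \cref{thr:1-Gevrey_to_convergent}. For divergence, note that a convergent $\tilde\Delta$ would, by \cref{prop:0-Gevrey_to_ent_exp_type}, have an entire Borel transform; but the continuation of the germ is unique and has poles, a contradiction. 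The main point needing care is precisely this uniqueness step: the Borel germ continues along paths in $\mathbb C\setminus\{u_{j,k}\}$ to the single-valued meromorphic function $F(\zeta\ee^u)$, so it cannot coincide with an entire function. I would settle this with the identity theorem on the connected domain $\mathbb C_u\setminus\{u_{j,k}\}$, which is connected because the singular set is discrete.
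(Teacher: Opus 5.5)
Your proposal is correct and follows the paper's proof: in each case you reduce the claims to \cref{thr:1-Gevrey_to_convergent}, \cref{prop:0-Gevrey_to_ent_exp_type} and \cref{lem:exp_type_polynomial} exactly as the paper does. Your identity-theorem argument in case (iii) makes explicit the uniqueness of analytic continuation that the paper leaves implicit.

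One correction concerns your exhaustiveness check. The definition of umbral Borel functional does allow $R$ to have a pole at $w=0$: for example $F(w)=1/w$ gives $\hat{\Delta}=\zeta^{-1}\ee^{-u}$ and the convergent series $\tilde{\Delta}=\zeta^{-1}/(t+1)$, which fits none of (i)--(iii). So $a_j\neq 0$ is not assumed by the statement; it must be imposed as an extra global hypothesis for ``exactly one case occurs'' to hold. The paper's proof passes over this point too.
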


\begin{proof}
	By definition, $\hat{\Delta}$ is holomorphic at the origin, so $\tilde{\Delta} \in t^{-1}\mathbb C[[t^{-1}]]_1$ via theorem~\ref{thr:1-Gevrey_to_convergent}.

	In case (i), $F(\zeta\ee^u)$ is entire of exponential type. By theorem~\ref{prop:0-Gevrey_to_ent_exp_type}, its Laplace inverse $\tilde{\Delta}$ is convergent, so $\tilde{\Delta} \in t^{-1}\mathbb C\{t^{-1}\}$.

	In case (ii), $\hat{\Delta}$ is entire. However, if it were of exponential type in the whole Borel plane, lemma~\ref{lem:exp_type_polynomial} would imply $F$ is a polynomial, contradicting the assumption. Hence $\hat{\Delta}$ exhibits anisotropic growth and is not of exponential type globally, meaning $\tilde{\Delta}$ cannot be convergent.

	In case (iii), the solutions to $\zeta \ee^u = a_j$ immediately yield the stated logarithmic lattice of singularities. The presence of these finite singularities prevents $\hat{\Delta}$ from being entire, thus $\tilde{\Delta}$ is strictly 1-Gevrey divergent.
\end{proof}

\begin{remark}
	By definition, umbral Laplace series belong to the 1-Gevrey class. Hence divergence in the non-polynomial cases is to be understood strictly in the 1-Gevrey sense. It arises from two distinct mechanisms: anisotropic growth in the entire case, and the presence of singularities in the polar case.
\end{remark}

\begin{proposition}[Nontriviality of umbral Laplace series]\label{prop:nontrivial_umbral}
	The class of umbral Laplace series is non-empty and contains 1-Gevrey divergent elements.
\end{proposition}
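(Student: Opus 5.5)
The plan is to exhibit one explicit element and verify everything directly, without relying on the abstract classification. The natural candidate is the rational kernel $F(w) = 1/(1-w)$ with a fixed $\zeta \in \mathbb C^*\setminus\{1\}$. It belongs to case (ii) of \cref{def:umbral_Borel_functionals}, with $R=F$ and no entire part, and it is holomorphic at $w=\zeta$. The associated Borel functional is $\hat{\Delta}(u;\zeta) = 1/(1-\zeta\ee^u)$, which is holomorphic at $u=0$. The corresponding formal series is $\tilde{\Delta} = \mathcal B^{-1}\hat{\Delta}$, defined through \cref{eq:Borel_inverse_transform} from the Taylor coefficients of $\hat{\Delta}$ at the origin. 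By construction its Borel transform continues analytically to an umbral Borel functional.

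The first step is to show $\tilde{\Delta} \in t^{-1}\mathbb C[[t^{-1}]]_1$. Since $\hat{\Delta}$ is a convergent germ, its radius of convergence being the distance $|\Log(1/\zeta)+2\pi i k|$ to the nearest pole, \cref{thr:1-Gevrey_to_convergent} gives this immediately. So $\tilde{\Delta}$ is an umbral Laplace series and the class is non-empty. A polynomial example such as $F(w)=w$, with $\tilde{\Delta} = \sum_n \zeta t^{-n-1}$, could be added for the trivially convergent element, but it is not needed.

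The second step, which is the real content, is to show that $\tilde{\Delta}$ actually diverges. I would argue by contradiction using \cref{prop:0-Gevrey_to_ent_exp_type}. If $\tilde{\Delta}$ were convergent, then $\hat{\Delta}$ would extend to an entire function of exponential type. However, $\hat{\Delta}$ has genuine simple poles at $u_k = \Log(1/\zeta) + 2\pi i k$, with residue $-1$ in $u$, because $\frac{\mathrm d}{\mathrm du}(1-\zeta\ee^u) = -\zeta\ee^{u}$ equals $-1$ at $u_k$. Hence $\hat{\Delta}$ cannot be entire, and $\tilde{\Delta} \notin t^{-1}\mathbb C\{t^{-1}\}$. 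This is exactly case (iii) of \cref{thr:classification_Laplace_umbral}, which could simply be cited. I would still include the direct check so that the argument does not depend on how that theorem's proof treats "finite singularities prevent entireness".

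For a fully concrete confirmation I would also compute the coefficients. The cleanest choice is $\zeta = -1$, where $\hat{\Delta}(u) = 1/(1+\ee^u) = \tfrac12 - \tfrac12\tanh(u/2)$. Its Taylor coefficients involve Bernoulli numbers, so $c_n = n!\,[u^n]\hat{\Delta}$ grows like $n!/\pi^{n}$ up to constants, since the nearest poles are at $\pm i\pi$. This shows the 1-Gevrey growth is attained and not better.

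The main obstacle is only a matter of bookkeeping: the Gevrey classes require exact rather than merely Gevrey-1 membership. The pole argument handles this cleanly. The only care needed is to choose $\zeta$ so that $u=0$ is not a pole, i.e.\ $\zeta\neq 1$, as required by the remark on the constraint on $\zeta$.
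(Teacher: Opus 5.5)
Your proof is correct, but it uses a different witness from the paper's. The paper takes the entire non-polynomial functional $\hat{\Delta}(u;1)=\exp(\ee^u)$, i.e.\ case (ii) of \cref{thr:classification_Laplace_umbral}. There the Borel germ is entire, so divergence cannot be read off from a finite radius of convergence. The paper obtains it instead from the super-exponential growth of $\exp(\ee^{u})$ along $\mathbb R_{>0}$, which rules out exponential type and hence convergence via \cref{prop:0-Gevrey_to_ent_exp_type}.

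You take the polar functional $1/(1-\zeta\ee^u)$, case (iii). Its non-removable poles $u_k$ make the Borel germ non-entire, so you only need the easy direction of \cref{prop:0-Gevrey_to_ent_exp_type}: a convergent series has an entire Borel transform. Your route is more elementary. Because the Borel radius is finite, it also gives exact Gevrey order $1$ for every admissible $\zeta\neq 1$, not merely divergence. Your Bernoulli-number check for $\zeta=-1$ is right, with $\abs{c_{2n-1}}\sim 2\,(2n-1)!/\pi^{2n}$. Note, though, that the even coefficients vanish for $n\ge 2$, so ``grows like $n!/\pi^n$'' holds only along odd indices.

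What the paper's choice buys instead is a more striking fact, emphasised in the remark after the proposition. 1-Gevrey divergence already occurs when the Borel transform has no finite singularities at all, purely through anisotropic growth. So the entire non-polynomial case is a genuinely asymptotic extension of the polynomial one, not a by-product of Borel-plane singularities.
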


\begin{proof}
	Consider the umbral Borel functional $\hat{\Delta}(u;1)=\exp(\ee^u)$, corresponding to the entire non-polynomial function $F(w)=\ee^w$. Since $\hat{\Delta}(u;1)$ defines a convergent germ in $\mathbb C\{u\}$, theorem~\ref{thr:1-Gevrey_to_convergent} implies its formal Laplace inverse $\tilde{\Delta}$ exists and belongs to $t^{-1}\mathbb C[[t^{-1}]]_1$.
	
	However, along the positive real axis, $|\hat{\Delta}(u;1)| = \exp(\ee^{\RE u})$ grows super-exponentially. By lemma~\ref{lem:exp_type_polynomial}, it is not of exponential type globally, excluding the possibility that $\tilde{\Delta}$ is convergent. Therefore, $\tilde{\Delta} \in t^{-1}\mathbb C[[t^{-1}]]_1 \setminus t^{-1}\mathbb C\{t^{-1}\}$, confirming the existence of genuinely 1-Gevrey divergent elements.
\end{proof}

\begin{remark}
	The previous proposition shows that the analytic umbral framework contains genuinely divergent, yet 1-Gevrey, Laplace series. In particular, the entire non-polynomial case is not a reformulation of the convergent polynomial case, but a genuinely asymptotic extension thereof.
\end{remark}

\begin{remark}
	The choice $\zeta=1$ in proposition~\ref{prop:nontrivial_umbral} is only made for simplicity. More generally, for any $\zeta\in\mathbb C^*$ the functional
	\begin{dmath*}
		u \mapsto \exp(\zeta \ee^u)
	\end{dmath*}
	is entire in the Borel plane but, unless the corresponding $F$ is polynomial, it is never of exponential type as a function of the Borel variable $u$ in the whole Borel plane. In particular, sectorial decay for suitable values of $\zeta$ does not remove the 1-Gevrey divergence of the associated umbral Laplace series.
\end{remark}

\subsection{Parametric extensions}

We now extend the basic construction by allowing affine rescalings in the additive Borel variable and multiplicative twists in the umbral variable.

\begin{definition}[Parametric umbral Borel functionals]\label{def:parametric_umbral}
	Let
	\begin{dmath*}
		w = \zeta^\alpha \ee^{\mu u},
	\end{dmath*}
	where
	\begin{dmath*}
		{\beta,\nu \in \mathbb C,
		\qquad
		\zeta,\mu,\alpha \in \mathbb C^*}
	\end{dmath*}.
	We call \emph{parametric umbral Borel functional} any function of the form
	\begin{dmath*}
		\hat{\Delta}(u;\zeta,\mu,\alpha,\beta,\nu)
		=
		\zeta^\beta \ee^{\nu u}F(w),
	\end{dmath*}
	where $w \mapsto F(w)$ is holomorphic at $w=\zeta^\alpha$ and satisfies condition (i) or (ii) of \cref{def:umbral_Borel_functionals}.
\end{definition}

\begin{remark}
	The multiplicative prefactor $\zeta^\beta$ and the exponential twist $\ee^{\nu u}$ do not alter the qualitative analytic classification established in theorem~\ref{thr:classification_Laplace_umbral}. They only modify the corresponding growth rates and, in the Mellin picture, induce a multiplicative twist and an affine shift of the Mellin variable.
\end{remark}

\begin{remark}
	The basic class discussed above corresponds to the special choice
	\begin{dmath*}
		{\beta = \nu=0,
		\qquad
		\alpha=\mu=1}
	\end{dmath*}.
	In order not to burden the notation, the explicit dependence of $\hat{\Delta}$ on all or some of the parameters will often be omitted when no confusion can arise.
\end{remark}

\begin{remark}\label{rem:parametric_structure}
	The parametric class contains the elementary functional $\ee^{\mu u}$ and is stable under multiplication by exponentials $\ee^{\nu u}$.
	If $F$ has poles at $\{a_j\}_{j\in J}$, then the singularities of the corresponding parametric functional are located at
	\begin{dmath*}
		u_{j,k}
		=
		\frac{1}{\mu}
		\left(
			\Log\left(\frac{a_j}{\zeta^\alpha}\right)
			+
			2\pi i k
		\right)
		\condition*{j \in J,\ k \in \mathbb Z.}
	\end{dmath*}
\end{remark}

\begin{remark}[Gevrey invariance]
	By definition, umbral Borel functionals give rise to holomorphic germs at $u=0$. Therefore, the associated parametric umbral Laplace series always belong to the 1-Gevrey class.

	The parameters $\zeta,\mu,\alpha,\beta,\nu$ only modify the global analytic structure of the Borel functional, such as its growth, singular loci, and sectorial behaviour.

	In particular, the parameter $\mu$ rescales the singular lattice, rotates the distinguished decay directions, and changes the Mellin/Laplace representations. Thus $\mu$ is invisible to the local Gevrey classification, but decisive for the global resurgent structure.
\end{remark}

\subsection{Laplace diagnostics and emergence of Mellin kernels}\label{sec:Laplace_diagnostics}

Although the analytic umbral pairing will ultimately be formulated in Mellin--Barnes terms, it is instructive to analyse the Laplace transformability of umbral Borel functionals. This provides a diagnostic tool revealing both the limitations of the Laplace framework and the emergence of canonical Mellin-type spectral objects.

Let
\begin{dmath*}
	\hat{\Delta}(u;\cdots)
	=
	\zeta^\beta \ee^{\nu u}
	F\!\left(\zeta^\alpha \ee^{\mu u}\right)
\end{dmath*}
be a parametric umbral Borel functional. One may attempt to define directional Laplace transforms
\begin{dmath*}
	\Delta^\theta(t;\cdots)
	=
	\int_0^{\infty \ee^{i\theta}}
	\ee^{-tu}\hat{\Delta}(u;\cdots)\,\mathrm d u,
\end{dmath*}
whenever the integrand has at most exponential growth along the ray $\ee^{i\theta}\mathbb R_{>0}$.

\begin{proposition}[Directional Laplace transform]\label{prop:laplace_umbral_functional}
	Let $\hat{\Delta}$ be a parametric umbral Borel functional and let $\theta \in \mathbb R$.
	If, for fixed values of the parameters, one has
	\begin{dmath*}
		\hat{\Delta}
		\in
		\mathcal H_\tau(\ee^{i\theta}\mathbb R_{>0})
	\end{dmath*}
	for some $\tau \in \mathbb R$, then the directional Laplace transform
	\begin{dmath*}
		\Delta^\theta(t;\cdots)
		=
		\int_0^{\infty \ee^{i\theta}}
		\ee^{-tu}\hat{\Delta}(u;\cdots)\,\mathrm d u
	\end{dmath*}
	is well defined and holomorphic in the half-plane $\Pi_\tau^\theta$.
\end{proposition}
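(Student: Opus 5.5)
The statement reduces to the general theory of the directional Laplace transform recalled in \cref{def:directional_Laplace_transform}. Once $\hat{\Delta}\in\mathcal H_\tau(\ee^{i\theta}\mathbb R_{>0})$ is assumed, the specific form $\zeta^\beta\ee^{\nu u}F(\zeta^\alpha\ee^{\mu u})$ plays no further role. I would therefore prove convergence and holomorphy directly, using only two facts: $\hat{\Delta}$ continues holomorphically to a half-strip $S_\delta(\theta)$, and it obeys $\abs{\hat{\Delta}(u)}\le A\ee^{\tau'\abs{u}}$ for $u\in S_\delta(\theta)$, $\abs{u}\ge R$, for every $\tau'>\tau$.

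\textbf{Step 1 (pointwise convergence).} Parametrise the ray by $u=r\ee^{i\theta}$, $r\ge 0$. For $t\in\Pi_\tau^\theta$ set $\sigma\defeq\RE(t\ee^{i\theta})>\tau$ and choose $\tau'\in(\tau,\sigma)$. On $[0,R]$ the integrand is continuous, since $\hat{\Delta}$ is holomorphic on a neighbourhood of the compact segment: at $u=0$ this is holomorphy of $F$ at $\zeta^\alpha$, and elsewhere it is the half-strip continuation. On $[R,\infty)$ we have
\begin{dmath*}
\abs{\ee^{-tu}\hat{\Delta}(u)}\le A\ee^{-(\sigma-\tau')r},
\end{dmath*}
which is integrable. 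So the integral converges absolutely.

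\textbf{Step 2 (holomorphy).} Fix a compact set $K\subset\Pi_\tau^\theta$. By compactness, $\sigma_K\defeq\min_{t\in K}\RE(t\ee^{i\theta})>\tau$, and we pick $\tau'\in(\tau,\sigma_K)$. Then on all of $K$ the integrand is dominated by the $t$-independent integrable function
\begin{dmath*}
g(r)=\max_{[0,R]}\abs{\hat{\Delta}}\,\ee^{\max_K\abs{t}R}\,\mathbf 1_{[0,R]}(r)+A\ee^{-(\sigma_K-\tau')r}\mathbf 1_{[R,\infty)}(r).
\end{dmath*}
The truncated integrals $\int_0^{N}$ are entire in $t$ by differentiation under the integral sign on a compact interval, and by this domination they converge uniformly on $K$ as $N\to\infty$. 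Weierstrass' theorem then gives holomorphy of $\Delta^\theta$ on $\Pi_\tau^\theta$. Alternatively, one can use Morera together with Fubini, which the same bound justifies.

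\textbf{Step 3 (where the care lies).} The only real subtlety is that $\tau$ is an infimum and need not itself be attained in the bound. This is why every estimate above is made with some $\tau'$ strictly between $\tau$ and the relevant value of $\RE(t\ee^{i\theta})$, giving a strictly positive decay rate $\sigma-\tau'$. A second, minor point is that the parameters must be held fixed: the constants $A$, $R$, $\tau$ may depend on them, since by \cref{rem:parametric_structure} the parameters move the singular lattice and change the growth. This is harmless because the statement concerns fixed parameter values only. I also note that the ray itself avoids the singular lattice, since the half-strip continuation is part of the hypothesis, so no lateral limits in the sense of \cref{sec:Stokes_phenomenon} are needed.
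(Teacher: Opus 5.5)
Your proposal is correct and follows the same route as the paper. The paper's proof simply notes that the claim is an immediate consequence of Definition~\ref{def:directional_Laplace_transform} once $\hat{\Delta}\in\mathcal H_\tau(\ee^{i\theta}\mathbb R_{>0})$ is assumed; your Steps 1--2 supply the convergence and holomorphy argument that the definition takes for granted, choosing $\tau'\in(\tau,\RE(t\ee^{i\theta}))$ to handle the infimum and then using domination with the Weierstrass theorem.
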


\begin{proof}
	This is an immediate consequence of \Cref{def:directional_Laplace_transform}.
\end{proof}

\begin{remark}[Sectorial dependence]
	In general, the existence of $\Delta^\theta$ depends critically on the direction $\theta$, and different admissible directions may yield different analytic branches. Thus the Laplace description is inherently sectorial and non-canonical, necessitating a transition to a more global spectral representation.
\end{remark}

\subsubsection{Polynomial case: elementary kernels and distributional Mellin transform}

We begin with the case where $F$ is a polynomial. In this situation the corresponding umbral Borel functional is a finite linear combination of exponentials in the additive Borel variable, and the Laplace transform can be computed explicitly.

The fundamental building block is the elementary functional
\begin{dmath*}
	\hat{\Delta}(u) = \ee^u,
\end{dmath*}
corresponding to the choice $F(w)=w$ with $\zeta=1$.

For any direction $\theta$ such that $\RE((t-1)\ee^{i\theta})>0$, one has
\begin{dmath*}
	\int_0^{\infty \ee^{i\theta}}
	\ee^{-tu}\ee^{u}\,\mathrm d u
	=
	\frac{1}{t-1}.
\end{dmath*}
The directional Laplace transforms therefore combine to a single rational function
\begin{dmath*}
	\Delta(t)
	=
	\frac{1}{t-1},
\end{dmath*}
holomorphic on $\mathbb C_t\setminus\{1\}$.

For a polynomial of the general form
\begin{dmath*}
	\hat{\Delta}(u)
	=
	\ee^{\nu u}\sum_{m=0}^N a_m \ee^{\mu m u},
\end{dmath*}
one obtains the meromorphic transform
\begin{dmath*}
	\Delta(t)
	=
	\sum_{m=0}^N
	\frac{a_m}{t-(\mu m + \nu)}.
\end{dmath*}

\begin{remark}[Absence of Stokes phenomenon]
	In the polynomial case, the Laplace transform is independent of the direction and defines a single-valued meromorphic function. Thus no Stokes phenomenon occurs. This sharply distinguishes the polynomial case from both the entire non-polynomial and the rational regimes.
\end{remark}

\begin{remark}[Hyperfunctional interpretation]
	The Borel functional $\ee^u$ admits a natural interpretation as an analytic functional of compact support. In the framework of Sternin--Shatalov \cite{sternin1995borellaplace}, the function $(t-1)^{-1}$
	is interpreted as the Cauchy transform of a hyperfunction supported at $t=1$. More generally, finite linear combinations of exponentials correspond to analytic functionals supported on finite sets.
\end{remark}

Passing to the multiplicative Borel variable $z=\ee^u$, the kernel $\ee^u$ corresponds to the function $F(z)=z$. The associated Mellin transform is not defined in the classical sense, but admits a natural interpretation at the distributional level.

Indeed, for a test function $\phi$ one has
\begin{dmath*}
	\int_{-\infty}^{+\infty}
	\ee^u \phi(u)\,\mathrm d u
	=
	\int_0^\infty
	\phi(\log z)\,\mathrm d z,
\end{dmath*}
which shows that the kernel acts as evaluation at $z=1$ in the multiplicative variable.

\begin{remark}[Delta behaviour]
	In this sense, the elementary umbral Borel functional $\ee^u$ behaves as a Dirac delta distribution supported at $z=1$ in the multiplicative Borel variable. This distributional collapse explains why formal umbral calculus works flawlessly for polynomials without requiring the full Mellin-Barnes machinery: the spectral integral trivially evaluates to the operator $\mathfrak u$.
\end{remark}

The polynomial case therefore provides the algebraic backbone of the theory. It yields convergent umbral Laplace series and admits a simple hyperfunctional interpretation, but does not exhibit genuine resurgent phenomena. The latter arise only beyond this regime.

\subsubsection{Entire non-polynomial case: sectorial Laplace transforms and jump kernels}

Assume that $F$ is entire and non-polynomial, and consider the umbral Borel functional $\hat{\Delta}(u;\zeta,\mu) = F(\zeta \ee^{\mu u})$.
While $\hat{\Delta}$ is entire in the additive Borel variable $u$, it typically exhibits anisotropic growth at infinity. The obstruction to global Laplace transformability is therefore due to this super-exponential growth in specific directions.

\begin{remark}[Mittag--Leffler representation]
	Let $F(w)=\sum_{n\geq 0} F_n w^n$.
	Formally integrating termwise along any admissible direction yields the local Mittag-Leffler representation
	\begin{dmath*}
		\Delta^\theta(t;\zeta,\mu)
		\sim
		\sum_{n=0}^\infty
		\frac{F_n \zeta^n}{t-\mu n}.
	\end{dmath*}
	
	This shows that the Laplace transform admits a meromorphic continuation with simple poles located at $t_n = \mu n$ and residues $F_n \,\zeta^n$.
	More precisely, for any interval $\Theta$ of admissible directions, one can write
	\begin{dmath*}
		\Delta^\Theta(t;\zeta,\mu)
		=
		\sum_{n=0}^N
		\frac{F_n \zeta^n}{t-\mu n}
		+
		H_N^\Theta(t;\zeta,\mu),
	\end{dmath*}
	where $H_N^\Theta$ is holomorphic in the domain $\mathcal D(\Theta,f_\tau)$.
\end{remark}

\begin{example}[The umbral Borel functional $\exp(\zeta \ee^u)$]\label{ex:exponential_jump}
	We examine the model example
	\begin{dmath*}
		\hat{\Delta}(u;\zeta)
		=
		\exp(\zeta \ee^u),
	\end{dmath*}
	which captures the essential features of the whole class of entire Borel functionals.
	
	\smallskip
	
	If $\RE \zeta<0$, then for any $\theta\in\Theta^+ \defeq (-\frac{\pi}{2},\frac{\pi}{2})$ the integrand decays sufficiently fast along the ray $u=re^{i\theta}$. The resulting sectorial transform, coinciding with the analytic continuation of the directional transform $\Delta^0(t;\zeta)$ from the initial half-plane $\RE t>0$, evaluates to the incomplete gamma function:
		\begin{dmath*}
		\Delta^{\Theta^+}(t;\zeta)
		=
		(-\zeta)^{t}\Gamma(-t,-\zeta)
		\condition*{\;t \in \mathbb C_t}
	\end{dmath*}.
	
	\smallskip
	
	Conversely, for any fixed value of $\zeta$, the directional Laplace transform is well defined for any $\theta \in \Theta^- \defeq \left(\frac{\pi}{2},\frac{3\pi}{2}\right)$. The resulting sectorial transform, coinciding with the analytic continuation of $\Delta^\pi(t;\zeta)$ from the initial half-plane $\RE t<0$, captures the integration toward $u \to -\infty$:
	\begin{dmath*}
		\Delta^{\Theta^-}(t;\zeta)
		= 
		-(-\zeta)^{t}\gamma(-t,-\zeta)
		=
		-(-\zeta)^{t}\left[\Gamma(-t)-\Gamma(-t,-\zeta)\right]
		\condition*{\;t \in \mathbb C_t\setminus\mathbb Z_{\ge 0}}
	\end{dmath*},
	where $\gamma(-t,-\zeta)$ denotes the lower incomplete gamma function.
	
	\smallskip
	
	In the intersection of the respective domains the two sectorial transforms differ by the \emph{jump function}
	\begin{dmath*}
		J(t;\zeta) \defeq {
			\Delta^{\Theta^+}(t;\zeta)
			-
			\Delta^{\Theta^-}(t;\zeta)
			=
			(-\zeta)^{t}\Gamma(-t)
		}
	\end{dmath*}.
	This function is in fact meromorphic, due to the discrete pole structure generated by the asymptotic expansion at $u \to -\infty$.
		
	Differently from the directional transforms, the jump function is a canonical construction, which can be identified with the bilateral Laplace transform along the real axis. After the change of variables $z=\ee^u$, it takes the form of a Mellin-type integral
	\begin{dmath*}
		J(t;\zeta) = {
		\int_{-\infty}^{+\infty}
		\ee^{-tu}\exp(\zeta \ee^u)\,\mathrm d u
		=
		\int_0^\infty
		z^{-t-1}\ee^{\zeta z}\,\mathrm d z =
		(-\zeta)^{t}\Gamma(-t)
		}
	\end{dmath*}.
\end{example}

\smallskip

\begin{remark}
	The same mechanism persists for the general parametric exponential functional
	\begin{dmath*}
		\hat{\Delta}(u;\zeta,\mu,\alpha,\beta,\nu)
		=
		\zeta^\beta \ee^{\nu u}
		\exp\bigl(\zeta^\alpha \ee^{\mu u}\bigr).
	\end{dmath*}
	Indeed, the transformations
	\begin{dmath*}
		u \mapsto {\frac{u}{\mu},
		\qquad
		t \mapsto \frac{t-\nu}{\mu},
		\qquad
		\zeta \mapsto \zeta^\alpha}
	\end{dmath*}
	reduce the analysis to the previous case. In particular, the sectorial domains are rotated by $\Arg\mu$, the pole structure is rescaled by $\mu$, and the jump function becomes
	\begin{dmath*}
		J(t;\zeta)
		=
		\frac{1}{\mu}
		\zeta^{\,\beta-\alpha}
		(-\zeta^\alpha)^{\frac{t-\nu}{\mu}}\,
		\Gamma\left(-\frac{t-\nu}{\mu}\right),
	\end{dmath*}
with the same domain structure up to affine transformation.
\end{remark}

\begin{remark}[Mittag--Leffler representations and the jump function]
	The pole structure of the bilateral Laplace transform is determined by the asymptotic expansion of the functional in the decay sector. For $\hat{\Delta}(u;\zeta)=\exp(\zeta \ee^u)$, one has, as $u\to -\infty$,
	\begin{dmath*}
		\exp(\zeta \ee^u)
		=
		\sum_{n=0}^\infty \frac{\zeta^n}{n!}\ee^{nu},
	\end{dmath*}
	which yields formally the Mittag--Leffler expansion
	\begin{dmath*}
		\sum_{n=0}^\infty \frac{\zeta^n}{n!(t-n)}.
	\end{dmath*}
	In particular, since $\Delta^{\Theta^+}(t;\zeta)$ is an entire function of $t$, the meromorphic principal part is carried entirely by $-\Delta^{\Theta^-}(t;\zeta)$:
	\begin{dmath*}
		-\Delta^{\Theta^-}(t;\zeta)
		\equiv
		\sum_{n=0}^\infty \frac{\zeta^n}{n!(t-n)}
		\pmod{\mathrm{Hol}(\mathbb C_t)}.
	\end{dmath*}
	Consequently, the jump function
	\begin{dmath*}
		J(t;\zeta)
		=
		\Delta^{\Theta^+}(t;\zeta)
		-
		\Delta^{\Theta^-}(t;\zeta)
	\end{dmath*}
	is a meromorphic function of $t$ that exactly reproduces this principal part.
	In this sense, the jump isolates the purely resurgent component of the Laplace transform, encoding the algebraic pole structure generated by the Mittag--Leffler expansion.
\end{remark}

\begin{remark}[The Meromorphicity of the jump kernel]
    It is mathematically significant that $J(t;\zeta)$ is a meromorphic function of $t$. While the sectorial Laplace transform $\Delta^{\Theta^-}$ is inherently meromorphic -- possessing poles that reflect the functional's behaviour as $u \to -\infty$ -- the sectorial transform $\Delta^{\Theta^+}$ is entire. Their difference distills the resurgent Stokes data into a globally canonical spectral object. Its simple poles at $t \in \mathbb{N}$ provide the fundamental analytic representative required to extract the formal umbral coefficients $1/n!$ via contour pairing.
\end{remark}

\begin{remark}[Paradigmatic character]
	The preceding example is representative of the general behaviour of entire umbral Borel functionals. 
	Even in the absence of finite singularities in the Borel plane, the comparison of sectorial Laplace transforms produces a nontrivial, meromorphic jump kernel, which is naturally expressed as a Mellin-type integral. This kernel provides the robust spectral fingerprint required for the Mellin--Barnes pairing introduced in the following section.
\end{remark}

\begin{remark}[Ramanujan's Master Theorem and Mellin--Barnes kernels]\label{rem:ramanujan}
	The fact that the entire class produces a meromorphic Mellin--Barnes kernel is a structural manifestation of Ramanujan's Master Theorem (\cite{hardy1940ramanujan}; see also \cite{babusci2011ramanujanmastertheorem} for a formal umbral derivation of the theorem). Recall that Ramanujan's theorem asserts, under suitable analytic and growth assumptions, that if a function admits an expansion of the form
	\begin{dmath*}
		f(z)
		=
		\sum_{n=0}^\infty
		\frac{\phi(n)}{n!}
		(-z)^n,
	\end{dmath*}
	then its Mellin transform is given by
	\begin{dmath*}
		\int_0^\infty
		z^{s-1}f(z)
		\,\mathrm d z
		=
		\Gamma(s)\phi(-s),
	\end{dmath*}
	initially in a fundamental strip and then by meromorphic continuation.

	In the umbral framework, passing to the multiplicative plane $z=\ee^u$ transforms the paradigmatic entire functional into $\hat{K}(z)=\exp(\zeta z)$.
	
	\noindent Expanding this function in Ramanujan's alternating format gives
	\begin{dmath*}
		\exp(\zeta z)
		=
		\sum_{n=0}^\infty
		\frac{(-\zeta)^n}{n!}
		(-z)^n,
	\end{dmath*}
	so that the interpolating sequence is $\phi(n) = (-\zeta)^n$.

	The bilateral jump kernel $J(t;\zeta)$ is precisely the Mellin transform evaluated at the dual spectral variable $s=-t$. Applying Ramanujan's Master Theorem therefore gives
	\begin{dmath*}
		J_{\hat{\Delta}}(t;\zeta)
		= {
		\Gamma(-t)\phi(t)
		=
		(-\zeta)^t\Gamma(-t)
		}
	\end{dmath*},
	with the power $(-\zeta)^t$ understood on the chosen branch.

	Thus Ramanujan's Master Theorem explains why the Mellin representative of an entire umbral Borel functional is meromorphic in the dual spectral variable. From the resurgent viewpoint, this meromorphic kernel is the Mellin--Barnes manifestation of the singular behaviour of the entire functional at infinity. The analytic umbral pairing uses this classical Mellin interpolation mechanism to convert Borel-side data into a dual spectral kernel acting on analytic test germs.
\end{remark}

\subsubsection{Polar case: finite singularities, sectorial transforms and Stokes jumps}

Assume that $F$ is rational. Then the corresponding umbral Borel functional
\begin{dmath*}
	\hat{\Delta}(u;\cdots)
	=
	F(\zeta^\alpha \ee^{\mu u})
\end{dmath*}
is meromorphic in the Borel plane and develops a logarithmic lattice of singularities. In contrast with the entire case, the obstruction to Laplace transformability is now due to the presence of isolated singularities at finite distance from the origin.

By partial fraction decomposition, it suffices to consider the elementary rational functional
\begin{dmath*}
	\hat{\Delta}(u;\cdots)
	=
	\frac{\zeta^\beta \ee^{\nu u}}{1 - a \zeta^\alpha \ee^{\mu u}}.
\end{dmath*}
The singularities are located at
\begin{dmath*}
	u_k
	=
	\frac{1}{\mu}
	\left(
		\Log\left(\frac{1}{a\zeta^\alpha}\right)
		+
		2\pi i k
	\right)
	\condition*{k\in\mathbb Z}
\end{dmath*},
and form a logarithmic lattice. The singular directions, corresponding to $\theta_k = \Arg u_k$, partition the Borel plane into sectors on which directional Laplace transforms can be defined.

Rather than analysing the problem in full generality, it is more instructive to examine a paradigmatic example, enabling to present all the relevant elements in explicit form.

\smallskip

\begin{example}[The umbral Borel functional $1/(1-\zeta\ee^u)$]\label{ex:rational_kernel_zeta}
	Consider the rational Borel functional
	\begin{dmath*}
		\hat{\Delta}(u;\zeta)
		=
		\frac{1}{1-\zeta\ee^u}
		\condition*{
		\zeta \in\mathbb C\setminus\mathbb R_{\ge 0}}
	\end{dmath*},
	with principal determinations throughout.

	\smallskip

	The singularities are located at
	\begin{dmath*}
		u_k
		= {
		\Log(1/\zeta)+2\pi i k
		=
		-\ln\,|\zeta|+i(-\Arg\zeta+2\pi k)
		\condition*{k\in\mathbb Z}
		}
	\end{dmath*}
	and form a logarithmic lattice. The condition $\zeta \in\mathbb C\setminus\mathbb R_{\ge 0}$ ensures that there are no poles on the real axis. Each residue is $\operatorname{Res}_{\,u=u_k}\hat{\Delta}(u;\zeta) = -1$.
	
	\noindent The corresponding singular directions $\theta_k = \Arg u_k$ partition the Borel plane into admissible sectors.

	In the region $\RE u < -\ln\,|\zeta|$, one has the geometric expansion
	\begin{dmath*}
		\hat{\Delta}(u;\zeta)
		=
		\sum_{n=0}^\infty
		\zeta^n \ee^{nu},
	\end{dmath*}
	while for $\RE u > -\ln\,|\zeta|$ one has
	\begin{dmath*}
		\hat{\Delta}(u;\zeta)
		=
		-\sum_{n=0}^\infty
		\zeta^{-n-1} \ee^{-(n+1)u}.
	\end{dmath*}
	These expansions yield, by formal termwise Laplace transform, Mittag--Leffler representations of the meromorphic components of sectorial Laplace transforms, with poles respectively at $t\in\mathbb N$ and $t\in -\mathbb N-1$.

	In the special case $\ln\,|\zeta|=0$, there are only two admissible intervals, namely
	\begin{dmath*}
		\Theta^+ = {\left(-\frac{\pi}{2},\frac{\pi}{2}\right),
		\qquad
		\Theta^- = \left(\frac{\pi}{2},\frac{3\pi}{2}\right)
		}
	\end{dmath*},
	while for $\ln\,|\zeta|\neq 0$ one obtains a countable family of intervals on one side and a single interval of aperture $\pi$ on the other. In either case, we select the \emph{distinguished pair of intervals}
	\begin{dmath*}
		\Theta^+_0
		\quad\text{and}\quad
		\Theta^-,
		\qquad\text{or}\qquad
		\Theta^+
		\quad\text{and}\quad
		\Theta^-_0,
	\end{dmath*}
	where $\Theta^+_0$ denotes the interval of aperture less that $\pi$ containing $\theta = 0$ and $\Theta^-_0$ the one containing $\theta = \pi$. 

	\smallskip

	For each couple of intervals, the Stokes jump between the corresponding two sectorial transforms is obtained by Cauchy integration. As the integration contour rotates from $\theta=0$ to $\theta=\pi$, it captures the infinite sequence of simple poles located at $u_k$ for $k\ge 0$. The resulting jump is identical in both configurations and equates precisely to the sum of these Stokes residues:
	\begin{dmath*}
		J(t;\zeta)
		\defeq {
		\Delta^{\Theta^+}(t;\zeta)
		-
		\Delta^{\Theta_0^-}(t;\zeta)
		=
		\Delta^{\Theta^+_0}(t;\zeta)
		-
		\Delta^{\Theta^-}(t;\zeta)
		}
		=
		-2\pi i
		\sum_{k\ge 0}
		\operatorname{Res}_{u=u_k}
		\hat{\Delta}(u;\zeta)\,
		\ee^{-t u_k}.
	\end{dmath*}
	Using the explicit form of the residues, this becomes
	\begin{dmath*}
		J(t;\zeta)
		=
		2\pi i \,
		\zeta^t
		\sum_{k\ge 0}
		\ee^{-2\pi i t k},
	\end{dmath*}
    which can be summed geometrically (for $\operatorname{Im} t < 0$), yielding the jump function
	\begin{dmath*}
		J(t;\zeta)
		=
		-(-\zeta)^t \pi \csc(\pi t),
	\end{dmath*}
	initially defined in the strip $-1<\RE t<0$ and extended meromorphically to $\mathbb C\setminus\mathbb Z$.

	\smallskip

	The jump function admits the Mellin representation
	\begin{dmath*}
		J(t;\zeta)
		= {
		\int_{-\infty}^{+\infty}
		\frac{\ee^{-tu}}{1-\zeta\ee^u}\,\mathrm d u
		=
		\int_0^\infty
		\frac{z^{-t-1}}{1-\zeta z}\,\mathrm d z
		}
	\end{dmath*}
	This identifies the jump as the bilateral Laplace transform along the real line, i.e. --  after passing to the multiplicative variable $z$ -- the Mellin transform of the Borel functional. By the substitution $x = -\zeta z$, the integral evaluates directly to $-(-\zeta)^t \pi \csc(\pi t)$.
\end{example}

\smallskip

\begin{remark}[Connection to the Euler functional $1/(\eta + \ee^u)$]\label{rem:eta_connection}
	In applications involving divergent Euler-type series, one frequently encounters the shifted functional $\hat{\Delta}_{\text{Euler}}(u;\eta) = 1/(\eta + \ee^u)$. This is trivially recovered from the paradigmatic form by setting $\zeta = -1/\eta$ and scaling:
	\begin{dmath*}
		\frac{1}{\eta+\ee^u}
		=
		\frac{1}{\eta} \frac{1}{1 - (-\eta^{-1})\ee^u}.
	\end{dmath*}
	Applying this scaling to the jump function yields
	\begin{dmath*}
		J_{\text{Euler}}(t;\eta)
		=
		\frac{1}{\eta} J(t; -\eta^{-1})
		=
		-\eta^{-t-1}\pi \csc(\pi t),
	\end{dmath*}
	which carries the exact same resurgent structure.
\end{remark}

The analysis of the previous example extends directly to the general elementary rational functional depending on additional parameters.

\begin{proposition}[General parametric jump formula]\label{prop:rational_full_parametric}
	Consider the elementary polar umbral Borel functional
	\begin{dmath*}
		\hat{\Delta}(u;\dots)
		=
		\frac{\zeta^\beta \ee^{\nu u}}{1 - a \zeta^\alpha \ee^{\mu u}}
		\condition*{\mu,\zeta,\alpha,a \in \mathbb C^*, \quad \beta,\nu \in \mathbb C}
	\end{dmath*}
	and assume that no pole of $\hat{\Delta}$ lies on the oriented line defined by the condition $\mu u \in \mathbb R$.

	Then the corresponding jump function is represented by the bilateral Laplace integral
	\begin{dmath*}
		J(t;\dots)
		=
		\int_{\ee^{-i\Arg \mu}\mathbb R}
		\ee^{-tu}
		\hat{\Delta}(u;\dots)\,\mathrm d u,
	\end{dmath*}
	initially convergent in the strip $0<\RE\left(\frac{\nu-t}{\mu}\right)<1$.
		Moreover, one has
	\begin{dmath*}
		J(t;\dots)
		=
		- \frac{\zeta^\beta}{\mu}
		\left(
			-a\zeta^\alpha
		\right)^{\frac{t-\nu}{\mu}}
		\pi \csc\left(\pi\,\frac{t - \nu}{\mu}\right)
	\end{dmath*},
	which extends meromorphically to the whole $t$-plane, with simple poles at
	\begin{dmath*}
		t_k
		=
		\nu+\mu k
		\condition*{k\in\mathbb Z}.
	\end{dmath*}
\end{proposition}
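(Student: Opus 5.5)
The plan is to reduce everything to \cref{ex:rational_kernel_zeta} by an affine change of variables in the Borel plane. First, I would parametrise the oriented line $\mu u\in\mathbb R$ as $u = x/\mu$ with $x\in\mathbb R$, so that $\mathrm du = \mathrm dx/\mu$. The integrand then becomes
\begin{dmath*}
	\ee^{-tu}\hat{\Delta}(u;\dots) = \zeta^\beta\,\frac{\ee^{-x(t-\nu)/\mu}}{1-a\zeta^\alpha\ee^{x}}.
\end{dmath*}
The hypothesis that no pole lies on the line is exactly the statement that $a\zeta^\alpha\notin\mathbb R_{>0}$, so the denominator never vanishes for real $x$.

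Next, I would set $s\defeq (t-\nu)/\mu$ and $\xi \defeq a\zeta^\alpha$. Convergence of the integral is then checked at both ends. As $x\to+\infty$, the integrand behaves like $\ee^{-x(s+1)}$, which requires $\RE s>-1$. As $x\to-\infty$, it behaves like $\ee^{-xs}$, which requires $\RE s<0$. This gives the strip $0<\RE\bigl((\nu-t)/\mu\bigr)<1$ stated in the proposition. Within this strip the identity
\begin{dmath*}
	J(t;\dots)=\frac{\zeta^\beta}{\mu}\int_{-\infty}^{+\infty}\frac{\ee^{-sx}}{1-\xi\ee^x}\,\mathrm dx
\end{dmath*}
holds. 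The right-hand side is exactly the bilateral integral of \cref{ex:rational_kernel_zeta} with $t\mapsto s$ and $\zeta\mapsto\xi$. Invoking its evaluation $-(-\xi)^{s}\pi\csc(\pi s)$ then gives the closed form, and multiplying by $\zeta^\beta/\mu$ recovers the stated formula.

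To justify that this bilateral integral is the jump (the difference of the distinguished sectorial transforms), I would repeat the Cauchy argument of \cref{prop:stokes_simple_poles} and \cref{cor:lattice_poles} in the rotated frame. The singular lattice is the one given in \cref{rem:parametric_structure}, with residues $-\zeta^\beta\ee^{\nu u_k}/\mu$, because the derivative of the denominator at $u_k$ equals $-\mu$. Summing the contributions $-2\pi i\,\mathrm{Res}\,\ee^{-tu_k}$ geometrically should then reproduce the same closed form. Alternatively, one can simply take the bilateral integral as the definition of the jump, consistent with the two preceding examples.

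Finally, meromorphic continuation is immediate from the closed form. The factor $(-\xi)^{s}$ is entire in $t$, and $\csc(\pi s)$ has simple poles exactly at $s=k\in\mathbb Z$, that is, at $t_k=\nu+\mu k$. I expect the main obstacle to be the branch bookkeeping. The substitution $x=-\xi z$ in the Mellin integral requires rotating the contour from the ray $\mathbb R_{>0}$ to $-\xi\mathbb R_{>0}$ without crossing the pole at $z=1/\xi$. This is allowed precisely because $\xi\notin\mathbb R_{>0}$, and the principal branch of $(-\xi)^s$ is then the one consistent with the rotation angle $\lvert\Arg(-\xi)\rvert<\pi$. A secondary subtlety is that $\zeta^\alpha$ and $\zeta^\beta$ must be read on fixed branches, and the orientation of the line $\ee^{-i\Arg\mu}\mathbb R$ must agree with the direction of increasing $x$.
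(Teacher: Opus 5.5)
Your proposal is correct and follows essentially the paper's own proof. The paper substitutes $z=\ee^{\mu u}$ to obtain the Mellin form $\frac{\zeta^\beta}{\mu}\int_0^\infty z^{-\frac{t-\nu}{\mu}-1}(1-a\zeta^\alpha z)^{-1}\,\mathrm dz$ and then applies $x=-a\zeta^\alpha z$ together with $\int_0^\infty x^{s-1}(1+x)^{-1}\,\mathrm dx=\pi\csc(\pi s)$; this is exactly what your reduction via $u=x/\mu$ to \cref{ex:rational_kernel_zeta} unpacks to, and your end-point convergence check and justification of the ray rotation (legitimate because $a\zeta^\alpha\notin\mathbb R_{>0}$) make explicit what the paper leaves implicit.

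On your optional residue aside: on the common strip, $J=\Delta^{\theta_0}-\Delta^{\theta_0+\pi}$ with $\theta_0=-\Arg\mu$ already follows by splitting the line at $u=0$. If you instead evaluate by a Stokes sum, the sign depends on the sweep: counterclockwise gives $J=+2\pi i\sum\operatorname{Res}_{u=u_k}\bigl(\ee^{-tu}\hat{\Delta}\bigr)$ over the poles with $\IM(\mu u_k)>0$, and only the clockwise sweep gives $-2\pi i$ times the sum over $\IM(\mu u_k)<0$, so the $-2\pi i$ you quote is right only for the clockwise orientation.
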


\begin{proof}
	Introducing the multiplicative variable $z = \ee^{\mu u}$, the bilateral Laplace transform takes the canonical Mellin form
	\begin{dmath*}
		J(t;\dots)
		=
		\frac{\zeta^\beta}{\mu}
		\int_0^\infty
		\frac{z^{-\frac{t-\nu}{\mu}-1}}{1 - a\zeta^\alpha z}
		\,\mathrm d z
	\end{dmath*}.
    Under the substitution $x = -a\zeta^\alpha z$, this becomes
    \begin{dmath*}
		\frac{\zeta^\beta}{\mu}
		\left(-a\zeta^\alpha\right)^{\frac{t-\nu}{\mu}}
		\int_0^\infty
		\frac{x^{-\frac{t-\nu}{\mu}-1}}{1+x}
		\,\mathrm d x
	\end{dmath*},
	converging whenever $0<\RE\left(\frac{\nu-t}{\mu}\right)<1$.
	The classical identity
	\begin{dmath*}
		\int_0^\infty
		\frac{x^{s-1}}{1+x}
		\,\mathrm d x
		=
		\pi \csc(\pi s)
		\condition*{0<\RE s<1}
	\end{dmath*}
	applied with $s = -\frac{t-\nu}{\mu}$ directly yields the stated jump function. The meromorphic continuation and the location of the poles are an immediate consequence of the properties of the cosecant function.
\end{proof}

\begin{remark}[Jump function as cumulative discontinuity]\label{rem:cumulative_disc}
	The macroscopic jump function $J(t;\dots)$ is structurally tied to the local discontinuity operator $\operatorname{disc}_{\theta}$ introduced earlier. As the sectorial Laplace contour rotates from $\theta = 0$ to $\theta = \pi$, it crosses the countable set of singular rays $\theta_k \in (0, \pi)$. 
	
	By Cauchy's theorem, the total sectorial difference is the sum of the Laplace transforms of the individual directional discontinuities:
	\begin{dmath*}
		J(t;\dots)
		= {
			\sum_{k \ge 0}
			\mathcal{L}^{\theta_k}\!
			\left[
				\operatorname{disc}_{\theta_k}\hat{\Delta}
			\right]\!(t)
			=
			\sum_{k \ge 0}
			\int_0^{\infty \ee^{i\theta_k}}
			\ee^{-tu} 
			\operatorname{disc}_{\theta_k}\hat{\Delta}(u;\dots)
			\,\mathrm d u
		}
	\end{dmath*}.
	Because the local discontinuity is given by the hyperfunctional evaluation $\operatorname{disc}_{\theta_k}\hat{\Delta} = -2\pi i \operatorname{Res}_{\,u_k} (\hat{\Delta}) \, \delta(u - u_k)$, this integral immediately evaluates to the geometric series of Stokes residues computed above. The jump function is therefore the global spectral synthesis of the local resurgent discontinuities.
\end{remark}

\begin{remark}[Structural interpretation]
	The previous computation shows that the Mellin character of the jump is intrinsic to the exponential parametrisation $z=\ee^{\mu u}$. After the scaling $w=\mu u$, the bilateral Laplace transform is converted directly into a Mellin integral in the multiplicative Borel variable.

	In particular, the additive logarithmic lattice of singularities in the Borel plane gives rise, through the jump function, to a canonical meromorphic Mellin kernel in the spectral $t$-plane. This mechanism is the polar counterpart of the Mellin structure already observed in the entire case.
\end{remark}

\begin{remark}[Spectral synthesis]\label{rem:jump_interpretation_rational}
	The jump function $J(t;\dots)$ enforces the spectral transition from the Laplace to the Mellin framework through two primary mechanisms:
	\begin{enumerate}
		\item \emph{Cancellation of local algebraic poles:} 
			The individual sectorial Laplace transforms $\Delta^{\Theta}$ are meromorphic, carrying local poles induced by the logarithmic lattice in the Borel plane and by the asymptotic expansions at $u \to \pm \infty$. Since the principal parts at these poles are identical for both $\Delta^{\Theta^+}$ and $\Delta^{\Theta^-_0}$, the jump function perfectly cancels these algebraic singularities, distilling the \emph{purely resurgent} difference.
		
		\item \emph{Canonical Mellin representative:} 
			The identification of $J(t;\dots)$ with the bilateral Laplace transform proves that the jump is the \emph{canonical spectral representative} of the functional. While individual directional transforms are multivalued and tied to the orientation of the logarithmic lattice, $J(t;\dots)$ provides a single global entity. In the multiplicative variable $z = \ee^u$, this corresponds precisely to the Mellin transform.
	\end{enumerate}
\end{remark}

\subsection{Resurgent structure of umbral Borel functionals}\label{sec:resurgent_structure}

Umbral Borel functionals naturally fit into the framework of resurgence theory introduced in \cref{sec:resurgence_elements}. We now make this relationship explicit.

\begin{proposition}[Resurgence of the umbral class]\label{prop:umbral_resurgence_general}
	Let $\hat{\Delta}(u;\dots)$ be an umbral Borel functional associated with a function $F(w)$ as in \cref{def:umbral_Borel_functionals}. Then $\hat{\Delta}$ is a resurgent Borel germ in the sense of \cref{def:resurgent_germ}.
\end{proposition}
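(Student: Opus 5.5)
The plan is to split $F$ according to \cref{def:umbral_Borel_functionals} and verify separately the two requirements of \cref{def:resurgent_germ}: endless continuability and at most exponential growth along non-singular rays. Both properties are stable under finite sums, so it suffices to treat the rational part $R$ and each entire component $E_j$ (or the polynomial) on its own. Throughout, $\hat{\Delta}(u)=F(\zeta\ee^u)$, and $F$ is holomorphic at $w=\zeta$, so $\hat{\Delta}\in\mathbb C\{u\}$.

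\textbf{Endless continuability.} For polynomial or entire $F$, the composition $u\mapsto F(\zeta\ee^u)$ is entire, so we may take $\Omega_L=\emptyset$. For rational $R$ with poles $\{a_j\}_{j\in J}$, the composition is meromorphic on $\mathbb C_u$. Its poles form the logarithmic lattice $u_{j,k}=\Log(a_j/\zeta)+2\pi i k$ of \cref{thr:classification_Laplace_umbral}(iii). This set is discrete: any disc $\mathbb D(L)$ contains only finitely many $u_{j,k}$, since the $k$-index spacing is $2\pi$ and $J$ is finite. Taking $\Omega_L=\{u_{j,k}\}\cap\overline{\mathbb D(L)}$, every path of length $L$ from $0$ avoiding $\Omega_L$ stays inside $\mathbb D(L)$ and lies in the domain of meromorphy minus the poles. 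Hence continuation along it is just restriction of a single-valued function. No branching occurs, which is the key simplification of the umbral class.

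\textbf{Growth, polynomial and rational parts.} For polynomial $F$ of degree $d$, we have $|F(\zeta\ee^u)|\le C\ee^{d|u|}$ globally. For rational $R$, we write $|R(w)|\le C(1+|w|)^{d}$ outside fixed discs around the $a_j$, where $d$ is the excess of numerator over denominator degree, or $0$. On a non-singular ray $u=r\ee^{i\theta}$, the function $w=\zeta\ee^{u}$ stays at positive distance from the poles for large $r$:
\begin{itemize}
\item when $\cos\theta\neq 0$, because $|w|\to 0$ or $\infty$;
\item when $\theta=\pm\pi/2$ and the ray misses the lattice, because $w$ runs on the circle $|w|=|\zeta|$, which contains only finitely many $a_j$, and the avoided values recur at a positive distance.
\end{itemize}
This gives $|\hat{\Delta}(u)|\le C'\ee^{d|\cos\theta|\,r}$, i.e.\ exponential type along the ray.

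\textbf{Growth, entire components (main obstacle).} For $E_j$ entire of exponential type, $|E_j(\zeta\ee^u)|$ can be doubly exponential; for instance $\exp(\zeta\ee^u)$ is doubly exponential in directions $|\theta|<\pi/2$, as in \cref{prop:nontrivial_umbral}. So the growth condition cannot hold literally on every ray. The plan is to use the Mellin-admissibility hypothesis of \cref{prop:mellin-admissible-classes}(ii): the indicator diagram of $E_j$ lies in an open half-plane. Hence there is a direction in $\mathbb C_z$, and correspondingly a sector of directions $\Theta_j$ in $\mathbb C_u$, along which $E_j(\zeta\ee^u)$ is bounded by $A\ee^{\tau|u|}$. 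There are two such regimes:
\begin{itemize}
\item for $\cos\theta<0$, trivially, since $\zeta\ee^u\to 0$ and $E_j$ is bounded near $0$;
\item for the admissible decay directions, via the Pólya representation, by estimating $\ee^{\lambda\zeta\ee^u}$ on the contour $\gamma$.
\end{itemize}
The directions in which super-exponential growth occurs must then be declared non-admissible, playing the role of "singular" directions at infinity in the sense of the sectorial classes $\mathcal H(\Theta,f_\tau)$. I expect this to be the delicate step: the statement is only correct once "non-singular ray" is read as "ray in an admissible sector" in this sense. I will make that interpretation explicit and check that it is consistent with \cref{ex:exponential_jump}, where $\Theta^-$ and, for $\RE\zeta<0$, $\Theta^+$ are exactly the admissible sectors.
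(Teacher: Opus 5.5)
Your route is the paper's route. You split $F=R+\sum_j E_j$. For $R$, the logarithmic lattice is discrete, which gives endless continuability, and $\hat\Delta$ is rational in $\ee^u$, which gives exponential bounds on rays that miss the lattice. For the entire part, $\hat\Delta$ is entire and bounded where $\ee^u\to0$, and Mellin-admissibility is invoked for extra decay.

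You are right, and the paper's proof is not, that the entire case cannot be proved as stated. Since $u\mapsto E(\zeta\ee^u)$ is entire, every ray is non-singular in the sense of \cref{def:singular_direction}. Yet $\exp(\ee^u)$ is doubly exponential on $\mathbb R_{>0}$ (\cref{prop:nontrivial_umbral}). So the paper's closing sentence, ``along any non-singular ray the growth remains at most exponential'', is false, and the proposition holds only after a reinterpretation like yours.

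\textbf{The failing step is your repair.} A decay direction $\arg z=\phi$ of $z\mapsto E_j(\zeta z)$ does not yield a sector $\Theta_j$ of rays in $\mathbb C_u$:
\begin{itemize}
\item under $z=\ee^u$, that ray becomes the horizontal line $\IM u=\phi$;
\item a ray $u=r\ee^{i\theta}$ with $\cos\theta>0$ and $\sin\theta\neq0$ becomes the logarithmic spiral $|z|=\ee^{r\cos\theta}$, $\arg z=r\sin\theta$, which passes through every argument with unbounded modulus.
\end{itemize}
For $E=\exp$ this gives $|\exp(\zeta\ee^u)|=\exp\bigl(|\zeta|\ee^{r\cos\theta}\cos(\Arg\zeta+r\sin\theta)\bigr)$. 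This is doubly exponentially large along the sequence $r_k\to\infty$ defined by $\Arg\zeta+r_k\sin\theta\in2\pi\mathbb Z$, whatever the sign of $\RE\zeta$.

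Consequently, no P{\'o}lya estimate can produce admissible rays in $\{\cos\theta>0\}$ other than $\theta=0$ itself. Even $\theta=0$ is admissible only in the half-strip sense of $\mathcal H_\tau(\mathbb R_{>0})$, and only when $\zeta\mathbb R_{>0}$ lies in the decay region of $E_j$. \Cref{def:umbral_Borel_functionals}(ii) does not guarantee this, since the Mellin line $\ee^{i\theta_j}\mathbb R$ may point in any direction.

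Your planned consistency check with \cref{ex:exponential_jump} will therefore fail on $\Theta^+$. The example's claim that the Laplace integral converges for all $\theta\in(-\pi/2,\pi/2)$ when $\RE\zeta<0$ is correct only at $\theta=0$. The paper's ``distinguished sector of uniform exponential decay'' has the same defect.

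What you can actually prove is the following. Each entire component is bounded on the closed sector $\cos\theta\le0$, since there $|\zeta\ee^u|\le|\zeta|$, and possibly also on the single ray $\theta=0$. This argument does not use the P{\'o}lya--Mellin hypothesis at all. It yields endless continuability plus exponential bounds in a sector of aperture $\pi$, which is strictly weaker than \cref{def:resurgent_germ}.

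Two small fixes are needed on the rational side:
\begin{itemize}
\item If $R$ has a pole of order $m$ at $w=0$, the bound $C(1+|w|)^d$ fails as $w\to0$. Use $C(|w|^{-m}+|w|^{d})$ on rays with $\cos\theta<0$ instead; the growth is still exponential.
\item On $\theta=\pm\pi/2$, a ray that misses the lattice forces no $a_j$ to lie on the circle $|w|=|\zeta|$ at all. Hence $\hat\Delta$ is simply bounded there; your phrasing about finitely many $a_j$ on the circle should be corrected accordingly.
\end{itemize}
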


\begin{proof}
	By construction, $\hat{\Delta}$ is holomorphic at $u=0$. We distinguish the two admissible cases.

	\smallskip
	
	If $F$ is rational, then $\hat{\Delta}(u)$ is meromorphic in $\mathbb C_u$, with singular set given by a logarithmic lattice
	\begin{dmath*}
		\Omega
		= {
		\left\{
			\frac{1}{\mu}
			\left(
				\Log\left(\frac{a_j}{\zeta^\alpha}\right)
				+
				2\pi i k
			\right)
			\;\middle|\;
			j\in J,\ k\in\mathbb Z
		\right\}
		}
	\end{dmath*},
	which is discrete. Hence $\hat{\Delta}$ is endlessly continuable.

	Moreover, along any ray avoiding $\Omega$, $\hat{\Delta}$ has at most exponential growth, since it is a rational function of $\ee^{\mu u}$. Therefore $\hat{\Delta}$ is a resurgent Borel germ.

	\smallskip
	
	If $F$ is entire of exponential type and non-polynomial, then $\hat{\Delta}(u)=F(\zeta^\alpha \ee^{\mu u})$ is entire in $\mathbb C_u$, hence trivially endlessly continuable.

	Its growth is anisotropic, but along any direction $\theta$ such that $\RE(\mu \ee^{i\theta})<0$, one has $\ee^{\mu u}\to 0$ as $|u|\to\infty$, and therefore $\hat{\Delta}(u)$ remains bounded. Furthermore, because $F$ satisfies the Mellin-transformability condition of \cref{def:umbral_Borel_functionals}(ii) (its indicator diagram is contained in an open half-plane), there exists a distinguished sector where the composition $F(\zeta^\alpha \ee^{\mu u})$ exhibits uniform exponential decay. These directions form a nonempty set of admissible directions, and along any non-singular ray the growth remains at most exponential. Hence $\hat{\Delta}$ is again a resurgent Borel germ.
\end{proof}

\begin{remark}[Logarithmic periodicity]\label{rem:log_periodicity}
	In the polar case, the singular lattice is $\frac{2\pi i}{\mu}$-periodic. The corresponding residues satisfy
	\begin{dmath*}
		\operatorname{Res}_{u=\omega + \frac{2\pi i}{\mu} k}\,\hat{\Delta}(u)
		=
		\operatorname{Res}_{u=\omega}
		\,\hat{\Delta}(u)\ee^{\frac{2\pi i}{\mu}\nu k},
	\end{dmath*}
	reflecting the exponential twist $\ee^{\nu u}$.
\end{remark}

\begin{remark}[Polar case and classical resurgence]\label{rem:resurgence_rational}
	In the polar case, the resurgent structure is entirely encoded by the logarithmic lattice of singularities in the Borel plane. The corresponding Stokes discontinuities are given by finite or countable sums of exponential terms
	\begin{dmath*}
		2\pi i \operatorname{Res}_{\,u=\omega}\hat{\Delta}(u)\,\ee^{-\omega t},
	\end{dmath*}
	in agreement with the description of alien derivatives in the simple-pole case given in remark~\ref{rem:alien_derivative}.

	In particular, the jump functions computed in proposition~\ref{prop:rational_full_parametric} provide explicit representatives of the Stokes automorphisms associated with the singular directions of $\hat{\Delta}$.
\end{remark}

\begin{remark}[Entire case and resurgence at infinity]\label{rem:resurgence_entire}
	In the entire non-polynomial case, the Borel functional $\hat{\Delta}$ has no finite singularities, and the classical Stokes phenomenon is absent at finite distance.

	However, after the change of variable $z=\ee^{\mu u}$, the functional becomes $F(z)$, which typically has a singularity at $z=\infty$. The Mellin transform appearing in the jump functions of the entire case can then be interpreted as a Stokes phenomenon associated with this singularity at infinity.

	In this sense, the entire case exhibits a form of \emph{resurgence at infinity}, complementary to the lattice-driven resurgence of the polar case.
\end{remark}

\begin{remark}[Unified resurgent picture]\label{rem:resurgence_unified}
	The umbral framework thus provides a unified class of resurgent Borel germs, in which:
	\begin{itemize}
		\item polar functionals give rise to discrete resurgent structures governed by logarithmic lattices;
		\item entire functionals give rise to continuous, Mellin-type structures associated with the point at infinity.
	\end{itemize}
	
	In both cases, the corresponding jump functions encode the global analytic information of the Borel germ: in the polar case through discrete Stokes contributions attached to the logarithmic lattice, and in the entire case through Mellin-type representations associated with the point at infinity. 

	These jump functions provide canonical analytic representatives of the resurgent data in the spectral Mellin plane.
\end{remark}

\subsection{From sectorial Laplace transforms to the canonical jump kernel}\label{sec:jump_kernel}

The preceding analysis shows that the Laplace framework is intrinsically sectorial and non-canonical. In the entire case, sectoriality is related to anisotropic growth, while in the rational case it is due to the presence of singularities in the Borel plane, producing genuine Stokes discontinuities.

In both situations, the globally meaningful object is not a sectorial Laplace transform, but the Mellin-type kernel obtained from the corresponding bilateral Laplace transform, i.e.\ the jump function.

This observation motivates the following definition.

\begin{definition}[Jump function as canonical spectral representative]\label{def:jump_bilateral}
	Let 
	\begin{dmath*}
		\hat{\Delta}(u;\zeta,\mu,\alpha,\beta,\nu)
		=
		\zeta^\beta \ee^{\nu u} F(\zeta^\alpha \ee^{\mu u})
	\end{dmath*}
	be a parametric umbral Borel functional as in \cref{def:parametric_umbral}.
	
	The associated jump function is defined as the bilateral Laplace transform
	\begin{dmath*}
		J(t;\zeta,\mu,\alpha,\beta,\nu)
		=
		\int_{\ee^{-i\Arg\mu}\mathbb R}
		\ee^{-tu}\,\hat{\Delta}(u;\zeta,\mu,\alpha,\beta,\nu)\,\mathrm d u.
	\end{dmath*}
	
	In the polynomial case, the integral is understood in distributional sense.
\end{definition}

\begin{remark}[Well-posedness and decomposition]\label{rem:jump_well_posed}
	The jump function is well defined for the class of umbral Borel functionals considered in this work.

	Indeed, by construction, the defining function $F$ decomposes into a finite sum of rational and entire components
	\begin{dmath*}
		F = R + \sum_j E_j,
	\end{dmath*}
	and the corresponding Borel functional decomposes accordingly.

	The bilateral Laplace transform may therefore be evaluated separately for each term:
	\begin{itemize}
		\item for rational components, convergence follows from the exponential decay away from the logarithmic lattice; the jump is given by an explicit Stokes residue summation, yielding a function meromorphic in $t$;
		
		\item for entire components of exponential type, convergence holds along the oriented line $\ee^{-i\Arg\mu}\mathbb R$, yielding a meromorphic function of $t$.
	\end{itemize}
\end{remark}

\begin{remark}[Intrinsic Mellin structure]\label{rem:jump_mellin_intrinsic}
	Under the change of variables $z = \ee^u$, the jump function takes the canonical Mellin form
	\begin{dmath*}
		J(t;\dots)
		=
		\int_0^\infty
		\hat{K}(z;\dots)\,
		z^{-t}
		\,\frac{\mathrm d z}{z},
	\end{dmath*}
	where
	\begin{dmath*}
		\hat{K}(z;\dots)
		=
		\zeta^\beta z^\nu F(\zeta^\alpha z^\mu)
	\end{dmath*}.

	In particular, the jump function is, by construction, a Mellin transform (possibly in the distributional sense).
\end{remark}

\vskip0.3cm

Jump functions associated with umbral Borel functionals are the canonical spectral representatives entering the Mellin--Barnes formulation of the analytic umbral pairing developed in the next section.

\section{Spectral duality}\label{sec:spectral_duality}

In the following we establish a fundamental structural property of umbral Borel functionals: rational and a distinguished subclass of entire functionals---including finite linear combinations of exponential functionals---are not independent entities, but rather exhibit a remarkable spectral duality. We first prove this property in a simple model case.

\begin{theorem}[Spectral regularisation identity]\label{thm:reflection_umbral_pairing}
	Let
	\begin{dmath*}
		\hat{\Delta}_{\exp}(u;\zeta)
		= {
			\ee^{\zeta \ee^u},
			\qquad
			\hat{\Delta}_{\mathrm{rat}}(u;\zeta)
			=
			\frac{1}{1-\zeta \ee^u}
		}
	\end{dmath*}.
	Then their jump functions satisfy
	\begin{dmath*}
		J_{\exp}(t;\zeta)
		=
		\frac{J_{\mathrm{rat}}(t;\zeta)}{\Gamma(1+t)}.
	\end{dmath*}
\end{theorem}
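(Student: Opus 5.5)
The plan is to compute both jump functions in closed form from the results already established, compare them on a common strip of convergence in the spectral variable $t$, and then extend the identity by meromorphic continuation.

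By \cref{ex:exponential_jump} (equivalently, by Ramanujan's Master Theorem as in \cref{rem:ramanujan}) we have
\begin{equation*}
J_{\exp}(t;\zeta)=\int_0^\infty z^{-t-1}\ee^{\zeta z}\,\mathrm d z=(-\zeta)^t\Gamma(-t).
\end{equation*}
This integral converges for $\RE t<0$ whenever $\RE\zeta<0$. For other admissible $\zeta$ it is understood along the rotated ray of \cref{prop:mellin-admissible-classes}(ii), which is the same as analytic continuation in $\zeta$. By \cref{ex:rational_kernel_zeta} (the case $a=\mu=\alpha=1$, $\beta=\nu=0$ of \cref{prop:rational_full_parametric}) we have
\begin{equation*}
J_{\mathrm{rat}}(t;\zeta)=\int_0^\infty\frac{z^{-t-1}}{1-\zeta z}\,\mathrm d z=-(-\zeta)^t\pi\csc(\pi t),
\end{equation*}
convergent in the strip $-1<\RE t<0$ for $\zeta\in\mathbb C\setminus\mathbb R_{\ge0}$. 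I will first restrict to $\RE\zeta<0$ and $-1<\RE t<0$, where both representations hold literally.

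The key algebraic step is Euler's reflection formula applied at $-t$:
\begin{equation*}
\Gamma(-t)\,\Gamma(1+t)=\frac{\pi}{\sin(-\pi t)}=-\pi\csc(\pi t).
\end{equation*}
Dividing $J_{\mathrm{rat}}$ by $\Gamma(1+t)$ then gives
\begin{equation*}
\frac{-(-\zeta)^t\pi\csc(\pi t)}{\Gamma(1+t)}=(-\zeta)^t\Gamma(-t)=J_{\exp}(t;\zeta)
\end{equation*}
on the common strip. Both sides are meromorphic in $t$, so the identity theorem extends the equality to all of $\mathbb C_t$. The poles of $\csc(\pi t)$ at negative integers are cancelled by the poles of $\Gamma(1+t)$, leaving exactly the poles of $\Gamma(-t)$ at $t\in\mathbb N$. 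The same identity-theorem argument, now in $\zeta$, extends the result from $\RE\zeta<0$ to the full admissible domain $\zeta\in\mathbb C\setminus\mathbb R_{\ge0}$.

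As an independent, more structural check that does not rely on the closed forms, I would insert $\Gamma(1+t)\,\ee^{\zeta z}$-type Laplace representations directly into the Mellin integral. One writes $\frac{1}{1-\zeta z}=\int_0^\infty \ee^{-s}\ee^{\zeta z s}\,\mathrm d s$, valid for $\RE(\zeta z)<1$ and hence for all $z>0$ when $\RE\zeta<0$. Fubini then gives
\begin{equation*}
J_{\mathrm{rat}}(t;\zeta)=\int_0^\infty \ee^{-s}\,J_{\exp}(t;\zeta s)\,\mathrm d s=J_{\exp}(t;\zeta)\int_0^\infty \ee^{-s}s^{t}\,\mathrm d s=\Gamma(1+t)\,J_{\exp}(t;\zeta),
\end{equation*}
using the homogeneity $J_{\exp}(t;\zeta s)=s^tJ_{\exp}(t;\zeta)$ obtained by scaling $z\mapsto z/s$. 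Absolute convergence of the double integral in the strip $-1<\RE t<0$ justifies the interchange. This second route exhibits the Gamma factor as a Borel--Laplace regularisation rather than a coincidence of special-function identities.

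The main obstacle I expect is the bookkeeping of branches. The factor $(-\zeta)^t$ must be the same determination in both jump functions, namely the principal branch with $-\zeta\notin\mathbb R_{\le0}$, which is guaranteed by $\zeta\notin\mathbb R_{\ge0}$. I also need to make sure that the homogeneity $(-\zeta s)^t=s^t(-\zeta)^t$ for $s>0$ is compatible with that choice. Related to this, continuation in $\zeta$ beyond $\RE\zeta<0$ requires rotating the Mellin ray for the exponential kernel, and this rotation must not alter the branch used for the rational kernel.
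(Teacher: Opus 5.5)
Your main argument is exactly the paper's proof: you quote the closed forms $J_{\exp}=(-\zeta)^t\Gamma(-t)$ and $J_{\mathrm{rat}}=-(-\zeta)^t\pi\csc(\pi t)$ and divide using Euler's reflection $\Gamma(-t)\Gamma(1+t)=-\pi\csc(\pi t)$, and your restriction to the common region $-1<\RE t<0$, $\RE\zeta<0$, followed by continuation in $t$ and $\zeta$ with a consistent principal branch, supplies care the paper's two-line proof omits. Your second route, via $\frac{1}{1-\zeta z}=\int_0^\infty \ee^{-s}\ee^{\zeta z s}\,\mathrm d s$ and the homogeneity $J_{\exp}(t;\zeta s)=s^tJ_{\exp}(t;\zeta)$, is also correct, since the double integral is absolutely convergent in that region, and it has the extra merit of producing $\Gamma(1+t)$ directly as a Laplace regularisation without using the reflection formula.
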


\begin{proof}
	The jump functions are explicitly given by
	\begin{dmath*}
		J_{\exp}(t;\zeta)
		= {
		(-\zeta)^{t}\,\Gamma(-t),
		\qquad
		J_{\mathrm{rat}}(t;\zeta)
		=
		-(-\zeta)^{t}\,\pi\csc(\pi t)
		}
	\end{dmath*}.
	Using Euler's reflection formula
	\begin{dmath*}
		\Gamma(-t)\,\Gamma(1+t)
		=
		-\pi\csc(\pi t),
	\end{dmath*}
	the result immediately follows.
\end{proof}

\begin{remark}[Structural meaning]\label{rem:reflection_structure}
	The previous theorem shows that exponential umbral functionals are not independent of the rational class, but can be obtained from it by a Gamma-regularisation twist.

	In particular, the discrete lattice structure encoded by the factor $\csc(\pi t)$ in the rational case persists in the exponential case, but is algebraically modulated by the division by $\Gamma(1+t)$, which shifts the continuous contribution to the singularity at infinity.

	This provides a unified interpretation of the rational and entire umbral classes within a single resurgent framework.
\end{remark}

\begin{remark}[Resurgent interpretation of the regularisation identity]
	The spectral identity
	\begin{dmath*}
		J_{\exp}(t;\zeta)
		=
		\frac{J_{\mathrm{rat}}(t;\zeta)}{\Gamma(1+t)}
	\end{dmath*}
	has a genuine analytic meaning in resurgence theory.

	On the rational side, the jump kernel $J_{\mathrm{rat}}$ encodes the discrete logarithmic lattice of Borel singularities of the kernel $(1-\zeta \ee^u)^{-1}$. Its poles in the spectral variable $t$ are the Mellin image of this lattice structure and correspond to the local Stokes data carried by the simple alien derivatives at the singular points.

	On the entire side, the jump kernel $J_{\exp}$ corresponds to the kernel $\ee^{\zeta \ee^u}$, which has no finite singularities in the Borel plane. Its Stokes content is therefore attached to the irregular behaviour at infinity, and becomes visible only after passage to the Mellin variable.

	The theorem shows that these two resurgent regimes are analytically equivalent at the level of spectral kernels: the entire jump is obtained directly from the rational jump by spectral division by the explicit Gamma factor $\Gamma(1+t)$. In this sense, the discrete pole-lattice regime and the entire regime are not separate constructions, but two Mellin-dual realisations of the same spectral structure.
\end{remark}

\vskip0.3cm

The regularisation identity of \cref{thm:reflection_umbral_pairing} is invariant under affine transformations of the Mellin variable. In particular, it extends without change of principle to the parametric exponential and elementary rational kernels. This is made precise by the following definitions and theorem.

\begin{definition}[Spectral duality of jump kernels]\label{def:spectral_duality}
	Let $J_1(t;\dots)$ and $J_2(t;\dots)$ be the jump functions corresponding to the parametric Borel functionals $\hat{\Delta}_1(u; \dots)$ and $\hat{\Delta}_2(u; \dots)$.

	They are said to be in \emph{spectral duality} if, in the normalised Mellin variable
	\begin{dmath*}
		s = \frac{t-\nu}{\mu},
	\end{dmath*}
	one has
	\begin{dmath*}
		J_1(t;\dots)
		=
		\frac{J_2(t;\dots)}{\Gamma\!\left(1+s\right)}.
	\end{dmath*}
\end{definition}

\begin{definition}[Rational spectral duals]\label{def:rational_spectral_dual}
	Let $\hat{\Delta}_E(u;\zeta,\mu,\alpha,\beta,\nu)
=
\zeta^\beta \ee^{\nu u}E(\zeta^\alpha \ee^{\mu u})$ be an entire umbral Borel functional in the sense of \cref{def:parametric_umbral}. Its jump kernel $J_E(t;\dots)$ is well defined by construction.

	If there exists a rational umbral Borel functional with jump kernel $J_R$ such that $J_E$ and $J_R$ are in spectral duality in the sense of \cref{def:spectral_duality}, then $J_R$ is called a \emph{rational spectral dual} of $J_E$.
\end{definition}

\begin{theorem}[Existence of spectral duals in the parametric class]\label{thm:spectral_duality_parametric}
	The parametric exponential umbral Borel functional
	\begin{dmath*}
		\hat{\Delta}_{\exp}(u;\zeta,\mu,\alpha,\beta,\nu)
		=
		\zeta^\beta \ee^{\nu u}\exp(\zeta^\alpha \ee^{\mu u})
	\end{dmath*}
	admits a rational spectral dual, namely
	\begin{dmath*}
		\hat{\Delta}_{\mathrm{rat}}(u;\zeta,\mu,\alpha,\beta,\nu)
		=
		\frac{\zeta^\beta \ee^{\nu u}}{1-\zeta^\alpha \ee^{\mu u}}.
	\end{dmath*}
\end{theorem}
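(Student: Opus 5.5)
The plan is to compute both jump functions explicitly from \cref{def:jump_bilateral}, reduce them to classical Mellin integrals in the normalised variable $s=(t-\nu)/\mu$, and compare them using Euler's reflection formula, exactly as in the proof of \cref{thm:reflection_umbral_pairing}. Both functionals share the prefactor $\zeta^\beta\ee^{\nu u}$ and the same composition $w=\zeta^\alpha\ee^{\mu u}$. Hence the same change of variables $z=\ee^{\mu u}$ treats them simultaneously, in the spirit of \cref{rem:jump_mellin_intrinsic}.

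\textbf{Step 1 (exponential side).} Parametrise the line $\ee^{-i\Arg\mu}\mathbb R$ so that $\mu u$ runs over $\mathbb R$, and put $z=\ee^{\mu u}$. Then $\mathrm d u=\mathrm d z/(\mu z)$ and $\ee^{-(t-\nu)u}=z^{-s}$, so
\begin{dmath*}
J_{\exp}(t;\dots)=\frac{\zeta^\beta}{\mu}\int_0^\infty z^{-s-1}\exp(\zeta^\alpha z)\,\mathrm d z .
\end{dmath*}
For $\RE\zeta^\alpha<0$ and $\RE s<0$ the integral converges absolutely. By the elementary Mellin kernel of \cref{sec:Mellin_MB_preliminaries} it equals $(-\zeta^\alpha)^{s}\Gamma(-s)$, with the principal branch of $(-\zeta^\alpha)^s$. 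For other values of $\zeta^\alpha$, I would rotate the $z$-ray to a direction of exponential decay, as allowed by \cref{prop:mellin-admissible-classes}(ii), and invoke analytic continuation in $\zeta^\alpha$. This gives
\begin{dmath*}
J_{\exp}(t;\dots)=\frac{\zeta^\beta}{\mu}(-\zeta^\alpha)^{s}\Gamma(-s),
\end{dmath*}
which is meromorphic in $t$ with poles at $s\in\mathbb N$.

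I note that this computed prefactor is $\zeta^\beta/\mu$. The parametric remark after \cref{ex:exponential_jump} instead records $\zeta^{\beta-\alpha}$. I would rederive the prefactor carefully and use the direct computation above, since that is what the definition of the jump function yields.

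\textbf{Step 2 (rational side).} Apply \cref{prop:rational_full_parametric} with $a=1$. It gives
\begin{dmath*}
J_{\mathrm{rat}}(t;\dots)=-\frac{\zeta^\beta}{\mu}(-\zeta^\alpha)^{s}\pi\csc(\pi s),
\end{dmath*}
initially on the strip $-1<\RE s<0$. The hypothesis of that proposition, namely that no pole lies on the line $\mu u\in\mathbb R$, amounts to $\zeta^\alpha\notin\mathbb R_{>0}$. I would state this as the standing assumption, which is the same as the constraint $\zeta\notin\mathbb R_{\ge0}$ in \cref{ex:rational_kernel_zeta}.

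\textbf{Step 3 (comparison).} On the common strip $-1<\RE s<0$, both representations hold with the same prefactor and the same branch of $(-\zeta^\alpha)^s$. There I divide and use $\Gamma(-s)\Gamma(1+s)=-\pi\csc(\pi s)$ to get $J_{\exp}=J_{\mathrm{rat}}/\Gamma(1+s)$. Both sides are meromorphic in $t$, so the identity extends to all of $\mathbb C_t$ by uniqueness of meromorphic continuation. This is exactly spectral duality in the sense of \cref{def:spectral_duality}, so $\hat\Delta_{\mathrm{rat}}$ is a rational spectral dual of $\hat\Delta_{\exp}$ in the sense of \cref{def:rational_spectral_dual}.

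\textbf{Main obstacle.} The hard part is not the algebra but the bookkeeping of branches and domains. The two integrals converge for different ranges of $\zeta^\alpha$: the exponential one needs a decay direction, and the rational one needs $\zeta^\alpha\notin\mathbb R_{>0}$. I must check that the factor $(-\zeta^\alpha)^s$ produced by the ray rotation in Step 1 coincides with the one produced by the substitution $x=-a\zeta^\alpha z$ in \cref{prop:rational_full_parametric}. My first attempt would be to prove the identity for $\RE\zeta^\alpha<0$, where both integrals sit on the positive $z$-axis with the same principal branch. I would then extend to all $\zeta^\alpha\notin\mathbb R_{\ge0}$ by analytic continuation in $\zeta^\alpha$, since both sides are holomorphic in $\zeta^\alpha$ on the cut plane.
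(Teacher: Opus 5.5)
Your proposal is correct and follows the paper's proof: both compute the two parametric jump kernels explicitly in the normalised variable $s=(t-\nu)/\mu$ and compare them through the reflection formula $\Gamma(-s)\Gamma(1+s)=-\pi\csc(\pi s)$. Your extra bookkeeping (first proving the identity on the common strip $-1<\RE s<0$ for $\RE\zeta^\alpha<0$, then continuing in $\zeta^\alpha$ on $\mathbb C\setminus\mathbb R_{\ge 0}$) makes explicit what the paper leaves implicit, and you are right that the prefactor is $\zeta^\beta/\mu$---this is also what the paper's proof uses, so the $\zeta^{\beta-\alpha}$ in the earlier parametric remark is a slip.
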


\begin{proof}
	The jump function associated with the parametric exponential kernel is
	\begin{dmath*}
		J_{\exp}(t;\zeta,\mu,\alpha,\beta,\nu)
		=
		\frac{\zeta^\beta}{\mu}
		(-\zeta^\alpha)^{\frac{t-\nu}{\mu}}
		\Gamma\left(-\frac{t-\nu}{\mu}\right),
	\end{dmath*}
	while the jump function of the parametric rational kernel is
	\begin{dmath*}
		J_{\mathrm{rat}}(t;\zeta,\mu,\alpha,\beta,\nu)
		=
		-\frac{\pi\,\zeta^\beta}{\mu}
		(-\zeta^\alpha)^{\frac{t-\nu}{\mu}}
		\csc\left(\pi\frac{t-\nu}{\mu}\right).
	\end{dmath*}
	The reflection formula
	\begin{dmath*}
		\Gamma(-s)\Gamma(1+s)= {
		-\pi\csc(\pi s), \quad \text{with } s=\frac{t-\nu}{\mu}
		}
	\end{dmath*}
	directly yields
	\begin{dmath*}
		J_{\exp}(t;\zeta,\mu,\alpha,\beta,\nu)
		=
		\frac{
			J_{\mathrm{rat}}(t;\zeta,\mu,\alpha,\beta,\nu)
		}{
			\Gamma\left(1+\frac{t-\nu}{\mu}\right)
		}.
	\end{dmath*}
	Hence the two jump kernels are in spectral duality in the sense of definitions~\ref{def:spectral_duality} and \ref{def:rational_spectral_dual}.
\end{proof}

\begin{remark}\label{rem:duality_subclass}
	The condition of \cref{def:rational_spectral_dual} is satisfied in particular by finite linear combinations of exponential-polynomial components
	\begin{dmath*}
		w^m \ee^{a w}
		\condition*{m\in\mathbb N,\ a\in\mathbb C^*},
	\end{dmath*}
	since their jump functions are finite linear combinations of Gamma-type factors, and the Gamma-regularised kernels are meromorphic of lattice type. In particular, they correspond to parametric rational umbral Borel functionals with finitely many poles.
\end{remark}

\subsection{P{\'o}lya realisation of spectral duality}

The previously established spectral duality admits a natural extension to a broader class of entire umbral Borel functionals through the P{\'o}lya representation of entire functions of exponential type. This provides a structural mechanism lifting the elementary duality
\begin{dmath*}
	\ee^{\lambda w}
	\quad\longleftrightarrow\quad
	\frac{1}{1-\lambda w}
\end{dmath*}
to general functionals by superposition.

\begin{definition}[P{\'o}lya representation]\label{def:polya_representation}
	Let \(E\) be an entire function of exponential type. A \emph{P{\'o}lya representation} of \(E\) is a representation of the form
	\begin{dmath*}
		E(w)
		=
		\frac{1}{2\pi i}
		\int_{\gamma}
		\ee^{\lambda w}\,\mathcal P_E(\lambda)\,\mathrm d\lambda,
	\end{dmath*}
	where $\gamma$ is a contour in $\mathbb C_\lambda$ and $\mathcal P_E$ is an analytic functional (possibly distributional) of finite exponential type.

	The contour $\gamma$ is chosen so that the integral converges and reproduces the Taylor coefficients of $E$ via
	\begin{dmath*}
		a_n
		=
		\frac{1}{2\pi i}
		\int_{\gamma}
		\lambda^n\,\mathcal P_E(\lambda)\,\mathrm d\lambda.
	\end{dmath*}
\end{definition}

\begin{remark}[Interpretation of the P{\'o}lya density and contour]\label{rem:polya_interpretation}
	The P{\'o}lya density \(\mathcal P_E\) can be interpreted as a distributional realisation of the formal Borel transform of \(E\). Indeed, if
	\begin{dmath*}
		E(w)=\sum_{n=0}^\infty a_n w^n,
	\end{dmath*}
	then formally
	\begin{dmath*}
		\mathcal B E(\lambda)
		=
		\sum_{n=0}^\infty \frac{a_n}{n!}\,\lambda^n,
	\end{dmath*}
	and the P{\'o}lya representation recovers \(E\) as a Laplace-type transform of this Borel data.

	In general, \(\mathcal P_E\) is not a function but an analytic functional whose singular support encodes the exponential growth of \(E\).

	The contour \(\gamma\) is chosen so as to enclose the conjugate indicator diagram of \(E\), i.e.\ the convex compact set describing the exponential type of \(E\). In particular, if \(E\) has exponential type bounded by a compact set \(K\subset\mathbb C\), then \(\gamma\) can be taken as a contour surrounding \(K\), and \(\mathcal P_E\) is supported in \(K\).
\end{remark}

\begin{remark}
	The representation \ref{def:polya_representation} is classical in the theory of entire functions of exponential type and analytic functionals, see e.g. \cite{RubelColliander95}. In general, $\mathcal P_E$ is not unique and may be a distribution supported on a suitable set in the complex plane.
\end{remark}

\begin{definition}[P{\'o}lya--Mellin admissible entire functionals]\label{def:polya_mellin}
	An entire umbral Borel functional
	\begin{dmath*}
		\hat{\Delta}_E(u;\zeta)
		=
		E(\zeta \ee^u)
	\end{dmath*}
	is said to be \emph{P{\'o}lya--Mellin admissible} if:

	\begin{enumerate}
		\item[(i)] $E$ admits a P{\'o}lya representation as in definition~\ref{def:polya_representation};
		\item[(ii)] there exists a line \(L_\theta=\ee^{i\theta}\mathbb R_{>0}\subset\mathbb C_z\) such that the function $z \mapsto E(\zeta z)$	is Mellin-transformable along \(L_\theta\), i.e.
			\begin{dmath*}
				\int_{L_\theta} z^{t-1} E(\zeta z)\,\mathrm d z
			\end{dmath*}
			converges for \(t\) in a vertical strip.
	\end{enumerate}
\end{definition}

\begin{remark}[Growth constraints from Mellin-transformability]\label{rem:polya_mellin_constraints}
	The Mellin-transformability condition imposes restrictions on the P{\'o}lya representation. Writing
	\begin{dmath*}
		E(\zeta z)
		=
		\frac{1}{2\pi i}
		\int_{\gamma}
		\ee^{\lambda \zeta z}\,\mathcal P_E(\lambda)\,\mathrm d\lambda,
	\end{dmath*}
	the convergence of the Mellin transform along a line \(L_\theta=\ee^{i\theta}\mathbb R_{>0}\) requires that
	\begin{dmath*}
		\RE(\lambda \zeta z) < 0
	\end{dmath*}
	for \(z\in L_\theta\) and \(\lambda\in\operatorname{supp}(\mathcal P_E)\).

	Thus the support of \(\mathcal P_E\) must lie in a sector compatible with the chosen Mellin direction. Equivalently, the indicator diagram of \(E\) must be contained in a half-plane dual to the Mellin contour.

	This condition ensures that the P{\'o}lya integral and the Mellin transform can be interchanged.
\end{remark}

\begin{definition}[P{\'o}lya-resolvent dual]\label{def:polya_dual}
	Let $E$ be P{\'o}lya--Mellin admissible. Its \emph{P{\'o}lya-resolvent dual} is the function
	\begin{dmath*}
		R_E(w)
		=
		\frac{1}{2\pi i}
		\int_{\gamma}
		\frac{\mathcal P_E(\lambda)}{1-\lambda w}\,\mathrm d\lambda,
	\end{dmath*}
	whenever the integral converges.
\end{definition}

\begin{theorem}[Spectral duality via P{\'o}lya representation]\label{thm:polya_spectral_duality}
	Let $E$ be P{\'o}lya--Mellin admissible and let $R_E$ be its P{\'o}lya-resolvent dual. Then the corresponding jump functions satisfy
	\begin{dmath*}
		J_E(t;\zeta)
		=
		\frac{J_{R_E}(t;\zeta)}{\Gamma(1+t)}.
	\end{dmath*}
\end{theorem}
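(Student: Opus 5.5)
The plan is to reduce the statement to the elementary duality of \cref{thm:reflection_umbral_pairing} by superposition over the P{\'o}lya density. By \cref{def:jump_bilateral} and \cref{rem:jump_mellin_intrinsic}, with $\mu=1$, $\nu=\beta=0$ and $\alpha=1$, the jump of $\hat{\Delta}_E$ is the Mellin-type integral
\begin{dmath*}
	J_E(t;\zeta)=\int_0^\infty E(\zeta z)\,z^{-t-1}\,\mathrm d z ,
\end{dmath*}
taken along the admissible ray $L_\theta$ of \cref{def:polya_mellin}(ii). The analogous expression with $R_E$ in place of $E$ gives $J_{R_E}$. I will insert the P{\'o}lya representation of \cref{def:polya_representation} for $E(\zeta z)$ and the defining integral of \cref{def:polya_dual} for $R_E(\zeta z)$. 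The aim is to exchange the order of integration, so that each jump function becomes a $\lambda$-superposition of elementary jump kernels.

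For the exponential side, Fubini gives
\begin{dmath*}
	J_E(t;\zeta)=\frac{1}{2\pi i}\int_\gamma \mathcal P_E(\lambda)\left(\int_{L_\theta}\ee^{\lambda\zeta z}z^{-t-1}\,\mathrm d z\right)\mathrm d\lambda .
\end{dmath*}
By \cref{ex:exponential_jump}, with $\zeta$ replaced by $\lambda\zeta$, the inner integral equals $(-\lambda\zeta)^t\Gamma(-t)$. The branch is fixed consistently with the rotated ray, using the Mellin conventions of \cref{sec:Mellin_MB_preliminaries}. The same exchange on the resolvent side produces the inner integral $\int_{L_\theta}(1-\lambda\zeta z)^{-1}z^{-t-1}\,\mathrm d z$. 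By \cref{ex:rational_kernel_zeta}, or \cref{prop:rational_full_parametric} with $a=\lambda$, this equals $-(-\lambda\zeta)^t\pi\csc(\pi t)$, and it converges in the strip $-1<\RE t<0$.

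Both inner kernels therefore share the common factor $(-\lambda\zeta)^t$, so
\begin{dmath*}
	J_E(t;\zeta)=\Gamma(-t)\,M(t), \qquad J_{R_E}(t;\zeta)=-\pi\csc(\pi t)\,M(t),
\end{dmath*}
where $M(t)=\frac{1}{2\pi i}\int_\gamma(-\lambda\zeta)^t\mathcal P_E(\lambda)\,\mathrm d\lambda$. Euler's reflection formula $\Gamma(-t)\Gamma(1+t)=-\pi\csc(\pi t)$, exactly as in the proof of \cref{thm:reflection_umbral_pairing}, then gives the identity on the common strip $-1<\RE t<0$. Meromorphic continuation in $t$ extends it to all $t$ off the poles.

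The main obstacle is justifying the two interchanges of integration, together with choosing branches of $(-\lambda\zeta)^t$ uniformly in $\lambda\in\operatorname{supp}\mathcal P_E$. On the exponential side I will rely on \cref{rem:polya_mellin_constraints}: $\RE(\lambda\zeta z)<0$ on $L_\theta$ for all $\lambda$ in the compact support $K$. Compactness gives a uniform bound $\RE(\lambda\zeta z)\le -c|z|$, hence absolute convergence of the double integral, since $\mathcal P_E$ is an analytic functional of finite order on $\gamma$. The same condition keeps every pole $z=1/(\lambda\zeta)$ of the resolvent kernel off $L_\theta$; indeed it puts them in the opposite half-plane. It also makes $(1-\lambda\zeta z)^{-1}$ uniformly bounded on $L_\theta$ with $O(|z|^{-1})$ decay, which makes the second exchange legitimate and shows that the integral defining $R_E$ converges near $L_\theta$.

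If $\mathcal P_E$ is only distributional, I will read every $\lambda$-integral as a pairing of $\mathcal P_E$ with the test function $\lambda\mapsto(-\lambda\zeta)^t$. That function is holomorphic near $K$ precisely because $K$ lies in a half-plane avoiding $0$, and continuity of the pairing replaces Fubini.
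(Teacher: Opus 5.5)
Your proposal is correct and follows the same route as the paper. You write both jump functions as superpositions, over the P\'olya density, of the elementary kernels $(-\lambda\zeta)^t\Gamma(-t)$ and $-(-\lambda\zeta)^t\pi\csc(\pi t)$, and then apply Euler's reflection formula exactly as in \cref{thm:reflection_umbral_pairing}.

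Where the paper only says ``by linearity'', you also justify the interchange of integrals, the common strip $-1<\RE t<0$, and a single consistent branch of $(-\lambda\zeta)^t$ on $L_\theta$, all via the half-plane condition of \cref{rem:polya_mellin_constraints}. Note that this condition excludes some P\'olya supports to which the paper elsewhere applies the theorem, e.g.\ $[-1,1]$ for $\sinh w/w$, which contains $0$. There you have no uniform decay on $L_\theta$, the double integral is not absolutely convergent, and the Fubini step would need a separate argument.
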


\begin{proof}
	By \cref{def:polya_representation},
	\begin{dmath*}
		E(\zeta \ee^u)
		=
		\frac{1}{2\pi i}
		\int_{\gamma}
		\ee^{\lambda \zeta \ee^u}\,\mathcal P_E(\lambda)\,\mathrm d\lambda.
	\end{dmath*}
	By linearity, the jump function is
	\begin{dmath*}
		J_E(t;\zeta)
		=
		\frac{1}{2\pi i}
		\int_{\gamma}
		J_{\exp}(t;\lambda\zeta)\,\mathcal P_E(\lambda)\,\mathrm d\lambda.
	\end{dmath*}
	Similarly,
	\begin{dmath*}
		J_{R_E}(t;\zeta)
		=
		\frac{1}{2\pi i}
		\int_{\gamma}
		J_{\mathrm{rat}}(t;\lambda\zeta)\,\mathcal P_E(\lambda)\,\mathrm d\lambda.
	\end{dmath*}
	The elementary duality
	\begin{dmath*}
		J_{\exp}(t;\lambda\zeta)
		=
		\frac{J_{\mathrm{rat}}(t;\lambda\zeta)}{\Gamma(1+t)}
	\end{dmath*}
	can therefore be integrated against \(\mathcal P_E(\lambda)\), yielding the result.
\end{proof}

\begin{remark}[Classical umbral kernels as distributional P{\'o}lya functionals]
	The usual exponential kernels correspond to Dirac-type P{\'o}lya densities. For instance,
	\begin{dmath*}
		E(w)=\ee^{a w}
	\end{dmath*}
	corresponds to
	\begin{dmath*}
		\mathcal P_E(\lambda)=2\pi i\,\delta(\lambda-a),
	\end{dmath*}
	and yields
	\begin{dmath*}
		R_E(w)=\frac{1}{1-a w}.
	\end{dmath*}
	More generally, exponential-polynomial functions correspond to derivatives of Dirac distributions and produce higher-order rational functions.
\end{remark}

\begin{remark}[The P{\'o}lya-resolvent dual]\label{rem:polya_resolvent_dual}
	The P{\'o}lya-resolvent dual $R_E(w)$ constructed in \cref{thm:polya_spectral_duality} is not, in general, a simple rational function, but rather a Borel-summed resolvent. Its finite singularities in the $w$-plane are determined by the contour $\gamma$, which encloses the conjugate indicator diagram of $E(w)$. The theorem demonstrates that the mapping $\ee^{\lambda w} \mapsto (1-\lambda w)^{-1}$ acts as a desingularisation map: it translates the transcendental growth of $E(w)$ at infinity into explicit finite analytic singularities for $R_E(w)$.
\end{remark}

\begin{remark}[Universal spectral transmutation]\label{rem:universal_transmutation}
	From a resurgent perspective, \cref{thm:polya_spectral_duality} establishes a universal spectral transmutation. It proves that the continuous spectrum of any entire function of exponential type (encoded by its P{\'o}lya indicator diagram) is spectrally dual to the discrete/branch spectrum of its resolvent counterpart. The algebraic operation of regularisation by $\Gamma(1+t)^{-1}$ is therefore identified as the canonical spectral operator that intertwines the geometry of continuous growth with the arithmetic of discrete lattice jumps.
\end{remark}

\begin{example}[Discrete P{\'o}lya support]\label{ex:polya_discrete}
	Let
	\begin{dmath*}
		E(w)=\ee^{a w}+\ee^{b w}.
	\end{dmath*}

	\proofstep{P{\'o}lya representation.}
	The P{\'o}lya density is
	\begin{dmath*}
		\mathcal P_E(\lambda)
		=
		2\pi i\,\bigl[\delta(\lambda-a)+\delta(\lambda-b)\bigr].
	\end{dmath*}

	\proofstep{Resolvent dual.}
	Substituting into \Cref{def:polya_dual} yields
	\begin{dmath*}
		R_E(w)
		=
		\frac{1}{1-a w}
		+
		\frac{1}{1-b w}.
	\end{dmath*}

	\proofstep{Interpretation.}
	Finite discrete P{\'o}lya support produces a rational resolvent dual with simple poles at the support points.
\end{example}

\begin{example}[Continuous P{\'o}lya support]\label{ex:polya_continuous}
	Let
	\begin{dmath*}
		E(w)
		= {
		\frac{1}{2}
		\int_{-1}^{1}
		\ee^{\lambda w}\,\mathrm d\lambda
		=
		\frac{\sinh w}{w}
		}
	\end{dmath*}

	\proofstep{P{\'o}lya representation.}
	The P{\'o}lya density is
	\begin{dmath*}
		\mathcal P_E(\lambda)
		=
		\mathbf{1}_{[-1,1]}(\lambda),
	\end{dmath*}
	where $\mathbf{1}_{[-1,1]}$ denotes the indicator function of the subset $[-1,1]$.

	\proofstep{Resolvent dual.}
	The resolvent dual is
	\begin{dmath*}
		R_E(w)
		= {
			\frac{1}{2}
			\int_{-1}^{1}
			\frac{\mathrm d\lambda}{1-\lambda w}
			=
			\frac{1}{2w}
			\log\left(\frac{1+w}{1-w}\right)
		}
	\end{dmath*}

	\proofstep{Interpretation.}
	Continuous P{\'o}lya support produces a logarithmic resolvent dual with branch points determined by the support interval. This shows that the P{\'o}lya realisation extends the rational spectral duality from finite discrete P{\'o}lya support to genuinely continuous spectral distributions.
\end{example}

\vskip0.3cm

\paragraph{Anticipating the analytic pairing.} 
The ultimate operational consequence of spectral duality is that it descends to the level of explicit umbral evaluations. Although the formal analytic pairing $\langle \cdot, \cdot \rangle$ will be rigorously defined and studied in \cref{sec:umbral_pairing}, it is instructive to anticipate its action here. As we will see, the pairing of an umbral functional with a test germ $\varphi(t)$ can be realised via an integral against a corresponding Mellin kernel $W_\varphi(x)$. The following two examples demonstrate how P{\'o}lya spectral duality manifests perfectly in these explicit computations.

\begin{example}[Explicit P{\'o}lya pairing]\label{ex:polya_pairing}
	Let
	\begin{dmath*}
		E(w) = {
			\ee^{a w}+\ee^{b w},
			\qquad
			\varphi(t)=\Gamma(1+t)
		}
	\end{dmath*}

	\proofstep{P{\'o}lya representation.}
	The P{\'o}lya density is
	\begin{dmath*}
		\mathcal P_E(\lambda)
		=
		2\pi i\bigl(\delta(\lambda-a)+\delta(\lambda-b)\bigr).
	\end{dmath*}

	\proofstep{Analytic pairing.}
	Using linearity,
	\begin{dmath*}
		\langle \hat{\Delta}_E,\varphi\rangle
		=
		\langle \ee^{a\zeta \ee^u},\varphi\rangle
		+
		\langle \ee^{b\zeta \ee^u},\varphi\rangle.
	\end{dmath*}
	Each term is explicitly computable via the Mellin kernel
	\begin{dmath*}
		W_\varphi(x)=x\ee^{-x},
	\end{dmath*}
	giving
	\begin{dmath*}
		\langle \ee^{c\zeta \ee^u},\varphi\rangle
		= {
			\int_0^\infty \ee^{c\zeta x}\,\ee^{-x}\,\mathrm d x
			=
			\frac{1}{1-c\zeta}
			\condition*{\RE(1-c\zeta)>0}
		}
	\end{dmath*}.

	\proofstep{Result.}
	Therefore
	\begin{dmath*}
		\langle \hat{\Delta}_E,\varphi\rangle
		=
		\frac{1}{1-a\zeta}
		+
		\frac{1}{1-b\zeta},
	\end{dmath*}
	which coincides with the pairing of the rational dual
	\begin{dmath*}
		R_E(w)
		=
		\frac{1}{1-a w}+\frac{1}{1-b w}.
	\end{dmath*}

	\proofstep{Interpretation.}
	This example shows explicitly that the P{\'o}lya superposition commutes with the umbral pairing and that spectral duality reduces the entire functional to a rational one at the level of the analytic result.
\end{example}

\begin{example}[A non-discrete P{\'o}lya kernel]\label{ex:continuous_polya_kernel}
	Let
	\begin{dmath*}
		E(w)
		=
		\frac{1}{2}
		\int_{-1}^{1}
		\ee^{\lambda w}\,\mathrm d\lambda.
	\end{dmath*}
	Then
	\begin{dmath*}
		E(w)
		=
		\frac{\sinh w}{w},
	\end{dmath*}
	with the value at \(w=0\) understood by analytic continuation. Thus \(E\) is entire of exponential type \(1\), but its P{\'o}lya kernel is genuinely continuous.

	\proofstep{P{\'o}lya-resolvent dual.}
	The associated resolvent dual is
	\begin{dmath*}
		R_E(w)
		=
		\frac{1}{2}
		\int_{-1}^{1}
		\frac{\mathrm d\lambda}{1-\lambda w}.
	\end{dmath*}
	For \(w\notin (-\infty,-1]\cup[1,\infty)\), this gives
	\begin{dmath*}
		R_E(w)
		=
		\frac{1}{2w}
		\log\left(\frac{1+w}{1-w}\right).
	\end{dmath*}
	This is not rational: it is a logarithmic resolvent kernel with branch points at \(w=\pm 1\).

	\proofstep{Pairing with \(\Gamma(1+t)\).}
	Let
	\begin{dmath*}
		\varphi(t)
		=
		\Gamma(1+t).
	\end{dmath*}
	Since
	\begin{dmath*}
		W_\varphi(x)
		=
		x\ee^{-x},
	\end{dmath*}
	the Mellin realisation of the pairing gives
	\begin{dmath*}
		\left\langle E(\zeta \ee^u),\Gamma(1+t)\right\rangle
		=
		\int_0^\infty
		E(\zeta x)\,\ee^{-x}\,\mathrm d x.
	\end{dmath*}
	Substituting \(E(w)=\sinh w/w\), one obtains
	\begin{dmath*}
		\left\langle E(\zeta \ee^u),\Gamma(1+t)\right\rangle
		=
		\frac{1}{\zeta}
		\int_0^\infty
		\ee^{-x}
		\frac{\sinh(\zeta x)}{x}\,
		\mathrm d x.
	\end{dmath*}
	Using
	\begin{dmath*}
		\int_0^\infty
		\ee^{-x}
		\frac{\sinh(\zeta x)}{x}\,
		\mathrm d x
		=
		\frac{1}{2}
		\log\left(\frac{1+\zeta}{1-\zeta}\right),
	\end{dmath*}
	one finds
	\begin{dmath*}
		\left\langle E(\zeta \ee^u),\Gamma(1+t)\right\rangle
		=
		\frac{1}{2\zeta}
		\log\left(\frac{1+\zeta}{1-\zeta}\right).
	\end{dmath*}
	
	\proofstep{Spectral-duality justification.}
	By definition of the resolvent dual, one has
	\begin{dmath*}
		R_E(w)
		=
		\frac{1}{2\pi i}
		\int_{\gamma}
		\frac{\mathcal P_E(\lambda)}{1-\lambda w}\,\mathrm d\lambda.
	\end{dmath*}
	For the elementary rational kernel $1/(1-\lambda\zeta \ee^u)$, the unreflected analytic pairing with $\varphi(t)=1$ evaluates precisely to the function itself via Mellin inversion:
	\begin{dmath*}
		\frac{1}{2\pi i}
		\int_{\mathcal C_t}
		J_{\mathrm{rat}}(t;\lambda\zeta)\,\mathrm d t
		=
		\frac{1}{1-\lambda\zeta}.
	\end{dmath*}
	By the spectral duality identity $J_E(t;\zeta) = J_{R_E}(t;\zeta)/\Gamma(1+t)$, the analytic pairing with $\varphi(t) = \Gamma(1+t)$ yields
	\begin{dmath*}
		\langle \hat{\Delta}_E, \Gamma(1+t) \rangle
		=
		\frac{1}{2\pi i}
		\int_{\mathcal C_t}
		J_E(t;\zeta) \Gamma(1+t)\,\mathrm d t
		=
		\frac{1}{2\pi i}
		\int_{\mathcal C_t}
		J_{R_E}(t;\zeta)\,\mathrm d t.
	\end{dmath*}
	By linearity and exchange of integrals, this is:
	\begin{dmath*}
		\frac{1}{2\pi i}
		\int_{\mathcal C_t}
		J_{R_E}(t;\zeta)\,\mathrm d t
		=
		\frac{1}{2\pi i}
		\int_{\gamma}
		\frac{\mathcal P_E(\lambda)}{1-\lambda\zeta}\,\mathrm d\lambda
		=
		R_E(\zeta).
	\end{dmath*}
	Hence the analytic pairing coincides identically with the resolvent dual evaluated at \(w=\zeta\).
	
	\proofstep{Interpretation.}
	The analytic pairing coincides with the P{\'o}lya-resolvent dual evaluated at \(w=\zeta\):
	\begin{dmath*}
		\left\langle E(\zeta \ee^u),\Gamma(1+t)\right\rangle
		=
		R_E(\zeta).
	\end{dmath*}
	Thus the continuous P{\'o}lya kernel produces a non-rational spectral dual with logarithmic branch points. This shows that the P{\'o}lya realisation extends the rational spectral duality from finite discrete P{\'o}lya support to genuinely continuous spectral distributions.
\end{example}

\begin{example}[Bessel function of the first kind]\label{ex:bessel_kernel}
	Let $E(w)$ be the zeroth-order Bessel function,
	\begin{dmath*}
		E(w)
		=
		J_0(w).
	\end{dmath*}
	This is an entire function of exponential type $1$.

	\proofstep{P{\'o}lya representation and Resolvent dual.}
	Using the classical integral representation of the Bessel function, the P{\'o}lya indicator diagram is the continuous segment $[-i, i]$ on the imaginary axis. Integrating the corresponding P{\'o}lya density against the elementary pole yields the resolvent dual:
	\begin{dmath*}
		R_E(w)
		=
		\frac{1}{\sqrt{1+w^2}}.
	\end{dmath*}
	Similar to the previous example, the continuous support generates a non-rational dual with branch cuts at $w = \pm i$.

	\proofstep{Jump kernels.}
	The entire jump kernel $J_E$ is the Mellin transform of $J_0(\zeta z)$, which yields a characteristic ratio of Gamma functions:
	\begin{dmath*}
		J_E(t;\zeta)
		=
		2^{-t-1} \zeta^t \frac{\Gamma(-t/2)}{\Gamma(1+t/2)}.
	\end{dmath*}
	The polar jump kernel $J_R$ is the Mellin transform of $(1+(\zeta z)^2)^{-1/2}$. By substitution, this evaluates via the Beta function to:
	\begin{dmath*}
		J_R(t;\zeta)
		=
		\frac{1}{2} \zeta^t \frac{\Gamma(-t/2)\Gamma\left(\frac{1+t}{2}\right)}{\sqrt{\pi}}.
	\end{dmath*}

	\proofstep{Spectral-duality justification.}
	According to the universal spectral regularisation identity, we must have $J_E(t;\zeta) \Gamma(1+t) = J_R(t;\zeta)$. Multiplying the entire jump by $\Gamma(1+t)$ yields:
	\begin{dmath*}
		J_E(t;\zeta) \Gamma(1+t)
		=
		2^{-t-1} \zeta^t \frac{\Gamma(-t/2)}{\Gamma(1+t/2)} \Gamma(1+t).
	\end{dmath*}
	Applying the Legendre duplication formula $\Gamma(1+t) = \frac{2^t}{\sqrt{\pi}} \Gamma\left(\frac{1+t}{2}\right) \Gamma\left(1 + \frac{t}{2}\right)$, the $\Gamma(1+t/2)$ term perfectly cancels, leaving:
	\begin{dmath*}
		J_E(t;\zeta) \Gamma(1+t)
		=
		\frac{1}{2} \zeta^t \frac{\Gamma(-t/2)\Gamma\left(\frac{1+t}{2}\right)}{\sqrt{\pi}}
		=
		J_R(t;\zeta).
	\end{dmath*}

	\proofstep{Interpretation.}
	This example demonstrates that the framework rigorously handles higher transcendental functions. Furthermore, it reveals a profound mechanism: the spectral regularisation division mapping the polar jump to the entire jump is exacted mechanically via the Legendre duplication formula.
\end{example}

\begin{example}[Wright function and Mittag-Leffler pairing]\label{ex:wright_function}
	Let $E(w)$ be the Wright generalized Bessel function,
	\begin{dmath*}
		E(w)
			= {
			W_{\alpha, \,\beta}(w)
			=
			\sum_{r = 0}^\infty \frac{w^r}{r! \Gamma(\alpha r + \beta)}
			\condition*{\alpha, \beta > 0}
		}
	\end{dmath*}
	This is an entire function of order $\rho = 1/(1+\alpha) < 1$. Because it is of minimal exponential type, its P{\'o}lya indicator diagram collapses to the origin, representing a boundary case for the resolvent dual which manifests as a transcendental fractional resolvent rather than an elementary branch cut.

	\proofstep{Jump kernels.}
	Despite this, the macroscopic Stokes jump is perfectly well-defined via Ramanujan's Master Theorem. Setting $w = \zeta z$, the alternating sequence is $\phi(r) = 1/\Gamma(\alpha r + \beta)$. Evaluating the Mellin transform at the spectral dual variable $s = -t$ yields the corresponding meromorphic jump kernel:
	\begin{dmath*}
		J_E(t;\zeta)
		=
		(-\zeta)^t \frac{\Gamma(-t)}{\Gamma(\alpha t + \beta)}.
	\end{dmath*}
	By the universal regularisation identity, its polar spectral dual is immediately:
	\begin{dmath*}
		J_R(t;\zeta)
		= {
			(-\zeta)^t \frac{\Gamma(-t)\Gamma(1+t)}{\Gamma(\alpha t + \beta)}
			=
			-(-\zeta)^t \frac{\pi\csc(\pi t)}{\Gamma(\alpha t + \beta)}
		}
	\end{dmath*}.

	\proofstep{Interpretation and anticipating the pairing.}
	This functional is of paramount importance in fractional calculus. Anticipating the formal umbral pairing with the test function $\varphi(t) = \Gamma(1+t)$, the action of the pairing exactly annihilates the factorial in the denominator of $W_{\alpha, \beta}$, yielding:
	\begin{dmath*}
		\left \langle W_{\alpha, \,\beta}(\zeta \ee^u), \Gamma(1+t) \right\rangle
		= {
			\sum_{r=0}^\infty \frac{\zeta^r}{\Gamma(\alpha r + \beta)}
			=
			E_{\alpha, \,\beta}(\zeta)
		}
	\end{dmath*}.
	Thus, within this framework, the Wright function is identified as the exact umbral Borel pre-image of the two-parameter Mittag-Leffler function $E_{\alpha, \beta}$.
\end{example}

\begin{example}[Mittag-Leffler functional and the spectral ladder]\label{ex:mittag_leffler_functional}
	Conversely, let $E(w)$ be the two-parameter Mittag-Leffler function,
	\begin{dmath*}
		E(w)
		= {
			E_{\alpha, \,\beta}(w)
			=
			\sum_{r = 0}^\infty \frac{w^r}{\Gamma(\alpha r + \beta)}
		}
	\end{dmath*}.
	\proofstep{Growth and admissibility constraints.}
	The order of growth is $\rho = 1/\alpha$. Consequently, $E_{\alpha, 	,\beta}(w)$ is an admissible entire Borel functional strictly when $\alpha \ge 1$. 

	\proofstep{Jump kernels and the resolvent ladder.}
	Assuming $\alpha \ge 1$, the sequence for Ramanujan's Master Theorem is $\phi(r) = r! / \Gamma(\alpha r + \beta)$. The associated meromorphic jump kernel evaluates to:
	\begin{dmath*}
		J_E(t;\zeta)
		=
		-(-\zeta)^t \frac{\pi \csc(\pi t)}{\Gamma(\alpha t + \beta)}.
	\end{dmath*}
	Notice that this entire jump is identical to the polar jump of the Wright function. However, the P{\'o}lya-resolvent dual of the Mittag-Leffler function shifts the factorial weight up again:
	\begin{dmath*}
		J_R(t;\zeta)
		= {
			J_E(t;\zeta)\Gamma(1+t)
			=
			(-\zeta)^t \frac{\pi^2 \csc(\pi t)^2}{\Gamma(-t)\Gamma(\alpha t + \beta)}
		}
	\end{dmath*}
	This demonstrates that the P{\'o}lya-resolvent mapping is an asymmetric spectral ladder: it maps the Wright function to the Mittag-Leffler function, and the Mittag-Leffler function to a higher-order factorial series $\sum n! w^n / \Gamma(\alpha n + \beta)$.

	\proofstep{Analytic pairing.}
	To step backward down the ladder and recover the Wright function, one must use the analytic pairing with the inverse test function $\varphi(t) = 1/\Gamma(1+t)$, which exacts the inverse operation of division by $n!$:
	\begin{dmath*}
		\left\langle E_{\alpha, \,\beta}(\zeta \ee^u), \frac{1}{\Gamma(1+t)} \right\rangle
		= {
			\sum_{r=0}^\infty \frac{\zeta^r}{r! \Gamma(\alpha r + \beta)}
			=
			W_{\alpha, \,\beta}(\zeta)
		}
	\end{dmath*}.
\end{example}

\begin{remark}[Termination of the spectral ladder]\label{rem:ladder_termination}
	The asymmetry of the P{\'o}lya-resolvent mapping reveals a fundamental analytic limitation: the spectral ladder cannot be climbed indefinitely. Because each application of the resolvent dual multiplies the Taylor coefficients by $n!$, the order of growth of the resultant functional progressively degrades. 
	
	Starting from the Wright function ($m=0$) and stepping upward, the order of growth at the $m$-th step is $\rho = 1/(\alpha - m)$. Unless the fractional parameter $\alpha$ is continually forced to higher values, the ladder rapidly breaks analyticity. For instance, if $\alpha = 1$, the first step yields the standard exponential (admissible entire), the second step yields the geometric series $\sum w^n = (1-w)^{-1}$ (polar class, finite radius of convergence), and the third step collapses into Euler's divergent series $\sum n! w^n$, which has a zero radius of convergence. The umbral framework thus naturally bounds itself, transitioning from entire functions, to meromorphic germs, to purely formal asymptotic series.
\end{remark}

\medskip

\paragraph{Dictionary of spectral dualities and the parametric recipe.}
Before passing to the operational framework of the analytic pairing, we provide a comprehensive reference dictionary (\cref{tab:spectral_duality_dictionary}). This table serves as a practical resum{\'e} of explicitly computed jump functions for a variety of paradigmatic umbral Borel functionals. 

To immediately reflect the spectral duality established in \cref{thm:reflection_umbral_pairing}, the functionals are presented as dual pairs. For each entry, the entire functional $E(w)$ is paired with its P{\'o}lya-resolvent dual $R_E(w)$. Their respective jump kernels are algebraically intertwined by the universal regularisation identity $J_R(t;\zeta) = J_E(t;\zeta) \Gamma(1+t)$, which translates the continuous exponential growth of the entire class into the discrete $\csc(\pi t)$ lattice poles of the rational class.

\smallskip
\noindent\textbf{The Parametric Recipe.} The entries in the table are given for the primary variables $w = \zeta \ee^u$. However, any primary jump function $J_{\mathrm{base}}(t; \zeta)$ can be systematically upgraded to the fully parametric functional $\hat{\Delta}(u) = \zeta^\beta \ee^{\nu u} F(\zeta^\alpha \ee^{\mu u})$ via a simple algebraic substitution rule derived from the affine scaling of the Mellin integral:
\begin{dmath*}
	J_{\mathrm{param}}(t; \zeta, \mu, \alpha, \beta, \nu)
	=
	\frac{\zeta^\beta}{\mu} \, 
	J_{\mathrm{base}}\!\left( \frac{t-\nu}{\mu}; \, \zeta^\alpha \right).
\end{dmath*}

\begin{table}[htpb]
	\centering
	\renewcommand{\arraystretch}{2.4}
	\resizebox{\textwidth}{!}{%
	\begin{tabular}{@{}lcccc@{}}
		\toprule
		\textbf{Functional Type} 
		& \textbf{Entire Class $E(w)$} 
		& \textbf{Polar Class $R_E(w)$} 
		& \textbf{Entire Jump $J_E(t;\zeta)$} 
		& \textbf{Polar Jump $J_R(t;\zeta)$} \\
		\midrule
		
		\textbf{Full Parametric} 
		& $\displaystyle \zeta^\beta \ee^{\nu u} \exp(\zeta^\alpha \ee^{\mu u})$
		& $\displaystyle \frac{\zeta^\beta \ee^{\nu u}}{1-\zeta^\alpha \ee^{\mu u}}$ 
		& $\displaystyle \frac{\zeta^\beta}{\mu} (-\zeta^\alpha)^{\frac{t-\nu}{\mu}} \Gamma\left(-\frac{t-\nu}{\mu}\right)$
		& $\displaystyle -\frac{\pi \zeta^\beta}{\mu} (-\zeta^\alpha)^{\frac{t-\nu}{\mu}} \csc\left(\pi\frac{t-\nu}{\mu}\right)$ \\
		
		\midrule		
		\multicolumn{5}{c}{\textit{Primary Variables: $w = \zeta \ee^u$}} \\
		\midrule
		
		\textbf{Standard Exponential} 
		& $\displaystyle \exp(w)$ 
		& $\displaystyle \frac{1}{1-w}$ 
		& $\displaystyle (-\zeta)^t \Gamma(-t)$
		& $\displaystyle -(-\zeta)^t \pi \csc(\pi t)$ \\
		
		\textbf{Polynomial Twist} 
		& $\displaystyle w^m \exp(w)$ \quad \small{$(m \in \mathbb{N})$}
		& $\displaystyle \frac{m!\, w^m}{(1-w)^{m+1}}$ 
		& $\displaystyle (-\zeta)^{t-m} \Gamma(m-t)$
		& $\displaystyle (-\zeta)^{t-m} \Gamma(m-t)\Gamma(1+t)$ \\
		
		\textbf{Hyperbolic Cosine} 
		& $\displaystyle \cosh(w)$ 
		& $\displaystyle \frac{1}{1-w^2}$ 
		& $\displaystyle \frac{1}{2} \Gamma(-t) \bigl[ (-\zeta)^t + \zeta^t \bigr]$
		& $\displaystyle -\frac{\pi}{2} \csc(\pi t) \bigl[ (-\zeta)^t + \zeta^t \bigr]$ \\
		
		\textbf{Hyperbolic Sine} 
		& $\displaystyle \sinh(w)$ 
		& $\displaystyle \frac{w}{1-w^2}$ 
		& $\displaystyle \frac{1}{2} \Gamma(-t) \bigl[ (-\zeta)^t - \zeta^t \bigr]$
		& $\displaystyle -\frac{\pi}{2} \csc(\pi t) \bigl[ (-\zeta)^t - \zeta^t \bigr]$ \\
		
		\textbf{Circular Cosine} 
		& $\displaystyle \cos(w)$ 
		& $\displaystyle \frac{1}{1+w^2}$ 
		& $\displaystyle \frac{1}{2} \Gamma(-t) \bigl[ (-i\zeta)^t + (i\zeta)^t \bigr]$
		& $\displaystyle -\frac{\pi}{2} \csc(\pi t) \bigl[ (-i\zeta)^t + (i\zeta)^t \bigr]$ \\
		
		\textbf{Circular Sine} 
		& $\displaystyle \sin(w)$ 
		& $\displaystyle \frac{w}{1+w^2}$ 
		& $\displaystyle \frac{1}{2i} \Gamma(-t) \bigl[ (-i\zeta)^t - (i\zeta)^t \bigr]$
		& $\displaystyle -\frac{\pi}{2i} \csc(\pi t) \bigl[ (-i\zeta)^t - (i\zeta)^t \bigr]$ \\
		
		\textbf{Continuous P{\'o}lya} 
		& $\displaystyle \frac{\sinh w}{w}$ 
		& $\displaystyle \frac{1}{2w}\log\left(\frac{1+w}{1-w}\right)$ 
		& $\displaystyle \frac{\Gamma(-t-1)}{2\zeta} \bigl[ (-\zeta)^{t+1} - \zeta^{t+1} \bigr]$
		& $\displaystyle \frac{\Gamma(-t-1)\Gamma(1+t)}{2\zeta} \bigl[ (-\zeta)^{t+1} - \zeta^{t+1} \bigr]$ \\
		
		\textbf{Bessel (Order Zero)} 
		& $\displaystyle J_0(w)$ 
		& $\displaystyle \frac{1}{\sqrt{1+w^2}}$ 
		& $\displaystyle 2^{-t-1} \zeta^t \frac{\Gamma(-t/2)}{\Gamma(1+t/2)}$
		& $\displaystyle \frac{\zeta^t}{2\sqrt{\pi}} \Gamma\left(-\frac{t}{2}\right) \Gamma\left(\frac{1+t}{2}\right)$ \\
		
		\textbf{Wright / Gen. Bessel} 
		& $\displaystyle W_{\alpha, \,\beta}(w)$ 
		& $\displaystyle E_{\alpha, \,\beta}(w)$ 
		& $\displaystyle (-\zeta)^t \frac{\Gamma(-t)}{\Gamma(\alpha t + \beta)}$
		& $\displaystyle -(-\zeta)^t \frac{\pi \csc(\pi t)}{\Gamma(\alpha t + \beta)}$ \\

		\textbf{Mittag-Leffler} \small{$(\alpha \ge 1)$}
		& $\displaystyle E_{\alpha, \,\beta}(w)$ 
		& $\displaystyle \sum_{n=0}^\infty \frac{n!\, w^n}{\Gamma(\alpha n + \beta)}$ 
		& $\displaystyle -(-\zeta)^t \frac{\pi \csc(\pi t)}{\Gamma(\alpha t + \beta)}$
		& $\displaystyle (-\zeta)^t \frac{\pi^2 \csc(\pi t)^2}{\Gamma(-t) \Gamma(\alpha t + \beta)}$ \\
		
		\bottomrule
	\end{tabular}%
	}
	\vspace{0.3cm}
	\caption{Reference dictionary of explicitly computed umbral jump functions. The table pairs entire umbral functionals $E(w)$ with their P{\'o}lya-resolvent duals $R_E(w)$. Their respective macroscopic Stokes jumps are perfectly intertwined via the spectral regularisation identity $J_R(t;\zeta) = J_E(t;\zeta) \Gamma(1+t)$.}
	\label{tab:spectral_duality_dictionary}
\end{table}

\section{The Analytic Umbral Pairing}\label{sec:umbral_pairing}

Classical umbral calculus is fundamentally algebraic, defining umbral operators strictly as formal linear functionals on the vector space of polynomials. In this section, we elevate this algebraic operation to a topological duality between asymptotically matched function spaces. 

The previous sections demonstrated that the global analytic content of an umbral Borel functional is perfectly encoded by its jump kernel $J(t)$ in the spectral plane. Because these jump kernels typically exhibit exponential vertical growth (e.g., via the discrete lattice regularisation $\csc(\pi t)$), their integration requires a specialized test space of functions to absorb this growth. 

Therefore, we define the analytic umbral pairing $\langle \hat{\Delta}, \varphi \rangle$ directly as a Mellin--Barnes contour integral. In this framework, the ``ground states'' $\varphi(t)$ serve as a test space of continuous functions with super-exponential vertical decay (driven by Gamma ratios), while the umbral jump kernels act as the continuous linear functionals operating upon them. The pairing is thus treated as a conditional functional duality: it is rigorously defined strictly when the exponential decay of the chosen ground state majorizes the specific vertical growth of the umbral operator.

\subsection{Admissible ground state functions}\label{sec:admissible_ground_states}

\begin{definition}[Branched admissible ground states]\label{def:branched-admissible-ground-states}
	A branched admissible ground state is a function of the form
	\begin{dmath*}
		\varphi(t)
		=
		C\,
		\frac{\prod_{j=1}^p \Gamma(a_j t+b_j)^{m_j}}
		{\prod_{k=1}^q \Gamma(\alpha_k t+\beta_k)^{n_k}}
		\prod_{\ell=1}^r (t+\lambda_\ell)^{-\sigma_\ell},
	\end{dmath*}
	where
	\begin{dmath*}
		C \in {\mathbb C,
		\qquad
		a_j,\alpha_k,b_j,\beta_k,\lambda_\ell,\sigma_\ell \in \mathbb C,
		\qquad
		m_j,n_k \in \mathbb N
		}
	\end{dmath*},
	the affine arguments
	\begin{dmath*}
		a_j t+b_j,
		\qquad
		\alpha_k t+\beta_k
	\end{dmath*}
	are not identically zero, and a branch is fixed for each factor $(t+\lambda_\ell)^{-\sigma_\ell}$.
\end{definition}

\begin{definition}[Admissible ground states]\label{def:admissible-ground-states}
	An admissible ground state is a branched admissible ground state with
	\begin{dmath*}
		r=0.
	\end{dmath*}
	Equivalently,
	\begin{dmath*}
		\varphi(t)
		=
		C\,
		\frac{\prod_{j=1}^p \Gamma(a_j t+b_j)^{m_j}}
		{\prod_{k=1}^q \Gamma(\alpha_k t+\beta_k)^{n_k}}.
	\end{dmath*}
\end{definition}

\begin{remark}[Scope of admissible ground states and logarithmic factors]\label{rem:scope_admissible_states}
	While the restricted algebraic form of \cref{def:branched-admissible-ground-states} may initially appear narrow, it is purposefully designed to capture almost all non-pathological special functions while guaranteeing the global convergence of the Mellin--Barnes contour. Specifically, finite ratios of Gamma functions constitute the exact spectral signatures of the generalized hypergeometric class (including Meijer $G$-functions and Fox $H$-functions). Consequently, the admissible ground states natively span the umbral test spaces for virtually all solutions to linear differential equations with polynomial coefficients, including fractional and integral operators parameterized by the algebraic branch factors $(t+\lambda_\ell)^{-\sigma_\ell}$.

	This structural restriction is an analytic necessity rather than a severe limitation. The required asymptotic decay of the ground state along vertical lines ($\sim \ee^{-\pi|\IM t|/2}$), intrinsic to the Gamma function, precisely balances the exponential growth of the umbral jump kernels (such as $\csc(\pi t)$). Functions excluded by this definition---such as doubly-periodic elliptic functions (which generate 2D pole lattices) or functions with super-exponential Gaussian decay---are structurally incompatible with the 1D separating contour $\mathcal{C}_t$ or violate Mellin inversion criteria. 

	Finally, the framework elegantly accommodates logarithmic behavior without requiring explicit $\log(t)$ insertions. Logarithmic phenomena in the physical or Borel variables correspond directly to multiple poles in the spectral $t$-plane. Because the definition permits arbitrary integer exponents $m_j, n_k \in \mathbb{N}$ on the constituent Gamma functions, the collision of affine pole sequences naturally generates higher-order singularities. Evaluation of the Barnes integral at an $m$-th order pole classically yields the required $\log^{m-1}$ terms, ensuring that logarithmic test functions and distributions are fully embedded within the meromorphic structure of the Gamma ratios.
\end{remark}

\subsection{Umbral pairing as a Mellin--Barnes integral}

\begin{definition}[Admissible pairing data]\label{def:admissible-pairing-data}
	Let $\hat{\Delta}$ be an umbral Borel functional and let
	\begin{dmath*}
		J(t;\dots)
		= {
		\int_{\ee^{-i\Arg \mu}\mathbb R}
		\ee^{-tu}
		\hat{\Delta}(u;\dots)\,\mathrm d u =
		\mathcal M_{-\Arg \mu}[\hat{\Delta}](-t)
		}
	\end{dmath*}
	denote the corresponding jump function as in \cref{def:jump_bilateral}. Let \(\varphi\) be a branched admissible ground state.

	We say that \((\hat{\Delta},\varphi)\) is admissible for umbral pairing if there exists a Barnes contour \(\mathcal C_t\) such that:
	\begin{itemize}
		\item[(i)] \(\mathcal C_t\) avoids the poles and branch cuts of both \(J(t;\dots)\) and \(\varphi(t)\);
		\item[(ii)] \(\mathcal C_t\) strictly separates the left-extending singularity sequences of the integrand from the right-extending singularity sequences, keeping the former entirely to the left of the contour and the latter entirely to its right;
		\item[(iii)] the integral
		\begin{dmath*}
			\frac{1}{2\pi i}
			\int_{\mathcal C_t}
			J(t;\dots)\,\varphi(t)\,\mathrm d t
		\end{dmath*}
		converges absolutely.
	\end{itemize}
\end{definition}

\begin{remark}[Partitioning of bilateral pole sequences]
	While the left-extending poles generally originate from the ground state $\varphi(t)$ and the right-extending poles from the jump kernel $J(t)$, certain umbral operators generate bilateral singularity sequences. For example, the discrete lattice regularisation naturally introduces terms such as $\csc(\pi t)$, which possess poles at all integers $t \in \mathbb{Z}$. In such cases, the contour $\mathcal{C}_t$ must partition the sequence, keeping the non-negative integers $t \ge 0$ to the right to correctly evaluate the umbral series, while pushing the negative integers $t \le -1$ to the left.
\end{remark}

\begin{definition}[Analytic umbral pairing]\label{def:analytic-umbral-pairing}
	Let \((\hat{\Delta},\varphi)\) be admissible in the sense of \Cref{def:admissible-pairing-data}. The analytic umbral pairing is defined by
	\begin{dmath*}
		\langle \hat{\Delta},\varphi\rangle
		=
		\frac{1}{2\pi i}
		\int_{\mathcal C_t}
		J(t;\dots)\,\varphi(t)\,\mathrm d t
	\end{dmath*}.
\end{definition}

\begin{remark}[Stratification of the function spaces]\label{rem:stratified_duality}
	The pair-wise admissibility condition in \cref{def:admissible-pairing-data} resolves a potential circularity in defining the function spaces. Because the umbral jump kernels exhibit varying degrees of exponential vertical growth, there is no single, monolithic test space that can absorb all possible umbral operators. 

	Instead, both the space of jump kernels and the space of admissible ground states are \textit{stratified} by their asymptotic behavior along the imaginary axis. An umbral functional $\hat{\Delta}$ belonging to a specific exponential growth stratum acts as a rigorous, continuous linear functional exclusively on the subspace of ground states whose exponential decay stratum strictly majorizes it. Therefore, checking if $(\hat{\Delta}, \varphi)$ forms an "admissible couple" is equivalent to verifying that the two functions are drawn from compatible topological strata.
\end{remark}

\begin{remark}[Contour invariance]
	The Mellin--Barnes contour contour \(\mathcal C_t\) runs from \(-i\infty\) to \(+i\infty\), possibly deformed to avoid branch cuts, and separates the singularities of \(J\) from those of \(\varphi\). 
	While it is defined topologically rather than rigidly, the value of the integral is strictly unique. By Cauchy's integral theorem, any continuous deformation of $\mathcal{C}_t$ leaves the pairing invariant, provided the deformation does not cross any singularities of the integrand. Thus, the condition of separating the left and right singularity families uniquely determines the value of the umbral pairing.
\end{remark}

\begin{remark}[Polynomial case and distributional Mellin kernels]\label{rem:polynomial_distribution}
	If \(F\) is polynomial, then \(\hat{\Delta}(u)\) is a finite linear combination of exponentials \(\ee^{\mu n u}\), and the jump function reduces formally to a finite linear combination of Dirac distributions:
	\begin{dmath*}
		J(t;\dots)
		=
		\sum_n c_n\,\delta(t-n).
	\end{dmath*}

	In this case, the Mellin representation is to be understood in this distributional sense, and the umbral pairing reduces to
	\begin{dmath*}
		\langle \hat{\Delta},\varphi\rangle
		=
		\sum_n c_n\,\varphi(n).
	\end{dmath*}

	This expression coincides with the value obtained by a classical Cauchy integral selecting the residues of the integrand at the points \(t=n\). The distributional formulation, therefore, only provides a compact encoding of an entirely classical residue computation.
\end{remark}

\begin{proposition}[Reduction to a Mellin integral]\label{prop:MB-to-Mellin-reduction}
	Let \((\hat{\Delta},\varphi)\) be admissible and let \(J(t;\dots)\) be the associated jump function.
Assume that:
	\begin{itemize}
		\item[(i)] there exists \(c\in\mathbb R\) such that \(J(t;\dots)\,\varphi(t)\) is holomorphic on the vertical line \(\RE t=c\);

		\item[(ii)] there exist constants \(A,B>0\) such that along this line
		\begin{dmath*}
			|J(c+is;\dots)|
			\le
			A(1+|s|)^B;
		\end{dmath*}

		\item[(iii)] the ground state satisfies a vertical decay estimate
		\begin{dmath*}
			|\varphi(c+is)|
			\le
			C (1+|s|)^{-2-B-\epsilon}
			\condition{for some $\epsilon>0$}
		\end{dmath*};

		\item[(iv)] the Mellin inversion
		\begin{dmath*}
			W_\varphi(z) = {
				\mathcal M^{-1}[\varphi](z) =
				\frac{1}{2\pi i}
				\int_{c-i\infty}^{c+i\infty}
				\varphi(t)\,z^{-t}\,\mathrm d t
			}
		\end{dmath*}
		holds pointwise for \(x>0\).
	\end{itemize}

	Then the umbral pairing admits a formulation in terms of the Mellin--Parseval integral
	\begin{dmath*}
		\langle \hat{\Delta},\varphi\rangle
		=
		\int_0^\infty
		\hat{K}(z;\dots)\,W_\varphi(z)\,\frac{\mathrm d z}{z}
	\end{dmath*}.
\end{proposition}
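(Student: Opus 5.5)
Starting from \cref{def:analytic-umbral-pairing}, I will use hypothesis (i) to take the Barnes contour $\mathcal C_t$ to be the vertical line $\RE t=c$. Admissibility (\cref{def:admissible-pairing-data}) already gives a separating contour, and (i) says the integrand is holomorphic on $\RE t=c$. So by the contour invariance remark I may deform $\mathcal C_t$ onto this line, provided the line separates the left- and right-extending singular sequences. I take this as implicit in the choice of $c$ in the statement. Then
\begin{dmath*}
	\langle \hat{\Delta},\varphi\rangle
	=
	\frac{1}{2\pi}\int_{-\infty}^{+\infty} J(c+is;\dots)\,\varphi(c+is)\,\mathrm d s ,
\end{dmath*}
which converges absolutely by (ii)--(iii): the integrand is $O\bigl((1+|s|)^{-2-\epsilon}\bigr)$.

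Next I insert the Mellin representation of the jump function from \cref{rem:jump_mellin_intrinsic}, $J(t)=\int_0^\infty \hat K(z)\,z^{-t}\,\mathrm dz/z$, and exchange the order of integration by Fubini--Tonelli. This produces $\int_0^\infty \hat K(z)\bigl[\frac{1}{2\pi i}\int_{c-i\infty}^{c+i\infty}\varphi(t)z^{-t}\,\mathrm dt\bigr]\mathrm dz/z$. Hypothesis (iv) identifies the inner bracket with $W_\varphi(z)$, which gives the claimed Mellin--Parseval formula. Note that $|z^{-t}|=z^{-c}$ on the line, so the $s$-integral of $|\varphi|$ converges by (iii). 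What remains for Fubini is $\int_0^\infty|\hat K(z)|z^{-c-1}\,\mathrm dz<\infty$, i.e.\ that $c$ lies in the fundamental strip where the Mellin integral for $J$ converges absolutely.

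\textbf{Main obstacle.} This Fubini step is the hard part. For rational and exponential kernels $c$ lies in the absolute convergence strip (e.g.\ $-1<c<0$ for $1/(1-\zeta z)$), so the argument goes through. But for oscillatory entire kernels (circular sine and cosine, $J_0$), the Mellin integral along $\mathbb R_{>0}$ is only conditionally convergent. For these I would first try to rotate the $z$-ray to a direction $L_\theta$ of exponential decay, as allowed by \cref{def:polya_mellin}(ii) and \cref{prop:mellin-admissible-classes}. If that fails, I would fall back on an $L^2$ Parseval argument.

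\textbf{Parseval fallback.} Write $z=\ee^{u}$. Then $J(c+is)$ is the Fourier transform of $\ee^{-cu}\hat\Delta(u)$, and $\varphi(c+is)$ is the Fourier transform of $\ee^{cu}W_\varphi(\ee^{u})$ (up to sign conventions). Both lie in $L^2(\mathbb R)$: the first because $J$ is polynomially bounded and, after the damping, in $L^2$ on the line; the second by (iii). Plancherel then gives the identity without absolute integrability of $\hat K$. Alternatively, I would regularise with a factor $\ee^{-\varepsilon s^2}$ and let $\varepsilon\to0$ by dominated convergence, justified by (ii)--(iii).
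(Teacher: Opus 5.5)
Your main line is exactly the paper's proof: take $\mathcal C_t$ to be the line $\RE t=c$, insert $J(t)=\int_0^\infty \hat K(z)\,z^{-t}\,\mathrm dz/z$, exchange the integrals, and identify the inner integral with $W_\varphi$ via (iv). You have also located the step the paper passes over. The paper justifies Fubini by the bound $|\hat K(z)z^{-t}\varphi(t)|\le C_z(1+|t|)^{-2-\epsilon}$. That bound only gives integrability in $t$ for each fixed $z$, and it really uses (iii) alone, since (ii) controls $J$, not $\hat K$. Tonelli also needs $\int_0^\infty C_z\,\mathrm dz/z<\infty$, i.e.\ $\int_0^\infty|\hat K(z)|\,z^{-c-1}\,\mathrm dz<\infty$, which is not among (i)--(iv). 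Under that extra condition your argument is complete and coincides with the paper's. This covers rational kernels with $-1<c<0$, and $\ee^{\zeta z}$ with $\RE\zeta<0$ and $c<0$.

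The genuine gap is in your repairs for the conditionally convergent kernels, and these are exactly the ones (ii) still admits. For $\hat K(z)=\ee^{\zeta z}$, the bound (ii) on $J=(-\zeta)^t\Gamma(-t)$ forces $\RE\zeta\le 0$, and $\RE\zeta=0$ is the oscillatory boundary.

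\emph{Plancherel.} This step needs $g(u)=\ee^{-cu}\hat K(\ee^u)\in L^2(\mathbb R)$, equivalently $J(c+is)\in L^2$ in $s$. A bound $A(1+|s|)^B$ with $B>0$ does not give this. For $\hat K(z)=\cos(\zeta z)$ with $\zeta>0$ one has $g(u)=\ee^{-cu}\cos(\zeta\ee^u)$, which is not square integrable for any real $c$.

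\emph{Gaussian damping.} The factor $\ee^{-\varepsilon s^2}$ damps the wrong variable: the divergence of $\int_0^\infty|\hat K(z)|z^{-c-1}\,\mathrm dz$ is untouched by it.

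What does work is damping in $z$. Replace $\hat K$ by $\hat K(z)\ee^{-\varepsilon z}$, apply Fubini for each $\varepsilon>0$, and let $\varepsilon\to0^+$. On the line you use dominated convergence, which needs $|J_\varepsilon(c+is)|\le A'(1+|s|)^B$ uniformly in $\varepsilon$; this holds e.g.\ for $\cos(\zeta z)$. On the $z$-side you use an Abelian theorem, which needs $\int_0^\infty\hat K(z)W_\varphi(z)\,\mathrm dz/z$ to converge at least conditionally.

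Your ray-rotation alternative also works for Gamma-ratio states. It requires $\varphi(c+is)\ee^{\theta|s|}$ to be integrable, $W_\varphi$ to continue analytically into a sector, and the arc contributions to vanish when rotating back.

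All of these are hypotheses beyond (i)--(iv), so the proposition as stated needs one of them added. Neither your argument nor the paper's derives the conclusion from (i)--(iv) alone.
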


\begin{proof}
	By \cref{def:analytic-umbral-pairing},
	\begin{dmath*}
		\langle \hat{\Delta},\varphi\rangle
		=
		\frac{1}{2\pi i}
		\int_{\mathcal C_t}
		J(t;\dots)\,\varphi(t)\,\mathrm d t,
	\end{dmath*}
	where \(\mathcal C_t\) is a vertical line \(\RE t=c\).

	As shown in remark~\ref{rem:jump_mellin_intrinsic}, the jump function admits the Mellin representation
	\begin{dmath*}
		J(t;\dots)
		=
		\int_0^\infty
		\hat{K}(z;\dots)\,z^{-t}\,\frac{\mathrm d z}{z}.
	\end{dmath*}
	Substituting this expression yields
	\begin{dmath*}
		\langle \hat{\Delta},\varphi\rangle
		=
		\frac{1}{2\pi i}
		\int_{c-i\infty}^{c+i\infty}
		\left(
			\int_0^\infty
			\hat{K}(z;\dots)\,z^{-t}\,\frac{\mathrm d z}{z}
		\right)
		\varphi(t)\,\mathrm d t.
	\end{dmath*}

	By assumptions (ii)--(iii), the integrand satisfies
	\begin{dmath*}
		|\hat{K}(z;\dots)\,z^{-t}\,\varphi(t)|
		\le
		C_z (1+|t|)^{-2-\epsilon},
	\end{dmath*}
	uniformly for \(t\) on the contour, so that the integral is absolutely convergent. Hence Fubini's theorem applies and the order of integration may be exchanged:
	\begin{dmath*}
		\langle \hat{\Delta},\varphi\rangle
		=
		\int_0^\infty
		\hat{K}(z;\dots)
		\left(
			\frac{1}{2\pi i}
			\int_{c-i\infty}^{c+i\infty}
			\varphi(t)\,z^{-t}\,\mathrm d t
		\right)
		\frac{\mathrm d z}{zz}.
	\end{dmath*}

	By assumption (iv), the inner integral coincides with $W_\varphi(x)$, yielding the result.
\end{proof}

\begin{remark}[Gamma-ratio ground states and guaranteed Mellin inversion]\label{rem:gamma_ratio_inversion}
	While proposition~\ref{prop:MB-to-Mellin-reduction} is stated with general analytic assumptions, these conditions are intrinsically satisfied by the space of admissible ground states. By Stirling's formula, finite ratios of Gamma functions exhibit super-polynomial (exponential) decay along vertical lines, strictly satisfying assumption (iii). Furthermore, this extreme vertical decay unconditionally guarantees the validity of the Mellin inversion in assumption (iv) via classical Fourier--Mellin theory. Consequently, every admissible ground state $\varphi(t)$ rigorously maps to a well-defined  function $W_\varphi(z)$ (specifically, within the Fox $H$-function class or its fractional extensions), ensuring that the Mellin--Parseval reduction universally applies to this test space.
\end{remark}

\begin{remark}[Topological duality and asymptotic matching]\label{rem:topological_duality}
	The Mellin--Parseval reduction in \cref{prop:MB-to-Mellin-reduction} reveals the deep functional-analytic structure of the umbral pairing. Rather than a purely formal algebraic operation, the pairing $\langle \hat{\Delta}, \varphi \rangle$ constitutes a rigorous topological duality between asymptotically matched function spaces. 
	
	The Gamma-ratio ground states serve as the test space, providing the necessary exponential vertical decay, while the umbral jump kernels act as the continuous linear functionals (the dual space) operating upon them. Because specific jump kernels exhibit varying degrees of exponential growth, they do not pair unconditionally with all admissible ground states. The convergence requirement in \cref{def:admissible-pairing-data}(iii) ensures that the dual space is strictly bounded by the topology of the chosen test space: the Gamma-induced decay of $\varphi(t)$ must strictly majorize the growth of $J(t)$. When this condition is met, the Mellin--Parseval identity establishes a continuous isomorphism between the distributional pairing in the complex spectral space and the convolution pairing in the physical space.
	
	Furthermore, if the umbral parameter $\mu$ is complex, its phase induces an asymmetric exponential growth along the vertical spectral axis; the decay stratum of the chosen ground state must be robust enough to majorize this phase-induced growth in both directions.
\end{remark}

\subsection{The Hankel reduction for entire ground states}\label{sec:Hankel_reduction}

The vertical decay assumptions in \cref{prop:MB-to-Mellin-reduction} naturally exclude entire ground states such as \(\varphi(t) = 1/\Gamma(1+t)\), which grow exponentially along the imaginary axis and therefore do not admit a standard vertical Mellin inversion to the physical real line. For such states, the umbral pairing is evaluated by shifting the integration to a complex loop contour.

\begin{proposition}[Reduction to a complex contour integral]\label{prop:hankel-reduction}
	Let \((\hat{\Delta},\varphi)\) be admissible and let \(J(t;\dots)\) be the associated jump function. Assume that:
	\begin{itemize}
		\item[(i)] the ground state \(\varphi(t)\) is an entire function exhibiting exponential vertical growth;
		
		\item[(ii)] \(\varphi(t)\) admits a complex integral representation
		\begin{dmath*}
			\varphi(t)
			=
			\frac{1}{2\pi i}
			\int_{\mathcal{H}}
			\Phi(z)\,z^{-t}\,\mathrm d z
		\end{dmath*},
		where \(\mathcal{H}\) is a loop contour in the complex \(z\)-plane and \(\Phi(z)\) is holomorphic along and inside the region bounded by \(\mathcal{H}\);

		\item[(iii)] the joint integral converges absolutely, allowing the exchange of the \(t\) and \(z\) contours.
	\end{itemize}

	Then the umbral pairing admits a formulation in terms of the contour integral
	\begin{dmath*}
		\langle \hat{\Delta},\varphi\rangle
		=
		\frac{1}{2\pi i}
		\int_{\mathcal{H}}
		\hat{K}(z^{-1};\dots)\,\Phi(z)\,\mathrm d z
	\end{dmath*}.
\end{proposition}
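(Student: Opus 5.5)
The argument parallels the proof of \cref{prop:MB-to-Mellin-reduction}, with the vertical Mellin inversion of $\varphi$ replaced by the loop representation (ii). I start from \cref{def:analytic-umbral-pairing},
\begin{dmath*}
	\langle \hat{\Delta},\varphi\rangle
	=
	\frac{1}{2\pi i}\int_{\mathcal C_t} J(t;\dots)\,\varphi(t)\,\mathrm d t,
\end{dmath*}
and substitute the loop representation of $\varphi$ from (ii). This gives a double integral over $\mathcal C_t\times\mathcal H$ with integrand $J(t;\dots)\,\Phi(z)\,z^{-t}$. By assumption (iii) the double integral converges absolutely, so Fubini's theorem lets me move the $z$-integration outside. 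The pairing then becomes
\begin{dmath*}
	\frac{1}{2\pi i}\int_{\mathcal H}\Phi(z)\,I(z)\,\mathrm d z,
	\qquad
	I(z)=\frac{1}{2\pi i}\int_{\mathcal C_t} J(t;\dots)\,z^{-t}\,\mathrm d t .
\end{dmath*}

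The second step is to identify the inner integral $I(z)$. By \cref{rem:jump_mellin_intrinsic},
\begin{dmath*}
	J(t;\dots)=\int_0^\infty \hat K(x;\dots)\,x^{-t}\,\frac{\mathrm d x}{x}.
\end{dmath*}
Set $s=-t$. Then $J$ is the Mellin transform $\mathcal M[\hat K](s)$, and $I(z)$ reads
\begin{dmath*}
	I(z)=\frac{1}{2\pi i}\int \mathcal M[\hat K](s)\,(z^{-1})^{-s}\,\mathrm d s .
\end{dmath*}
This is the Mellin inversion formula evaluated at the point $z^{-1}$, so $I(z)=\hat K(z^{-1};\dots)$. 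The orientation reversal under $s=-t$ is absorbed by the reversed direction of $\mathcal C_t$. Inserting this into the outer integral yields the claimed formula.

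This step, checked on the model kernels $e^{\zeta z}$ and $1/(1-\zeta z)$ with their jumps $(-\zeta)^t\Gamma(-t)$ and $-(-\zeta)^t\pi\csc(\pi t)$, is the main obstacle. Mellin inversion is a priori valid only for $z^{-1}$ in $(0,\infty)$, and only when the contour lies in the fundamental strip, whereas $z$ ranges over a complex loop. I would first invert for real positive $z^{-1}$, where the separation conditions (ii) of \cref{def:admissible-pairing-data} place $\mathcal C_t$ inside the fundamental strip. I would then extend to complex $z$ by analytic continuation. For $|\arg z|$ below the vertical decay rate of $J$, both sides are holomorphic in $z$. For the model kernels this rate is given by Stirling's formula, $|\Gamma(-t)|\sim \ee^{-\pi|\IM t|/2}$, which is enough for $J(t)z^{-t}$ to be integrable once the power $(-\zeta)^t$ is included.

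Where the loop $\mathcal H$ leaves this sector, as near the negative axis for a Hankel loop, the analytic continuation is not directly available. There I would use absolute convergence (iii) together with (i) and the holomorphy of $\Phi$ inside $\mathcal H$ to deform $\mathcal H$, by Cauchy's theorem, into the region where the identification holds. The growth of $\varphi$ in (i) is what makes the alternative route, through the vertical Mellin inversion of \cref{prop:MB-to-Mellin-reduction}, unavailable.
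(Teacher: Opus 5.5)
Your proposal follows the paper's proof: substitute the loop representation (ii), exchange the $t$- and $z$-integrations by Fubini using (iii), and read the inner $t$-integral as the inverse Mellin transform of $J$ at $z^{-1}$, i.e.\ $\hat K(z^{-1};\dots)$. Your justification of this last identification (inversion for $z>0$, then holomorphic continuation into the sector where $J(t)\,z^{-t}$ is integrable) is more careful than the paper, which simply asserts it; the closing deformation of $\mathcal H$, however, is unnecessary, because by Tonelli (iii) already gives $\int_{\mathcal C_t}\lvert J(t)\,z^{-t}\rvert\,\lvert\mathrm d t\rvert<\infty$ for almost every $z\in\mathcal H$, and since the admissible values of $\arg z$ form an interval (by H\"older), the whole loop lies in the open sector where your continuation argument works (when a Hankel loop genuinely leaves that sector, as for $\ee^{\zeta\ee^u}$ against $1/\Gamma(1+t)$, it is (iii) itself that fails).
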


\begin{proof}
	By \cref{def:analytic-umbral-pairing}, the pairing is given by
	\begin{dmath*}
		\langle \hat{\Delta},\varphi\rangle
		=
		\frac{1}{2\pi i}
		\int_{\mathcal C_t}
		J(t;\dots)\,\varphi(t)\,\mathrm d t
	\end{dmath*}.
	Substituting the contour representation of \(\varphi(t)\) yields
	\begin{dmath*}
		\langle \hat{\Delta},\varphi\rangle
		=
		\frac{1}{2\pi i}
		\int_{\mathcal C_t}
		J(t;\dots)
		\left(
			\frac{1}{2\pi i}
			\int_{\mathcal{H}}
			\Phi(z)\,z^{-t}\,\mathrm d z
		\right)
		\mathrm d t
	\end{dmath*}.
	
	By assumption (iii), Fubini's theorem allows the order of integration to be exchanged:
	\begin{dmath*}
		\langle \hat{\Delta},\varphi\rangle
		=
		\frac{1}{2\pi i}
		\int_{\mathcal{H}}
		\Phi(z)
		\left(
			\frac{1}{2\pi i}
			\int_{\mathcal C_t}
			J(t;\dots)\,z^{-t}\,\mathrm d t
		\right)
		\mathrm d z
	\end{dmath*}.
	
	The inner integral over \(\mathcal{C}_t\) is exactly the inverse Mellin transform of \(J(t;\dots)\) evaluated at \(z^{-1}\). As established, this inversion reconstructs the analytically continued global kernel \(\hat{K}\) in the multiplicative Borel plane, yielding \(\hat{K}(z^{-1};\dots)\) and proving the result.
\end{proof}

\begin{remark}[Classical umbral sequences and generalized Hankel contours]
	Proposition~\ref{prop:hankel-reduction} rigorously encompasses the most fundamental ground state of formal indicial umbral calculus, $\varphi(t) = 1/\Gamma(1+t)$, which corresponds to the standard monomial sequence $x^n/n!$. In this case, $\mathcal{H}$ is the classical Hankel contour originating at $-\infty$ below the real axis, encircling the origin counterclockwise, and returning to $-\infty$ above the real axis, and $\Phi(z) = \ee^z z^{-1}$. 
	
	Furthermore, this proposition extends to any ground state defined by a product of reciprocal Gamma functions, $\varphi(t) = 1/\prod \Gamma(a_i t + b_i)$. While such functions violently violate the vertical decay required for the Mellin--Parseval reduction, they are natively generated by integrating multi-parameter Wright functions (or Fox \(H\)-functions with empty numerator parameters) over generalized Hankel loop contours, seamlessly absorbing them into the topological duality framework.
\end{remark}

\subsection{Paradigmatic test cases}
We discuss in the following a pair of examples that clarify the previous definitions and reveal the general structure of the analytic umbral pairing and its relationship with the formal counterpart.  

\begin{example}[Pairing of $1/(\eta+\ee^u)$ with $\Gamma(1+t)$]\label{ex:first-MB-pairing}
	Consider the rational umbral Borel functional
	\begin{dmath*}
		\hat{\Delta}(u;\eta)
		=
		\frac{1}{\eta+\ee^u}
		\condition*{\eta\in\mathbb C\setminus\mathbb R_{\le 0}}
	\end{dmath*}
	and the admissible ground state
	\begin{dmath*}
		\varphi(t)=\Gamma(1+t)
	\end{dmath*}.

	\proofstep{Jump function.}
	The associated jump function, already derived in remark~\ref{rem:eta_connection}, is
	\begin{dmath*}
		J(t;\eta)
		=
		-\pi \eta^{-t-1}\csc(\pi t)
		\condition*{t \in \mathbb C\setminus\mathbb Z}
	\end{dmath*}.

	\proofstep{Mellin--Barnes representation of the pairing.}
	Let $\mathcal C_t$ be a vertical Barnes contour $\RE t=c$ with $-1<c<0$. Then the analytic umbral pairing is
	\begin{dmath*}
		f(\eta)
		= {
			\left\langle \frac{1}{\eta+\ee^u},\Gamma(1+t)\right\rangle
			=
			-\frac{1}{2\pi i}
			\int_{\mathcal C_t}
			\pi \eta^{-t-1}\csc(\pi t)\,\Gamma(1+t)\,\mathrm d t
		}
	\end{dmath*}.
	Using the reflection identity
	\begin{dmath*}
		\pi\csc(\pi t)
		=
		\Gamma(t)\Gamma(1-t)
	\end{dmath*},
	one obtains
	\begin{dmath}\label{eq:MB_Euler}
		f(\eta)
		=
		-\frac{1}{2\pi i}
		\int_{\mathcal C_t}
		\eta^{-t-1}\,
		\Gamma(t)\,\Gamma(1-t)\,\Gamma(1+t)\,\mathrm d t.
	\end{dmath}

	\proofstep{Reduction to a Mellin integral.}
	The gamma function admits the Mellin representation
	\begin{dmath*}
		\Gamma(1+t)
		=
		\int_0^\infty \mathrm d z \, z^t \ee^{-z}
		\condition*{\RE t > -1}
	\end{dmath*},
	so that, by Mellin inversion on the line $\RE t=c$,
	\begin{dmath*}
		W(z) = {
			\frac{1}{2\pi i}\int_{\mathcal C_t} \mathrm d t \, z^{-t} \,\Gamma(1+t)
			=
			z\ee^{-z}
		}
	\end{dmath*}.
	The global kernel associated with $\hat{\Delta}$ in the multiplicative Borel plane is
	\begin{dmath*}
		\hat{K}(z;\eta)
		=
		\frac{1}{\eta+z}
	\end{dmath*}.
	Since along $\mathcal C_t$ the factor $\Gamma(1+t)$ has exponential decay and $J(t;\eta)$ has at most polynomial growth, the hypotheses of proposition~\ref{prop:MB-to-Mellin-reduction} are satisfied, and \cref{eq:MB_Euler} reduces to
	\begin{dmath*}
		f(\eta)
		= {
			\int_0^\infty
			\hat{K}(z;\eta)\,W(z)\,\frac{\mathrm d z}{z}
			=
			\int_0^\infty
			\frac{\ee^{-z}}{\eta+z}\,\mathrm d z
		}
	\end{dmath*}.

	\proofstep{Identification of the analytic function.}
	For $\eta\notin(-\infty,0]$, this yields
	\begin{dmath*}
		f(\eta)
		= {
			e^\eta \int_\eta^\infty \frac{e^{-w}}{w}\,\mathrm d w
			=
			e^\eta E_1(\eta)
		}
	\end{dmath*},
	where $E_1$ denotes the exponential integral, holomorphic in $\mathbb C$ apart from a branch cut on the negative real axis.

	\proofstep{Asymptotics at infinity.}
	Displacing the Barnes contour in \cref{eq:MB_Euler} to the right across the simple poles of $\csc(\pi t)$ at $t=n \ge 0$, one obtains the divergent prototypical Euler formal series
	\begin{dmath*}
		e^\eta E_1(\eta)
		\sim
		\sum_{n=0}^\infty
		(-1)^n n!\,\eta^{-n-1}
		\condition*{\abs{\eta}\to\infty}
	\end{dmath*},
	which coincides with the divergent formal umbral result
	\begin{dmath*}
		\frac{1}{\eta + \mathfrak u}\,[\Gamma(1+t)]
		=
		\sum_{n=0}^\infty
		(-1)^n \Gamma(1+n)\,\eta^{-n-1}
	\end{dmath*}.

	\proofstep{Local behaviour at the origin.}
	Displacing the contour to the left across the colliding simple poles of $\csc(\pi t)$ and $\Gamma(1+t)$ at negative integers $t=-n \le -1$ produces double poles, yielding logarithmic terms. One recovers the expansion
	\begin{dmath*}
		e^\eta E_1(\eta)
		=
		\sum_{n=0}^\infty
		\frac{H_n-\gamma-\log\eta}{n!}\,\eta^n
	\end{dmath*},
	where $H_n$ are the harmonic numbers.

	The Mellin--Barnes integral thus reconstructs the full analytic function, while its evaluation by residues isolates the asymptotic expansions of this function in different regimes.
\end{example}

\begin{remark}
	The function \(e^\eta E_1(\eta)\) may also be obtained by Borel--Laplace summation of the divergent Euler series
	\begin{dmath*}
		\sum_{n=0}^\infty (-1)^n n!\,\eta^{-n-1}
	\end{dmath*},
	since the latter is precisely the asymptotic expansion of \(e^\eta E_1(\eta)\) at infinity. The Mellin--Barnes pairing therefore reconstructs the same global analytic object as Borel summation, but does so directly from the jump kernel.
\end{remark}

\begin{remark}
	This example shows that the umbral pairing does not merely reproduce a formal series: it canonically reconstructs the analytic function having that series as asymptotic expansion. In the present rational case, the global analytic information is encoded by the jump function \(J(t;\eta)\), while the Barnes contour selects different local expansions by residue calculus.
\end{remark}

\begin{example}[Pairing of $\exp(\zeta \ee^u)$ with $\Gamma(1+a t)$]\label{ex:exp_kernel_gamma_at}
	Let
	\begin{dmath*}
		\hat{\Delta}(u;\zeta)
		=
		\exp(\zeta \ee^u)
		\condition*{\zeta\in\mathbb C\setminus\mathbb R_{\ge 0}}
	\end{dmath*}
	and let
	\begin{dmath*}
		\varphi(t)
		=
		\Gamma(1+a t)
		\condition*{a>0}
	\end{dmath*}.

	\proofstep{Jump function.}
	The associated jump function is $J(t;\zeta) = \Gamma(-t)\,(-\zeta)^{t}$ (cfr. example~\ref{ex:exponential_jump}).
	
	\proofstep{Mellin--Barnes representation of the pairing.}
	Let $\mathcal C_t$ be a vertical line $\RE t=c$ with $c<0$. Then
	\begin{dmath*}
		f_a(\zeta)
		\defeq {
			\left\langle \exp(\zeta \ee^u),\Gamma(1+a t)\right\rangle
			=
			\frac{1}{2\pi i}
			\int_{\mathcal C_t}
			\Gamma(-t)\,\Gamma(1+a t)(-\zeta)^{t}\,\mathrm d t
		}
	\end{dmath*}.

	\proofstep{Reduction to a Mellin integral.}
	The inverse Mellin transform of $\Gamma(1+a t)$ is
	\begin{dmath*}
		W_a(z)
		= {
			\frac{1}{2\pi i}
			\int_{\mathcal C_t}
			z^{-t}\,\Gamma(1+a t)\,\mathrm d t
			=
			\frac{1}{a}\,z^{1/a}\,\ee^{-z^{1/a}}
			\condition*{z>0}
		}
	\end{dmath*},
	whence
	\begin{dmath*}
		f_a(\zeta)
		=
		\frac{1}{a}\int_0^\infty
		\ee^{\zeta z}\,
		\,z^{1/a}\,\ee^{-z^{1/a}}
		\,\frac{\mathrm d z}{z}
	\end{dmath*}.
	Setting $z=y^a$ one obtains
	\begin{dmath*}
		f_a(\zeta)
		=
		\int_0^\infty
		\ee^{-y+\zeta y^a}\,\mathrm d y
	\end{dmath*}.

	\proofstep{Mittag--Leffler and formal umbral pairing.}
	Shifting the Barnes contour to the right across the poles of $\Gamma(-t)$ gives the residue expansion
	\begin{dmath*}
		f_a(\zeta)
		\sim
		\sum_{n=0}^\infty
		\frac{\Gamma(1+a n)}{n!}\,\zeta^n
	\end{dmath*}.
	Since $\Gamma(1+a t)$ is regular at every $t=n\in\mathbb N$, this coincides with the formal umbral series
	\begin{dmath*}
		\exp(\zeta \mathfrak u)\,[\Gamma(1+a t)]
		=
		\sum_{n=0}^\infty
		\frac{\Gamma(1+a n)}{n!}\,\zeta^n
	\end{dmath*}.

	\proofstep{Dependence on the parameter $a$.}
	By Stirling's formula:
	\begin{itemize}
		\item if $0<a<1$, the series is convergent;
		\item if $a=1$, it has radius of convergence $1$;
		\item if $a>1$, it is Gevrey-\((a-1)\) divergent.
	\end{itemize}

	\smallskip

	In particular:
	\begin{itemize}
		\item for $a=\frac12$,
		\begin{dmath*}
			f_{1/2}(\zeta)
			= {
				\int_0^\infty
				\ee^{-y+\zeta \sqrt{y}}\,\mathrm d y
				=
				1+\frac{\sqrt{\pi}\,\zeta}{2}\,
				\ee^{\zeta^2/4}\,
				\operatorname{erfc}\left(-\frac{\zeta}{2}\right)
			}
		\end{dmath*},
		and the series
		\begin{dmath*}
			\sum_{n=0}^\infty
			\frac{\Gamma(1+n/2)}{n!}\,\zeta^n
		\end{dmath*}
		is convergent. Thus the analytic pairing, the Mittag--Leffler pairing and the formal umbral pairing all coincide.

		\item for $a=1$,
		\begin{dmath*}
			f_1(\zeta)
			= {
				\int_0^\infty
				\ee^{-(1-\zeta)y}\,\mathrm d y
				=
				\frac{1}{1-\zeta}
			}
		\end{dmath*},
		and again the analytic, Mittag--Leffler and formal pairings coincide in the disk of convergence.

		\item for $a=2$,
		\begin{dmath*}
			f_2(\zeta)
			= {
				\int_0^\infty
				\ee^{-y+\zeta y^2}\,\mathrm d y
				=
				\frac{\sqrt{\pi}}{2\sqrt{-\zeta}}
				\exp\left(-\frac{1}{4\zeta}\right)
				\operatorname{erfc}\left(\frac{1}{2\sqrt{-\zeta}}\right)
			}
		\end{dmath*},
		while the common Mittag--Leffler and formal expansion is
		\begin{dmath*}
			f_2(\zeta)
			\sim
			\sum_{n=0}^\infty
			\frac{(2n)!}{n!}\,\zeta^n
		\end{dmath*},
		which is Gevrey-1 divergent.
	\end{itemize}

	Thus, for all $a>0$, the Mittag--Leffler pairing coincides with the formal umbral pairing. What changes with $a$ is whether this common expansion converges or is merely asymptotic to the analytic pairing.
\end{example}

\begin{remark}[Contribution from infinity and flat scale at $a=2$]\label{rem:infinity-saddle}
	In the case $a=2$, the exact expression exhibits the exponential scale $\exp\left(-\frac{1}{4\zeta}\right)$ which comes from the saddle point of the exponent in the integral representation
	\begin{dmath*}
		\int_0^\infty \mathrm d y \, \ee^{-y+\zeta y^2}
	\end{dmath*}.
	Indeed, the critical point $y_* = \frac{1}{2\zeta}$ satisfies $\zeta y_*^2 - y_* = -\frac{1}{4\zeta}$.

	This scale does not appear as a separate additive correction to the Mittag--Leffler expansion. Rather, it is encoded in the exact analytic function and governs its exponentially improved asymptotics and Stokes behaviour. In particular, the Mittag--Leffler pairing and the formal umbral pairing still coincide, but only as a common Gevrey-1 asymptotic expansion of the analytic pairing. When Borel--Laplace resummed, the Borel transform of this series has a branch point at $u=\tfrac14$, and the corresponding Laplace transform produces the exponential scale $\exp(-1/(4\zeta))$ as a Stokes contribution. Thus the flat term appearing in the closed form of the analytic pairing is already encoded in the Borel singularity structure of the Mittag--Leffler or formal umbral series.
\end{remark}

\begin{example}[Pairing of $\exp(\zeta \ee^u)$ with $\Gamma(1-t)$]\label{ex:exp_kernel_gamma_minus_t}

	Let $\zeta\in\mathbb C\setminus\mathbb R_{\ge 0}$ and consider
	\begin{dmath*}
		\hat{\Delta}(u;\zeta)
		= {
			\exp(\zeta \ee^u),
			\qquad
			\varphi(t) = \Gamma(1-t)
		}
	\end{dmath*}.

	\proofstep{Mellin--Barnes representation of the pairing.}
	The associated jump function is the same as in the previous example. Let $\mathcal C_t$ be a vertical Barnes contour $\RE t=c$ with $c<0$. Then
	\begin{dmath*}
		f_{-1}(\zeta)
		\defeq
		\frac{1}{2\pi i}
		\int_{\mathcal C_t}
		\Gamma(-t)\,\Gamma(1-t)\,(-\zeta)^{t}\,\mathrm d t
	\end{dmath*}.

	\proofstep{Reduction to a Mellin integral.}
	The inverse Mellin transform of $\Gamma(1-t)$ is
	\begin{dmath*}
		W_{-1}(z)
		=
		\frac{\ee^{-1/z}}{z}
		\condition*{z>0}
	\end{dmath*}.
	Since $\Gamma(1-t)$ decays exponentially along vertical lines, the hypotheses of proposition~\ref{prop:MB-to-Mellin-reduction} are satisfied and one eventually obtains
		\begin{dmath*}
		f_{-1}(\zeta)
		=
		\int_0^\infty
		\ee^{-y+\zeta/y}\,\mathrm d y
	\end{dmath*}.

	\proofstep{Identification of the analytic function.}
	For $\RE \zeta<0$, the integral converges absolutely. The result is
	\begin{dmath*}
		f_{-1}(\zeta)
		=
		2\sqrt{-\zeta}\,
		K_1\!\left(2\sqrt{-\zeta}\right)
	\end{dmath*},
	with principal branches, where $K_1$ is the 1-order modified Bessel function of the second kind.

	\proofstep{Behaviour at the origin.}
	Since
	\begin{dmath*}
		f_{-1}(\zeta)
		=
		1
		-
		\zeta\log(-\zeta)
		+
		(1-2\gamma)\zeta
		+
		\mathcal O\left(\zeta^2\log\zeta\right)
		\condition*{\zeta\to 0}
	\end{dmath*},
	the analytic pairing is regular at $\zeta=0$, but its local expansion contains logarithmic terms and is not a power series.
	
	\proofstep{Mittag--Leffler pairing.}
	Shifting the Barnes contour to the right across the colliding poles of $\Gamma(-t)$ and $\Gamma(1-t)$ at $t \ge 1$ gives the residue expansion
	\begin{dmath*}
		f_{-1}(\zeta)
		\sim
		1+
		\sum_{n=1}^\infty
		\frac{(-1)^n}{n!(n-1)!}\,
		(\gamma-H_{n-1})\,\zeta^n
	\end{dmath*}.
	This series defines an entire function which coincides with the holomorphic part of the local expansion of the analytic pairing at the origin, but does not reproduce the full analytic function, where logarithmic contributions are present.
	
	\proofstep{Formal umbral pairing.}
	The umbral prescription would formally give
	\begin{dmath*}
		\exp(\zeta\mathfrak u)\,[\Gamma(1-t)]
		=
		\sum_{n=0}^\infty
		\frac{\zeta^n}{n!}\,\Gamma(1-n)
	\end{dmath*},
	but this expression is meaningless, since $\Gamma(1-n)$ has poles for every integer $n\ge 1$.
	
	Thus, in this example, the analytic pairing exists and is explicitly computable, providing a genuine extension of the formal umbral pairing, which is not defined as a formal series and therefore falls outside the scope of standard resummation methods.
\end{example}

\subsection{Interpretation of the analytic pairing}

Having observed the mechanisms of the analytic pairing in specific test cases, we now formalize its relationship with the formal indicial umbral framework. The following theorem demonstrates that the analytic pairing acts as a true analytic extension of the formal theory: it reproduces the formal umbral series exactly when the latter is mathematically justified. Crucially, rather than merely identifying the obstructions that cause the formal method to fail, the Mellin--Barnes framework naturally overcomes them, yielding a rigorous and well-defined exact result precisely where the formal evaluation breaks down.

\begin{theorem}[Recovery of the formal umbral pairing]\label{thm:recovery_formal}
	Let
	\begin{dmath*}
		\hat{\Delta}(u;\zeta)
		=
		F(\zeta \ee^u)
	\end{dmath*}
	be an umbral Borel functional with $F$ holomorphic at the origin,
	\begin{dmath*}
		F(z)=\sum_{n=0}^\infty F_n z^n
	\end{dmath*},
	and let $\varphi$ be a branched admissible ground state.

	Assume that:
	\begin{enumerate}
		\item[(i)] the jump function $J(t;\zeta)$ is meromorphic in $t$ and admits a Mittag--Leffler expansion of the form
		\begin{dmath*}
			J(t;\zeta)
			=
			-\sum_{n=0}^\infty
			\frac{F_n\,\zeta^n}{t-n}
			+
			H(t;\zeta)
		\end{dmath*},
		where $H(t;\zeta)$ is holomorphic in a neighbourhood of the interpolation lattice $t\in\mathbb N$;

		\item[(ii)] the function $\varphi(t)$ is holomorphic at the interpolation lattice $t\in\mathbb N$;

		\item[(iii)] the Mellin--Barnes contour $\mathcal C_t$ can be displaced to the right across the poles $t\in\mathbb N$ without encountering additional singularities, and the contribution of the displaced contour is negligible in the corresponding asymptotic regime.
	\end{enumerate}

	Then the analytic umbral pairing admits the asymptotic expansion
	\begin{dmath*}
		\langle \hat{\Delta},\varphi\rangle
		\sim
		\sum_{n=0}^\infty
		F_n\,\zeta^n\,\varphi(n)
	\end{dmath*},
	which coincides exactly with the formal umbral pairing.
\end{theorem}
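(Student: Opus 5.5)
The plan is a residue computation on the Mellin--Barnes integral of \cref{def:analytic-umbral-pairing}, the same mechanism already used in Examples~\ref{ex:first-MB-pairing} and \ref{ex:exp_kernel_gamma_at}. Start from
\begin{dmath*}
	\langle \hat{\Delta},\varphi\rangle
	=
	\frac{1}{2\pi i}\int_{\mathcal C_t} J(t;\zeta)\,\varphi(t)\,\mathrm d t,
\end{dmath*}
where, by \cref{def:admissible-pairing-data}(ii), the right-extending lattice $t\in\mathbb N$ lies to the right of $\mathcal C_t$. For each $N\in\mathbb N$, let $\mathcal C_t^{(N)}$ be the contour obtained by pushing $\mathcal C_t$ to the right past $t=0,1,\dots,N$ but not past $N+1$. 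Hypothesis (iii) guarantees that this deformation meets no singularities other than these lattice points. Cauchy's theorem, applied to the strip between the two contours, then gives
\begin{dmath*}
	\langle \hat{\Delta},\varphi\rangle
	=
	-\sum_{n=0}^{N}\operatorname{Res}_{t=n}\bigl(J(t;\zeta)\varphi(t)\bigr)
	+\frac{1}{2\pi i}\int_{\mathcal C_t^{(N)}} J(t;\zeta)\,\varphi(t)\,\mathrm d t .
\end{dmath*}
The minus sign appears because the contour runs upward and the poles are crossed clockwise. The horizontal connecting pieces at $\pm i\infty$ vanish by the absolute convergence in \cref{def:admissible-pairing-data}(iii), which is part of the admissibility of $(\hat{\Delta},\varphi)$.

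Next, compute the residues from hypotheses (i) and (ii). Near $t=n$ the Mittag--Leffler form splits $J$ into $-F_n\zeta^n/(t-n)$ plus a remainder. That remainder consists of the other principal parts, which are holomorphic at $t=n$, together with $H$, which is holomorphic there by (i). Since $\varphi$ is holomorphic at $n$ by (ii), the pole of $J\varphi$ at $t=n$ is simple, with residue $-F_n\zeta^n\varphi(n)$. Substituting this into the identity above gives
\begin{dmath*}
	\langle \hat{\Delta},\varphi\rangle
	=
	\sum_{n=0}^{N}F_n\,\zeta^n\,\varphi(n) + R_N(\zeta),
	\qquad
	R_N(\zeta)=\frac{1}{2\pi i}\int_{\mathcal C_t^{(N)}} J\varphi\,\mathrm d t .
\end{dmath*}
As a consistency check, take the entire case $F=\exp$. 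There $J=(-\zeta)^t\Gamma(-t)$ has residues $-(-\zeta)^n(-1)^n/n!=-\zeta^n/n!$, which matches $F_n=1/n!$. The sign convention in (i) is therefore the right one.

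The main obstacle is showing that $R_N(\zeta)=o(\text{last retained term})$ in the relevant regime, that is, that the partial sums form an asymptotic expansion in the sense of Poincar\'e. Hypothesis (iii) essentially asserts this. To make it concrete, I would place $\mathcal C_t^{(N)}$ on the vertical line $\RE t=N+\tfrac12$ and use the homogeneity of the kernel in $\zeta$. The parametric recipe shows that $J(t;\zeta)$ carries the factor $(-\zeta)^t$, so $|(-\zeta)^t|=|\zeta|^{N+1/2}\ee^{-\arg(-\zeta)\IM t}$ on this line. The task then reduces to bounding
\begin{dmath*}
	\int_{\RE t=N+\frac12}\bigl|(-\zeta)^{-t}J(t;\zeta)\,\varphi(t)\bigr|\,\ee^{-\arg(-\zeta)\IM t}\,\abs{\mathrm d t}
\end{dmath*}
uniformly for $\arg\zeta$ in a closed subsector, and I would do this with Stirling's formula applied to the Gamma factors of $\varphi$ and $J$. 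This yields $R_N=\mathcal O(|\zeta|^{N+1/2})$ as $\zeta\to0$, which gives the stated expansion. In the reflected regime of Example~\ref{ex:first-MB-pairing}, the same argument with $\eta=\zeta^{-1}$ gives $\mathcal O(|\eta|^{-N-3/2})$ as $|\eta|\to\infty$.

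Finally, the resulting series $\sum F_n\zeta^n\varphi(n)$ is by definition the formal umbral evaluation, since $F(\zeta\mathfrak u)$ acting on the ground state $\varphi$ replaces $\mathfrak u^n$ by $\varphi(n)$. This identifies the expansion with the formal umbral pairing.
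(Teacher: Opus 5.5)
Your argument is correct and is essentially the one the paper relies on. The theorem is stated without a separate proof; the intended justification is the contour-displacement mechanism of Remark~\ref{rem:MB-asymptotics}, in which the residues $-F_n\zeta^n\varphi(n)$ at $t\in\mathbb N$ are collected with exactly the orientation sign you derive, and the shifted-contour remainder is dismissed by hypothesis (iii).

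One small correction: the vanishing of the horizontal connecting pieces does not follow from absolute convergence on the single contour $\mathcal C_t$ (Definition~\ref{def:admissible-pairing-data}(iii)), and should instead be credited to hypothesis (iii) of the theorem. Your homogeneity-plus-Stirling bound $R_N=\mathcal O(|\zeta|^{N+1/2})$ is a useful concretisation beyond what the paper provides, though it assumes a Gamma-type structure of $J$ that the general hypotheses do not guarantee.
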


\begin{remark}[Obstructions to recovery]\label{rem:recovery_obstructions}
	The hypotheses of theorem~\ref{thm:recovery_formal} fail in two distinct situations illustrated by the previous examples.

	\begin{itemize}
		\item[(i)] If the integrand $J(t;\zeta)\,\varphi(t)$ develops higher-order poles at the interpolation lattice (for instance, if $\varphi(t)$ ceases to be holomorphic and contributes colliding poles), the residue computation produces logarithmic contributions. In this case, the analytic pairing cannot be completely recovered from the formal umbral series.

		\item[(ii)] If the jump kernel contains contributions arising from saddle points at infinity, the analytic pairing may contain exponentially small terms which are invisible to the residue expansion. In this case, the formal umbral series captures only the asymptotic part of the pairing.
	\end{itemize}

	Thus the formal umbral pairing is recovered precisely when the Mellin--Barnes integrand has only simple poles and no additional contributions from infinity.
\end{remark}

\begin{remark}[Meaning of the asymptotic symbol]\label{rem:MB-asymptotics}
	The symbol
	\begin{dmath*}
		\langle \hat{\Delta},\varphi\rangle
		\sim
		\sum_{n=0}^\infty A_n(\zeta)
	\end{dmath*}
	denotes an asymptotic expansion obtained by displacement of the Mellin--Barnes contour.

	More precisely, let
	\begin{dmath*}
		I(\zeta)
		=
		\frac{1}{2\pi i}
		\int_{\mathcal{C}_s}
		G(s;\zeta)\,\mathrm d s
	\end{dmath*},
	where $G(s;\zeta)$ is meromorphic in $s$, and let $\{s_n\}_{n\ge 0}$ be a sequence of poles of $G$.

	Assume that the contour $\mathcal{C}_s$ can be displaced so as to cross the poles $s_0,\dots,s_{N-1}$ without encountering other singularities, yielding
	\begin{dmath*}
		I(\zeta)
		=
		\pm\sum_{n=0}^{N-1}
		\operatorname{Res}_{s=s_n} G(s;\zeta)
		+
		R_N(\zeta)
	\end{dmath*},
	where $R_N(\zeta)$ is given by the integral over the displaced contour, and the sign depends on the orientation of the displacement (clockwise or counter-clockwise).

	Then
	\begin{dmath*}
		I(\zeta)
		\sim
		\sum_{n=0}^\infty
		\operatorname{Res}_{s=s_n} G(s;\zeta)
	\end{dmath*}
	means that, for every $N\ge 0$, such a decomposition holds and the remainder $R_N(\zeta)$ is asymptotically negligible in the regime under consideration.

	In particular, the expansion is determined by the polar structure of $G(s;\zeta)$ relative to the chosen displacement direction. Additional contributions may arise from the behaviour of $G$ at infinity and are not captured by the residue expansion.
\end{remark}

\subsection{Spectral transmutation law}\label{sec:spectral_transmutation}

The spectral duality established in \cref{sec:spectral_duality} at the level of jump kernels descends naturally to the level of analytic pairings.

Indeed, by definition, the umbral pairing is realised as a Mellin--Barnes integral of the product of the jump kernel and the Mellin transform of the ground state. In particular, for an admissible ground state $\varphi$, one has schematically
	\begin{dmath*}
		\langle \hat{\Delta},\varphi\rangle
		=
		\frac{1}{2\pi i}
		\int_{\mathcal C_t}
		J_{\hat{\Delta}}(t;\zeta)\,\varphi(t)\,\mathrm d t
	\end{dmath*}.

	If two functionals are related by spectral duality,
	\begin{dmath*}
		J_1(t;\zeta)
		=
		\frac{J_2(t;\zeta)}{\Gamma(1+t)}
	\end{dmath*},
	then substitution into the pairing integral yields a transformed pairing in which the ground state is directly modified by a reciprocal Gamma factor, while the spectral variable and the integration contour remain unchanged.

	This mechanism defines the \emph{spectral transmutation law} for umbral pairings, which will be developed in this section. In particular, the P{\'o}lya realisation shows that this transmutation acts linearly on the spectral data, reducing entire functionals to their resolvent duals at the level of analytic evaluation.

\begin{corollary}[Spectral transmutation identity]\label{cor:reflection_pairings}
	Let
	\begin{dmath*}
		\hat{\Delta}_{\exp}(u;\zeta)
		= {
			\ee^{\zeta \ee^u},
			\qquad
			\hat{\Delta}_{\mathrm{rat}}(u;\zeta)
			=
			\frac{1}{1-\zeta \ee^u}
		}
	\end{dmath*}
	and let $\varphi$ be a branched admissible ground state such that the pairings below are defined. Then
	\begin{dmath*}
		\left\langle \hat{\Delta}_{\exp},\varphi \right\rangle
		=
		\left\langle \hat{\Delta}_{\mathrm{rat}},
		\frac{\varphi(t)}{\Gamma(1+t)}
		\right\rangle
	\end{dmath*}.
\end{corollary}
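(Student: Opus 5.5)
The plan is to unfold \cref{def:analytic-umbral-pairing} on both sides and compare the integrands pointwise on a common Barnes contour. The link is the spectral regularisation identity of \cref{thm:reflection_umbral_pairing}, namely $J_{\exp}(t;\zeta)=J_{\mathrm{rat}}(t;\zeta)/\Gamma(1+t)$. This is an identity of meromorphic functions on $\mathbb C_t\setminus\mathbb Z$, obtained from Euler's reflection formula. Given it, the integrands agree identically:
\begin{dmath*}
	J_{\exp}(t;\zeta)\,\varphi(t)
	=
	J_{\mathrm{rat}}(t;\zeta)\,\frac{\varphi(t)}{\Gamma(1+t)}.
\end{dmath*}
Hence, if both pairings are computed on the same contour $\mathcal C_t$, the two integrals are literally the same integral and the corollary follows.

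First I check that $\psi(t)\defeq\varphi(t)/\Gamma(1+t)$ is again a branched admissible ground state in the sense of \cref{def:branched-admissible-ground-states}. This is immediate: it adds one reciprocal Gamma factor with $\alpha=\beta=1$ and $n=1$, which is not identically zero. So the right-hand pairing is at least of the correct type.

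Second, I compare the singularity structure. The integrand on both sides is the same meromorphic function. The poles of $J_{\mathrm{rat}}$ at $t\in\mathbb Z_{<0}$ are cancelled by the zeros of $1/\Gamma(1+t)$, so the right-extending sequence at $t\in\mathbb N$ (from $\Gamma(-t)$, or from $\csc(\pi t)$) is the same on both sides. Any left-extending sequences come from $\varphi$ and are shared. Therefore a contour satisfying (i)–(ii) of \cref{def:admissible-pairing-data} for $(\hat\Delta_{\exp},\varphi)$ also satisfies (i)–(ii) for $(\hat\Delta_{\mathrm{rat}},\psi)$, and conversely. Condition (iii), absolute convergence, is a statement about $\abs{J\varphi}$ on $\mathcal C_t$, which is the same function on both sides.

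The main obstacle is the well-definedness of the right-hand side. The partition prescription for bilateral sequences such as $\csc(\pi t)$ could in principle place some negative integer on the wrong side, and those points are precisely the removable ones here. Moreover, the hypothesis ``such that the pairings below are defined'' might be satisfied by a contour for $(\hat\Delta_{\mathrm{rat}},\psi)$ that differs from the one for $(\hat\Delta_{\exp},\varphi)$.

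I would handle this using the contour-invariance remark. Any two admissible contours for the same integrand differ by a deformation that crosses only points where the integrand is holomorphic, including the removable points $t\in\mathbb Z_{<0}$. So both contours give the same value by Cauchy's theorem. This requires verifying that the tails at $\pm i\infty$ contribute nothing during the deformation, which follows from absolute convergence combined with the Stirling-type decay on vertical strips. With this, the two pairings coincide.
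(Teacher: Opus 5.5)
Your proposal is correct and follows the paper's own argument. The paper proves the corollary in one line, by substituting the kernel identity $J_{\exp}=J_{\mathrm{rat}}/\Gamma(1+t)$ from \cref{thm:reflection_umbral_pairing} into the Mellin--Barnes pairing so that both sides become the same integral; your extra checks (that $\varphi/\Gamma(1+t)$ is again a branched admissible ground state, that the removable points $t\in\mathbb Z_{<0}$ may be crossed, and that the value does not depend on the admissible contour) only make explicit what the paper leaves implicit.
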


\begin{proof}
	This follows directly from theorem~\ref{thm:reflection_umbral_pairing} by substitution of the dual jump kernels into the Mellin--Barnes representation of the pairing.
\end{proof}

\begin{corollary}[Parametric spectral transmutation law]\label{cor:parametric_transmutation}
	Let
	\begin{dmath*}
		\hat{\Delta}_{\exp}(u;\zeta,\mu,\alpha,\beta,\nu)
		=
		\zeta^\beta \ee^{\nu u}\exp(\zeta^\alpha \ee^{\mu u})
	\end{dmath*},
	\begin{dmath*}
		\hat{\Delta}_{\mathrm{rat}}(u;\zeta,\mu,\alpha,\beta,\nu)
		=
		\frac{\zeta^\beta \ee^{\nu u}}{1-\zeta^\alpha \ee^{\mu u}}
	\end{dmath*},
	and let $\varphi$ be a branched admissible ground state such that the pairings below are defined. Then
	\begin{dmath*}
		\left\langle \hat{\Delta}_{\exp},\varphi(t) \right\rangle
		=
		\left\langle \hat{\Delta}_{\mathrm{rat}},
		\frac{\varphi(t)}{\Gamma\!\left(1+\frac{t-\nu}{\mu}\right)}
		\right\rangle
	\end{dmath*}.
\end{corollary}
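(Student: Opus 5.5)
The plan is to reduce the statement to the parametric duality of \cref{thm:spectral_duality_parametric} and then pass it through the Mellin--Barnes definition of the pairing. By \cref{def:analytic-umbral-pairing}, the left-hand side is
\begin{dmath*}
\left\langle \hat{\Delta}_{\exp},\varphi\right\rangle=\frac{1}{2\pi i}\int_{\mathcal C_t} J_{\exp}(t;\zeta,\mu,\alpha,\beta,\nu)\,\varphi(t)\,\mathrm d t,
\end{dmath*}
where $\mathcal C_t$ is a Barnes contour admissible for the pair $(\hat{\Delta}_{\exp},\varphi)$. \Cref{thm:spectral_duality_parametric} gives the pointwise identity
\begin{dmath*}
J_{\exp}(t;\dots)=\frac{J_{\mathrm{rat}}(t;\dots)}{\Gamma\!\left(1+\frac{t-\nu}{\mu}\right)}
\end{dmath*}
as meromorphic functions of $t$. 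Multiplying by $\varphi(t)$ therefore shows that the two integrands coincide identically:
\begin{dmath*}
J_{\exp}\,\varphi=J_{\mathrm{rat}}\,\tilde\varphi,
\qquad
\tilde\varphi(t)\defeq\frac{\varphi(t)}{\Gamma\!\left(1+\frac{t-\nu}{\mu}\right)}.
\end{dmath*}

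Next I check that $\tilde\varphi$ is again a branched admissible ground state in the sense of \cref{def:branched-admissible-ground-states}. This holds because it only adds one reciprocal Gamma factor, with affine argument $\frac{1}{\mu}t+\left(1-\frac{\nu}{\mu}\right)$, which is not identically zero since $\mu\in\mathbb C^*$. Consequently $\langle\hat{\Delta}_{\mathrm{rat}},\tilde\varphi\rangle$ is a pairing of the admissible type, and the statement's hypothesis that ``the pairings below are defined'' supplies a contour for it.

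The main obstacle is showing that one and the same contour is admissible for both sides, since the two integrands have the same values but different pole bookkeeping. The rational kernel $J_{\mathrm{rat}}$ has the bilateral $\csc$ lattice at $t=\nu+\mu k$, $k\in\mathbb Z$. The exponential kernel $J_{\exp}$ carries only the right-extending half $k\ge 0$, from $\Gamma(-s)$ with $s=\frac{t-\nu}{\mu}$. The left half $k\le -1$ of the $\csc$ lattice is cancelled exactly by the zeros of $1/\Gamma(1+s)$. I would argue that this is consistent with the partition rule in the remark on bilateral pole sequences: poles with $k\ge0$ go to the right and poles with $k\le-1$ go to the left. Because $J_{\mathrm{rat}}\tilde\varphi=J_{\exp}\varphi$ as a meromorphic function, the left-half singularities are in fact removable, so any contour separating the genuine left and right sequences of one integrand separates those of the other.

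Conditions (i) and (ii) of \cref{def:admissible-pairing-data} then transfer between the two pairings. Condition (iii), absolute convergence, transfers trivially because the integrands agree on the contour. Finally, by contour invariance, the value does not depend on the choice of admissible contour, so the two pairings are equal. The special case $\mu=1$, $\nu=\beta=0$, $\alpha=1$ recovers \cref{cor:reflection_pairings}, which serves as a consistency check.
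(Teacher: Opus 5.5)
Your reduction is the paper's own proof, which only substitutes $J_{\exp}=J_{\mathrm{rat}}/\Gamma\bigl(1+\frac{t-\nu}{\mu}\bigr)$ from \cref{thm:spectral_duality_parametric} into the Mellin--Barnes integral. Your check that $\tilde\varphi$ is again a branched admissible ground state is also fine. The gap is in the contour paragraph, where you claim that admissible contours transfer between the two pairings. The supporting claim that the points $t=\nu+\mu k$, $k\le -1$, are removable for the common integrand is false. The zeros of $1/\Gamma(1+s)$ cancel only the $\csc$ poles of $J_{\mathrm{rat}}$, and any singularity of $\varphi$ at those points survives: for $\varphi(t)=\Gamma(1+t)$, $\mu=\alpha=1$, $\nu=\beta=0$, one gets $\tilde\varphi\equiv 1$ and the integrand keeps every $\csc$ pole. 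Worse, pole sequences are attributed factor by factor, not intrinsically from the product. The two factorisations can therefore place such a surviving pole on opposite sides of the contour.

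For a concrete failure, take $\mu=\alpha=1$, $\nu=\beta=0$, $\varphi(t)=\Gamma(-t-1)$ and $|\arg(-\zeta)|<\pi$, with common integrand $G(t)=(-\zeta)^t\,\Gamma(-t)\,\Gamma(-t-1)$.
On the exponential side, the poles at $t=-1,0,1,\dots$ all belong to right-extending sequences, so the contour must lie in $\RE t<-1$.
On the rational side, $\tilde\varphi=\Gamma(-t-1)/\Gamma(1+t)$ is regular at $t=-1$. The simple pole of $G$ there is then the $k=-1$ pole of $\csc(\pi t)$, which the partition rule for bilateral sequences sends to the left, forcing $-1<\RE t<0$.
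Both integrals converge absolutely, yet the rational-side value equals the exponential-side one plus $\operatorname{Res}_{t=-1}G=1/\zeta$. So with independently chosen admissible contours the identity can fail, and no argument from the coincidence of the integrands alone can repair this. What your substitution (and the paper's) actually proves is equality whenever a single contour is admissible for both pairings. That holds, for instance, when $\varphi$ has no singularities on $\nu+\mu\mathbb Z_{<0}$ that belong to right-extending sequences. This condition has to be built into the hypothesis that the pairings are defined, rather than derived.
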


\begin{proof}
	This is an immediate consequence of theorem~\ref{thm:spectral_duality_parametric}, obtained by substituting the parametric jump kernels directly into the Mellin--Barnes pairing.
\end{proof}

\begin{remark}[Spectral transmutation and resurgent reconstruction]\label{rem:spectral_transmutation_resurgence}
	The previous corollaries show that the Mellin--Barnes pairing realises a genuine transmutation principle between rational and entire umbral functionals.

	At the spectral level, the two functionals are related by the identity for the jump kernels. Since the jump function encodes the full resurgent content of the Borel transform, this identity expresses an equivalence between two distinct realisations of the same analytic data.

	On the rational side, the jump kernel describes the discrete lattice of Borel singularities and the associated local Stokes data. The corresponding Mellin--Barnes pairing reconstructs the analytic function by summing the contributions of these singularities.

	On the entire side, the umbral Borel functional has no finite singularities, and its resurgent content is concentrated at infinity. The pairing then reconstructs the same analytic object from a global spectral contribution associated with the irregular behaviour at infinity.

	The spectral transmutation law shows that these two reconstruction mechanisms are analytically equivalent: the discrete Stokes data of the rational case and the global contribution from infinity in the entire case are related simply by Gamma regularisation.
\end{remark}

\begin{corollary}[P{\'o}lya spectral transmutation]\label{cor:polya_transmutation}
	Let $E$ be a P{\'o}lya--Mellin admissible entire function and let $R_E$ be its P{\'o}lya-resolvent dual. Let $\varphi$ be a branched admissible ground state such that the pairings below are defined. Then
	\begin{dmath*}
		\left\langle E(\zeta \ee^u),\varphi(t) \right\rangle
		=
		\left\langle R_E(\zeta \ee^u),
		\frac{\varphi(t)}{\Gamma(1+t)}
		\right\rangle
	\end{dmath*}.
\end{corollary}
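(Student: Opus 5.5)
The argument follows the proofs of Corollaries~\ref{cor:reflection_pairings} and \ref{cor:parametric_transmutation}: we substitute the P\'olya spectral duality of Theorem~\ref{thm:polya_spectral_duality} into the Mellin--Barnes definition of the pairing. The statement asks us to assume that both pairings are defined. We read this as saying that $(E(\zeta\ee^u),\varphi)$ and $(R_E(\zeta\ee^u),\varphi/\Gamma(1+t))$ are admissible couples in the sense of Definition~\ref{def:admissible-pairing-data}. Each pairing therefore has some Barnes contour, and the integral converges absolutely on it.

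The first step is to write
\begin{dmath*}
	\left\langle E(\zeta \ee^u),\varphi\right\rangle
	=
	\frac{1}{2\pi i}\int_{\mathcal C_t} J_E(t;\zeta)\,\varphi(t)\,\mathrm d t
\end{dmath*}
and to use Theorem~\ref{thm:polya_spectral_duality} to replace $J_E(t;\zeta)$ by $J_{R_E}(t;\zeta)/\Gamma(1+t)$. The integrand then becomes, identically as a meromorphic function of $t$, $J_{R_E}(t;\zeta)\,\bigl[\varphi(t)/\Gamma(1+t)\bigr]$. This is the integrand defining $\langle R_E(\zeta\ee^u),\varphi/\Gamma(1+t)\rangle$. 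We also note that $\varphi/\Gamma(1+t)$ is again a branched admissible ground state: we append the factor $\Gamma(t+1)$ to the denominator, with $n_k=1$.

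The second step is the contour bookkeeping. It suffices to show that one contour $\mathcal C_t$ is admissible for both pairings. Then the two integrals coincide pointwise, and by the contour-invariance remark the values do not depend on which admissible contour is chosen. The integrands are identical, so conditions (i) and (iii) of Definition~\ref{def:admissible-pairing-data} are literally the same for both couples. The only issue is condition (ii), the separation of left and right pole families.

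Here the factor $1/\Gamma(1+t)$ is entire, so it adds no poles. It does cancel the poles of $J_{R_E}$ at $t\in\mathbb Z_{<0}$. In the elementary case these are the negative-integer poles of $\csc(\pi t)$; in general they arise through the superposition over $\lambda$. After this cancellation, the singular set of the product is exactly that of $J_E\varphi$. The right-extending family at $t\in\mathbb N$ comes from $\Gamma(-t)$ on one side and from the nonnegative poles of $\csc(\pi t)$ on the other, so it is the same for both. The left-extending poles of $\varphi$ are unchanged.

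The main obstacle is making this partition argument hold for a general P\'olya density $\mathcal P_E$, where $J_{R_E}$ is itself an integral over $\gamma$. For this I would go back to the proof of Theorem~\ref{thm:polya_spectral_duality} and write both pairings as double integrals over $\gamma\times\mathcal C_t$. The Mellin-admissibility condition of Remark~\ref{rem:polya_mellin_constraints} keeps $\Arg(-\lambda\zeta)$ in a fixed compact range for $\lambda\in\operatorname{supp}\mathcal P_E$. That gives uniform vertical bounds on $(-\lambda\zeta)^t\Gamma(-t)\varphi(t)$, which justify Fubini. Both sides then reduce to
\begin{dmath*}
	\frac{1}{2\pi i}\int_\gamma \left\langle \ee^{\lambda\zeta\ee^u},\varphi\right\rangle \mathcal P_E(\lambda)\,\mathrm d\lambda ,
\end{dmath*}
once Corollary~\ref{cor:reflection_pairings} (with $\zeta\mapsto\lambda\zeta$) is applied inside the $\lambda$-integral. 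This gives the identity even where the direct contour comparison is delicate.
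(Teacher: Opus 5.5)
Your proposal is correct and is essentially the paper's argument: you substitute $J_E=J_{R_E}/\Gamma(1+t)$ from Theorem~\ref{thm:polya_spectral_duality} into the Mellin--Barnes pairing, and for a general density $\mathcal P_E$ you exchange the P\'olya and Mellin integrals (your Fubini fallback). Your contour bookkeeping makes explicit what the paper's one-line proof leaves implicit. Since the two integrands coincide identically, condition (ii) holds automatically, and condition (i) differs only by the negative-integer poles of $J_{R_E}$; these are removable in the product, so a small contour deformation sidesteps them without changing the value.
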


\begin{proof}
	This follows from theorem~\ref{thm:polya_spectral_duality} by substitution into the Mellin--Barnes pairing and exchange of the P{\'o}lya and Mellin integrals.
\end{proof}

\begin{remark}[Transmutation as a Gevrey-descent operator on test functions]\label{rem:transmutation_operator}
	The spectral transmutation law can be interpreted as the action of an operator on admissible test functions.

	Define
	\begin{dmath*}
		(\mathcal T \varphi)(t)
		\defeq
		\frac{\varphi(t)}{\Gamma(1+t)}
	\end{dmath*}.

	Then the transmutation identity for the pairing can be written in the compact form
	\begin{dmath*}
		\langle \hat{\Delta}_1,\varphi \rangle
		=
		\langle \hat{\Delta}_2,\mathcal T \varphi \rangle
	\end{dmath*},
	whenever the jump kernels of $\hat{\Delta}_1$ and $\hat{\Delta}_2$ are in spectral duality.

	In particular, spectral transmutation does not act on the pairing as such, but rather on the admissible test function, leaving the analytic value invariant. 
	
	Analytically, the operator $\mathcal T$ acts as a spectral Borel reduction (or Gevrey descent) on the test space. By dividing by $\Gamma(1+t)$, it strictly improves the vertical exponential decay of the ground state. Consequently, it maps a formal evaluation with $s$-Gevrey divergence into an equivalent dual evaluation with $(s-1)$-Gevrey divergence, systematically regularising the asymptotic pairing.
\end{remark}

\begin{example}[Spectral transmutation: discrete case]\label{ex:polya_pairing_discrete}
	Let
	\begin{dmath*}
		E(w) = {
			\ee^{a w}+\ee^{b w},
			\qquad
			\varphi(t)=\Gamma(1+t)
		}
	\end{dmath*}.

	\proofstep{Original pairing.}
	By linearity,
	\begin{dmath*}
		\langle E(\zeta \ee^u),\varphi\rangle
		=
		\langle \ee^{a\zeta \ee^u},\varphi\rangle+ \langle \ee^{b\zeta \ee^u},\varphi\rangle
	\end{dmath*}.
	Using $W_\varphi(z)=z\ee^{-z}$,
	\begin{dmath*}
		\langle \ee^{c\zeta \ee^u},\varphi\rangle
		= {
			\int_0^\infty \ee^{c\zeta z}\ee^{-z}\,\mathrm d z
			=
			\frac{1}{1-c\zeta}
		}
	\end{dmath*},
	we obtain
	\begin{dmath*}
		\langle E(\zeta \ee^u),\varphi\rangle
		= {
			\frac{1}{1-a\zeta}
			+
			\frac{1}{1-b\zeta}
			=
			R_E(\zeta)
		}
	\end{dmath*}.

	\proofstep{Transmuted pairing.}
	By corollary~\ref{cor:polya_transmutation}, the ground state transmutes to $\mathcal T \varphi(t) = \Gamma(1+t)/\Gamma(1+t) = 1$. Thus,
	\begin{dmath*}
		\langle E(\zeta \ee^u),\varphi\rangle
		=
		\left\langle R_E(\zeta \ee^u), 1 \right\rangle
	\end{dmath*}.
	Since $R_E(w)=\frac{1}{1-a w}+\frac{1}{1-b w}$, and the inverse Mellin transform of the constant $1$ acts as a point evaluation at $z=1$ in the multiplicative Borel plane, the transmuted pairing trivially reconstructs the same function:
	\begin{dmath*}
		\langle E(\zeta \ee^u),\varphi\rangle
		=
		R_E(\zeta)
	\end{dmath*}.

	\proofstep{Interpretation.}
	The entire functional is transmuted into its rational dual. The Gamma-regularisation completely neutralises the ground state, reducing the complex pairing to a direct evaluation of the resolvent dual.
\end{example}

\begin{example}[Spectral transmutation: continuous case]\label{ex:polya_pairing_continuous}
	Let
	\begin{dmath*}
		E(w) = {\frac{\sinh w}{w},
		\qquad
		\varphi(t)=\Gamma(1+t)
		}
	\end{dmath*}.

	\proofstep{Original pairing.}
	One has
	\begin{dmath*}
		\langle E(\zeta \ee^u),\varphi\rangle
		= {
			\int_0^\infty
			E(\zeta z)\,\ee^{-z}\,\mathrm d z
			=
			\frac{1}{\zeta}
			\int_0^\infty
			\ee^{-z}
			\frac{\sinh(\zeta z)}{z}\,\mathrm d z
			=
			\frac{1}{2\zeta}
			\log\left(\frac{1+\zeta}{1-\zeta}\right)
		}
	\end{dmath*}.

	\proofstep{Spectral-duality justification.}
	By definition of the resolvent dual, one has
	\begin{dmath*}
		R_E(w)
		=
		\frac{1}{2\pi i}
		\int_{\gamma}
		\frac{\mathcal P_E(\lambda)}{1-\lambda w}\,\mathrm d\lambda
	\end{dmath*}.
	For the elementary resolvent kernel, Mellin inversion directly gives the dual identity. By linearity and exchange of integrals:
	\begin{dmath*}
		\frac{1}{2\pi i}\int_{\mathcal C_t} J_{R_E}(t;\zeta)\,\mathrm d t = {
			\frac{1}{2\pi i}
			\int_{\gamma}
			\frac{\mathcal P_E(\lambda)}{1-\lambda\zeta}\,\mathrm d\lambda
			=
			R_E(\zeta)
		}
	\end{dmath*}.
	
	\proofstep{Interpretation.}
	As in the discrete case, the transmutation reduces the ground state to $1$. The analytic pairing perfectly coincides with the P{\'o}lya-resolvent dual evaluated at $w=\zeta$:
	\begin{dmath*}
		\left\langle E(\zeta \ee^u),\Gamma(1+t)\right\rangle
		=
		R_E(\zeta)
	\end{dmath*}.
	Thus the continuous P{\'o}lya kernel produces a non-rational spectral dual with logarithmic branch points, and the pairing realises this duality exactly without further integration.
\end{example}

\vskip0.3cm

It is immediate to realise that a similar transmutation law holds also at the level of formal umbral pairings. In its most elementary manifestation, it takes the form:
\begin{dmath*}
	\ee^{\zeta\mathfrak u}[\varphi]
	=
	\frac{1}{1-\zeta\mathfrak u}
	\left[
		\frac{\varphi(t)}{\Gamma(1+t)}
	\right]
\end{dmath*}.
	
We clarify in the following proposition that this "umbral transmutation law" is not an independent formal rule, but strictly the primary (local) expansion of the global spectral transmutation identity.

\begin{proposition}[Umbral transmutation as primary expansion of spectral duality]
	The formal umbral identity
	\begin{dmath*}
		\ee^{\zeta\mathfrak u}[\varphi]
		=
		\frac{1}{1-\zeta\mathfrak u}
		\left[
			\frac{\varphi(t)}{\Gamma(1+t)}
		\right]
	\end{dmath*}
	is obtained exactly as the primary expansion at $\zeta=0$ of the analytic spectral transmutation law.
\end{proposition}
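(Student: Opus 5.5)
The plan is to start from the analytic transmutation identity of Corollary~\ref{cor:reflection_pairings},
\begin{dmath*}
\left\langle \hat{\Delta}_{\exp},\varphi \right\rangle = \left\langle \hat{\Delta}_{\mathrm{rat}}, \frac{\varphi(t)}{\Gamma(1+t)} \right\rangle,
\end{dmath*}
and to apply Theorem~\ref{thm:recovery_formal} separately to each side. Both expansions are taken in the regime $\zeta\to 0$, where the residue series from displacing $\mathcal C_t$ to the right is an expansion in powers of $\zeta$. Because the two sides are equal as analytic functions, their asymptotic expansions obtained by the same contour displacement must agree term by term. The task is then to identify each expansion with the corresponding side of the formal umbral identity.

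\textbf{Left-hand side.} Here $F(w)=\ee^w$, so $F_n=1/n!$. The jump kernel $J_{\exp}(t;\zeta)=(-\zeta)^t\Gamma(-t)$ has simple poles at $t=n\in\mathbb N$. The residue of $\Gamma(-t)$ there is $-(-1)^n/n!$, and combined with $(-\zeta)^n$ this gives the principal part $-\zeta^n/(n!(t-n))$. This is exactly hypothesis (i) of Theorem~\ref{thm:recovery_formal}, with $H$ holomorphic near $\mathbb N$. Assuming $\varphi$ is holomorphic on $\mathbb N$ (hypothesis (ii)), the theorem yields $\langle \hat{\Delta}_{\exp},\varphi\rangle\sim\sum_n \zeta^n\varphi(n)/n!$. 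This is the definition of $\ee^{\zeta\mathfrak u}[\varphi]$.

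\textbf{Right-hand side.} Here $F(w)=(1-w)^{-1}$, so $F_n=1$. The kernel $J_{\mathrm{rat}}=-(-\zeta)^t\pi\csc(\pi t)$ has residue $-(-\zeta)^n(-1)^n=-\zeta^n$ at $t=n\ge 0$, again matching hypothesis (i). The transmuted ground state $\varphi(t)/\Gamma(1+t)$ is holomorphic on $\mathbb N$ whenever $\varphi$ is, because $1/\Gamma(1+t)$ is entire and nonzero at $t\in\mathbb N$. The theorem therefore gives the expansion $\sum_n \zeta^n\varphi(n)/n!$. This is precisely the formal series $\frac{1}{1-\zeta\mathfrak u}[\varphi/\Gamma(1+t)]$, since $\mathfrak u^n$ acts by evaluation at $t=n$.

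\textbf{Main obstacle: hypothesis (iii).} Theorem~\ref{thm:recovery_formal} requires that the displaced contour contribute a negligible remainder. For the rational side with $\varphi/\Gamma(1+t)$, the vertical decay may be lost: $1/\Gamma(1+t)$ grows, and the pairing may have to be realised through the Hankel reduction of Proposition~\ref{prop:hankel-reduction} instead. The key observation I would use is that the integrands on the two sides are literally the same meromorphic function, by Theorem~\ref{thm:reflection_umbral_pairing}:
\begin{dmath*}
J_{\exp}\,\varphi = J_{\mathrm{rat}}\,\frac{\varphi}{\Gamma(1+t)}.
\end{dmath*}
So the remainder $R_N(\zeta)$ is the same integral on both sides. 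Its control needs to be checked only once, say on the exponential side, where the decay of $\Gamma(-t)$ helps. On the shifted line $\RE t=N+\tfrac12$ one has $|(-\zeta)^t|=O(|\zeta|^{N+1/2})$ up to the phase factor $\ee^{-\Arg(-\zeta)\IM t}$. This factor must be dominated by the combined vertical decay of $\Gamma(-t)\varphi(t)$, which is exactly the admissibility (stratum-compatibility) condition. I would impose it as the standing hypothesis "such that the pairings are defined".

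Under that hypothesis, $R_N=o(\zeta^N)$ follows, and hence so does the identification of the formal umbral transmutation as the primary local expansion at $\zeta=0$. The argument holds coefficientwise even when the resulting series diverges, for example when $\varphi=\Gamma(1+at)$ with $a>1$.
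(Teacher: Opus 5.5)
Your proposal is correct and follows the paper's argument. Like the paper, you start from Corollary~\ref{cor:reflection_pairings}, shift the Barnes contour to the right on each side, and identify the residue sums $\sum_n \zeta^n\varphi(n)/n!$ and $\sum_n \zeta^n\varphi(n)/\Gamma(1+n)$ with the two formal umbral evaluations. Your explicit residue checks and your observation that the two integrands $J_{\exp}\varphi$ and $J_{\mathrm{rat}}\varphi/\Gamma(1+t)$ coincide identically are a useful tightening of the paper's more schematic proof. Because the integrands are the same, one remainder estimate controls both expansions, and the detour through the Hankel reduction is unnecessary.
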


\begin{proof}
	Start from the analytic identity established in corollary~\ref{cor:reflection_pairings}:
	\begin{dmath*}
		\langle \hat{\Delta}_{\exp},\varphi\rangle
		=
		\left\langle \hat{\Delta}_{\mathrm{rat}},
		\frac{\varphi(t)}{\Gamma(1+t)}
		\right\rangle
	\end{dmath*}.
	Expanding the left-hand side around $\zeta=0$ by shifting the contour to the right yields the classical residue sequence
	\begin{dmath*}
		\sum_{n=0}^\infty
		\frac{\zeta^n}{n!} \varphi(n)
	\end{dmath*},
	which is precisely the definition of the formal pairing $\ee^{\zeta\mathfrak u}[\varphi]$.

	Since the transmutation equality holds analytically, the same formal expansion must be generated by expanding the right-hand side. Shifting the contour for the rational kernel yields the geometric sequence of residues applied to the transmuted ground state:
	\begin{dmath*}
		\sum_{n=0}^\infty
		\zeta^n \frac{\varphi(n)}{\Gamma(1+n)}
	\end{dmath*},
	which is precisely the definition of $\frac{1}{1-\zeta\mathfrak u} \left[ \frac{\varphi(t)}{\Gamma(1+t)} \right]$.
\end{proof}

\begin{remark}[Global vs.\ primary transmutation]\label{rem:global_vs_primary}
	The previous proposition shows that the classical umbral transmutation law is recovered trivially as the primary expansion at $\zeta=0$ of the analytic spectral transmutation identity. In this sense, the algebraic umbral rule reflects only the local contribution of the Mellin--Barnes integral arising from the interpolation lattice $t\in\mathbb N$.

	The full spectral transmutation law, however, is substantially stronger. It is a global identity at the level of analytic pairings. As such, it guarantees equivalence not only for the primary expansion at the origin, but across the entire analytic structure of the paired functions, capturing their behaviour at infinity, their Stokes data, and their singularity structure in the $\zeta$-plane.

	In particular, while the umbral transmutation merely captures the cancellation of factorial growth at the level of formal coefficients, the spectral transmutation encodes the full resurgent correspondence between entire and rational kernels. The classical algebraic rule should therefore be regarded as nothing more than a local, perturbative shadow of the global analytic duality.
\end{remark}

\section{Worked examples and analytic realisations}\label{sec:examples}

In this section we illustrate the general theory developed above through a collection of explicit examples. The aim is twofold. On the one hand, we show how the Mellin--Barnes pairing produces concrete analytic functions from admissible umbral data. On the other hand, we make explicit the mechanism of spectral transmutation, by exhibiting in each case the equivalence between entire and rational representations of the same analytic object.

The examples are chosen so as to highlight different aspects of the theory. We first consider ground states admitting elementary Mellin kernels, leading to explicit integral representations. We then treat genuinely Barnes-type cases, where the analytic structure is governed by the interplay between pole lattices and Gamma factors. Finally, we include examples arising from P{\'o}lya representations, illustrating the passage from discrete to continuous spectral data.

In all cases, the analytic pairing provides a unified framework in which asymptotic expansions, local singular behaviour, and global analytic structure are simultaneously accessible.

\subsection{Elementary Mellin kernels}

For some particular branched admissible ground state functions $\varphi$, the Mellin kernel $W_\varphi$, defined by the inverse Mellin transform
\begin{dmath*}
	W_\varphi(z)
	= {
	(\mathcal M^{-1}\varphi)(z) = 
	\frac{1}{2\pi i}
	\int_{\mathcal C_t}
	\varphi(t)\,z^{-t}\,\mathrm d t
	}
\end{dmath*},
reduces to an elementary function on $(0,\infty)$. The analytic pairing with an umbral Borel functional is represented by a Mellin integral, which in these cases can often be evaluated in closed form. 
 
We examine in the following some examples involving ground state functions of this kind, by emphasising the resulting Mellin integral representation, its analytic interpretation and the relationship with the corresponding formal umbral result.

\begin{example}[The Gamma ground state]\label{ex:gamma_kernel}
	Let
	\begin{dmath*}
		\varphi(t)=\Gamma(1+t)
	\end{dmath*}.

	\proofstep{Kernel.}
	The Mellin kernel is
	\begin{dmath*}
		W_\varphi(z)=z\ee^{-z}
	\end{dmath*}.

	\proofstep{Pairing.}
	For the rational functional
	\begin{dmath*}
		\hat{\Delta}(u;\zeta)=\frac{1}{1-\zeta\ee^u}
	\end{dmath*},
	the multiplicative kernel is $\hat{K}(z) = \frac{1}{1-\zeta z}$. Assuming initially $\RE(1/\zeta) < 0$, one obtains
	\begin{dmath*}
		\left\langle \frac{1}{1-\zeta\ee^u},\varphi\right\rangle
		= {
		\int_0^\infty
		\frac{\ee^{-z}}{1-\zeta z}\,\mathrm d z
		= -\frac{1}{\zeta}\ee^{-1/\zeta} E_1\left(-\frac{1}{\zeta}\right)
		}
	\end{dmath*}.

	\proofstep{Interpretation and Transmutation.}
	This integral produces a function with a branch cut and a 1-Gevrey divergent Euler asymptotic expansion $\sum_{n=0}^\infty n! \zeta^{n}$ at the origin, exactly coinciding with the formal umbral pairing. 

	By the spectral transmutation law $\langle \hat{\Delta}_{\mathrm{rat}}, \psi \rangle = \langle \hat{\Delta}_{\exp}, \psi(t)\Gamma(1+t) \rangle$, this rational pairing is analytically equivalent to the entire pairing:
	\begin{dmath*}
		\left\langle \ee^{\zeta\ee^u}, \Gamma(1+t)^2 \right\rangle
	\end{dmath*}.
	Indeed, the formal series of this transmuted entire pairing is
	\begin{dmath*}
		\sum_{n=0}^\infty \frac{\zeta^n}{n!} (\Gamma(1+n))^2 = \sum_{n=0}^\infty n! \zeta^n
	\end{dmath*},
	proving that transmutation perfectly preserves the local formal series while the analytic framework guarantees both representations reconstruct the identical, globally resummed exponential integral.
\end{example}

\begin{example}[The Gaussian ground state]\label{ex:gaussian_kernel}
	Let
	\begin{dmath*}
		\varphi(t)=\frac{\Gamma(1+t)}{\Gamma(1+t/2)}
	\end{dmath*}.

	\proofstep{Mellin kernel.}
	The Mellin kernel is
	\begin{dmath*}
		W_\varphi(z)=\frac{1}{\sqrt{\pi}}\,z\ee^{-z^2/4}
	\end{dmath*}.
	Thus, for a multiplicative kernel $\hat K(z) = \hat{\Delta}(\zeta z)$, the pairing is
	\begin{dmath*}
		\langle \hat \Delta,\varphi\rangle
		=
		\frac{1}{\sqrt{\pi}}
		\int_0^\infty
		\hat K(z)\,\ee^{-z^2/4}\,\mathrm d z
	\end{dmath*}.

	\proofstep{Pairing with $\ee^{i\zeta\ee^u}$.}
	Here
	\begin{dmath*}
		\hat K(z)=\ee^{i\zeta z}
	\end{dmath*}.
	Therefore
	\begin{dmath*}
		\langle \ee^{i\zeta\ee^u},\varphi\rangle
		= {
			\frac{1}{\sqrt{\pi}}
			\int_0^\infty
			\ee^{-z^2/4+i\zeta z}\,\mathrm d z
			=
			\ee^{-\zeta^2}\operatorname{erfc}(-i\zeta)
			=
			\ee^{-\zeta^2}\left(1+i\,\operatorname{erfi}(\zeta)\right)
		}
	\end{dmath*}.
	The corresponding formal umbral series is
	\begin{dmath*}
		\sum_{n=0}^\infty
		\frac{(i\zeta)^n}{n!}
		\frac{\Gamma(1+n)}{\Gamma(1+n/2)}
		=
		\sum_{n=0}^\infty
		\frac{(i\zeta)^n}{\Gamma(1+n/2)}
	\end{dmath*},
	which is convergent to the same entire function.
	
	\proofstep{Pairing with $\cos(\zeta\ee^u)$.}
	Here $\hat K(z)=\cos(\zeta z)$. Therefore
	\begin{dmath*}
		\left\langle \cos(\zeta\ee^u),\varphi\right\rangle
		= {
			\frac{1}{\sqrt{\pi}}
			\int_0^\infty
			\ee^{-z^2/4}\cos(\zeta z)\,\mathrm d z
			=
			\ee^{-\zeta^2}
		}
	\end{dmath*}.
	The corresponding formal umbral series is
	\begin{dmath*}
		\sum_{n=0}^\infty
		(-1)^n
		\frac{\zeta^{2n}}{(2n)!}
		\frac{\Gamma(1+2n)}{\Gamma(1+n)}
		= {
			\sum_{n=0}^\infty
			(-1)^n
			\frac{\zeta^{2n}}{n!}
			=
			\ee^{-\zeta^2}
		}
	\end{dmath*}.
	
	\proofstep{Pairing with $\sin(\zeta\ee^u)$.}
	Here $\hat K(z)=\sin(\zeta z)$. Therefore
	\begin{dmath*}
		\left\langle \sin(\zeta\ee^u),\varphi\right\rangle
		= {
			\frac{1}{\sqrt{\pi}}
			\int_0^\infty
			\ee^{-z^2/4}\sin(\zeta z)\,\mathrm d z
			=
			\ee^{-\zeta^2}\operatorname{erfi}(\zeta)
		}
	\end{dmath*}.
	The corresponding formal umbral series is
	\begin{dmath*}
		\sum_{n=0}^\infty
		(-1)^n
		\frac{\zeta^{2n+1}}{(2n+1)!}
		\frac{\Gamma(1+2n+1)}{\Gamma(1+(2n+1)/2)}
		=
		\sum_{n=0}^\infty
		(-1)^n
		\frac{\zeta^{2n+1}}{\Gamma\!\left(n+\tfrac{3}{2}\right)}
	\end{dmath*},
	which is convergent and coincides with the Taylor expansion of $\ee^{-\zeta^2}\operatorname{erfi}(\zeta)$.

	\proofstep{Pairing with $(1-\zeta\ee^{2u})^{-1}$.}
	Here
	\begin{dmath*}
		\hat K(z)=\frac{1}{1-\zeta z^2}
	\end{dmath*}.
	Assuming the principal branch and initially $\RE\zeta<0$, one obtains
	\begin{dmath*}
		\left\langle \frac{1}{1-\zeta\ee^{2u}},\varphi\right\rangle
		=
		\frac{1}{\sqrt{\pi}}
		\int_0^\infty
		\frac{\ee^{-z^2/4}}{1-\zeta z^2}\,\mathrm d z
		=
		\frac{\sqrt{\pi}}{2\sqrt{-\zeta}}\,
		\ee^{-1/(4\zeta)}
		\operatorname{erfc}\left(\frac{1}{2\sqrt{-\zeta}}\right)
	\end{dmath*}.
	The formal umbral series is
	\begin{dmath*}
		\sum_{n=0}^\infty
		\zeta^n
		\frac{\Gamma(1+2n)}{\Gamma(1+n)}
		=
		\sum_{n=0}^\infty
		\zeta^n
		\frac{(2n)!}{n!}
	\end{dmath*}.
	This series is divergent, of Gevrey order $1$, and the analytic pairing above gives its Borel--Laplace sum in the corresponding sector.

	\proofstep{Pairing with $(1-\zeta\ee^u)^{-1}$.}
	Here
	\begin{dmath*}
		\hat K(z)=\frac{1}{1-\zeta z}
	\end{dmath*}.
	Assuming initially $\RE(1/\zeta)<0$, one obtains
	\begin{dmath*}
		\left\langle \frac{1}{1-\zeta\ee^u},\varphi\right\rangle
		=
		\frac{1}{\sqrt{\pi}}
		\int_0^\infty
		\frac{\ee^{-z^2/4}}{1-\zeta z}\,\mathrm d z
	\end{dmath*}.
	By substituting $x = z/2$, this integral evaluates in closed form to
	\begin{dmath*}
		\left\langle \frac{1}{1-\zeta\ee^u},\varphi\right\rangle
		=
		-\frac{1}{\zeta\sqrt{\pi}}
		e^{-1/(4\zeta^2)}
		\left[
		\frac{\pi}{2}
		\operatorname{erfi}\left(-\frac{1}{2\zeta}\right)
		-
		\frac{1}{2}
		\operatorname{Ei}\left(\frac{1}{4\zeta^2}\right)
		\right] =
		-\frac{1}{\zeta}
		D\left(-\frac{1}{2\zeta}\right)
		+
		\frac{1}{2\zeta\sqrt{\pi}}
		\ee^{-1/(4\zeta^2)}
		\operatorname{Ei}\left(\frac{1}{4\zeta^2}\right)
	\end{dmath*},
	with the usual branch conventions, where 
	\begin{dmath*}
		D(x)
		= {
			e^{-x^2} \int_0^x e^{s^2}\,\mathrm d s =
			\frac{\sqrt{\pi}}{2}
			e^{-x^2}
			\operatorname{erfi}(x)
		}
	\end{dmath*}
	is Dawson's integral and $\operatorname{Ei}(x)$ is the exponential integral.
	The corresponding naive umbral series is
	\begin{dmath*}
		\sum_{n=0}^\infty
		\zeta^n
		\frac{\Gamma(1+n)}{\Gamma(1+n/2)}
		=
		\sum_{n=0}^\infty
		\zeta^n
		\frac{n!}{\Gamma(1+n/2)}
	\end{dmath*}.
	This series is divergent, of Gevrey order $1/2$. The exact evaluation above provides its canonical Borel--Laplace resummation, and its asymptotic expansion at the origin perfectly reproduces the full formal umbral series, with Dawson's integral generating the even parity sector and the exponential integral generating the odd parity sector.
	
	\proofstep{Interpretation and Transmutation.}
	The same Gaussian ground state produces three different analytic regimes. The entire kernels give convergent umbral series, while the rational kernels give divergent series of different Gevrey orders. 
	
	Crucially, spectral transmutation allows us to map between these formalisms exactly. By the identity $\langle \hat{\Delta}_{\exp}, \varphi \rangle = \langle \hat{\Delta}_{\mathrm{rat}}, \frac{\varphi(t)}{\Gamma(1+t)} \rangle$, transmuting the entire pairing $\langle \ee^{i\zeta\ee^u}, \varphi \rangle$ yields the rational pairing:
	\begin{dmath*}
		\left\langle \frac{1}{1-i\zeta\ee^u}, \frac{1}{\Gamma(1+t/2)} \right\rangle
	\end{dmath*}.
	The formal series of this transmuted rational pairing is precisely $\sum (i\zeta)^n \frac{1}{\Gamma(1+n/2)}$, proving that the analytic transmutation operator perfectly preserves the exact formal summation across the duality.
\end{example}

\begin{example}[The Beta-type ground state]\label{ex:beta_kernel}
	Let
	\begin{dmath*}
		\varphi(t)=\frac{\Gamma(1+t)}{\Gamma(t+b)}
		\condition*{\RE b>1}
	\end{dmath*}.

	\proofstep{Mellin kernel.}
	The Mellin kernel is
	\begin{dmath*}
		W_\varphi(z)
		=
		\frac{1}{\Gamma(b-1)}\,z(1-z)^{b-2}
		\condition*{0<z<1}
	\end{dmath*}.
	Thus, for a multiplicative kernel $\hat K(z) = \hat \Delta(\zeta z)$, the pairing is
	\begin{dmath*}
		\langle \hat \Delta,\varphi\rangle
		=
		\frac{1}{\Gamma(b-1)}
		\int_0^1
		\hat K(z)(1-z)^{b-2}\,\mathrm d z
	\end{dmath*}.

	\proofstep{Pairing with $\ee^{\zeta\ee^u}$.}
	Here $\hat K(z)=\ee^{\zeta z}$. Therefore
	\begin{dmath*}
		\left\langle \ee^{\zeta\ee^u},\varphi\right\rangle
		=
		\frac{1}{\Gamma(b-1)}
		\int_0^1
		\ee^{\zeta z}(1-z)^{b-2}\,\mathrm d z
		=
		\frac{1}{\Gamma(b)}
		{}_1F_1(1;b;\zeta)
	\end{dmath*}.
	The corresponding formal umbral series is
	\begin{dmath*}
		\sum_{n=0}^\infty
		\frac{\zeta^n}{n!}
		\frac{\Gamma(1+n)}{\Gamma(n+b)}
		= {
			\sum_{n=0}^\infty
			\frac{\zeta^n}{\Gamma(n+b)}
			=
			\frac{1}{\Gamma(b)}
			{}_1F_1(1;b;\zeta)
		}
	\end{dmath*},
	so in this case the formal series is convergent and coincides with the analytic pairing.

	\proofstep{Pairing with $(1-\zeta\ee^u)^{-1}$.}
	Here
	\begin{dmath*}
		\hat K(z)=\frac{1}{1-\zeta z}
	\end{dmath*}.
	Thus
	\begin{dmath*}
		\left\langle \frac{1}{1-\zeta\ee^u},\varphi\right\rangle
		=
		\frac{1}{\Gamma(b-1)}
		\int_0^1
		\frac{(1-z)^{b-2}}{1-\zeta z}\,\mathrm d z
		=
		\frac{1}{\Gamma(b)}
		{}_2F_1(1,1;b;\zeta)
	\end{dmath*}.
	The corresponding formal umbral series is
	\begin{dmath*}
		\sum_{n=0}^\infty
			\zeta^n
			\frac{\Gamma(1+n)}{\Gamma(n+b)}
			= {
			\sum_{n=0}^\infty
			\zeta^n
			\frac{n!}{\Gamma(n+b)}
			=
			\frac{1}{\Gamma(b)}
			{}_2F_1(1,1;b;\zeta)
		}
	\end{dmath*}.
	This series has radius of convergence $1$, while the Euler integral gives its analytic continuation away from the cut produced by the pole of $(1-\zeta z)^{-1}$ on the interval $1<z<\infty$.

	\proofstep{The case $b=2$.}
	For $b=2$, the kernel becomes constant on $(0,1)$:
	\begin{dmath*}
		W_\varphi(z)=z
	\end{dmath*}.
	The two preceding formulas reduce to
	\begin{dmath*}
		\left\langle \ee^{\zeta\ee^u},\frac{1}{1+t}\right\rangle
		=
		\frac{\ee^\zeta-1}{\zeta}
	\end{dmath*},
	and
	\begin{dmath*}
		\left\langle \frac{1}{1-\zeta\ee^u},\frac{1}{1+t}\right\rangle
		=
		-\frac{\log(1-\zeta)}{\zeta}
	\end{dmath*}.

	\proofstep{Interpretation and Transmutation.}
	The Beta-type ground state has compact Mellin support. Consequently, the analytic pairing is governed by Euler-type integrals on $(0,1)$. 
	
	The mechanism of spectral transmutation relates the resulting functions explicitly. Transmuting the entire pairing $\langle \ee^{\zeta\ee^u}, \varphi \rangle$ divides the ground state by $\Gamma(1+t)$, yielding the rational pairing $\langle \frac{1}{1-\zeta\ee^u}, \frac{1}{\Gamma(t+b)} \rangle$. As expected, this transmuted rational formal series exactly evaluates to the confluent ${}_1F_1$ function.
	Conversely, transmuting the rational pairing $\langle \frac{1}{1-\zeta\ee^u}, \varphi \rangle$ multiplies the ground state by $\Gamma(1+t)$, yielding the entire pairing $\langle \ee^{\zeta\ee^u}, \frac{\Gamma(1+t)^2}{\Gamma(t+b)} \rangle$. Evaluating this transmuted formal series rigorously reproduces the exact Gauss ${}_2F_1$ function.
\end{example}

\subsection{The inverse Gamma ground state and its Hankel realisation}

The ground state $\varphi(t)=\Gamma(1+t)^{-1}$ plays a distinguished role in classical umbral calculus. It acts as a formal regulariser, compensating for the factorial growth of sequences and mapping formally divergent series to entire functions.

As established in \cref{sec:Hankel_reduction}, this ground state exhibits vertical exponential growth and therefore violates the standard real-line Mellin--Parseval reduction. Instead, proposition~\ref{prop:hankel-reduction} demonstrates that its analytic pairing is rigorously evaluated by substituting the classical Hankel contour integral \cite{WhittakerWatson1927} $\frac{1}{2\pi i} \int_{\mathcal H} \ee^w w^{-1-t} \,\mathrm{d}w$, which transforms the pairing into a complex contour integral in the Borel plane.

We explicitly evaluate this Hankel realisation below, highlighting how the resulting contour calculus reconstructs familiar entire functions through residue theory and branch-cut jumps.

\begin{example}[The inverse Gamma ground state via Hankel representation]\label{ex:inverse_gamma_hankel}
	Let
	\begin{dmath*}
		\varphi(t)=\frac{1}{\Gamma(1+t)}
	\end{dmath*}.

	\proofstep{Hankel realisation of the pairing.}
	By proposition~\ref{prop:hankel-reduction}, the analytic pairing reduces to
	\begin{dmath*}
		\langle \hat \Delta,\varphi\rangle
		=
		\frac{1}{2\pi i}
		\int_{\mathcal H}
		\hat K(w^{-1})\,\ee^{w}\,\frac{\mathrm d w}{w}
	\end{dmath*},
	whenever the joint integral is absolutely convergent.

	\proofstep{Pairing with $(1+\zeta^2\ee^u)^{-1}$.}
	Setting the multiplicative kernel to $\hat K(z)=1/(1+\zeta^2 z)$, one obtains
	\begin{dmath*}
		\cos_{\mathrm{G}}(\zeta) = {
			\left\langle
			\frac{1}{1+\zeta^2\ee^u},
			\frac{1}{\Gamma(1+t)}
			\right\rangle
			=
			\frac{1}{2\pi i}
			\int_{\mathcal H}
			\frac{\ee^{w}}{1+\zeta^2 w^{-1}}\,
			\frac{\mathrm d w}{w}
		}
	\end{dmath*}.
	Equivalently, clearing the denominator yields
	\begin{dmath*}
		\cos_{\mathrm{G}}(\zeta)
		=
		\frac{1}{2\pi i}
		\int_{\mathcal H}
		\frac{\ee^w}{w+\zeta^2}\,\mathrm d w
	\end{dmath*}.
	Deforming the Hankel contour to enclose the simple pole and taking the residue at $w=-\zeta^2$ gives
	\begin{dmath*}
		\cos_{\mathrm{G}}(\zeta)
		=
		\ee^{-\zeta^2}
	\end{dmath*}.
	This is the definition of the Gaussian cosine (see \cite{gD23} and \cite{Ricci2026} for a discussion of Gaussian trigonometric functions).

	\proofstep{Pairing with $\zeta\ee^{u/2}(1+\zeta^2\ee^u)^{-1}$.}
	Setting $\hat K(z)=\zeta z^{1/2}/(1+\zeta^2 z)$, one obtains
	\begin{dmath*}
		\sin_{\mathrm{G}}(\zeta) = {
			\left\langle
			\frac{\zeta\ee^{u/2}}{1+\zeta^2\ee^u},
			\frac{1}{\Gamma(1+t)}
			\right\rangle
			=
			\frac{1}{2\pi i}
			\int_{\mathcal H}
			\frac{\zeta w^{-1/2}\ee^{w}}{1+\zeta^2 w^{-1}}\,
			\frac{\mathrm d w}{w}
		}
	\end{dmath*}.
	Equivalently,
	\begin{dmath*}
		\sin_{\mathrm{G}}(\zeta)
		=
		\frac{\zeta}{2\pi i}
		\int_{\mathcal H}
		\frac{\ee^w w^{-1/2}}{w+\zeta^2}\,\mathrm d w
	\end{dmath*}.
	Note that, unlike the previous case, the presence of $w^{-1/2}$ introduces a branch cut along $\mathbb{R}_{<0}$, so that the evaluation involves the jump of the integrand across the Hankel contour rather than a simple sum of residues.
	
	Writing the Hankel contour as the difference of the two boundary values on the cut $\mathbb R_{<0}$, with $w=-x$ and $x>0$, one obtains for $a>0$:
	\begin{dmath*}
		\frac{1}{2\pi i}
		\int_{\mathcal H}
		\frac{\ee^w w^{-1/2}}{w+a}\,\mathrm d w
		=
		\frac{1}{\pi}
		\mathcal P\int_0^\infty
		\frac{\ee^{-x}x^{-1/2}}{a-x}\,\mathrm d x =
		\frac{2}{\pi}
		\mathcal P \int_0^\infty
		\frac{\ee^{-y^2}}{a-y^2}\,\mathrm d y
	\end{dmath*}.
	Reducing the kernel to a Hilbert transform, one finds
	\begin{dmath*}
		\frac{1}{2\pi i}
		\int_{\mathcal H}
		\frac{\ee^w w^{-1/2}}{w+a}\,\mathrm d w
		=
		a^{-1/2}\ee^{-a}\operatorname{erfi}\!\left(\sqrt a\right)
	\end{dmath*}.
	Setting $a = \zeta^2$, this directly yields the Gaussian sine, namely
	\begin{dmath*}
		\sin_{\mathrm{G}}(\zeta)
		=
		\ee^{-\zeta^2}\operatorname{erfi}(\zeta)
	\end{dmath*}.
	
	\proofstep{Pairing with $(1-i\zeta\ee^{u/2})^{-1}$.}
	Using the algebraic decomposition
	\begin{dmath*}
		\frac{1}{1-i\zeta\ee^{u/2}}
		=
		\frac{1}{1+\zeta^2\ee^u}
		+
		i\,
		\frac{\zeta\ee^{u/2}}{1+\zeta^2\ee^u}
	\end{dmath*},
	one obtains by linearity
	\begin{dmath*}
		\ee_{\mathrm{G}}(\zeta) = {
			\left\langle
			\frac{1}{1-i\zeta\ee^{u/2}},
			\frac{1}{\Gamma(1+t)}
			\right\rangle
			=
			\cos_{\mathrm{G}}(\zeta)
			+
			i\sin_{\mathrm{G}}(\zeta)
		}
	\end{dmath*},
	and therefore
	\begin{dmath*}
		\ee_{\mathrm{G}}(\zeta)
		= {
			\ee^{-\zeta^2}
			\left(
				1+i\operatorname{erfi}(\zeta)
			\right)
			=
			\ee^{-\zeta^2}\operatorname{erfc}(-i\zeta)
		}
	\end{dmath*},
	which is the definition of the Gaussian exponential.

	\proofstep{Interpretation.}
	The inverse Gamma ground state naturally realises the pairing as a Hankel contour integral. The resulting analytic functions precisely coincide with those obtained from the Gaussian ground state paired with entire kernels (example~\ref{ex:gaussian_kernel}). 
	
	This coincidence is not accidental; it is the exact manifestation of a profound algebraic interplay between \emph{Mellin rescaling} and \emph{spectral transmutation}. 
	
	First, the fractional umbral shift $\ee^{u/2}$ appearing in the rational kernel induces a Mellin rescaling (a dilation of the spectral variable). This maps the fractional rational functional to a standard rational functional evaluated against a rescaled ground state:
	\begin{dmath*}
		\left\langle \frac{1}{1-i\zeta\ee^{u/2}}, \frac{1}{\Gamma(1+t)} \right\rangle
		=
		\left\langle \frac{1}{1-i\zeta\ee^{u}}, \frac{1}{\Gamma(1+t/2)} \right\rangle
	\end{dmath*}.
	Second, applying the spectral transmutation law (\cref{cor:reflection_pairings}) to this rescaled rational pairing multiplies the ground state by the regularising factor $\Gamma(1+t)$, explicitly yielding the dual entire pairing:
	\begin{dmath*}
		\left\langle \ee^{i\zeta\ee^{u}}, \frac{\Gamma(1+t)}{\Gamma(1+t/2)} \right\rangle
	\end{dmath*}.
	This exactly reconstructs the Gaussian ground state and its corresponding exponential pairing from \cref{ex:gaussian_kernel}. 
	
	Thus, this example provides a rigorous analytic demonstration of how the umbral framework unifies distinct functional families: a fractional Mellin rescaling combined with spectral transmutation naturally maps the simple inverse Gamma ground state directly into the complex, fractional-gamma structure of the Gaussian pairings.
\end{example}

\subsection{Genuinely Barnes pairings}

In the previous examples, the analytic pairing could be reduced to explicit integral representations on $(0,\infty)$ or to closed-form expressions in terms of classical special functions. This is no longer the case for more general ground states, where no elementary Mellin kernel exists and the pairing is intrinsically of Mellin--Barnes type.

In this regime, the Mellin--Barnes integral itself must be regarded as the primary analytic object. The formal umbral evaluation does not produce this function, but only one of its local expansions, typically the primary expansion at $\zeta=0$, which may be convergent or merely asymptotic of Gevrey type.

A distinctive feature of the Mellin--Barnes representation is that it simultaneously encodes the global analytic structure of the pairing. In particular, by deforming the contour, one can extract not only the primary expansion at the origin, but also the specular expansion at $\zeta=\infty$, whose structure is governed by the pole multiplicities and zero lattices of the integrand. These features, including the possible emergence of logarithmic terms, are in general invisible to purely formal umbral manipulations.

The following example illustrates this genuinely Barnes behaviour.

\begin{example}\label{ex:a-third}
	Let
	\begin{dmath*}
		\varphi(t)
		= {
		\frac{\Gamma(1+t)}{\Gamma(1+t/3)},
		\qquad
		\hat K(z)
		= \hat \Delta(\zeta z) =
		\frac{1}{1-\zeta z}
		}
	\end{dmath*}.

	\proofstep{Mellin--Barnes representation.}
	The analytic pairing is
	\begin{dmath*}
		\langle \hat \Delta,\varphi\rangle
		=
		\frac{1}{2\pi i}
		\int_{\mathcal C_t}
		\zeta^t\,
		\frac{\pi}{\sin(\pi t)}\,
		\frac{\Gamma(1+t)}{\Gamma(1+t/3)}
		\,\mathrm d t
	\end{dmath*}.
	Equivalently, using the reflection formula, we obtain the spectrally transmuted form
	\begin{dmath*}
		\langle \hat \Delta,\varphi\rangle
		=
		\frac{1}{2\pi i}
		\int_{\mathcal C_t}
		\zeta^t\,
		\frac{\Gamma(-t)\,\Gamma(1+t)^2}{\Gamma(1+t/3)}
		\,\mathrm d t
	\end{dmath*}.

	\proofstep{Primary expansion at $\zeta=0$.}
	Closing the contour to the right, the simple poles of $\Gamma(-t)$ at $t=n$, $n\in\mathbb Z_{\ge0}$, give
	\begin{dmath*}
		\langle \hat \Delta,\varphi\rangle
		\sim
		\sum_{n=0}^\infty
		\frac{n!}{\Gamma(1+n/3)}
		\zeta^n
	\end{dmath*}.
	This coincides with the formal umbral evaluation
	\begin{dmath*}
		\frac{1}{1-\zeta\mathfrak u}[\varphi]
		=
		\sum_{n=0}^\infty
		\frac{\Gamma(1+n)}{\Gamma(1+n/3)}
		\zeta^n
	\end{dmath*}.

	\proofstep{Gevrey order.}
	By Stirling's formula,
	\begin{dmath*}
		\frac{\Gamma(1+n)}{\Gamma(1+n/3)}
		=
		\mathcal O\!\left((n!)^{2/3} A^n\right)
	\end{dmath*}
	for a suitable constant $A>0$. Hence the primary series is Gevrey-$2/3$. It is therefore formally divergent, but naturally adapted to exact Borel--Laplace resummation of order $2/3$.

	\proofstep{Expansion at infinity.}
	Closing the contour to the left (see \cref{fig:mellin_barnes_example}), one encounters the negative-integer lattice
	\begin{dmath*}
		t=-1,-2,-3,\dots
	\end{dmath*}.
	At these points, the squared factor $\Gamma(1+t)^2$ produces double poles, while the reciprocal term
	\begin{dmath*}
		\frac{1}{\Gamma(1+t/3)}
	\end{dmath*}
	vanishes at $t=-3,-6,-9,\dots$.
	
	\begin{figure}[ht]
    \centering
    \includegraphics[width=0.95\linewidth]{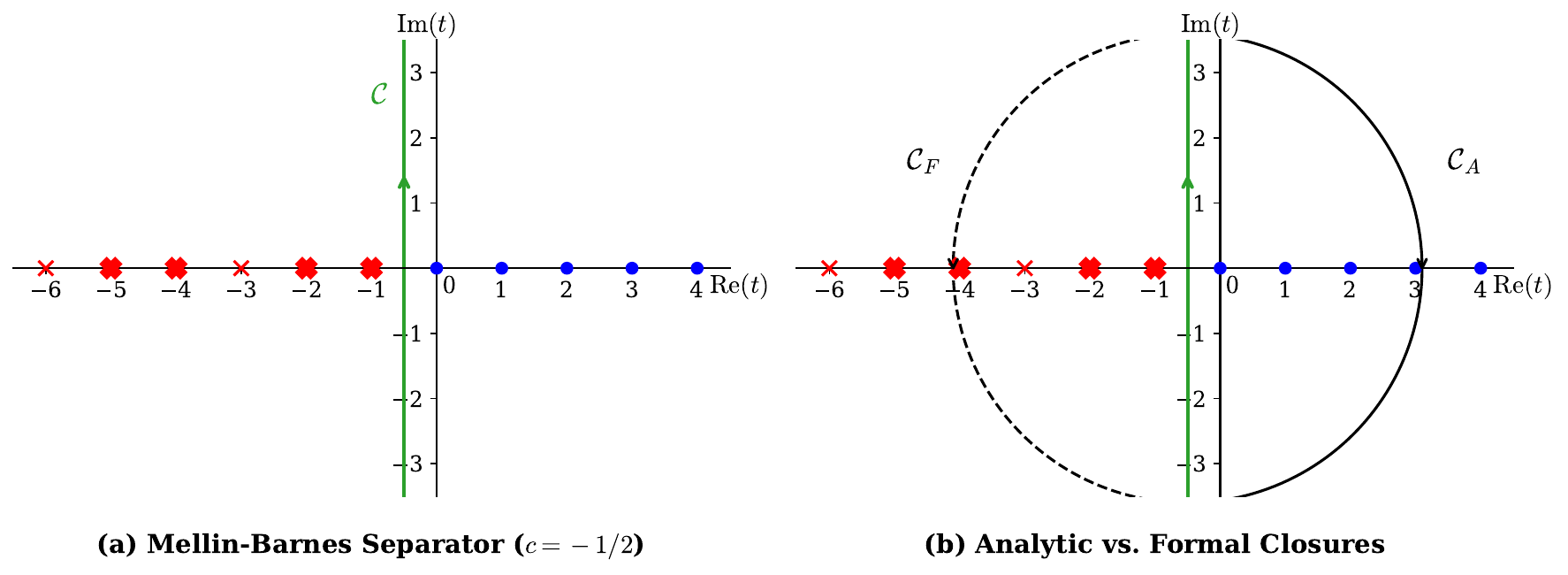}
    \caption{Structure of the complex Mellin--Barnes integration applied to the analytic pairing of example~\ref{ex:a-third}. \textbf{(a)} The integration contour $\mathcal{C}$ at $\mathrm{Re}(t) = c \in (-1, 0)$ separates the real axis into two distinct singularity lattices. The blue circles at $t \in \mathbb{Z}_{\ge 0}$ denote the simple poles of the ground state distribution $\Gamma(-t)$. The red crosses denote the umbral functional singularities from $\Gamma(1+t)^2/\Gamma(1+t/3)$; bold crosses at $t = -1, -2, -4, \dots$ indicate double poles (yielding logarithmic terms), while thinner crosses at $t = -3, -6, \dots$ indicate reduction to simple poles. \textbf{(b)} The integration path resolved via contour closure. Closing to the right ($\mathcal{C}_A$) captures the primary Gevrey expansion at $\zeta=0$, while closing to the left ($\mathcal{C}_F$) yields the trans-series expansion at infinity.}
    \label{fig:mellin_barnes_example}
\end{figure}
	
	\noindent Thus, one order of the pole is cancelled precisely at the multiples of $3$.

	Consequently, the expansion at infinity has the form
	\begin{dmath*}
		\langle \hat K,\varphi\rangle
		\sim
		\sum_{n\ge1}
		A_n\zeta^{-n}
		+
		\sum_{\substack{n\ge1\\ n\not\equiv0\pmod 3}}
		B_n\zeta^{-n}\log\zeta
	\end{dmath*}.
	The logarithmic terms arise exactly from the surviving double poles.

	\proofstep{Comparison with the inverted formal expansion.}
	Formally, one may write
	\begin{dmath*}
		\frac{1}{1-\zeta\mathfrak u}
		=
		-\frac{\zeta^{-1}\mathfrak u^{-1}}{1-\zeta^{-1}\mathfrak u^{-1}}
	\end{dmath*}.
	This formal identity suggests a purely geometric expansion at infinity in powers of $\zeta^{-1}$. However, the algebraic manipulation only detects the simple coefficient-level structure. It is entirely blind to the colliding double-pole geometry of the Mellin--Barnes integrand, and therefore cannot account for the logarithmic branches emerging in the true expansion at infinity.

	\proofstep{Interpretation.}
	This example is genuinely Barnes in nature. The analytic pairing is controlled not merely by the pole lattice of the rational kernel, but by the complex interaction between this sequence, the squared Gamma factor $\Gamma(1+t)^2$, and the periodic zero lattice of $\Gamma(1+t/3)^{-1}$.

	The primary expansion at $\zeta=0$ perfectly recovers the formal umbral series, establishing it rigorously as a divergent Gevrey-$2/3$ asymptotic expansion. The expansion at infinity, instead, reveals a filtered logarithmic structure completely invisible to the formal coefficient calculus. The Mellin--Barnes contour acts as a spectral filter, cancelling one order of the logarithmic defect exactly every third step.
\end{example}

\subsection{Deformations of Gamma-type ground states}

We now turn our attention to genuinely branched admissible ground states. While the previous examples focused on meromorphic test functions characterized by isolated pole lattices, introducing continuous deformations or algebraic factors natively generates branch cuts in the Mellin spectral plane. As illustrated in the following examples, the Mellin--Barnes framework naturally accommodates these branched structures, seamlessly mapping them into corresponding branch cuts and non-trivial analytic continuations in the $\zeta$-plane.

\begin{example}[The family $(1-\zeta \ee^u)^{-1}$ vs.\ $\Gamma(1+t)^{-\mu}$]\label{ex:mu-family}
	Let
	\begin{dmath*}
		\hat{\Delta}(u;\zeta)
		= {
			\frac{1}{1-\zeta \ee^u},
			\qquad
			\varphi_\mu(t)
			= 
			\frac{1}{\Gamma(1+t)^{\,\mu}}
		}
	\end{dmath*},
	with $\mu\in\mathbb C$.

	\proofstep{Mellin--Barnes representation.}
	The analytic pairing is
	\begin{dmath*}
		\Phi_\mu(\zeta)
		\defeq {
			\langle \hat{\Delta},\varphi_\mu\rangle
			=
			\frac{1}{2\pi i}
			\int_{\mathcal C_t}
			\zeta^{-t}\,
			\frac{\pi}{\sin(\pi t)}\,
			\Gamma(1+t)^{-\mu}\,
			\mathrm d t
		}
	\end{dmath*}.
	Using the reflection formula,
	\begin{dmath*}
		\Phi_\mu(\zeta)
		=
		\frac{1}{2\pi i}
		\int_{\mathcal C_t}
		\zeta^{-t}\,
		\Gamma(-t)\,\Gamma(1-t)^{1-\mu}\,
		\mathrm d t
	\end{dmath*}.

	\proofstep{Primary expansion at $\zeta=0$.}
	Closing the contour to the right, the simple poles of $\Gamma(-t)$ at $t=n$ yield
	\begin{dmath*}
		\Phi_\mu(\zeta)
		\sim
		\sum_{n=0}^\infty
		\frac{\zeta^n}{(n!)^{\,\mu}}
	\end{dmath*},
	which is the defining formal series of the Le Roy function.
	
	\proofstep{Dependence on $\mu$.}
	The analytic behaviour depends on $\mu$ as follows:
	\begin{itemize}
		\item If $\RE\mu>0$, the Le Roy series is entire and gives the analytic object directly.
		\item If $\mu=0$, one recovers the rational function $\Phi_0(\zeta) = \frac{1}{1-\zeta}$.
		\item If $\RE\mu<0$, the series is divergent and becomes a Gevrey-type asymptotic expansion rather than a convergent definition.
		\item If $\mu\notin\mathbb Z$, the Mellin--Barnes integrand is branched because of the fractional power $\Gamma(1-t)^{1-\mu}$.
	\end{itemize}
	
	Thus the same Mellin--Barnes representation interpolates continuously between an entire Le Roy function, the rational geometric series, and genuinely branched or divergent regimes.

	\proofstep{Special values.}
	\begin{dgroup*}
	\begin{dmath*}
		\Phi_0(\zeta)=\frac{1}{1-\zeta}
	\end{dmath*},
	\begin{dmath*}
		\Phi_1(\zeta)=\ee^\zeta
	\end{dmath*},
	\begin{dmath*}
		\Phi_2(\zeta) = I_0(2\sqrt{\zeta})
	\end{dmath*}.
	\end{dgroup*}

	\proofstep{Spectral structure.}
	The factor $\Gamma(1-t)^{1-\mu}$ controls the singular set on the right half-plane:
	\begin{itemize}
		\item for $\mu\in\mathbb Z$, one obtains isolated poles (possibly multiple);
		\item for $\mu\notin\mathbb Z$, one obtains branch points.
	\end{itemize}

	\proofstep{Interpretation.}
	This family interpolates continuously between rational, exponential, and Bessel-type formalisms. The deformation parameter $\mu$ acts directly on the Mellin--Barnes integrand, modifying the topological nature of the spectral singularities while preserving a unified complex integral representation.
\end{example}

\begin{example}[Exponential kernel and the Lerch transcendent]\label{ex:lerch-transmutation}
	Let
	\begin{dmath*}
		\hat{\Delta}(u;\zeta)
		= {
			\ee^{\zeta \ee^u},
			\qquad
			\varphi(t)
			=
			\frac{\Gamma(1+t)}{(t+\alpha)^\sigma}
		}
	\end{dmath*},
	with $\alpha>0$ and $\RE\sigma>0$.

	\proofstep{Mellin realisation and convolution.}
	The ground state is the product of the factorial state $\Gamma(1+t)$ and the algebraic state $(t+\alpha)^{-\sigma}$. Hence its Mellin weight is the multiplicative convolution of
	\begin{dmath*}
		W_1(z)=z\ee^{-z}
	\end{dmath*}
	and
	\begin{dmath*}
		W_2(y)
		=
		\frac{1}{\Gamma(\sigma)}
		y^\alpha(-\log y)^{\sigma-1}
		\condition*{0<y<1}
	\end{dmath*}.
	Thus
	\begin{dmath*}
		W_\varphi(z)
		=
		\frac{z}{\Gamma(\sigma)}
		\int_0^1
		\ee^{-z/y}\,
		y^{\alpha-2}
		(-\log y)^{\sigma-1}
		\,\mathrm d y
	\end{dmath*}.

	\proofstep{Analytic pairing.}
	The analytic pairing is
	\begin{dmath*}
		\langle \hat{\Delta},\varphi\rangle
		=
		\int_0^\infty
		\ee^{\zeta z}W_\varphi(z)\,\frac{\mathrm d z}{z}
	\end{dmath*}.
	Substituting the convolution formula and applying Fubini's theorem, initially for $\RE\zeta<1$, gives
	\begin{dmath*}
		\langle \hat{\Delta},\varphi\rangle
		=
		\frac{1}{\Gamma(\sigma)}
		\int_0^1
		y^{\alpha-2}
		(-\log y)^{\sigma-1}
		\left(
			\int_0^\infty
			\ee^{-z(1/y-\zeta)}
			\,\mathrm d z
		\right)
		\,\mathrm d y
	\end{dmath*}.
	The inner integral evaluates to
	\begin{dmath*}
		\int_0^\infty
		\ee^{-z(1/y-\zeta)}
		\,\mathrm d z
		=
		\frac{y}{1-\zeta y}
	\end{dmath*}.
	Therefore
	\begin{dmath*}
		\langle \hat{\Delta},\varphi\rangle
		=
		\frac{1}{\Gamma(\sigma)}
		\int_0^1
		\frac{
			y^{\alpha-1}
			(-\log y)^{\sigma-1}
		}{
			1-\zeta y
		}
		\,\mathrm d y
	\end{dmath*}.

	\proofstep{Lerch expansion.}
	For $|\zeta|<1$, expanding
	\begin{dmath*}
		\frac{1}{1-\zeta y}
		=
		\sum_{n=0}^\infty
		\zeta^n y^n
	\end{dmath*}
	and integrating termwise gives
	\begin{dmath*}
		\langle \hat{\Delta},\varphi\rangle
		=
		\sum_{n=0}^\infty
		\frac{\zeta^n}{(n+\alpha)^\sigma}
	\end{dmath*}.
	Thus the analytic pairing perfectly recovers the Lerch transcendent in its disk of convergence.

	Equivalently, the formal umbral evaluation gives the same series:
	\begin{dmath*}
		\ee^{\zeta\mathfrak u}[\varphi]
		=
		\sum_{n=0}^\infty
		\frac{\zeta^n}{(n+\alpha)^\sigma}
	\end{dmath*}.
	
	\proofstep{Analytic continuation.}
	The integral representation
	\begin{dmath*}
		\frac{1}{\Gamma(\sigma)}
		\int_0^1
		\frac{
			y^{\alpha-1}
			(-\log y)^{\sigma-1}
		}{
			1-\zeta y
		}
		\,\mathrm d y
	\end{dmath*}
	provides the canonical analytic continuation of the Lerch transcendent to the cut plane $\mathbb C\setminus[1,\infty)$.

	\proofstep{Interpretation.}
	This example shows how the exponential kernel and the factorial part of the ground state combine to produce the ordinary Lerch generating function. The exponential kernel alone would formally produce diverging factorial weights, but the $\Gamma(1+t)$ factor in $\varphi$ neutralises them exactly.

	Furthermore, the algebraic branch point of the ground state at $t=-\alpha$ (when $\sigma\notin\mathbb Z$) is explicitly recorded in the global analytic structure of the resulting Lerch function. Thus, the complex pairing natively converts a branched spectral structure into the standard cut structure of the Lerch transcendent in the physical $\zeta$-plane.
\end{example}

\begin{example}[Rational kernel and the Lerch transcendent]\label{ex:lerch-pairing}
	Let
	\begin{dmath*}
		\hat{\Delta}(u;\zeta)
		= {
			\frac{1}{1-\zeta \ee^u},
			\qquad
			\psi(t)
			=
			\frac{1}{(t+\alpha)^\sigma}
		}
	\end{dmath*},
	with $\alpha>0$ and $\RE\sigma>0$.

	\proofstep{Mellin representation of the ground state.}
	For $\RE\sigma>0$ one has
	\begin{dmath*}
		\frac{1}{(t+\alpha)^\sigma}
		=
		\frac{1}{\Gamma(\sigma)}
		\int_0^1
		z^t z^\alpha(-\log z)^{\sigma-1}\,\frac{\mathrm d z}{z}
	\end{dmath*}.
	Hence
	\begin{dmath*}
		W_{\alpha,\sigma}(z)
		=
		\frac{1}{\Gamma(\sigma)}
		z^\alpha(-\log z)^{\sigma-1}
		\condition*{0<z<1}
	\end{dmath*}.

	\proofstep{Analytic pairing.}
	Because $\hat{K}(z) = \frac{1}{1-\zeta z}$, the pairing is
	\begin{dmath*}
		\Phi(\zeta;\sigma,\alpha)
		=
		\frac{1}{\Gamma(\sigma)}
		\int_0^1
		\frac{z^{\alpha-1}(-\log z)^{\sigma-1}}{1-\zeta z}\,
		\mathrm d z
	\end{dmath*}.
	This integral converges for $\alpha>0$ and $\RE\sigma>0$, and defines a holomorphic function of $\zeta$ on $\mathbb C\setminus[1,\infty)$, with the cut induced by the pole of $(1-\zeta z)^{-1}$ crossing the integration interval $1<z<\infty$.

	\proofstep{Lerch expansion.}
	For $|\zeta|<1$, expanding
	\begin{dmath*}
		\frac{1}{1-\zeta z}
		=
		\sum_{n=0}^\infty \zeta^n z^n
	\end{dmath*}
	and integrating termwise gives
	\begin{dmath*}
		\Phi(\zeta;\sigma,\alpha)
		=
		\sum_{n=0}^\infty
		\frac{\zeta^n}{(n+\alpha)^\sigma}
	\end{dmath*}.
	Thus the analytic pairing recovers the Lerch transcendent
	\begin{dmath*}
		\Phi(\zeta;\sigma,\alpha)
		=
		\sum_{n=0}^\infty
		\frac{\zeta^n}{(n+\alpha)^\sigma}
	\end{dmath*}
	in its disk of convergence, and the integral gives its analytic continuation to the cut plane.

	\proofstep{Branched spectral structure.}
	When $\sigma\notin\mathbb Z$, the ground state
	\begin{dmath*}
		\psi(t)
		=
		\frac{1}{(t+\alpha)^\sigma}
	\end{dmath*}
	has an algebraic branch point at $t=-\alpha$. Hence the spectral structure is not purely meromorphic: besides the pole lattice of the rational kernel, the Mellin--Barnes pairing contains a branch singularity originating directly from the ground state.

	\proofstep{Interpretation and Transmutation.}
	This example shows that the umbral pairing naturally extends beyond Gamma-ratio ground states. The compactly supported Mellin kernel produces the classical Lerch transcendent. 
	
	Crucially, notice that this rational pairing $\langle \hat{\Delta}_{\mathrm{rat}}, \psi \rangle$ evaluates to the exact same integral representation derived from the entire pairing in \cref{ex:lerch-transmutation}. This is the ultimate, explicit demonstration of the spectral transmutation law: transmuting the rational pairing exactly multiplies the ground state by $\Gamma(1+t)$, yielding the entire pairing 
	\begin{dmath*}
		\left\langle \ee^{\zeta\ee^u}, \frac{\Gamma(1+t)}{(t+\alpha)^\sigma} \right\rangle
	\end{dmath*}.
	Both the formal expansions and the global integral representations are flawlessly preserved across the duality.
\end{example}

\section{Conclusion and outlook}\label{sec:conclusion}

We have proposed an analytic framework for umbral calculus based on Mellin--Barnes representations and sectorial Laplace theory, in which umbral operators are realised as analytic functionals acting through contour integrals. Within this setting, classical umbral identities acquire a precise analytic meaning, and divergent formal series are interpreted as asymptotic expansions of well-defined functions reconstructed by Mellin--Barnes integrals.

A central outcome is the identification of a \emph{spectral transmutation law}, relating entire and rational kernels directly through Gamma regularisation in the Mellin spectral space. This law acts at the level of analytic pairings and encodes a global correspondence between different spectral realisations of the same object. The familiar algebraic umbral rules emerge seamlessly as local primary expansions of this identity, proving that the combinatorial cancellation mechanisms of the formal theory are merely local shadows of a deeper, global analytic structure.

From a broader perspective, this suggests that umbral calculus admits a natural interpretation within resurgent analysis. The jump kernels play the role of Borel transforms, and the Mellin--Barnes pairing reconstructs analytic functions from their spectral data. In this picture, the distinction between entire and rational kernels reflects two complementary regimes: one in which the resurgent content is concentrated at infinity, and one in which it is encoded by discrete singularities in the finite Borel plane. The spectral transmutation law then appears as the exact analytic mechanism exchanging these two descriptions.

Crucially, this framework embeds umbral calculus within a rigorous \emph{topological duality} of functional spaces, in the spirit of Gelfand--Shilov theory. The branched admissible ground states natively form a topological vector space of analytic test functions, governed by strict constraints on vertical exponential growth and sectorial decay. The umbral operators, in turn, are realised as continuous linear functionals---analytic distributions---acting upon this test space, with the Mellin--Barnes integral providing the explicit pairing mechanism. In this light, spectral transmutation operates as a continuous mapping between distinct functional topologies (a Gevrey descent), proving that the algebraic regularisation of formal umbral series is fundamentally dictated by the functional-analytic topology of the dual spaces.

The examples considered here indicate that this framework extends naturally beyond the classical Gamma-ratio setting. In genuinely Barnes-type cases, the Mellin--Barnes integral defines the analytic object itself, while the formal umbral series provides only partial information, typically as a Gevrey-type asymptotic expansion in a specific sector. The appearance of branch points in the spectral variable further suggests that the theory should be systematically extended to accommodate algebraic and more general resurgent singularities.

These observations point toward a number of speculative but highly promising directions. One may expect a rigorous correspondence between umbral operators and resurgent functions, in which the spectral variable plays the role of the Borel coordinate and the transmutation law reflects the action of alien derivatives. The interpretation of the pairing as a topological duality further opens the door to a formulation in terms of analytic functionals modulo holomorphic contributions (hyperfunctions), providing a natural setting for the decomposition of kernels into sectorial components and bridge equations.

Finally, the extension of the present framework to parametric families and to operators associated with differential, difference, or functional equations may reveal deeper connections between umbral calculus and integrable structures. In this context, the spectral transmutation law could be viewed as a manifestation of a more general duality principle acting on the space of solutions.

Taken together, these considerations suggest that umbral calculus, when formulated analytically, is not merely a combinatorial or operational tool, but natively belongs to a broader functional-analytic and resurgent structure in which formal series, analytic continuation, and spectral data are intrinsically linked. The present work provides a foundational step in this direction, and it is hoped that further developments will make this connection fully explicit.

\bibliographystyle{unsrt}
\bibliography{bibliography}

\end{document}